\numberwithin{equation}{section}
\title[{\bf Substitutional cut and project sets }]{When is a cut and project set substitutional? }
\date{\today}
\author{Edmund Harriss}
\address{University of Arkansas}
\email{}
\author{Henna Koivusalo}
\address{School of Mathematics, Fry Building, Woodland Road, Bristol BS8 1UG, United Kingdom}
\email{henna.koivusalo@bristol.ac.uk }
\urladdr{https://people.maths.bris.ac.uk/~te20281/index.html}
\author{James J.\ Walton} 
\address{School of Mathematical Sciences, Mathematical Sciences Building, University Park, Nottingham, NG7 2RD, United Kingdom}
\email{James.Walton@nottingham.ac.uk}
\urladdr{https://www.nottingham.ac.uk/mathematics/people/james.walton}
\theoremstyle{plain}
\newtheorem{theorem}{Theorem}[section]
\newtheorem{lemma}[theorem]{Lemma}
\newtheorem{proposition}[theorem]{Proposition}
\newtheorem{corollary}[theorem]{Corollary}
\newtheorem*{infmain}{Informal statement of Theorem \ref{thm:main}}
\newtheorem*{infpoly}{Informal statement of Theorem \ref{thm:polytopal}}
\theoremstyle{definition}
\newtheorem{definition}[theorem]{Definition}
\newtheorem{remark}[theorem]{Remark}
\newtheorem{example}[theorem]{Example}
\newtheorem{notation}[theorem]{Notation}
\newcommand{\exend}{\hfill \ensuremath{\Diamond}}
\newcommand{\R}{\mathbb R}
\newcommand{\Z}{\mathbb Z}
\newcommand{\Q}{\mathbb Q}
\newcommand{\N}{\mathbb N}
\newcommand{\sH}{\mathscr{H}}
\newcommand{\sing}{\mathscr{S}}
\newcommand{\nonsing}{\mathscr{N}}
\newcommand{\clin}{\varkappa}
\newcommand{\E}{\mathbb E}
\renewcommand{\epsilon}{\varepsilon}
\newcommand{\intr}{\mathrm{int}}
\newcommand{\cl}{\mathrm{cl}}
\DeclareMathOperator{\Id}{\mathrm{Id}}
\DeclareMathOperator{\rk}{rk}
\DeclareMathOperator{\tot}{\E}               
\DeclareMathOperator{\phy}{\E_\vee}          
\DeclareMathOperator{\intl}{\E_<}            
\DeclareMathOperator{\cps}{\Lambda}          
\DeclareMathOperator{\cP}{\mathcal{P}} 
\DeclareMathOperator{\cQ}{\mathcal{Q}} 
\DeclareMathOperator{\cU}{\mathcal{U}} 
\DeclareMathOperator{\cV}{\mathcal{V}}
\DeclareMathOperator{\cT}{\mathcal{T}}
\newcommand{\sub}{\sigma}
\DeclarePairedDelimiter\ang{\langle}{\rangle} 
\newcommand{\xra}[1]{\overset{#1}{\rightsquigarrow}}
\newcommand{\xrla}[1]{\overset{#1}{\leftrightsquigarrow}}
\DeclareMathOperator{\LD}{\scalebox{0.8}{\(\xra{\mathrm{LD}}\)}} 
\DeclareMathOperator{\MLD}{\scalebox{0.8}{\(\xrla{\mathrm{MLD}}\)}}
\newcommand{\xsqsubset}[1]{\overset{#1}{\sqsubset}}
\DeclareMathOperator{\LI}{\scalebox{0.8}{\(\xsqsubset{\mathrm{LI}}\)}}
\newcommand{\xsimeq}[1]{\overset{#1}{\simeq}}
\DeclareMathOperator{\LIs}{\scalebox{0.8}{\(\xsimeq{\mathrm{LI}}\)}}
\subjclass[2010]{Primary: 52C23; Secondary: 52C45}
\keywords{Aperiodic order, cut and project, model sets, repetitivity, Diophantine approximation}
\begin{document}

\maketitle
\thispagestyle{empty}


\begin{abstract}
Cut and project sets are obtained by projecting an irrational slice through a lattice to a lower dimensional subspace. Under standard conditions, the resulting pattern has no translational periods even though it retains some regularity of the lattice. Cut and project sets are one of the archetypical examples of patterns featuring aperiodic order, the other construction methods being by substitution and matching rules. Many early examples of aperiodic tilings, including the famous Penrose and Ammann--Beenker tilings, have a description from all of these methods. In this article we answer the following question, in the case of a Euclidean total space: what property of the cut and project data characterises when the resulting cut and project sets may also be defined by a substitution rule?
\end{abstract}

\section{Introduction}

A {\it tiling} $\cT$ of Euclidean space $\R^d$ is a collection $(T_i)_{i\in\N}$ of topologically regular subsets (i.e., equal to the closures of their interiors) that intersect only on their boundaries and cover all of $\R^d$. A {\it Delone set} in $\R^d$ is a collection of points that is relatively dense and uniformly discrete. We use the term pattern to refer to both tilings and Delone sets (or variants of these where tiles or points may carry labels). The field of Aperiodic Order \cite{AOI} studies patterns that have no translational periods but feature a good amount of structure, in some sense. The main methods for producing aperiodic order are by matching rules, substitutions and cut and project.

The matching rule tilings are colections of shapes that can tile the plane but only non-periodically \cite{Berger66,Gardner89,Robinson71,Kari96,JR21}, such as the famous Penrose tiles and the recently discovered Hat einstein \cite{SMKG24}. The discovery of such tilesets in the '60s and '70s invigorated the study of tilings and patterns, with a quest to discover general methods that could create such tiles. Two powerful methods were developed, hierarchic/substitution methods that expanded and replaced tiles to create larger patches and projection methods that brought tilings down from higher dimensions. Many classical examples of matching rule tilings, examples, such as the Penrose tiling and the Hat tiling that can be generated in both of these ways.

Let us first focus on tile substitutions (or, used somewhat less commonly, Delone set substitution \cite{LW03}). Symbolic substitutions are classical in Dynamics \cite{Morse21,BI94}, and symbolic substitutions have relevance in tiling theory and vice versa \cite{PF02}. Loosely speaking, they are defined by inflate and replace rules that produce arbitrarily large patches of patterns. Proving that a given matching rule enforces hierarchy (often described by some substitution) is one of the more common approaches to proving aperiodicity of a matching rule tiling. For example, the Penrose tilings \cite{Pen79} and many other classical examples have a description from substitutions as well as from matching rules.

We note at this point that a substitution rule \(\sigma\) defines not a single tiling, but rather a collection of finite patches that, in turn, defines a collection of admitted tilings that are all \emph{locally isomorphic} (at least in the translationally repetitive/primitive case), that is, they all have the same finite patches, up to translation. In the nonperiodic case, there are uncountably many such tilings, even modulo translation equivalence, which together form a \emph{tiling space} or \emph{translational hull} \(\Omega\) \cite{Sad08book, AP98}. The tiling space is a topological dynamical system, when equipped with a natural topology and the \(\R^d\)-action of translation. The original substitution acts continuously on \(\Omega\) and, as a result of aperiodicity, is a homeomorphism on \(\Omega\). That is, it is invertible: each tiling \(\cT \in \Omega\) has a unique substitutive pre-image \(\cT' = \sigma^{-1}(\cT) \in \Omega\). This result is known as unique composition, or recognisability; see \cite{Sol98} for primitive stone inflation rules or \cite{Wal25} for a more general statement in the non-repetitive case that extends beyond tilings, to general so-called \(L\)-sub FLC pattern spaces.

The third method is by via cut and project schemes. Cut and project sets are obtained by taking an irrational slice through a lattice and projecting it to a lower dimensional subspace. More formally, a cut and project scheme \(\mathcal{S}\) consists of the following data: two (positive-dimensional) transversal subspaces \(\phy\), \(\intl < \R^k\), respectively called the \emph{physical} and \emph{internal} space, and a lattice \(\Gamma < \R^k\). We restrict here to a Euclidean \emph{total space} \(\R^k\) (although it is also of interest to allow non-Euclidean internal and total space). Let \(\pi_\vee \colon \R^k \to \phy\) be the projection with respect to the decomposition \(\R^k = \phy + \intl\). Then, given a \emph{window} \(W \subset \intl\), we define the \emph{cut and project set}:
\[
\cps = \cps(\mathcal{S},W) \coloneqq \pi_\vee (\Gamma \cap (W + \phy)) .
\]
In other words, we \emph{cut} the lattice \(\Gamma\) with the \emph{strip} \(W + \phy\), and then \emph{project} it to the physical space. For further details, see Section \ref{sec:cutandprojectDef}. 

Throughout, we use the standard assumptions that \(\pi_\vee\) is injective on \(\Gamma\) and that the projection of \(\Gamma\) to the internal space is dense (otherwise, certain reductions can be performed). We also take the projection of the lattice to internal space to be injective, which ensures non-periodicity. It is not hard to then show that \(\cps\) is a Delone set if \(W\) is compact and topologically regular (which we always will do in this article).

Just as how substitutions do not merely define single tilings, a cut and project scheme \(\mathcal{S}\) with window \(W\) more naturally defines a whole translational hull \(\Omega = \Omega(\mathcal{S},W)\) of Delone sets which, due to the above restrictions, are all locally isomorphic, non-periodic and repetitive. This set of patterns is defined by taking non-singular translates of the lattice (those not intersecting the boundary of the strip), then cutting and projecting this, and completing this collection by adding any limiting patterns corresponding to the singular translates.

The cut and project method was first given by Meyer in 1972 as a tool to study certain problems in harmonic analysis and Diophantine approximation \cite{Meyer72}. Just as many of the early substitution patterns were proven to have matching rule descriptions and vice versa, many cut and project patterns can also be generated from these other descriptions. For example, it was shown by de Bruijn in 1981 that this is true of Penrose tilings \cite{Bruijn81}.

Comparing the patterns that can be generated by these different methods is of fundamental importance to the theory of Aperiodic Order. It was shown by Goodman--Strauss in 1998 that essentially every substitution tiling can be enforced, in a certain sense, as a matching rule tiling \cite{Goodman98} (also see \cite{Mozes89} for similar considerations in the special case of Wang tiles). For a certain class of cut and project schemes, B\'edaride and Fernique \cite{BF17} found a characterisation of the existence of matching rules (also see \cite{Burkov88,Levitov88,Socolar90,BF15,FL24} for earlier and supplementary results). The problem of characterising those substitution tilings with a `nice' cut and project description is closely related to the elusive Pisot conjecture \cite{Lee23,ABBL15,AH14}, towards which so far only partial progress has been made \cite{Barge18,Queffelec87,ABBL15}. 

The current article tackles the reverse problem: characterising precisely when cut and project sets are also substitutional. Some past work exploring this problem from various perspectives include \cite{BJS91,Pleasants00,Har04,MMP21,KL23}. Our work extends a programme of establishing precise criteria on the cut and project data that correspond to natural properties of the resulting patterns. For polytopal windows, it is known that the complexity function \(p(r)\) (giving the number of patches of each radius \(r\)) grows polynomially, with power determined algebraically by the ranks of intersections of the projected lattice with supporting subspaces of the window, see \cite{Jul10, KoiWal21, Wal24}. The property of linear repetitivity \cite{LP02} requires, additionally, that the lattice is in some sense positioned in a badly approximable way with respect to the physical space, see \cite{HKW14, KoiWalII, Wal24}.

Many insights and results of the current article are rooted in the work of \cite{Har04,Harriss:OCST}. However, a key departure from this is a great simplification and clarification on what is meant by `substitutional'. Whilst there are some simple notions in the case of 1-dimensional tilings, when moving to higher dimensions it is less obvious how one defines the notion of being substitutional, in a way that encompasses examples such as the Penrose tilings, where either one must allow for non-stone inflations (where inflated tiles do not have support exactly equal to their replacement), or one needs to apply a local rule so as to work with so-called \emph{mutually locally derivable} (MLD) tilings, the Robinson triangle tilings, which do permit a stone-inflation rule. The notion of a local derivation is fundamental in Aperiodic Order \cite{AOI} and will be explained further in Section \ref{sec:setup}. Cut and project sets are Delone sets, rather than tilings, so implicitly we already perform such MLD transformations when comparing to the case of tilings. Thus, it seems natural to work with a notion of substitutional that is invariant under MLD equivalence.

In fact, the notion of a local derivation is integral to our working definition of being substitutional here. We aim to characterise those cut and project sets which we can locally derive from some scaled pattern that is locally indistinguishable from it (that is, in the repetitive case here, in the same local isomorphism class). So as to avoid confusing the set-ups and terms, we have chosen to call such cut and project sets \emph{substitutional}.

Thus, we have chosen the following definition for a Delone set \(\cps \subset \R^d\) (or tiling) to be substitutional with respect to some linear isomorphism \(L \colon \R^d \to \R^d\): there is a Delone set \(\cps' \subset \R^d\) such that 
\begin{itemize}
\item there is a local derivation from \(L(\cps')\) to \(\cps\) and
\item \(\cps'\) is locally indistinguishable from \(\cps\).
\end{itemize}
For more detail, see Section \ref{sec:substitutionalDef}. Our definition is very closely related to the standard notion of being generated by substitution for tilings, as we will demonstrate in Section \ref{sec:compare tilings}. When \(L\) is expansive, by recognisability \cite{Sol98, Wal25} (and since our cut and project sets are non-periodic and repetitive), the first condition is equivalent to the stronger condition that ``there is a local derivation from $L(\Lambda')$ to $\Lambda$, and vice versa'' i.e., \(L(\cps')\) is MLD to \(\cps\). Similarly, the second condition is equivalent to \(\cps\) and \(\cps'\) being locally isomorphic i.e., they are locally indistinguishable (see Definition \ref{def:local indistinguishable}) from each other.

We are now ready to state our main theorem. For the precise statement, see Theorem \ref{thm:main}.

\begin{infmain}
The cut and project sets determined by the data as above are substitutional with respect to some linear expansion \(L \colon \intl \to \intl\) if and only if there is a linear map \(M \colon \R^k \to \R^k\) satisfying: 
\begin{enumerate}
	\item \(M(\Gamma) = \Gamma\);
	\item \(M(\phy)=\phy\) with $M|_{\phy} = L\);
	\item \(M(\intl)=\intl\) and 
	\item a translate of $W$ is {\bf  constructable} from $M(W)$. In other words, $W$ can be written using finitely many translates of $M(W)$, and the set operations: intersection, union and complement. The allowed translations are the projected elements of $\Gamma$. 
\end{enumerate}
\end{infmain}

We note that our results throughout also allow for our windows to be multicoloured, even though we only speak of the single-coloured version in this introduction, for simplicity.

The first three conditions depend only on the cut and project scheme \(\mathcal{S} = (\R^k,\phy,\intl,\Gamma)\), which need to be preserved by the linear automorphism \(M\). The final condition says that \(W\) is also fixed by \(M\), up to `mutual constructability'. The notion of constructability is explained in more detail in Section \ref{sec:windowconstructDef}. It is shown that, when the above theorem is satisfied, we necessarily also have that \(M(W)\) is constructable from \(W\). This condition is slightly weaker (owing to our notion of substitutional being MLD invariant) to the window being defined by a certain type of GIFS (graph iterated function system). That this notion is related to the notion of a cut and project Delone (multi)set being substitutive is already known, see for instance \cite{Lee23}. Whilst the window being a special form of attractor for a GIFS is not necessary for our substitutional property, we show that it is closely related: up to MLD equivalence, the window must (up to a translation) be a GIFS of the special form described in Corollary \ref{cor:sub=>GIFS}.

This theorem has many corollaries. For instance, we quickly derive an explicit characterisation of the possible physical spaces \(\phy\) and lattices \(\Gamma\) which allow for substitutional cut and project sets. Namely, if \(\Gamma = \Z^2\) (which is without loss of generality, up to reparameterisation), it is necessary and sufficient for the slop of \(\phy\) to be a quadratic irrational, see Theorem \ref{thm:2-to-1}. We are also able to show that there are essentially (that is, up to reparametrisation) only countably many substitutional cut and project sets (with Euclidean total space), see Theorem \ref{thm:countably many}. We also demonstrate that variations of the above substitutionality definition and assumptions on the cut and project sets lead to quantifiable changes in the statements. These allow us to, for instance, classify exactly what expansive substitution maps \(\sub \colon \Omega \to \Omega\) the translational hull supports (Theorem \ref{thm:classification}) and which exact positionings of the lattice leads to LIDS \cite{AOI} patterns, loosely speaking, those which substitute to \emph{themselves} under an expansive substitution map.

We are able to derive a much simpler (and more easily checked) criteria on the window \(W\) when restricting to the case that \(W\) is polytopal. The main result here states the following; for a precise statement, see Theorem \ref{thm:polytopal}.

\begin{infpoly}
Let $\cps$ be a cut and project set determined by the data above, where $W$ is a polytope, and let \(L \colon \phy \to \phy\) be expansive. Then \(\cps\) is substitutional with respect to some power of \(L\) if and only if there is a linear map \(M \colon \R^k \to \R^k\) satisfying the conditions (1--3) from Theorem \ref{thm:main} (see above), and \(W\) satisfies: 
\begin{itemize}
\item the supporting hyperplanes of $W$ are invariant under some power of $M$, and 
\item the differences of the vertices of $W$ can be expressed as projected elements of $\tfrac 1M\Gamma$ for some $M\in \N$. 
\end{itemize}
\end{infpoly}

The techniques for proving these theorems build on a vast range of methodology from the literature. It is not too hard to show that if an $M$ such as in the statement of both of the main theorems exists, the corresponding cut and project set is substitutional. A key clue for this direction of the proof are analyses of local derivations and local isomorphisms in the cut and project context which correspond, respectively, to translations and reconstructions of the window. These techniques are particularly inspired by \cite[Chapter 7]{AOI} and \cite{BJS91}. The converse direction is less obvious. Finding such an $M$ when $\cps$ is substitutional, we need to lift the substitutional property of the cut and project set into the ambient space $\R^k$. Here, properties of local inflation-deflation symmetries (LIDS) on Delone sets play an important role. These techniques utilise \cite[Chapter 5]{AOI}. Throughout, the acceptance domain formalism is important, as is the stabiliser rank computations for pattern analysis in the polytopal case (based on e.g., \cite{KoiWal21, KoiWalII}). We summarise the crucial background findings on acceptance domains, local derivations and local isomorphisms, and LIDS, in Section \ref{sec:techniques for general windows}. Many of the consequences of the main theorem rely on the torus parametrisation of the translational hull \(\Omega\) (see Theorem \ref{thm:classification}), that is, the maximal equicontinuous factor of \(\Omega\), a tool used frequently for cut and project schemes see e.g., \cite{FHK02}.

\subsection*{How to read this article} In Section \ref{sec:setup} we give the basic definitions: Delone sets, local derivability and local isomorphisms, substitutionality, cut and project sets and constructability of windows, and the torus parametrisation of the local isomorphism class of cut and project sets. We focus on the case of Delone sets, but many of the definitions also apply for tilings. In Section \ref{sec:main results} we give the main results, including the main theorems, Theorems \ref{thm:main} and \ref{thm:polytopal}, together with examples and discussion to set them into context. Section \ref{sec:techniques for general windows} contains preliminary results on acceptance domains, LIDS and local derivability and isomorphisms, which are applied in Section \ref{sec:proof for general windows} to prove Theorem \ref{thm:main} and many of the accompanying results from Section \ref{sec:main results}. Section \ref{sec:proof polytopal} contains the proof for the main theorem for polytopal windows Theorem \ref{thm:polytopal}, which utilises techniques from \cite{KoiWalII} to describe the possible vertices and supporting hyperplanes of $W$. In Section \ref{sec:L-sub versus classical} we compare the definitions and results from the current work to the traditional definitions for tilings.

\section{Problem set-up, notation, and basic properties}\label{sec:setup}

In this section we define Delone (multi)sets and recall some basic properties, such as notions of {\it local derivability} and {\it indistinguishability}. Many of these properties have analogous definitions for tilings. In Section \ref{sec:L-sub versus classical} we will explain how to compare the properties defined in the Delone context, to the corresponding tiling properties in principle, and demonstrate that the results from this and later sections apply to tilings just as well as Delone sets. In particular, we show that the definition of a Delone set being {\it substitutional} given below in Subsection \ref{sec:substitutionalDef} is analogous to the classical definition of tilings or Delone (multi)sets being generated by substitution rules.

In Subsection \ref{sec:cutandprojectDef} we define Euclidean cut and project sets and go on to define and investigate other properties required to state our main theorems, in particular, in Subsections \ref{sec:windowconstructDef} and \ref{sec:torus parametrisation} we relate local derivability and local indistinguishability precisely to the cut and project set up.

\subsection{Delone (multi)sets}
Throughout, let \(E \cong \R^d\) be some \(d\)-dimensional vector space (\(d \geq 1\)), which will be the ambient space of our patterns. Fix a norm on \(E\). We let \(B(x,r)\) denote the closed ball of radius \(r\) centred at \(x \in E\). A \textbf{Delone set} is a subset \(\cps \subset E\) which is both \textbf{uniformly discrete} (there exists some \(r > 0\) so that every \(r\)-ball in \(E\) intersects at most one point of \(\cps\)) and \textbf{relatively dense} (there exists some \(R > 0\) so that every \(R\)-ball intersects at least one point of \(\cps\)).

Given \(\mathbf{\Lambda} = (\cps_1, \ldots, \cps_\ell) \subset E^\ell\), we denote its \textbf{support} by  \(\cps = \mathrm{supp}(\mathbf{\Lambda}) \coloneqq \bigcup_{i=1}^\ell \Lambda_i\). We call \(\mathbf{\Lambda}\) a \textbf{Delone multiset} if \(\cps\) is a Delone set\footnote{It is usual to further demand that each \(\Lambda_i\) is Delone, although here it seems more natural to not demand this, in analogy with tilings (where not all tile types necessarily appear relatively densely).}. Although a Delone multiset is a subset of the \(\ell\)-fold product of the ambient space \(E\), we visualise it as a finite collection of uniformly discrete sets in \(E\), where the points of \(\cps_i \subseteq \cps\) are those labelled with `colour' \(i\). No complications arise from points of different colours occupying the same point of \(E\), we can simply view that point as having multiple colours. We may define the translate \(\mathbf{\Lambda} + x = (\Lambda_i+x)_{i=1}^\ell\) of a Delone (multi)set, and say it is \textbf{non-periodic} if \(\cps \neq \cps + x\) for all \(x \in E \setminus \{\mathbf{0}\}\). Similarly, if \(L \colon E \to E\) is a linear automorphism, we may define \(L\cps = (L \Lambda_i)_{i=1}^\ell\). Henceforth, we will drop the bold font on \(\mathbf{\Lambda}\) and work with Delone sets and multisets interchangeably.

Given a Delone (multi)set \(\cps\), a patch is given by taking a finite selection of points in \(\cps\). More precisely, for a bounded subset \(U \subset E\), we may define the patch \(\cps\ang{U} \subset \cps\) by \((\cps\ang{U})_i \coloneqq \Lambda_i \cap U\). A \textbf{patch} in \(\cps\) is then simply a patch of the form \(\cps\ang{U}\) for bounded \(U \subset E\) i.e., a finite selection of points from \(\cps\) (recording their colours, in the multiset case).

In what follows we will use the word `pattern', to encompass both Delone (multi)sets and tilings. For the pattern \(\cP = \cps\) which, for simplicity, we assume is a Delone (multi)set for now, \(x \in E\) and \(r \geq 0\), we define
\begin{equation}\label{eq:patch notation}
\cP[x,r] \coloneqq \cP\ang{B(x,r)} - x .
\end{equation}
If \(x \in \mathrm{supp}(\cP)\), we in particular call \(\cP[x,r]\) an \textbf{\(r\)-patch}. The square brackets indicate that an equivalence relation has been introduced: since we `recentre', by translating \(x\) over the origin in Equation \ref{eq:patch notation}, \(\cP[x,r] = \cP[y,r]\) can be interpreted as `the \(r\)-patches centred at \(x\) and \(y\) in \(\cP\) are the same, modulo translation from \(x\) to \(y\)'. More generally, given a bounded subset \(U \subset E\), we define a \textbf{\(U\)-patch} to be a patch of the form \(\cP[x,U] \coloneqq \cP\ang{U+x}-x\), where again (for later convenience) we restrict to \(x \in \mathrm{supp}(\cP)\) (for Delone sets). Restricting \(r\)- or \(U\)-patches to be centred on \(\mathrm{supp}(\cP)\) is a convenience that allows the following simpler definitions (otherwise, we need to modify them slightly):

\begin{definition}
A pattern \(\cP\) has \textbf{finite local complexity} (\textbf{FLC}) if, for each \(r \geq 0\), there are only finitely many \(r\)-patches in \(\cP\). Equivalently, for all bounded \(U \subset E\), there are only finitely many \(U\)-patches.
\end{definition}

\begin{definition}
A pattern \(\cP\) is \textbf{repetitive} if, for each \(r \geq 0\), there is some \(R=R_r > 0\) so that, for each \(r\)-patch \(P\) in \(\cP\) and all \(x \in E\), there is some \(y \in B(x,R)\) with \(\cP[y,r] = P\). In other words, all patches appear relatively densely.
\end{definition}

We henceforth assume that all patterns are FLC. In fact, for most of the paper (from Section \ref{sec:main results} onwards), all patterns will also be repetitive.

\subsection{Local derivability and indistinguishability}
We will use the notions of local derivability and indistinguishability to give a highly practical (especially in the context of cut and project sets) definition of patterns being `substitutional'.

\begin{definition}\label{def:LD}
A pattern \(\cQ\) is \textbf{locally derivable} (\textbf{LD}) \textbf{from} a pattern \(\cP\) if there is some \textbf{derivation radius} \(c \geq 0\) satisfying the following: whenever \(\cP[x,c] = \cP[y,c]\) for $x,y\in E$, then \(\cQ[x,0] = \cQ[y,0]\), in which case we write \(\cP \LD \cQ\). If \(\cP\) is locally derivable from \(\cQ\) and vice versa, we call them \textbf{mutually locally derivable} and write \(\cP \MLD \cQ\).
\end{definition}

A pattern \(\cQ\) being LD from \(\cP\) may be thought of as saying that one may redecorate \(\cP\) to obtain \(\cQ\) using a rule that is `local', in the sense it can only inspect regions to a bounded radius in \(\cP\). One may also consider that, in this case, `\(\cP\) is detailed enough so that its patches, to some bounded radius, determine \(\cQ\)'.

\begin{remark}\label{rem:LD on supports or punctures}
It is easy to show that we can increase the `\(0\)' for the patches in \(\cQ\) in Definition \ref{def:LD} with any \(r \geq 0\), by increasing the derivation radius by \(r\). In particular, if \(\cQ = \cps\) is a Delone (multi)set and \(r > 0\) is such that all points of \(E\) are within distance \(r\) of a point of its support, clearly in Definition \ref{def:LD} it is necessary and sufficient for a local derivation \(\cP \LD \cQ\) that, for any \(x\) and \(y \in \Lambda\) (rather than all of \(E\)), there is some \(c \geq 0\) so that if \(\cP[x,c] = \cP[y,c]\) then \(\cQ[x,r] = \cQ[y,r]\).
\end{remark}

\begin{definition}\label{def:local indistinguishable}
A pattern \(\cP\) is \textbf{locally indistinguishable} (\textbf{LI}) \textbf{from} a pattern \(\cQ\), and we write \(\cP \LI \cQ\), if every patch of \(\cP\) appears, up to translation, in \(\cQ\). If \(\cP\) is locally indistinguishable from \(\cQ\) and vice versa, we call them \textbf{locally isomorphic} and write \(\cP \LIs \cQ\).
\end{definition}

\begin{remark}
Note that, in contrast to \cite{AOI}, we do not use the terms \emph{locally isomorphic to} and \emph{locally indistinguishable from} interchangeably, with the latter being only a one-sided relation here. This non-transitive version is very useful, though, for example in defining the hull of a pattern.
\end{remark}

The proof of the following is a simple exercise:

\begin{lemma}\label{lem:LI => properties}
If \(\cP \LI \cQ\) and \(\cQ\) is FLC then \(\cP\) is also FLC. If, additionally, \(\cQ\) is repetitive, then so is \(\cP\) and \(\cQ \LI \cP\), that is, \(\cP \LIs \cQ\).
\end{lemma}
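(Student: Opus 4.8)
The plan is to handle the three assertions—finite local complexity of $\cP$, repetitivity of $\cP$, and $\cQ \LI \cP$—noting that the latter two both flow from a single \emph{transfer} mechanism: push a large patch of $\cP$ into $\cQ$ using the hypothesis $\cP \LI \cQ$, exploit repetitivity inside $\cQ$, and pull the located sub-patch back to $\cP$.

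First, finite local complexity. Fix $r \ge 0$. Each $r$-patch $\cP[x,r]$ (so $x \in \mathrm{supp}(\cP)$) is a patch of $\cP$, hence by $\cP \LI \cQ$ it occurs up to translation in $\cQ$; since the origin is occupied, the matching centre lies in $\mathrm{supp}(\cQ)$, so $\cP[x,r] = \cQ[y,r]$ for some $y \in \mathrm{supp}(\cQ)$. Thus the $r$-patches of $\cP$ inject into those of $\cQ$, of which there are finitely many as $\cQ$ is FLC; since $r$ is arbitrary, $\cP$ is FLC.

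Now assume $\cQ$ is also repetitive, with repetitivity radius $R_r$ for each $r$, and record the elementary observation underlying both remaining claims: if $\cP[x,s] = \cQ[w,s]$, then any sub-patch of $\cQ[w,s]$ matches, under the identification of recentred patches, an identical and identically placed sub-patch of $\cP[x,s]$. For $\cQ \LI \cP$, take any $r$-patch $Q$ of $\cQ$, set $s \coloneqq R_r + r$, fix any $x \in \mathrm{supp}(\cP)$, and transfer $\cP[x,s] = \cQ[w,s]$ via $\cP \LI \cQ$. Repetitivity of $\cQ$ supplies a copy of $Q$ centred within $R_r$ of $w$; since $R_r + r \le s$, this copy lies inside $B(w,s)$, so it is a genuine sub-patch of $\cQ[w,s]$ and hence of $\cP[x,s]$. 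Therefore $Q$ appears in $\cP$, giving $\cQ \LI \cP$ and, with the hypothesis, $\cP \LIs \cQ$.

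Repetitivity of $\cP$ runs the same transfer. Let $P$ be any $r$-patch of $\cP$; by the FLC step $P = \cQ[y,r]$ is also an $r$-patch of $\cQ$, so repetitivity of $\cQ$ applies to it. Given arbitrary $x \in E$, set $s \coloneqq R_r + r$, transfer $\cP[x,s] = \cQ[w,s]$, locate a copy of $P$ within $R_r$ of $w$ inside $\cQ[w,s]$, and pull it back to a copy of $P$ centred within $R_r$ of $x$ in $\cP$. Hence $\cP$ is repetitive, inheriting the radius $R_r$ from $\cQ$. The only delicate point in this otherwise routine exercise is the radius bookkeeping: one must take the ambient radius $s$ at least $R_r + r$ so that the sub-patch produced by repetitivity of $\cQ$ sits strictly within the transferred ball, and is therefore a common sub-patch of $\cP[x,s]$ and $\cQ[w,s]$ rather than one clipped at the boundary.
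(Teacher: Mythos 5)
The paper gives no proof of this lemma (it is explicitly left as ``a simple exercise''), so there is no argument to compare against; your proof is correct, and the transfer mechanism with the radius bookkeeping \(s = R_r + r\) — push an \(s\)-patch of \(\cP\) into \(\cQ\), apply repetitivity of \(\cQ\) there, and pull the resulting sub-patch back — is precisely the intended one. The only step left implicit is the routine reduction, when concluding \(\cQ \LI \cP\), from arbitrary patches \(\cQ\ang{U}\) to \(r\)-patches centred on \(\mathrm{supp}(\cQ)\) (any patch, including an empty one over a region devoid of points, sits inside some \(r\)-patch centred at a support point by relative density of \(\cQ\)), which is harmless.
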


\begin{definition}
For an FLC pattern \(\cP\) we let its (\textbf{translational, continuous}) \textbf{hull} \(\Omega_{\cP}\) be the space of patterns locally indistinguishable from \(\cP\), with the local topology.
\end{definition}

See \cite[Section 5.4]{AOI} for the definition of the local topology. In short, it may be derived from a uniformity, or a metric, which considers patterns to be `close' if they agree on a large ball about the origin, up to a small translation. One may equivalently take \(\Omega_{\cP}\) as the completion of the orbit \(\cP + E\) of translates of \(\cP\). It follows quickly from the definitions that \(\cP \LI \cQ\) if and only if \(\Omega_{\cP} \subseteq \Omega_{\cQ}\).

For a proof of the below, in a generalised setting, see \cite{Wal25}.

\begin{lemma}\label{lem: LD over hull}
If \(\cP \LD \cQ\), with derivation radius \(c\), then this may be uniquely extended to a surjective factor map \(f \colon \Omega_{\cP} \to \Omega_{\cQ}\) with \(f(\cP) = \cQ\). This map satisfies the following: given \(\cU\), \(\cV \in \Omega_{\cP}\), if \(\cU[x,c] = \cV[y,c]\) then \((f\cU)[x,0] = (f\cV)[y,0]\). In particular, each element of \(\Omega_{\cQ}\) is LD, with derivation radius \(c\), from an element of \(\Omega_{\cP}\).
\end{lemma}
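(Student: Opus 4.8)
The plan is to exploit the description of \(\Omega_{\cP}\) as the completion of the translation orbit \(O_{\cP} = \{\cP + t : t \in E\}\) under the local uniformity, and to build \(f\) on this dense subset before extending it by uniform continuity (here I use that \(\cQ\), being a local recoding of the FLC pattern \(\cP\), is itself FLC, so \(\Omega_{\cQ}\) is a well-defined compact, hence complete, space). On the orbit there is only one candidate compatible with the required properties: a factor map with \(f(\cP) = \cQ\) commutes with translation, so it must satisfy \(f(\cP + t) = \cQ + t\). First I would check this is genuinely a function of the pattern \(\cP + t\) and not just of \(t\): if \(\cP + t = \cP + t'\) then \(s \coloneqq t - t'\) is a period of \(\cP\), so \(\cP[x,c] = \cP[x+s,c]\) for all \(x\), and the derivation property forces \(\cQ[x,0] = \cQ[x+s,0]\) for all \(x\), i.e.\ \(s\) is also a period of \(\cQ\) and \(\cQ + t = \cQ + t'\). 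Thus \(f \colon O_{\cP} \to O_{\cQ}\), \(\cP + t \mapsto \cQ + t\), is well defined and \(E\)-equivariant by construction.

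The heart of the argument is uniform continuity of \(f|_{O_{\cP}}\). Using the identity \((\cP + t)[x,r] = \cP[x-t,r]\), the derivation hypothesis becomes the statement that whenever two orbit elements \(\cU, \cV\) satisfy \(\cU[x,c] = \cV[y,c]\) one has \((f\cU)[x,0] = (f\cV)[y,0]\). Now if two orbit members agree on a large ball \(B(0,R)\) up to a small translation \(\epsilon\), then applying this pointwise, together with equivariance, shows that their images agree on the smaller ball \(B(0,R-c)\) up to the same translation \(\epsilon\); quantifying this gives a modulus of continuity depending only on \(c\). Since \(O_{\cQ}\) is dense in the complete space \(\Omega_{\cQ}\), the uniformly continuous map \(f|_{O_{\cP}}\) extends uniquely to a continuous \(f \colon \Omega_{\cP} \to \Omega_{\cQ}\); the extension inherits \(E\)-equivariance because \(f \circ (\,\cdot + t) = (\,\cdot + t) \circ f\) holds on the dense orbit and both sides are continuous. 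Hence \(f\) is a factor map with \(f(\cP) = \cQ\).

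It remains to verify the three asserted consequences. For surjectivity, \(\Omega_{\cP}\) is compact, so \(f(\Omega_{\cP})\) is a closed subset of \(\Omega_{\cQ}\) containing the dense orbit \(O_{\cQ}\), whence \(f(\Omega_{\cP}) = \Omega_{\cQ}\). The displayed local property—if \(\cU[x,c] = \cV[y,c]\) then \((f\cU)[x,0] = (f\cV)[y,0]\) for all \(\cU,\cV \in \Omega_{\cP}\)—holds on \(O_{\cP}\) as above, and I would upgrade it to the whole hull by approximating \(\cU,\cV\) by orbit elements and noting that both the hypothesis and the conclusion are statements about finite local patches that are preserved under the local topology. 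Uniqueness of the extension is then automatic: any continuous \(E\)-equivariant \(g\) with \(g(\cP) = \cQ\) agrees with \(f\) on the dense orbit and hence everywhere, as \(\Omega_{\cQ}\) is Hausdorff. Finally, the ``in particular'' statement is immediate, since each \(\cR \in \Omega_{\cQ}\) equals \(f(\cU)\) for some \(\cU \in \Omega_{\cP}\) and the local property exhibits \(\cU \LD \cR\) with derivation radius \(c\).

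The main obstacle I anticipate is the careful bookkeeping in the uniform continuity step: the local uniformity measures closeness by agreement on a large ball only after a small translation, so one must track how recentering interacts with the radius \(c\) and confirm that the allowed translation \(\epsilon\) is genuinely preserved—rather than merely controlled up to an error growing with \(R\)—when passing from \(\cP\) to \(\cQ\). The limiting argument for the local property over the whole hull is a close second concern, as it requires that the relevant patch equalities remain closed conditions compatible with the chosen metric.
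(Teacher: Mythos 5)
The paper itself gives no proof of this lemma (it defers to \cite{Wal25}), so your attempt is judged as a self-contained argument. Your overall strategy is sound and standard: define \(f\) on the dense orbit by \(\cP + t \mapsto \cQ + t\), check well-definedness by showing periods of \(\cP\) are periods of \(\cQ\), prove uniform continuity with the quantitative loss of radius \(c\) (exact agreement on \(B(\mathbf{0},R)\) for orbit elements yields exact agreement of images on \(B(\mathbf{0},R-c)\)), extend by completeness of \(\Omega_{\cQ}\), and obtain surjectivity from compactness and uniqueness from density. All of those steps are carried out correctly.

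The genuine gap is the step you yourself flag as a concern: upgrading the displayed local property from the orbit to all of \(\Omega_{\cP}\). Your justification -- that hypothesis and conclusion are ``statements about finite local patches that are preserved under the local topology'' -- is false for the conclusion. Convergence in the local topology only requires agreement on large balls \emph{up to small translations}, and \((f\cU)[x,0] = (f\cV)[y,0]\) is an exact, radius-zero condition at an exact position: if \(f\cU_n \to f\cU\) and every \((f\cU_n)[x,0] = \emptyset\), the limit \(f\cU\) can still have a point at exactly \(x\) (the approximants may have points at \(x - v_n\) with \(v_n \to 0\), \(v_n \neq 0\); think of \(\Z + \tfrac{1}{n} \to \Z\)). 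So the limiting argument, as justified, does not go through. It can be repaired along your lines as follows. Choose orbit approximants \(\cU_n = \cP + t_n\) agreeing with \(\cU\) \emph{exactly} on \(B(\mathbf{0},R_n)\), \(R_n \to \infty\) (possible: absorb the small translations appearing in the metric into \(t_n\), which stays in the orbit). Then for \(n \leq m\), \(\cU_n\) and \(\cU_m\) agree exactly on \(B(\mathbf{0},R_n)\), so by your orbit-level estimate \(f\cU_n\) and \(f\cU_m\) agree exactly on \(B(\mathbf{0},R_n - c)\). The images therefore form an exactly consistent sequence whose limit is the pattern agreeing with each \(f\cU_n\) exactly on \(B(\mathbf{0},R_n - c)\); by continuity of \(f\) and uniqueness of limits this pattern is \(f\cU\). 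Hence \((f\cU)[x,0] = (f\cU_n)[x,0]\) for large \(n\), and likewise for \(\cV\), after which the orbit-level property gives the claim. (Alternatively, one avoids the issue entirely by defining \(f\) pointwise on all of \(\Omega_{\cP}\): every \(c\)-patch \(\cU[z,c]\) of \(\cU \in \Omega_{\cP}\) occurs as some \(\cP[w,c]\), so one may set \((f\cU)[z,0] \coloneqq \cQ[w,0]\), well defined by the derivation property; this makes the displayed property true by construction and is essentially the route of \cite{Wal25}.)
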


More generally, for spaces of patterns \(\Omega\) and \(\Omega'\), we say that a map \(f \colon \Omega \to \Omega'\) is a \textbf{local derivation map} if it satisfies the conclusion of the above, that is, there exists some \(c \geq 0\) for which, for all \(\cU\), \(\cV \in \Omega\), if \(\cU[x,c] = \cV[y,c]\) then \(f(\cU)[x,0] = (f\cV)[y,0]\). In particular, for each \(\cU \in \Omega\), we have \(\cU \LD f(\cU)\). This is the geometric analogue of a `sliding block code' (between two orbit closures) from Symbolic Dynamics.

A local derivation map \(f \colon \Omega_{\cP} \to \Omega_{\cQ}\), as induced by \(\cP \LD \cQ\) from Lemma \ref{lem: LD over hull}, is injective (and thus is a homeomorphism) if and only if \(\cP \MLD \cQ\), in which case the inverse \(f^{-1}\) is the derivation map induced by \(\cQ \LD \cP\). For further details, see, for instance, \cite{Wal25}.

\subsection{Substitutional patterns}\label{sec:substitutionalDef}
We now use the notions of LD and LI to give an intrinsic definition of when a pattern is `substitutional' and derive some simple consequences. The idea of this definition is simple: when dealing with sufficiently nice patterns generated by an `inflate, replace' substitution rule, for any such pattern one always has a predecessor pattern, also generated by the substitution, for which applying substitution recovers the original pattern (in other words, the inflated `super-pattern' produces the original when we apply the replacement rule to it). Conversely, such a predecessor pattern determines a substitution rule generating these patterns. We make the existence of such a predecessor the defining property. In the below, the `inflate' is given by the map \(L\), and the `replace' (on the inflated structure) is given by the local derivation:

\begin{definition}\label{def:L-sub}
Let \(L \colon E \to E\) be a linear isomorphism and \(\cP\) a pattern in \(E\). We say that \(\cP\) is \textbf{\(L\)-substitutional} (or \textbf{\(L\)-sub}, for short) if there is another pattern \(\cP'\), called a \textbf{predecessor}, for which \(L\cP' \LD \cP\) and \(\cP' \LI \cP\). That is:
\begin{enumerate}
	\item \(\cP\) is locally derivable from \(L \cP'\) and
	\item \(\cP'\) is locally indistinguishable from \(\cP\).
\end{enumerate}
\end{definition}

We will refer to any linear automorphism \(L \colon E \to E\) on the ambient space of a pattern as an `inflation', whether or not it is expansive. However, in most places in this paper (except when considering symmetries), we have in mind that \(L\) is expansive:

\begin{definition}
A linear map \(L \colon E \to E\) is called \textbf{expansive} if every eigenvalue of \(L\) is strictly greater than \(1\) in modulus. Similarly, a linear map is called \textbf{contractive} if all its eigenvalues are strictly less than \(1\) in modulus.
\end{definition}

One may show that being expansive is equivalent to the property that, with respect to some appropriate norm on \(E\), there is some \(\lambda > 1\) with \(\|L(x)\| \geq \lambda\|x\|\) for all \(x \in E\). Given a fixed, expansive \(L\), we always assume such a \(\lambda > 1\) and norm are taken (the norm, otherwise, does not affect properties of relevance in this paper, so this may be done freely).

We say that \(\sub \colon \Omega \to \Omega\) is a \textbf{substitution map} on a space of patterns \(\Omega\) if \(\sub = S \circ L\), where \(L \colon \Omega \to L\Omega\) is the expansion map \(\cU \mapsto L\cU\) and \(S \colon L\Omega \to \Omega\) is a local derivation map. For an \(L\)-sub pattern \(\cP\), with predecessor \(\cP'\), by Lemma \ref{lem: LD over hull} we always have an induced \textbf{subdivision map} \(S \colon \Omega_{L\cP'} \to \Omega_{\cP}\), which is a surjective local derivation map. Often, we at least have \(\cP' \LIs \cP\)  (see Definition \ref{def:strongly L-sub} below, Theorem \ref{thm:recognisability}, but also generally in the repetitive case), so that \(\Omega_{L\cP'} = \Omega_{L\cP}\) and thus subdivision defines a map \(S \colon \Omega_{L\cP} \to \Omega_{\cP}\) i.e., we may apply subdivision to \emph{any} inflated pattern from the hull of \(\cP\). We may then define the (necessarily surjective) substitution map as \(\sub \coloneqq S \circ L \colon \Omega_{\cP} \to \Omega_{\cP}\) i.e., the map which `inflates, then subdivides'. If all elements of \(\Omega\) are locally isomorphic (i.e., in the repetitive case), clearly each \(\cP \in \Omega\) is \(L\)-sub, with predecessor any element of \(\sub^{-1}(\cP)\).

If one requires the predecessor to be literally equal to the original in Definition \ref{def:L-sub} we obtain the following notions, see \cite[Definition 5.15]{AOI} (and \cite{Sol98}, where the notion of a pseudo self-affine pattern is considered).

\begin{definition}
We call \(\cP\) \textbf{pseudo self-affine} (or say that \(\cP\) has a \textbf{local scaling property}), with respect to a linear map \(L \colon E \to E\), if \(L\cP \LD \cP\). We say that \(\cP\) has a \textbf{local inflation deflation symmetry} (or is \textbf{LIDS}, for short), with respect to \(L\), if \(L\cP \MLD \cP\).
\end{definition}

If, instead, we drop the local derivation, we define the following notion:

\begin{definition}
We say that a linear map \(L \colon E \to E\) is a \textbf{symmetry} of \(\cP\) if \(L\cP \LI \cP\).
\end{definition}

When we have such a symmetry, we obtain a continuous map \(L \colon \Omega_{\cP} \to \Omega_{L\cP} \subseteq \Omega_{\cP}\), given by \(\cU \mapsto L\cU\). For repetitive patterns, \(L\cP \LI \cP\) automatically implies that \(L\cP \LIs \cP\) (see Lemma \ref{lem:LI => properties}), and thus \(L\) defines a homeomorphism \(L \colon \Omega_{\cP} \to \Omega_{\cP}\), with inverse \(L^{-1}\), which is also a symmetry. Thus, \(L\) is in fact a `symmetry' of the LI-class of \(\cP\) (the hull of \(\cP\), in the repetitive case), rather than of just \(\cP\) itself, and the set of symmetries of \(\cP\) (or \(\Omega_{\cP}\)) is a group. Note, however, that a symmetry need not fix \(\cP\) itself, nor indeed any element of the hull, see Example \ref{ex:Penrose} below.

\begin{example}
Trivially, any pattern \(\cP\) is \(L\)-sub for \(L = \Id\). The notion is more relevant when \(L\) is expansive. Of course, \(L = \Id\) is also a symmetry of any pattern \(\cP\).
\end{example}

\begin{example}\label{ex:Penrose} We include here the familiar example of the Penrose tiling for comparison, and to demonstrate how the properties above relate to the set-up of a Penrose tiling. For definitions relating to this example, see \cite[Chapters 5-6]{AOI} and Section \ref{sec:L-sub versus classical}. 

Let \(\cP\) be a Penrose rhomb tiling, and \(L\) be inflation by the golden ratio. There is a {\it tile substitution}, using the inflation \(L\), which acts on tilings: for any Penrose tiling \(\cQ \in \Omega_{\cP}\), we have its substitution \(\sub(\cQ) \in \Omega_{\cP}\), given by first inflating \(\cQ\) by \(L\) then applying a rule which replaces inflated tiles by certain patches (for details, see \cite[Section 6.2]{AOI}). Any Penrose tiling has a `predecessor', so there is another Penrose tiling \(\cP' \in \Omega_{\cP}\) with \(\sub(\cP') = \cP\). In particular, \(L\cP' \LD \cP\). In fact, by recognisability (see below) and aperiodicity, \(L\cP' \MLD \cP\). That is, the supertiling \(L\cP'\) may be locally derived from \(\cP\) \cite{Sol98}.

A linear map \(L\) is a symmetry of \(\cP\) if \(L \in D_{10}\), although no Penrose tiling is literally fixed under rotation by \(2\pi/10\).
\end{example}

\begin{example}\label{ex:sub Delone}
Let \(\cps'\) be a Delone set in \(E\) and \(C \subset E\) a finite set. Suppose that \(\cps \coloneqq L\cps' + C = \{Lx+c \mid x \in \cps, c \in C\}\) is a Delone set and satisfies \(\cps' \LI \cps\). Clearly \(L\cps' \LD \cps\) (we just replace each point \(Lx \in L\cps'\) with the cluster \(Lx+C\), which is a local rule), so \(\cps\) is \(L\)-sub. When \(\cps \LIs \cps'\) (for instance, when \(\cps\) is repetitive), we will see below that we may define a substitution map \(\sub \colon \Omega \to \Omega\) on the hull \(\Omega = \Omega_{\cps}\). This is simply the substitution rule \(\sub(X) = LX + C\), for any Delone set \(X \in \Omega\). One may also consider a similar setup for Delone multisets, where the replacement clusters \(C_i\) (which are coloured) can depend on the colour(s) \(i\) present at each \(Lx \in \mathrm{supp}(L\cps')\).

The above is similar to the standard notion of a substitution Delone (multi)set \cite{LW03}, but is also essentially universal, as discussed in Section \ref{sec:L-sub versus classical}, up to enhancing by locally derivable labels.
\end{example}

\begin{example}\label{ex:periodic L-sub}
Let \(\cP\) be (a translate of) the periodic unit square lattice in \(E = \R^d\) and \(L(x) = \lambda x\) for \(\lambda \in \N\). Then \(\cP\) is \(L\)-sub, where we may take \(\cP' = \cP\) (or any translate of \(\cP\)). Indeed, trivially, \(\cP \LIs \cP'\). And, since \(\lambda \in \N\), it is not hard to see that \(L\cP' \LD \cP\); the local derivation is given by replacing each inflated square by its covering of \(\lambda^2\) unit squares (and perhaps applying a fixed translation), which is clearly a local derivation. So \(\cP\) is \(L\)-sub. However, \(L\cP'\) is not LD from \(\cP\) for \(\lambda \neq 1\). Indeed, the translational symmetries of \(L\cP\) are a strict subset of those of \(\cP\) in this case and the translational symmetries of two MLD patterns must be equal. The symmetries of \(\cP\) are given by the dihedral group \(D_4\).
\end{example}

It will be crucial in many of our proofs for the LD of Definition \ref{def:L-sub} to be an MLD, and that \(\cP\) and \(\cP'\) are in fact locally isomorphic:

\begin{definition}\label{def:strongly L-sub}
We say that \(\cP\) is \textbf{strongly \(L\)-substitutional} (or \textbf{\(L\)-Sub}, for short) if there exists some \(\cP'\) for which \(L(\cP') \MLD \cP \LIs \cP'\).
\end{definition}

Note that we use an upper case in Sub as shorthand for being \emph{strongly} \(L\)-substitutional. Recall that \(\cP\) is called \textbf{aperiodic} if its hull \(\Omega_{\cP}\) consists of only non-periodic patterns. In rather general circumstances, patterns which are \(L\)-sub are automatically \(L\)-Sub, when aperiodic:

\begin{theorem}[\cite{Wal25}]\label{thm:recognisability}
Let \(\cP\) be an \(L\)-sub FLC pattern in \(E\), where \(L \colon E \to E\) is expansive, with predecessor \(\cP'\). Then \(\cP \LIs \cP'\), so we have a substitution map \(\sub \colon \Omega \to \Omega\). Moreover, \(\cP\) is strongly \(L\)-Sub, that is \(L\cP' \MLD \cP\) or equivalently \(\sub \colon \Omega_{\cP} \to \Omega_{\cP}\) is injective, if and only if \(\cP\) is aperiodic.
\end{theorem}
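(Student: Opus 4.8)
The plan is to decompose the statement into three parts: (A) that \(\cP \LIs \cP'\) holds unconditionally, which makes the substitution map well defined; (B) the elementary equivalence between being strongly \(L\)-Sub and \(\sub\) being injective; and (C) the equivalence of these with aperiodicity, which is the substantive recognisability content. For the set-up, since \(L\cP' \LD \cP\), Lemma \ref{lem: LD over hull} gives a surjective local derivation map \(S \colon \Omega_{L\cP'} \to \Omega_{\cP}\) with \(S(L\cP') = \cP\); composing with the inflation homeomorphism \(L\) yields a surjective \(\sub = S \circ L \colon \Omega_{\cP'} \to \Omega_{\cP}\) with \(\sub(\cP') = \cP\). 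Granting (A), we have \(\Omega_{\cP'} = \Omega_{\cP}\), hence \(\sub \colon \Omega_{\cP} \to \Omega_{\cP}\). By the discussion following Lemma \ref{lem: LD over hull}, \(S\) is injective if and only if \(L\cP' \MLD \cP\); as \(L\) is a homeomorphism this is equivalent to \(\sub\) being injective, and, (A) already supplying \(\cP \LIs \cP'\), this is exactly the condition that \(\cP\) is strongly \(L\)-Sub (Definition \ref{def:strongly L-sub}). Thus (B) is immediate and the whole theorem reduces to (A) together with ``\(\sub\) injective \(\iff \cP\) aperiodic''.

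For (A), the repetitive case is immediate from Lemma \ref{lem:LI => properties}, so the work is the general case. I would first use surjectivity of \(\sub\) to build a predecessor chain \(\cP = \cP^{(0)}, \cP^{(1)} = \cP', \cP^{(2)}, \dots\) with \(\sub(\cP^{(n+1)}) = \cP^{(n)}\); by Lemma \ref{lem: LD over hull} each satisfies \(L\cP^{(n+1)} \LD \cP^{(n)}\) with the same radius \(c\), and \(\cP^{(n+1)} \LI \cP^{(n)}\), so the hulls \(\Omega_{\cP^{(n)}}\) are nested decreasing. Using expansivity (\(\|L^{-1}\| \leq \lambda^{-1}\)) and composing the deflations, an \(r\)-patch of \(\cP\) is determined by a patch of \(\cP^{(n)}\) of radius at most \(R := r + c/(\lambda - 1)\), \emph{uniformly} in \(n\) (the controlled radius obeys \(s \mapsto (s+c)/\lambda\), which converges to the fixed point \(c/(\lambda-1)\)); here Remark \ref{rem:LD on supports or punctures} handles the minor adjustment of patch centres. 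Since there are only finitely many \(R\)-patches (FLC) and their occurrence sets decrease along the chain, they stabilise for \(n \geq N\); combined with \(\cP^{(N+1)} \LI \cP^{(N)}\) this forces the \(R\)-patches of \(\cP^{(N)}\) to occur also in \(\cP^{(N+1)}\). Propagating this back up the chain by applying \(\sub\) repeatedly — each step enlarging the controlled radius via \(s \mapsto \lambda s - c\), which is increasing above the fixed point \(c/(\lambda-1)\) — transports the conclusion down to level \(0\), giving that every \(r\)-patch of \(\cP\) occurs in \(\cP'\). As \(r\) is arbitrary, \(\cP \LI \cP'\), whence \(\cP \LIs \cP'\).

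For the easy half of (C), note that local derivation maps are translation equivariant and \(L(\cU + s) = L\cU + Ls\), so \(\sub(\cU + s) = \sub(\cU) + Ls\). Suppose \(\cP\) is not aperiodic, so some \(\cU \in \Omega_{\cP}\) has a period \(0 \neq t\). If \(\sub\) were injective, choose preimages \(\cU_n\) with \(\sub(\cU_n) = \cU_{n-1}\) and \(\cU_0 = \cU\); whenever \(\tau\) is a period of \(\cU_{n-1}\) we have \(\sub(\cU_n + L^{-1}\tau) = \sub(\cU_n) + \tau = \cU_{n-1} = \sub(\cU_n)\), so injectivity forces \(L^{-1}\tau\) to be a period of \(\cU_n\). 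Iterating, \(L^{-n}t\) is a nonzero period of \(\cU_n\) of length at most \(\lambda^{-n}|t| \to 0\). But every element of the FLC hull is uniformly discrete with a common constant \(\rho > 0\) (distinct points lie at distance \(\geq \rho\)), so no nonzero period can be shorter than \(\rho\); this contradiction shows \(\sub\) is not injective.

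The remaining implication — that \(\sub\) injective follows from aperiodicity, equivalently that \(L\cP'\) is locally reconstructible from \(\cP\) — is the recognisability statement and I expect it to be the main obstacle. Arguing contrapositively, suppose \(\sub(\cU) = \sub(\cV)\) with \(\cU \neq \cV\). The inflated patterns \(L\cU\) and \(L\cV\) are carried to the same image by the radius-\(c\) rule \(S\), so their discrepancy must be invisible to \(S\); the crux is to show that under an expansive \(L\) such a discrepancy cannot persist across all scales without producing an exact translational period. The natural route is to pass to the diagonal translation orbit of \((\cU,\cV)\) in \(\Omega_{\cP} \times \Omega_{\cP}\) and use compactness of the hull to extract a limit pair whose difference is a genuine period of an element of \(\Omega_{\cP}\), contradicting aperiodicity. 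This propagation-and-compactness argument is precisely the recognisability phenomenon, due in the primitive stone-inflation case to \cite{Sol98} (after Mossé) and in the general FLC setting to \cite{Wal25}; it is the technical heart of the theorem, the preceding steps being comparatively routine given the framework of this section.
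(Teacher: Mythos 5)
First, be aware that the paper contains no proof of Theorem \ref{thm:recognisability} to compare against: the result is imported wholesale from \cite{Wal25}, with the remark that in the repetitive case it can be assembled from \cite{Sol98,Sol07,PriSol01}. Judged on its own terms, your proposal correctly isolates the routine content --- part (B) is indeed immediate from the discussion following Lemma \ref{lem: LD over hull}, and your ``injective \(\Rightarrow\) aperiodic'' argument (pulling a nonzero period \(t\) back through preimages to periods \(L^{-n}t\), which eventually violate the uniform discreteness constant common to all members of an FLC hull) is sound --- but it does not prove the theorem. The converse direction, that aperiodicity forces injectivity of \(\sub\), is precisely the recognisability statement, and your treatment of it is a heuristic followed by a citation to \cite{Sol98} and \cite{Wal25}: you never explain how a disagreement \(\cU \neq \cV\) with \(\sub(\cU) = \sub(\cV)\), which by construction is invisible to the radius-\(c\) rule \(S\) at every scale, is converted by the diagonal-orbit compactness argument into an exact translational period of some element of \(\Omega_{\cP}\). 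That conversion is the entire technical content of the ``moreover'' clause, so the proposal is incomplete exactly where the theorem is nontrivial; it reduces the statement to the literature rather than proving it.

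Second, your part (A) has a genuine but repairable gap. You build the chain using surjectivity of \(\sub \colon \Omega_{\cP'} \to \Omega_{\cP}\) and then assert \(\cP^{(n+1)} \LI \cP^{(n)}\). But surjectivity only places the preimage \(\cP^{(n+1)}\) in \(\Omega_{\cP'}\), which gives \(\cP^{(n+1)} \LI \cP'\); for \(n \geq 2\) this does not yield \(\cP^{(n+1)} \LI \cP^{(n)}\), and without that the sets of \(R\)-patches along the chain are not nested, so your stabilisation step has nothing to stabilise. The repair is to reapply Lemma \ref{lem: LD over hull} at each level: having arranged \(L\cP^{(n)} \LD \cP^{(n-1)}\) with radius \(c\) and \(\cP^{(n)} \in \Omega_{\cP^{(n-1)}}\), that lemma gives a surjective factor map \(\Omega_{L\cP^{(n)}} \to \Omega_{\cP^{(n-1)}}\), which coincides with the restriction of \(S\) (both are continuous, translation-equivariant maps agreeing at \(L\cP^{(n)}\), whose translation orbit is dense); surjectivity of this restricted map lets you choose \(\cP^{(n+1)}\) inside \(\Omega_{\cP^{(n)}}\), restoring both the nesting and the compatibility of the local rules used in your back-propagation step. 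With this correction (and the recentring supplied by Remark \ref{rem:LD on supports or punctures}), your telescoping-radius argument for \(\cP \LIs \cP'\) goes through; an alternative that avoids the chain altogether is to compare the patch-counting functions of \(\cP\) and \(\cP'\): the inequalities coming from \(\cP' \LI \cP\) and from one application of the substitution squeeze them into equality above a fixed radius, and equality of finite nested patch sets gives \(\cP \LI \cP'\) directly.
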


The above is an extension of the classical result from \cite{Sol98}, on recognisability or unique decomposition in the geometric setting (see also \cite{Mos92,Mos96,BSTY19,BPRS25} for recognisability results in the symbolic setting). In fact, for the repetitive case at hand, it may be derived from a combination of the results from \cite{Sol98,Sol07,PriSol01} (along with showing that a power of substitution must have a fixed point). In short, aperiodicity is equivalent to there being a \emph{unique} inflated pattern \(L\cU' \in L\Omega\) subdividing to a given \(\cU \in \Omega\), in which case \(L\cU'\) can be derived from \(\cU\) using a local rule.

The upshot from all of the above is, that in all reasonable cases of FLC \(L\)-sub patterns (namely, when \(L\) is expansive, or \(\cP\) is repetitive), we do have a subdivision map \(S \colon \Omega_{L\cP} \to \Omega_{\cP}\) and substitution map \(\sub \colon \Omega_{\cP} \to \Omega_{\cP}\). Moreover, if we additionally assume aperiodicity and that \(L\) is expansive, we have that \(L\)-sub and \(L\)-Sub are equivalent. Similarly, in this case, being pseudo self-affine is equivalent to LIDS, with respect to \(L\). However, we preserve the distinctions, as the definition of \(L\)-sub seems the most natural extension of the idea of a tiling (or Delone set) being substitutional in the classical sense. Compare to Examples \ref{ex:sub Delone} and \ref{ex:periodic L-sub}; surely, we would like to consider translates of a periodic square lattice to be `substitutional', even though they are not strongly so. Moreover, the weaker definition arises also in considering non-expansive linear maps \(L\). Hence, the relationship between these ideas is best understood through the original definition for \(L\)-sub, even though in the proofs we will rely on the implied stronger property \(L\)-Sub. 

\begin{lemma}\label{lem:L-sub stable under LIs}
If \(\cP\) is \(L\)-sub, \(L\)-Sub, or has \(L\) as a symmetry, and \(\cP \LIs \cQ\) then \(\cQ\) has the same property.
\end{lemma}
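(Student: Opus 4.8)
The plan is to exploit that $\cP \LIs \cQ$ says precisely $\Omega_\cP = \Omega_\cQ =: \Omega$, so that each of the three properties, being really a property of the hull $\Omega$ together with its image $L\Omega$, can be transported from $\cP$ to $\cQ$. I would set up two facts first. The expansion $\cU \mapsto L\cU$ is a homeomorphism $\Omega_\cU \to \Omega_{L\cU} = L\Omega_\cU$ that preserves $\LI$: since $L$ is a linear automorphism, a patch $P$ occurs in a pattern iff $LP$ occurs in its $L$-image, so $L$ induces a bijection of $\LI$-classes and carries hulls to hulls. And Lemma \ref{lem: LD over hull} promotes a local derivation to a surjective factor map of hulls with a fixed derivation radius. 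With these in hand the symmetry case is purely formal: $L$ being a symmetry of $\cP$ means $\Omega_{L\cP} \subseteq \Omega_\cP$, and applying the homeomorphism to $\Omega_\cP = \Omega_\cQ$ gives $\Omega_{L\cQ} = L\Omega_\cQ = L\Omega_\cP = \Omega_{L\cP} \subseteq \Omega_\cP = \Omega_\cQ$, i.e.\ $L\cQ \LI \cQ$.

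For the $L$-sub case I would begin from a predecessor $\cP'$ (so $L\cP' \LD \cP$ and $\cP' \LI \cP$) and manufacture a predecessor for $\cQ$. By Lemma \ref{lem: LD over hull} the derivation $L\cP' \LD \cP$ extends to a surjective factor map $f \colon \Omega_{L\cP'} \to \Omega_\cP$ for which every element of the target is locally derivable, with the fixed radius, from one of its preimages. Since $\cQ \in \Omega_\cP$, I pull it back to some $\cV \in \Omega_{L\cP'}$ with $\cV \LD \cQ$, and write $\cV = L\cQ'$ using $\Omega_{L\cP'} = L\Omega_{\cP'}$. Then $\cQ' \in \Omega_{\cP'} \subseteq \Omega_\cP = \Omega_\cQ$ (using $\cP' \LI \cP$) yields $\cQ' \LI \cQ$, so $\cQ'$ is the required predecessor and $\cQ$ is $L$-sub.

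The $L$-Sub case runs along the same lines, but the map $g \colon \Omega_{L\cP'} = L\Omega \to \Omega$ induced by $L\cP' \MLD \cP$ (with $\cP \LIs \cP'$, so $\Omega_{\cP'} = \Omega$) is now a homeomorphism that is a local derivation in both directions; setting $\cQ' := L^{-1}g^{-1}(\cQ) \in \Omega$ and reading $g$ and $g^{-1}$ along $L\cQ'$ and $\cQ$ gives $L\cQ' \LD \cQ$ and $\cQ \LD L\cQ'$, that is $L\cQ' \MLD \cQ$. I expect the main obstacle to be the remaining requirement $\cQ \LIs \cQ'$ (full local isomorphism, not merely $\cQ' \LI \cQ$), since this lemma precedes the standing repetitivity assumption, so $\cQ' \in \Omega$ alone does not force $\Omega_{\cQ'} = \Omega$. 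To clear it I would use that a factor map sends orbit closures to orbit closures: by $E$-equivariance and compactness of the hulls, $g(\Omega_{L\cQ'}) = \Omega_{g(L\cQ')} = \Omega_\cQ = \Omega = g(L\Omega)$, and injectivity of $g$ then forces $\Omega_{L\cQ'} = L\Omega$, whence $\Omega_{\cQ'} = \Omega = \Omega_\cQ$. (When $\cQ$ is repetitive this last step is immediate from Lemma \ref{lem:LI => properties}, as $\cQ' \in \Omega_\cQ$ already gives $\cQ' \LIs \cQ$.) This exhibits $\cQ'$ as a strong predecessor and completes all three cases.
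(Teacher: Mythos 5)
Your proof is correct. The \(L\)-sub and symmetry cases run essentially the same way as the paper's: pull \(\cQ\) back through the surjective derivation map of hulls to obtain a predecessor \(L\cQ'\), then chain inclusions \(\Omega_{\cP'} \subseteq \Omega_{\cP} = \Omega_{\cQ}\) to get \(\cQ' \LI \cQ\). Where you genuinely diverge is at the step you rightly identify as the crux of the \(L\)-Sub case: upgrading \(\cQ' \LI \cQ\) to \(\cQ' \LIs \cQ\) in the absence of repetitivity. The paper does this with a patch-level observation: a local derivation map \(f\) with derivation radius \(c\) preserves local indistinguishability (\(\cU \LI \cV\) implies \(f\cU \LI f\cV\), since each \(r\)-patch of \(f\cU\) is determined by an \((r+c)\)-patch of \(\cU\), all of which recur in \(\cV\)); applying this to \(S^{-1}\) and \(\cP \LIs \cQ\) gives \(L\cP' \LIs L\cQ'\), hence \(\cP' \LIs \cQ'\), and the chain \(\cQ \LIs \cP \LIs \cP' \LIs \cQ'\) closes the argument. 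You instead argue dynamically: \(g\) is an equivariant homeomorphism, so by compactness it carries the orbit closure \(\Omega_{L\cQ'}\) onto \(\Omega_{\cQ} = \Omega = g(L\Omega)\), and injectivity forces \(\Omega_{L\cQ'} = L\Omega\), i.e.\ \(\Omega_{\cQ'} = \Omega_{\cQ}\). Both are valid. The paper's route is more elementary and produces a reusable fact (derivation maps preserve \(\LI\) and \(\LIs\)) that it exploits again in the proof of Lemma \ref{lem:L-sub stable under MLD}; your route is cleaner at the level of spaces and would transfer to any setting with compact hulls, but it silently leans on two standard facts the paper never states explicitly, namely that hulls of FLC patterns are compact and that the hull coincides with the orbit closure, so in a write-up you should cite or prove these (they hold here because all patterns are assumed FLC).
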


\begin{proof}
If \(\cP \LIs \cQ\) then \(\Omega_{\cP} = \Omega_{\cQ}\). Subdivision \(S \colon \Omega_{L\cP'} \to \Omega_{\cP}\) is surjective, so there exists some \(L\cQ' \in \Omega_{L\cP'}\) with \(S(L\cQ') = \cQ\). Since \(S\) is a local derivation map, we have that \(L\cQ' \LD \cQ\). Moreover, since \(L\cQ' \in \Omega_{L\cP'}\), equivalently \(\cQ' \in \Omega_{\cP'}\), we have \(\cQ' \LI \cP' \LI \cP \LIs \cQ\), that is, \(\cQ' \LI \cQ\), as required. If \(\cP\) is also \(L\)-Sub, then subdivision defines an MLD \(L\cQ' \MLD \cQ\). It is not hard to see that, for patterns \(\cU \LI \cV\) and a local derivation map \(f\) (as in Lemma \ref{lem: LD over hull}), we also have \(f(\cU) \LI f(\cV)\). Indeed, the finite patches of \(f(\cU)\) are determined, under \(f\), by those of \(\cU\), and similarly for \(f(\cV)\), which contain at least all of those of \(f(\cU)\) since \(\cU \LI \cV\). We see that, if \(\cU \LIs \cV\), then \(f\cU \LIs f\cV\). In our case, starting with \(\cP \LIs \cQ\) and applying the local map \(S^{-1}\), we see that \(L\cP' \LIs L\cQ'\), equivalently, \(\cP' \LIs \cQ'\). Thus \(\cQ \LIs \cP \LIs \cP' \LIs \cQ'\) so \(\cQ \LIs \cQ'\), as required. The proof for \(L\) being a symmetry is similar (and simpler).
\end{proof}

In particular, for the hull of a repetitive pattern \(\cP\), all elements \(\cQ \in \Omega_{\cP}\) have \(\cQ \LIs \cP\) (Lemma \ref{lem:LI => properties}) and thus are also \(L\)-sub; of course, in this case, one may take any \(\cQ' \in \sub^{-1}(\cQ)\) as a predecessor for \(\cQ\). Iterating substitution, and choosing pre-images, we may thus always construct a hierarchy of patterns \(\cQ_i \in \Omega_{\cP}\)
\[
\cdots \LD L^2 \cQ_2 \LD L \cQ_1 \LD \cQ_0 = \cQ \LD L^{-1}\cQ_{-1} \LD L^{-2}\cQ_{-2} \LD \cdots
\]
where each LD is induced by subdivision \(S \colon \Omega_{L^i\cP} \to \Omega_{L^{i-1}\cP}\) in the obvious way, and we have \(\sub(\cQ_{i+1}) = \cQ_i\).

Being \(L\)-sub or $L$-Sub is also invariant under MLD equivalence. Note, however, that whilst being pseudo self-affine (or LIDS) are MLD-invariant, they are not LI invariant. On the other hand, symmetries are not MLD invariant.

\begin{lemma}\label{lem:L-sub stable under MLD}
If \(\cP\) is \(L\)-sub, \(L\)-Sub, LIDS or pseudo self-affine with respect to \(L\), and \(\cQ \MLD \cP\), then \(\cQ\) has the same property.
\end{lemma}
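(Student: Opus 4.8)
The plan is to reduce all four properties to two elementary closure facts about local derivability, and then, for the two substitutional properties, to transport the predecessor across the MLD equivalence. First I would record two auxiliary observations and state them explicitly before the case analysis. (i) \emph{Local derivability is transitive}: if $\cU \LD \cV$ with radius $c_1$ and $\cV \LD \cT$ with radius $c_2$, then by Remark~\ref{rem:LD on supports or punctures} we may boost the target radius of the first derivation from $0$ to $c_2$ at the cost of enlarging $c_1$ to $c_1 + c_2$, so that agreement of $\cU$ on a ball of radius $c_1 + c_2$ forces agreement of $\cV$ on a ball of radius $c_2$, hence agreement of $\cT$; thus $\cU \LD \cT$, and consequently $\MLD$ is an equivalence relation. (ii) \emph{Local derivability is stable under a global linear change of coordinates}: for a linear isomorphism $L$ one has $\cU \LD \cV$ if and only if $L\cU \LD L\cV$, and hence $\cU \MLD \cV$ if and only if $L\cU \MLD L\cV$. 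This is bi-Lipschitz bookkeeping: fixing $K \geq 1$ with $K^{-1}\Norm{z} \leq \Norm{Lz} \leq K\Norm{z}$, one checks that $L^{-1}(B(u,Kc)) \supseteq B(L^{-1}u, c)$, so that agreement of $L\cU$ to radius $Kc$ about $u$ yields agreement of $\cU$ to radius $c$ about $L^{-1}u$, which a derivation $\cU \LD \cV$ of radius $c$ converts into agreement of $\cV$ at $L^{-1}u$, i.e.\ of $L\cV$ at $u$; the converse is symmetric.

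With these in hand the pseudo self-affine and LIDS cases are immediate. Assume $\cQ \MLD \cP$. If $\cP$ is pseudo self-affine then $L\cP \LD \cP$; applying (ii) to $\cQ \MLD \cP$ gives $L\cQ \MLD L\cP$, and chaining $L\cQ \LD L\cP \LD \cP \LD \cQ$ through (i) yields $L\cQ \LD \cQ$. If instead $\cP$ is LIDS then $L\cP \MLD \cP$, and the same chain with every link an $\MLD$ gives $L\cQ \MLD \cQ$.

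For the $L$-sub and $L$-Sub cases the idea is to carry the predecessor across the equivalence. Let $\cP'$ be a predecessor of $\cP$. Since $\cP' \LI \cP$ we have $\cP' \in \Omega_{\cP'} \subseteq \Omega_{\cP}$, so the MLD homeomorphism $f \colon \Omega_{\cP} \to \Omega_{\cQ}$ induced by $\cP \LD \cQ$ (Lemma~\ref{lem: LD over hull}) is defined at $\cP'$; set $\cQ' \coloneqq f(\cP')$. Because $f$ and its inverse $h = f^{-1}$ are local derivation maps exchanging $\cP'$ and $\cQ'$, evaluating their defining property at $\cP'$ (resp.\ $\cQ'$) gives $\cP' \MLD \cQ'$, whence $L\cP' \MLD L\cQ'$ by (ii). Now $\cQ' = f(\cP') \in \Omega_{\cQ}$ gives $\cQ' \LI \cQ$, and the chain $L\cQ' \LD L\cP' \LD \cP \LD \cQ$ gives $L\cQ' \LD \cQ$ by (i); so $\cQ'$ is a predecessor of $\cQ$ and $\cQ$ is $L$-sub. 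For the strong version the same $\cQ'$ works: the hypothesis $L\cP' \MLD \cP$ upgrades the middle link of the chain to an $\MLD$, giving $L\cQ' \MLD \cQ$, while $\cP \LIs \cP'$ passes through the local derivation map $f$ — exactly as in the proof of Lemma~\ref{lem:L-sub stable under LIs} — to give $\cQ = f(\cP) \LIs f(\cP') = \cQ'$.

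I expect the only genuine work to be fact (ii); everything else is a formal chase through transitivity and the homeomorphism $f$. The radius bookkeeping in (ii) is routine, but it is the single place where the geometry of the ambient space, rather than the purely combinatorial structure of the relations $\LD$, $\LI$ and $\MLD$, actually enters the argument.
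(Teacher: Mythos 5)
Your proof is correct and follows essentially the same route as the paper's: transport the predecessor $\cP'$ to $\cQ' = f(\cP')$ via the hull derivation map of Lemma~\ref{lem: LD over hull}, chain $L\cQ' \MLD L\cP' \LD \cP \MLD \cQ$, and pass $\LIs$ through the derivation map (as in Lemma~\ref{lem:L-sub stable under LIs}) for the strong case, with LIDS/pseudo self-affine as the simpler instance $\cP' = \cP$. The only difference is presentational: you isolate and verify the two background facts (transitivity of $\LD$ and invariance of $\LD$/$\MLD$ under the linear map $L$) that the paper uses implicitly, which is a reasonable addition but not a different argument.
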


\begin{proof}
By Lemma \ref{lem: LD over hull}, we may extend the MLD to a derivation map of hulls \(g \colon \Omega_{\cP} \to \Omega_{\cQ}\), where \(\cU \MLD g\cU\) for all \(\cU \in \Omega_{\cP}\), where \(g\cP = \cQ\). Let \(\cQ' \coloneqq g(\cP') \in \Omega_{\cQ}\), so \(\cQ'\LI \cQ\) and \(\cP' \MLD \cQ'\). Since \(L\cQ' \MLD L\cP' \LD \cP \MLD \cQ\), we have \(L\cQ' \LD \cQ\), so \(\cQ\) is \(L\)-sub. If \(\cP\) is \(L\)-Sub, then we have \(\cP' \LIs \cP\). Using a similar argument to the last proof, we have \(\cQ' \LIs \cQ\) (as \(g\cP' = \cQ'\) and \(g\cP = \cQ\), so \(\cQ' \LIs \cP' \LIs \cP \LIs \cQ\)). Since \(L\cQ' \MLD L\cP' \MLD \cP \MLD \cQ\), we see that \(L\cQ' \MLD \cQ\) and \(\cQ\) is \(L\)-Sub, as required. The proof for LIDS/pseudo self-affine with respect to \(L\) are simpler, as in this case we just have that \(\cP' = \cP\) and thus we take \(\cQ' = \cQ\).
\end{proof}

\subsection{Euclidean cut and project sets}\label{sec:cutandprojectDef}
We now define the main object of study in this paper, cut and project sets. 

\begin{definition}
Let \(\tot\) be a \(k\)-dimensional vector space, called the {\bf total space}. We let \(\phy\) be a dimension \(d > 0\) subspace of \(\tot\), called the {\bf physical space}, and \(\intl\) be an \(n \coloneqq k-d > 0\) dimensional subspace of \(\tot\) that is complementary with \(\phy\), called the {\bf internal space} (so \(\phy \cap \intl = \{\mathbf{0}\}\) and \(\tot = \phy + \intl\)). Let \(\Gamma\) be a full rank lattice of \(\tot\).

Denote by \(\pi_\vee, \pi_<\) the projections to \(\phy\) and \(\intl\), respectively, with respect to the decomposition \(\tot=\intl + \phy\). For any subset \(A\subset \tot\), use the shorthands \(A_\vee \coloneqq \pi_\vee(A)\) and \(A_< \coloneqq \pi_<(A)\), and similarly \(a_\vee \coloneqq \pi_\vee(a)\) and \(a_< \coloneqq \pi_<(a)\) for any \(a \in \tot\). We always assume that \(\Gamma\) is irrationally positioned relative to the the physical and internal spaces, in the following sense:
\begin{itemize}
\item \(\pi_\vee|_{\Gamma} \colon \Gamma \to \phy\) is injective;
\item \(\pi_<|_{\Gamma} \colon \Gamma\to\intl\) is injective;
\item \(\Gamma_<\subset \intl\) is dense.
\end{itemize} 
The tuple \(\mathcal{S} = (\tot, \phy, \intl, \Gamma)\) is called a {\bf \(k\)-to-\(d\) cut and project scheme}. 

We define the {\bf star map} by \(e^\star \coloneqq \pi_<\circ \pi_\vee^{-1}(e)\) on elements \(e \in \Gamma_\vee\), which defines a (discontinuous) isomorphism \(\star \colon \Gamma_\vee \to \Gamma_<\).
\end{definition}

\begin{remark}
In the above definition, injectivity of $\pi_\vee|_\Gamma$ guarantees that the star map is well defined. Injectivity of \(\pi_<|_\Gamma\) is equivalent to non-periodicity of the resulting patterns.
\end{remark}

To produce a cut and project set from a scheme, we choose a window in its internal space:

\begin{definition} \label{def: cps}
A subset is \textbf{topologically regular} if it is the closure of its interior. A ({\bf multi-coloured}) {\bf window} of a cut and project scheme \(\mathcal{S}\) is a finite collection \(W = (W_i)_{i=1}^\ell\) of \textbf{window components} \(W_i \subset \intl\), which are compact and topologically regular.

We say that \(W\) is in {\bf non-singular position} if each \(\Gamma_< \cap \partial W_i = \emptyset\). In this case, we define the associated the {\bf cut and project set} as the Delone (multi)set \(\cps(\mathcal{S},W) = (\Lambda_i)_{i=1}^\ell\) where
\[
\Lambda_i \coloneqq \{e \in \Gamma_\vee \mid e^\star \in W_i\} = (\Gamma \cap (\phy + W_i))_\vee\ .
\]
More generally, we say that a Delone (multi)set \(\cps'\) is {\bf generated by the cut and project scheme} \(\mathcal{S}\) (and window \(W\)) if \(\cps' \in \Omega(\mathcal{S},W) \coloneqq \Omega_{\cps}\), where \(\cps = \cps(\mathcal{S},W+s)\) for some (equivalently, any) non-singular translation \(W+s\) of \(W\).

Given a window, we define the set \(\sing \subset \tot\) of \textbf{singular points} as those \(x \in \tot\) for which \(x_< \in \bigcup_{g \in \Gamma_<} \partial W + g\). We let \(\nonsing = \tot \setminus \sing\) denote the \textbf{non-singular points}. Note that, in particular, for \(x \in \intl\) we have that \(x \in \sing\) if and only if \(x\in \bigcup_{g \in \Gamma_<} \partial W + g\), otherwise \(x \in \nonsing\).
\end{definition}

Frequently one does not need the window to be multi-coloured. In this case, we take \(W\) itself as simply a compact, topologically regular subset of \(\intl\), and take \(\cps\) as an (uncoloured) Delone set. As we will see in Proposition \ref{prop:reducing to one colour}, reducing to the single-colour case often results in little loss of generality. However, allowing for multi-coloured windows can have practical advantage in applying our results directly to given examples, such as schemes that are naturally defined using partitions of the window but also those whose internal space is the product of a Euclidean space with a finite Abelian group, see Example \ref{ex:penrose cps} below.

Notice that, since we assume \(W\) is topologically regular, \(\nonsing\) is a countable union of open dense sets in \(\intl\) and is thus dense (similarly, \(\sing\) is also dense). This does not require that the window is \textbf{regular}, that is, the measure \(|\partial W| = 0\). In the regular case, \(|\sing_<| = 0\), so \(\nonsing_<\) is full measure in \(\intl\), and the same holds for the singular and non-singular points in \(\tot\).

\begin{remark}
Since we restrict to repetitive patterns, generating the hull using non-singular translates of the window, the \(W_i\) being topologically regular is not a particularly restrictive condition. Indeed, otherwise, replacing each \(W_i\) with \(\cl(\intr(W_i))\) does not change \(\Omega(\mathcal{S},W)\) but makes the window topologically regular. One advantage is that, for a fixed scheme \(\mathcal{S}\), there is now only one choice of window \(W\), up to translation, which produces \(\Omega(\mathcal{S},W)\), see Lemma \ref{lem: LI <=> equal windows}.
\end{remark}

\begin{remark} \label{rem: fixing ns cps}
All cut and project sets as defined above are repetitive (this follows by Lemma \ref{lem:acceptance domains indicate patches} by the same proof as \cite[Proposition 7.5]{AOI}) and aperiodic. So the elements of \(\Omega = \Omega(\mathcal{S},W)\) are all locally isomorphic. In particular, by Lemma \ref{lem:L-sub stable under LIs}, we have that all elements of \(\Omega\) are \(L\)-sub (or \(L\)-Sub) if and only if some particular element \(\cps \in \Omega\) is.
\end{remark}

\begin{example} \label{ex:penrose cps}
The vertices of the Penrose tiling are given as a cut and project set with an internal space \(\R^2 \times C_5\), where \(C_5 = \Z / 5\Z\), see \cite[Example 7.11]{AOI}. The window is a union of four pentagons, occupying the components of the internal space corresponding to non-trivial elements of \(C_5\). Considering each pentagon to have a different colour does not affect the MLD class. Lattice elements may be used to translate each window component to \(W_i - (\gamma_i)_<\), all in the trivial component \(\R^2 \times \{[0]\}\). The result will be equal to a Penrose pattern but where each of the sub-Delone sets of different colours are non-trivially translated relative to each other. However, doing this does not affect the MLD class. So (most of) our results apply directly to examples of this form: when the internal space is isomorphic to \(\R^n \times G\) for some finite Abelian group \(G\), a process like this can always be done.
\end{example}

\subsection{Constructability between windows}\label{sec:windowconstructDef}
Constructability between windows was introduced in \cite{BJS91} (also see \cite[Remark 7.6]{AOI}). We recall the definition here with some expository detail.

Informally, given a fixed cut and project scheme \(\mathcal{S}\), we na\"{i}vely say that a window \(W'\) is \emph{constructable} from \(W\) if we can give \(W'\) as a finite Boolean expression in terms of \(W\) and its translates under \(\Gamma_<\). However, this definition lacks important nuance, as the next example demonstrates:

Consider the windows \(W' = [0,1]^2\), the unit square in \(\intl = \R^2\), and \(W = W' \cup (W' + (1,0)) \cup (W' + (0,1))\). That is, \(W\) is an `L' shape. Suppose that \((1,0) \in \Gamma_<\). Then it is almost the case that \(W' = W \cap (W+(1,0))\), although not quite: this intersection contains an extra \(1\)-dimensional segment \([(1,1),(1,2)] \subset \partial W\). We could attempt to remove this segment using further Boolean operations on \(\Gamma_<\) translates of \(W\) and its complement, but it can well be the case that there is no element in \(\Gamma_<\) that would fully remove this segment without also removing interior points of \(W'\). However, once the windows are moved into non-singular position, no lattice points ever hit this segment, so the definition of constructability should be blind to such inconsequential features.

One route to a more precise definition is to consider equality between the windows only on the non-singular points. An alternative, neater way, and the one given in \cite{BJS91}, is to redefine the standard Boolean operations by always taking the closure of interior at each step, which removes awkward features such as in teh example above. Another advantage of this approach is that this makes acceptance domains (see Definition \ref{def:acceptance domains}) topologically regular and thus also windows. This is the definition we give underneath.

\begin{notation}\label{not:boolean operations}
For \(U \subset \intl\) we define \(\clin(U) \coloneqq \mathrm{cl}(\intr(U))\). We denote:
\[
U \wedge V \coloneqq \clin(U \cap V), \ U \vee V \coloneqq \clin(U \cup V), \ \neg U \coloneqq \clin(\intl \setminus U) .
\]
Let \(\mathscr{B}\) be the collection of topologically regular subsets of \(\intl\). It is not hard to check that \(\clin(U) \in \mathscr{B}\) for all subsets \(U\), that \(\clin = \Id\) on \(\mathscr{B}\) and that, for \(U\), \(V \in \mathscr{B}\), we have \(U \vee V = U \cup V\) and \(\neg U = \cl(\intl \setminus U)\). Moreover, one may check that this makes \(\mathscr{B} = (\mathscr{B},\wedge,\vee,\neg)\) a Boolean algebra.

Given \(W = (W_i)_{i=1}^\ell\), with each \(W_i \in \mathscr{B}\), let \(\mathscr{B}(W)\) be the sub-Boolean algebra of \(\mathscr{B}\) generated by \(\{W_i+g \mid i \in \{1,\ldots,\ell\}, g \in \Gamma_<\} \subset \mathscr{B}\). That is, \(\mathscr{B}(W)\) consists of all \(U \in \mathscr{B}\) which may be constructed as a finite expression in terms of the subsets \(W_i+g\), for \(g \in \Gamma_<\), and the operations \(\wedge\), \(\vee\) and \(\neg\).
\end{notation}

\begin{definition}\label{def: constructable}
Fix a cut and project scheme \(\mathcal{S}\) and two windows \(W\), \(W' \subset \intl\). We say that \(W'\) \textbf{is constructable from} \(W\) if \(\mathscr{B}(W') \subseteq \mathscr{B}(W)\). We call the windows \textbf{mutually constructable} if \(W'\) is constructable from \(W\) and vice versa, that is, \(\mathscr{B}(W) = \mathscr{B}(W')\).
\end{definition}

To unpack the above definition somewhat, for simplicity suppose that both \(W\) and \(W'\) are single-coloured (so we may just take \(W = W_1\), \(W' = W_1'\)). If \(W'\) is constructable from \(W\) then, by definition, \(\mathscr{B}(W') \subseteq \mathscr{B}(W)\). This is equivalent to \(W' \in \mathscr{B}(W)\): Indeed, \(W' \in \mathscr{B}(W')\), so if \(\mathscr{B}(W') \subseteq \mathscr{B}(W)\) then \(W' \in \mathscr{B}(W)\). Conversely, if \(W' \in \mathscr{B}(W)\) then also \(W'+g \in \mathscr{B}(W)\) for each \(g \in \Gamma_<\), since all terms of a given Boolean expression may be translated by \(g\) to provide another for the translate. Since these generate \(\mathscr{B}(W')\), we have \(\mathscr{B}(W') \subseteq \mathscr{B}(W)\).

Now, \(W' \in \mathscr{B}(W)\) simply means that \(W'\) can be given a finite expression in terms of the translates \(W+g\), for \(g \in \Gamma_<\), and the Boolean operations above. Using the De Morgan Laws (and that \(\vee = \cup\) on \(\mathscr{B}\)), such an expression may always be re-arranged to an equation of the form
\begin{equation}\label{eq:construction}
W' = \bigcup_{i=1}^m X_i \ \ \ \text{ for } \ \ \ X_i = \bigwedge_{j=1}^{m_i} W_{ij},
\end{equation}
where each \(W_{ij} = W+(g_{ij})_<\) or \(W_{ij} = \neg W+(g_{ij})_<\), where the \(g_{ij} \in \Gamma\). Intuitively, we may construct \(W'\) from intersections, unions and \(\Gamma_<\) translates of \(W\), up to negligible topological defects (more precisely, see also Lemma \ref{lem:Booleans same on NS}). The case of a multi-coloured window is similar: in this case, we now take each \(W_{ij} = W_m + (g_{ij})_<\) or \(W_{ij} = \neg W_m + (g_{ij})_<\) for some index \(m\) in Equation \ref{eq:construction}, to construct each window component \(W'_p\).

\subsection{The torus parametrisation}\label{sec:torus parametrisation}
For a hull \(\Omega = \Omega(\mathcal{S},W)\) of cut and project sets, we now recall the \textbf{torus parameterisation}, \(\tau \colon \Omega \to \mathbb{T}\), where \(\mathbb{T} = \tot / \Gamma\) . This map is defined as follows. For each \(x \in \nonsing\) (equivalently, such that \((\Gamma + x)_< \cap \partial W = \emptyset\)), we associate a \textbf{non-singular} cut and project set \(\cps_x \in \Omega\), defined by  
\[
\cps_x=\pi_\vee((\Gamma + x)\cap (\phy + W) )
\]
(and similarly for a coloured window, where \((x+\gamma)_\vee \in \cps_x\) is given colour \(i\) when \((x+\gamma)_< \in W_i\)). We then define \(\tau(\cps_x) \coloneqq [x]\in \mathbb T\). All other elements \(\cps \in \Omega\), which are called \textbf{singular}, may be written as a limit of non-singular patterns \(\cps_{x_n} \to \cps\), where \([x_n] \to [x]\) for some \([x] \in \mathbb{T}\), and we may set \(\tau(\cps) := [x]\), which can be shown to be well-defined. Clearly, for \(\gamma \in \Gamma\), we have \(x \in \nonsing\) if and only if \(x + \gamma \in \nonsing\). Hence, the singular and non-singular points \(x\in\tot\) correspond to elements of \(\mathbb{T}\), whose pre-images under \(\tau\) are the singular and non-singular elements of \(\Omega\), respectively. Note that \(\tau\) defines a surjective factor map, where \(\phy\) acts on \(\mathbb{T}\) by \(x \cdot [u] = [u+x]\). That is, \(\tau(\cps+x) = \tau(\cps) + [x]\) for any \(x \in \phy\). Moreover, it is not hard to see that, for \(s \in \intl\) with \(W-s\) non-singular, we have that \(\tau(\cps(\mathcal{S},W-s)) = \tau(\cps_s) = [s]\), since \((\gamma+s)_< \in W\) if and only if \(\gamma_< \in W-s\).

The idea of the above may be stated quite intuitively: if \(\tau(\cps) = [x]\), this means that \(\cps\) may be considered as the cut and project set given by cutting and projecting the translated lattice \(\Gamma + x\). One just needs to bear in mind that, if \([x]\) is singular, we need to be careful in selecting points from the translated lattice when they project to the boundary of the window. In particular, there are multiple different such cut and project sets for this translate, depending on which lattice points hitting the window boundary to include or exclude; in other words, depending on how we approach the singular pattern with non-singular ones. The non-singular points of \(\mathbb{T}\) are precisely those with single pre-images under \(\tau\), and the singular points are precisely those with multiple pre-images. For more detail, see e.g. \cite{Hun15}. 

\section{Characterisation of cut and project schemes that are substitutional} \label{sec:main results}

We can now state our main theorems. Recall Definition \ref{def:strongly L-sub} of \(L\)-Sub, shorthand for strongly \(L\)-substitutional. Recall that since cut and project sets are aperiodic, by Theorem \ref{thm:recognisability},  $L$-Sub is equivalent to the weaker  notion of being $L$-substitutional, when \(L\) is expansive. Further, by Lemma \ref{lem:L-sub stable under LIs}, for any element from the hull of locally isomorphic patterns is also $L$-sub. Hence, the result below also characterises when a hull of Euclidean cut and project sets satisfies \(L\)-sub.

\begin{theorem}\label{thm:main}
Let \(\mathcal{S}=(\tot, \phy, \intl, \Gamma)\) be a Euclidean cut and project scheme with window \(W\), and let $L: \phy \to \phy$ be linear. The cut and project sets in \(\Omega(\mathcal{S},W)\) are \(L\)-Sub if and only if there is a linear map \(M \colon \tot \to \tot\) satisfying the following:
\begin{enumerate}
	\item \(M(\Gamma) = \Gamma\);
	\item \(M(\phy) = \phy\) with \(L = M|_{\phy}\);
	\item \(M(\intl) = \intl\), and we define \(A \coloneqq M|_{\intl}\);
	\item a translate \(W-s\) of \(W\), for some \(s \in \intl\), is mutually constructable from \(A(W)\).
\end{enumerate}
If \(L\) is expansive, then \(A=M|_{\intl}\) is contractive and we may replace (4) with the weaker condition
\begin{enumerate}
	\item[(4a)] a translate \(W-s\) of \(W\), for some \(s \in \intl\), is constructable from \(A(W)\).
\end{enumerate}
\end{theorem}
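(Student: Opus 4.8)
The plan is to prove both directions by establishing a dictionary between the ambient-space structure (the map $M$ acting on $\tot$) and the pattern-space structure ($L$-Sub on the hull). Since cut and project sets are aperiodic and repetitive (Remark \ref{rem: fixing ns cps}), Theorem \ref{thm:recognisability} tells me $L$-Sub is equivalent to $L$-sub whenever $L$ is expansive, so in the expansive case I am free to work with whichever formulation is convenient. The central idea throughout is that, under the torus parametrisation $\tau \colon \Omega \to \bT = \tot/\Gamma$, geometric operations in $\tot$ that preserve the scheme descend to natural operations on $\Omega$, and the window encodes exactly the local data.

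\textbf{Sufficiency ($M$ exists $\Rightarrow$ $L$-Sub).} Suppose I am given $M$ satisfying (1)--(3) and the constructability condition (4). Conditions (1)--(3) say $M$ is a linear automorphism of $\tot$ preserving $\Gamma$, $\phy$ and $\intl$, so it induces a homeomorphism of the torus $\bT$ commuting with the $\phy$-action via $L$; pulling back along $\tau$, I expect $M$ to induce a map on the hull that inflates patterns by $L$. Concretely, I would take the predecessor $\cps'$ to be the cut and project set built from the \emph{same} scheme $\mathcal S$ but with window $A(W)$ (up to the translate $W-s$), and check the two defining properties of $L$-Sub. First, $L\cps' \MLD \cps$: inflating the physical space by $L$ is the same as applying $M$ in the total space, which carries the strip over $A(W)$ to the strip over $M(A(W))$; the constructability of (a translate of) $W$ from $A(W)$ is then precisely what lets me locally reconstruct $\cps$ from $L\cps'$ and vice versa, using the correspondence between Boolean operations on windows and local derivations/MLD that the paper sets up in Section \ref{sec:techniques for general windows} (acceptance domains). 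Second, $\cps' \LIs \cps$: since $\cps'$ uses the same scheme and a mutually-constructable window, it lies in the same local isomorphism class; here I would invoke the relationship (to be developed in Section \ref{sec:techniques for general windows}) between constructability and local indistinguishability, and the fact that changing the window by a constructable modification does not leave the hull.

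\textbf{Necessity ($L$-Sub $\Rightarrow$ $M$ exists).} This is the harder direction, because I must \emph{manufacture} a linear map $M$ on the ambient $\tot$ out of purely combinatorial/topological $L$-Sub data on $\Omega$. Assume $\cps$ is $L$-Sub with predecessor $\cps'$, so $L\cps' \MLD \cps \LIs \cps'$. The strategy is to lift this to $\tot$: the inflation $L$ on $\phy$ must be matched by a total-space automorphism. I would use the torus parametrisation together with the theory of LIDS (local inflation-deflation symmetries) from \cite[Chapter 5]{AOI}, flagged in the introduction as the key tool for this direction. The expansion $L$ on $\phy$ induces a self-map of $\bT$ intertwined with $\tau$, and I expect that the requirement $M(\Gamma)=\Gamma$ forces $M$ to be the unique linear lift making the diagram on $\bT=\tot/\Gamma$ commute; its restriction to $\intl$ then defines the contractive map $A$. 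The real content is to extract condition (4a): I must show that the MLD $L\cps' \MLD \cps$ translates, via the acceptance-domain formalism, into the statement that $W$ (up to a translate) is constructable from $A(W)$. That is, the locality of the derivation bounds the radius of inspection, which in window terms bounds the construction to finitely many $\Gamma_<$-translates and Boolean operations.

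\textbf{Main obstacle.} The crux, and the step I expect to fight with, is necessity's construction of $M$ and the passage from ``local derivability of $L\cps'$ from $\cps$'' to ``finite Boolean constructability of the window.'' A local derivation is a priori only a rule inspecting bounded patches; I must convert this into a \emph{globally linear} map on $\tot$ and a \emph{finite} window expression. The bridge is the acceptance-domain description (to be proved in Section \ref{sec:techniques for general windows}), which shows that bounded patches correspond to cells of a polytopal subdivision of the window by $\Gamma_<$-translates, so that a local rule of bounded radius is encoded by finitely many such translates and hence a finite Boolean expression. I also expect delicacy in the equivalence of (4) and (4a) under expansiveness: there I would argue that once $A$ is contractive, constructability in one direction automatically forces it in the other (so one can upgrade (4a) to (4)), presumably because iterating the contraction and using $M(\Gamma)=\Gamma$ lets me invert the construction; establishing that $A$ is contractive precisely when $L$ is expansive is itself a small linear-algebra point that I would settle first, using $M(\phy)=\phy$, $M(\intl)=\intl$ and $M(\Gamma)=\Gamma$ to control the eigenvalues of $M$ across the complementary invariant subspaces.
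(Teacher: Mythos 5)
Your sufficiency direction is, in outline, the paper's own argument and would go through once the window dictionary of Section \ref{sec:techniques for general windows} is in place, but two statements in it are backwards. The predecessor must be \(\cps' = \cps(\mathcal{S},W)\) itself, so that \(L\cps' = \cps(\mathcal{S},A(W))\) (Lemma \ref{lem:rescaling of cps is cps}); taking \(\cps'\) to be the set with window \(A(W)\), as you wrote, gives \(L\cps'\) the window \(A^2(W)\), about which condition (4) says nothing. Moreover, you justify \(\cps' \LIs \cps\) by mutual constructability, but constructability corresponds to local derivability (Theorem \ref{thm: MLD <=> constructable}), not local indistinguishability: LI holds precisely when the two windows are translates of one another (Lemma \ref{lem: LI <=> equal windows}), which is exactly why the paper compares \(W-s\) with \(W\). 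Modifying a window by a non-trivial Boolean construction in general \emph{changes} the hull; only translation preserves it.

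The genuine gaps are in the necessity direction. First, you build \(M\) as the linear lift of a torus map intertwining \(\tau\) with the substitution, but the paper only proves that \(\sub\) descends to \(\mathbb{T}\) (Proposition \ref{prop:subs on torus}) \emph{after} an \(M\) satisfying (1--3) exists; to run your argument you would first need the universal (maximal equicontinuous factor) property of \(\tau\) for arbitrary compact, topologically regular windows, which is nowhere established in the paper and is not trivial for non-regular windows. The paper's actual engine is more elementary: the limit translation module satisfies \(\Delta(\cps) = \Gamma_\vee\) (Corollary \ref{cor:LTM is projected lattice}) and is preserved by \(L\) for \(L\)-Sub patterns (Corollary \ref{cor: sub preserves LTM}), so \(L(\Gamma_\vee) = \Gamma_\vee\) and \(M\) is defined on \(\Gamma\) outright. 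Second, even granting your lift, you never argue condition (3): you simply restrict \(M\) to \(\intl\), but \(M(\intl) = \intl\) is not automatic from (1) and (2); the paper needs the return-vector estimates (Lemma \ref{lem:return vectors of cps}, Corollary \ref{cor:action of L on return vectors}) and an approximation argument to prove both (2) and (3). Third, and most concretely false, contractivity of \(A\) is \emph{not} a linear-algebra consequence of (1--3): take \(M \in \mathrm{GL}(4,\Z)\) the companion matrix of a degree-\(4\) Salem polynomial, \(\phy\) the expanding eigenline and \(\intl\) the complementary \(M\)-invariant \(3\)-space; the irreducibility of the polynomial makes this a legitimate cut and project scheme satisfying (1--3) with \(L\) expansive, yet \(A\) has two eigenvalues of modulus \(1\). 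Contractivity genuinely requires the \(L\)-Sub hypothesis (equivalently, (4)); the paper proves it dynamically, iterating the substitution map to shrink subsets of \(\Omega\) and transporting this to \(\mathbb{T}\) via Theorem \ref{thm:classification}. Relatedly, the upgrade from (4a) to (4) is not obtained by ``inverting the construction'' at the window level using contractivity: it comes from recognisability (Theorem \ref{thm:recognisability}), which makes \(L\)-sub and \(L\)-Sub agree in the aperiodic expansive case, after which Corollary \ref{cor:constructable window from rescaling} returns mutual constructability.
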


The proof of this result will be given in Section \ref{sec:proof for general windows}. A few remarks are in order. 

\begin{remark}
\begin{itemize}
\item[(a)] Similar results on sufficiency of properties similar to (1--4) have appeared before in the literature \cite{Har04, KL23, AFHI11}; more of the work is in formally proving they are also necessary. 
\item[(b)] Note that Theorem \ref{thm:main} applies directly to single- and multi-coloured windows. Also, as discussed in Example \ref{ex:penrose cps}, it is indirectly but easily applied to the case of internal spaces with multiple Euclidean components, such as seen in the standard Penrose cut and project scheme, through a simple (MLD) conversion to the single-component setting considered here. 
\item[(c)] For readers interested in defining explicit substitution rules, note that the construction of (a translate of) \(W\) from \(A(W)\) determines the actual substitution rule on given patterns (how points of inflated patterns are replaced with clusters), as explained through Proposition \ref{prop:LD from construction} and Theorem \ref{thm: MLD <=> constructable}. See also Theorem \ref{thm:classification} below, which describes the corresponding substitution map on the hull. 
\item[(d)] We will see later that, the window $W$ of an $L$-Sub cut and project set need not be (even up to translation) the attractor of a GIFS. However, once we decompose the window into enough acceptance domains, see Definition \ref{def:acceptance domains}, it is an attractor of a GIFS, see Corollary \ref{cor:sub=>GIFS}.
\end{itemize}
\end{remark}

Given a Euclidean total space, lattice and physical space, it is natural to ask if this triple permits an internal space and window making (expansive) substitutional patterns. This is answered in the result below, where we call a linear automorphism \(M \colon \tot \to \tot\) \textbf{hyperbolic} if \(\tot = A+B\) is the sum of \(M\)-invariant subspaces, with \(M\) expansive on \(A\) and contractive on \(B\). Note that the condition that \([\phy]=\phy/\Gamma\) is dense in \(\mathbb{T}\) is equivalent to our requirement that \(\Gamma_<\) is dense in \(\intl\) in all cut and project schemes, but without reference to a particular \(\intl\).

\begin{theorem}\label{thm:existence}
Let \(\tot\), \(\phy\) and \(\Gamma\) be given, with \([\phy]\) dense in \(\mathbb{T} = \tot / \Gamma\) and \(\Gamma \cap \phy = \{\mathbf{0}\}\). The following are equivalent: 
\begin{itemize}
\item[(1)]There is an internal space $\intl$, expansive $L\colon \phy\to \phy$, and a window $W$ so that the cut and project set corresponding to $W$ and the scheme $(\tot, \phy, \intl, \Gamma)$ is $L$-Sub. 
\item[(2)] There exists a hyperbolic linear automorphism \(M \colon 
\tot \to \tot\), with \(M(\Gamma) = \Gamma\), for which \(\phy\) is 
\(M\)-invariant, and \(M|_{\phy}\) is expansive. 
\end{itemize}
In this case, $L=M|_{\phy}$ and $\intl$ is the contracting subspace of $M$ with $A=M|_{\intl}$. Moreover, there is a finite collection of lattice points $Z\subset \Gamma$ and such a window $W$, invariant in the sense that 
\begin{equation}\label{eq:Winvariant}
W=\bigcup_{z\in Z}A(W)+z_<. 
\end{equation}
\end{theorem}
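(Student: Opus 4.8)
The plan is to read off the equivalence \((1)\iff(2)\) almost directly from Theorem \ref{thm:main}, and then to establish the ``moreover'' by an explicit self-affine window construction. For \((1)\Rightarrow(2)\): if the patterns in \(\Omega(\mathcal S,W)\) are \(L\)-Sub with \(L\) expansive, Theorem \ref{thm:main} supplies a linear \(M\colon\tot\to\tot\) with \(M(\Gamma)=\Gamma\), \(M(\phy)=\phy\), \(M|_\phy=L\), and \(M(\intl)=\intl\) with \(A=M|_\intl\) \emph{contractive} (this contractivity being the content of the final clause of Theorem \ref{thm:main}). Since \(\tot=\phy\oplus\intl\) with \(M|_\phy\) expansive and \(M|_\intl\) contractive, \(M\) is hyperbolic, with expanding subspace \(\phy\) and contracting subspace \(\intl\); this is precisely \((2)\), and along the way we have identified \(\phy\) and \(\intl\) as the expanding and contracting subspaces of \(M\), as the statement asserts.

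For \((2)\Rightarrow(1)\) I set \(L:=M|_\phy\), take \(\intl\) to be the contracting subspace of \(M\), and put \(A:=M|_\intl\). Since a valid scheme requires \(\tot=\phy\oplus\intl\) with \(\intl\) an \(M\)-invariant contracting complement of \(\phy\), necessarily \(\phy\) is the full expanding subspace of \(M\) (any invariant complement \(\intl\) meeting part of the expanding subspace would fail to be contractive), which guarantees \(\tot=\phy\oplus\intl\). I then check \(\mathcal S=(\tot,\phy,\intl,\Gamma)\) is a legitimate cut and project scheme. Injectivity of \(\pi_<|_\Gamma\) is the hypothesis \(\Gamma\cap\phy=\{\mathbf 0\}\). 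For injectivity of \(\pi_\vee|_\Gamma\) I must show \(\Gamma\cap\intl=\{\mathbf 0\}\): if \(\mathbf 0\neq\gamma\in\Gamma\cap\intl\) then \(M^n\gamma\to\mathbf 0\) as \(n\to\infty\) by contractivity on \(\intl\), while \(M^n\gamma\in\Gamma\) for all \(n\), contradicting discreteness of \(\Gamma\). Finally, density of \(\Gamma_<\) in \(\intl\) is exactly the assumed density of \([\phy]\) in \(\mathbb T\), as the two are equivalent.

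It remains to produce a finite \(Z\subset\Gamma\) and a window \(W\) satisfying \eqref{eq:Winvariant}; the cut and project set is then \(L\)-Sub by Theorem \ref{thm:main}, since \(W=\bigvee_{z\in Z}\bigl(A(W)+z_<\bigr)\) exhibits \(W\) (the case \(s=\mathbf 0\)) as constructable from \(A(W)\), so condition (4a) is met and \(L\) is expansive. The heart of the matter is to get such a \(W\) with non-empty interior. Consider the Hutchinson operator \(F(X)=\bigcup_{z\in Z}(AX+z_<)\), whose constituent maps \(x\mapsto Ax+z_<\) are contractions; it has a unique non-empty compact fixed point \(W=F(W)\), which is precisely \eqref{eq:Winvariant}. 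To force interior I seed with a ball: take \(W_0=B(\mathbf 0,r)\subset\intl\), pick \(r'>0\) with \(A(W_0)\supseteq B(\mathbf 0,r')\), and use density of \(\Gamma_<\) to choose finitely many \(z^{(1)},\dots,z^{(N)}\in\Gamma\) whose internal parts \(z^{(j)}_<\) are \(r'\)-dense in the compact set \(W_0\). With \(Z=\{z^{(1)},\dots,z^{(N)}\}\) we get \(W_0\subseteq\bigcup_j B(z^{(j)}_<,r')\subseteq\bigcup_j\bigl(A(W_0)+z^{(j)}_<\bigr)=F(W_0)\), so the iterates \(F^n(W_0)\) increase to \(W\) and \(W\supseteq W_0\) has non-empty interior. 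Since each \(F^n(W_0)\) is a finite union of ellipsoids, hence topologically regular, and the closure of an increasing union of regular sets is again regular (a short point-set argument using \(\overline{\operatorname{int} W}\supseteq\overline{\bigcup_n\operatorname{int}F^n(W_0)}\supseteq\bigcup_n F^n(W_0)\)), the fixed point \(W\) is a genuine window.

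The main obstacle is exactly this last point: guaranteeing that the self-affine attractor is a legitimate (compact, topologically regular) window, rather than a lower-dimensional or otherwise irregular set. The monotone ball-seed argument resolves this cleanly while preserving the \emph{exact} fixed-point identity \eqref{eq:Winvariant} (no passage to non-singular points or measure-theoretic approximation is needed). The only other delicate bookkeeping is confirming that \(\phy\) coincides with the expanding subspace of \(M\), so that the contracting subspace \(\intl\) genuinely complements it; this is forced by the requirement that \(\intl\) be an \(M\)-invariant contractive complement of \(\phy\).
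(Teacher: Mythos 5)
Your proof is correct and follows essentially the same route as the paper's: the direction \((1)\Rightarrow(2)\) is read off from Theorem \ref{thm:main} together with its contractivity clause, the scheme checks use the identical contraction/discreteness argument for \(\Gamma \cap \intl = \{\mathbf{0}\}\) and the same density equivalence, and the window is produced as the attractor of the same ball-seeded IFS, with topological regularity deduced from the increasing union of finite unions of topological balls. The only cosmetic differences are that you make the choice of \(Z\) explicit via an \(r'\)-dense set of projected lattice points and write out the point-set regularity argument, where the paper appeals to \cite[Theorem 9.1]{Falconer03}.
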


\begin{remark}\label{rem:IFS}
Since $A$ is a contraction, the collection of maps $\{x\mapsto A(x)+z_<\mid z\in Z\}$ is known as an {\bf iterated function system} and for any choice of finite $Z$, formally the existence of a unique, non-empty, compact set $W$ satisfying \eqref{eq:Winvariant} follows from standard theory, see e.g. \cite[Theorem 9.1]{Falconer03}. A particular choice of $Z$, and hence $W$, is made in the proof of Theorem \ref{thm:existence} in Section \ref{sec:proof for general windows}. 
\end{remark}

The next theorem is a classification result, on all possible substitution maps \(\sub \colon \Omega \to \Omega\) on a hull of Euclidean cut and project sets. We assume from here on that all substitution maps are injective. Note that injectivity is automatic when \(L\) is expansive (see Theorem \ref{thm:recognisability}), which is our main interest. Injectivity is also automatic in the case of a substitution map defined by a pattern which is (strongly) \(L\)-Sub, with respect to a given predecessor: In this case, \(\sub = S \circ L\) where, for each \(L\cps \in L\Omega\), we have that \(L\cps \MLD S(L\cps) = \sub(\cps)\).

To state the theorem, we need to introduce some notation. Given an automorphism \(M \colon \tot \to \tot\) with \(M(\Gamma) = \Gamma\) (as provided by Theorem \ref{thm:main}), we let \(M_\mathbb{T} \colon \mathbb{T} \to \mathbb{T}\) denote the induced automorphism \(M_\mathbb{T}[x] \coloneqq [Mx]\) on the torus. Given also some \([u] \in \mathbb{T}\), we let \(T_u = T_u^M \colon \mathbb{T} \to \mathbb{T}\) denote the map \(T_u[x] = M_\mathbb{T}[x] + [u]\). Recall also the torus parametrisation $\tau: \Omega \to \mathbb T$ from Section \ref{sec:torus parametrisation}.

\begin{theorem}\label{thm:classification}
For a Euclidean cut and project scheme \(\mathcal{S}\) with window $W$, suppose that the hull \(\Omega(\mathcal{S},W)\) consists of \(L\)-Sub patterns. Let \(M \colon \tot \to \tot\) be an automorphism satisfying (1--3) of Theorem \ref{thm:main}. 

Then, every substitution map \(\sub \colon \Omega \to \Omega\) with inflation $L=M|_{\phy}$ satisfies \(\tau \circ \sub = T_u \circ \tau\) for some \([u] \in \mathbb{T}\). For all such \(u\), we have that \(\nonsing + u = \nonsing\) and \(\sing + u = \sing\). Further, exactly those $u\in \mathbb T$ for which \(W-u_<\) is constructable from \(A(W)\), correspond to a substitution \(\sub \colon \Omega \to \Omega\) with \(\tau \circ \sub = T_u \circ \tau\).

Finally, the element \([u] \in \mathbb{T}\) uniquely characterises \(\sub\), in the sense that if \(\tau \circ \sub' = T_u \circ \tau\) for another substitution map \(\sub' \colon \Omega \to \Omega\), then \(\sub' = \sub\).

\end{theorem}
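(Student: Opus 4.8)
The plan is to prove Theorem \ref{thm:classification} in stages, following the flow of the statement: first the intertwining relation $\tau \circ \sub = T_u \circ \tau$, then the invariance of $\sing$ and $\nonsing$ under $u$, then the constructability characterisation of which $u$ arise, and finally the uniqueness of $\sub$ given $u$.

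\textbf{Step 1: deriving the intertwining relation.} First I would recall that any substitution map factors as $\sub = S \circ L$, where $L$ is the expansion $\cU \mapsto L\cU$ and $S$ is a local derivation map. Since $M$ satisfies conditions (1--3), $M$ induces a well-defined automorphism $M_\mathbb{T}$ of the torus $\mathbb{T} = \tot/\Gamma$, and because $M(\phy) = \phy$ with $M|_\phy = L$, the inflation $L$ on $\Omega$ should be compatible with $M_\mathbb{T}$ on $\mathbb{T}$: precisely, $\tau(L\cps) = M_\mathbb{T}(\tau(\cps))$. The cleanest way to see this is on a non-singular $\cps_x$, where $\cps_x$ is obtained by cutting and projecting $\Gamma + x$; inflating by $L$ should correspond (via $M$, which preserves $\Gamma$, $\phy$, and $\intl$) to cutting and projecting $M(\Gamma + x) = \Gamma + Mx$ with the inflated window $A(W)$, whose torus parameter is $[Mx] = M_\mathbb{T}[x]$, then extend to all of $\Omega$ by continuity of $\tau$ and $L$. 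The subdivision map $S$ is a local derivation map; since $\tau$ is the maximal equicontinuous factor, any local derivation map (indeed any factor map) commuting with the $\phy$-action must descend to a map on $\mathbb{T}$ that commutes with translation, and hence must be a translation $[x] \mapsto [x] + [u]$ for some fixed $[u] \in \mathbb{T}$. Composing, $\tau \circ \sub = \tau \circ S \circ L = (\,\cdot + [u]) \circ M_\mathbb{T} \circ \tau = T_u \circ \tau$.

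\textbf{Step 2: invariance of the singular set.} For the claim $\nonsing + u = \nonsing$ (and likewise for $\sing$), the key observation is that $\sub \colon \Omega \to \Omega$ is a homeomorphism (injectivity is assumed, surjectivity follows from it being a substitution map in the aperiodic expansive case via Theorem \ref{thm:recognisability}), so it permutes the fibres of $\tau$. Since $M_\mathbb{T}$ is a torus automorphism it is a bijection preserving the Haar-measure-zero/full-measure dichotomy and, more relevantly, it carries the non-singular locus of $\mathbb{T}$ to itself: a point of $\mathbb{T}$ is non-singular iff it has a single $\tau$-preimage, and this property is preserved by the homeomorphism $\sub$ over the torus automorphism $T_u$. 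Concretely, $T_u$ maps non-singular points to non-singular points and singular to singular; since $M_\mathbb{T}$ already preserves this partition (because $M$ preserves $\Gamma$, $\intl$, and carries $\partial(A(W))$-type singularities appropriately), translation by $[u]$ must also preserve it, giving $\nonsing + u = \nonsing$ and $\sing + u = \sing$.

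\textbf{Step 3: the constructability characterisation and uniqueness.} The identification of exactly those $u$ that arise is where the real content lies, and I expect this to be the main obstacle. The point is to connect the abstract translation $[u]$ appearing in the torus relation with the concrete window construction. I would use Theorem \ref{thm: MLD <=> constructable} (referenced in Remark (c)): a local derivation / MLD between cut and project sets corresponds precisely to a constructability relation between the associated windows. Specifically, $\sub(\cps) = S(L\cps)$ is MLD to $L\cps$, which is the cut and project set for the scheme with inflated window $A(W)$; the subdivision $S$ realises a translate of the original window $W - u_<$ as constructable from $A(W)$. Conversely, given that $W - u_<$ is constructable from $A(W)$, Proposition \ref{prop:LD from construction} supplies the local derivation $S$ realising a substitution with the prescribed torus translation. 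The matching of the internal-space translation $u_<$ with the torus element $[u]$ must be tracked carefully through the star map and the torus parametrisation formula $\tau(\cps(\mathcal{S}, W - s)) = [s]$. For uniqueness, the argument is that $\tau$ is the maximal equicontinuous factor and two substitution maps $\sub, \sub'$ inducing the same $T_u$ on $\mathbb{T}$ agree on the dense set of non-singular fibres (each a single point), hence agree on all non-singular patterns; since the non-singular patterns are dense in $\Omega$ and both maps are continuous, $\sub = \sub'$.
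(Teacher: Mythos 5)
Your overall architecture is the same as the paper's: the paper first proves a standalone proposition (its Proposition \ref{prop:subs on torus}) giving the intertwining relation, the $\sing$/$\nonsing$ statement and uniqueness, and then proves the constructability characterisation on top of it. Your Step 1 is sound, but routed through the universal property of the maximal equicontinuous factor; the paper avoids any MEF theory and argues more elementarily, computing $\tau(\sub(\cps+x))$ and $T_u(\tau(\cps+x))$ along a single dense $\phy$-orbit using the equivariance $\sub(\cps+x)=\sub(\cps)+Lx$, with $[u]=\tau(\sub\cps)-M_\mathbb{T}(\tau\cps)$, and then invokes continuity. Your uniqueness argument is essentially the paper's, with one point to make explicit: equality of $\sub\cps$ and $\sub'\cps$ for non-singular $\cps$ requires the fibre over $T_u(\tau\cps)$, not over $\tau\cps$, to be a singleton, so you must first invoke the Step 2 fact that $\sub$ carries non-singular patterns to non-singular ones (note also that the dynamical argument there uses surjectivity of $\sub$, not only injectivity).

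The genuine gap is Step 3, which you yourself flag as ``where the real content lies'' and then do not carry out. Citing Theorem \ref{thm: MLD <=> constructable} and Proposition \ref{prop:LD from construction} with the translation bookkeeping ``to be tracked carefully'' is a plan, not a proof: that bookkeeping \emph{is} this part of the theorem. Concretely missing are: (a) the reduction to non-singular positions (choose $x$ with $W-x$ non-singular so that Lemma \ref{lem:rescaling of cps is cps} and Theorem \ref{thm: MLD <=> constructable} apply, and transport the hypothesis via ``$W-s-A(x)$ is constructable from $A(W)-A(x)$''); (b) the physical component of $u$: constructability only pins down $u_<$, and to realise an arbitrary $[u]$ with $u_<=s$ one must post-compose the substitution with translation by $v=u_\vee$ and verify $\tau\circ\sub'=T_{v+s}\circ\tau$; and (c) in the converse direction, the use of non-singularity preservation to write $\sub\cps'$, after a shift by some $v\in\phy$, as $\cps(\mathcal{S},W-b)$, followed by the computation $[b+v]=T_u[a]$ giving $u_<=b-A(a)+\gamma_<$, so that invariance of constructability under $\Gamma_<$-translates yields that $W-u_<$ is mutually constructable from $A(W)$. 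A second, smaller weak point is in Step 2: the dynamical argument only yields $T_u(\sing)=\sing$, and converting this to $\sing+u=\sing$ requires $M_\mathbb{T}(\sing)=\sing$; your parenthetical justification (``carries $\partial(A(W))$-type singularities appropriately'') is not an argument, since what $M_\mathbb{T}$ actually does is carry the singular set defined by $W$ to the singular set defined by $A(W)$, and identifying these two sets is a nontrivial claim. The paper makes the same terse assertion at this point, so you match its proof there, but you should be aware that this is precisely the step at which a justification is being skipped rather than supplied.
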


\begin{remark}
\begin{itemize}
\item[(a)] Theorem \ref{thm:classification} means that every substitution map on a hull \(\Omega\) of a cut and project sets determines, and is determined by, a certain automorphism followed by a translation on the torus. These translations can also be characterised: they are the \(s \in \intl\) of (4) in Theorem \ref{thm:main}, up to an arbitrary translation in the physical space.

\item[(b)] When $L$ is expansive, the automorphism $M$ is unique and hyperbolic, and hence $T_u$ is also uniquely defined for all $[u]\in \mathbb T$. 

\item[(c)] Unlike in Theorem \ref{thm:main}, in Theorem \ref{thm:classification} the initial translation of the window affects the statement; changing it reparametrises the torus parametrisation \(\tau \colon \Omega \to \mathbb{T} = \tot / \Gamma\), see Section \ref{sec:torus parametrisation}. 

\end{itemize}
\end{remark}

The next results concerns a characterisation of the case where \(L\) is a symmetry. We require that when $L$ is a symmetry, then every  \(\cps \in \Omega\) also has \(L\cps \in \Omega\), without a further local derivation. Thus, the following is unsurprising:

\begin{theorem}\label{thm:symmetries}
Consider a Euclidean cut and project scheme \(\mathcal{S}\) and linear automorphism \(L \colon \phy \to \phy\). Then \(L\) is a symmetry of \(\Omega(\mathcal{S},W)\) if and only if there is a linear automorphism \(M \colon \tot \to \tot\) satisfying (1--3) of Theorem \ref{thm:main} and
\begin{enumerate}
	\item[(4b)] \(W\) is a translate of \(A(W)\).
\end{enumerate}
\end{theorem}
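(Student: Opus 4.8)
The plan is to prove both implications by converting the symmetry relation between $L\cps$ and $\cps$ into a linear relation between the windows, exploiting throughout that $L\cps$ is again a cut and project set for $\mathcal{S}$.

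The backward implication I would dispatch by direct computation. Given $M$ satisfying (1)--(3), writing $x = x_\vee + x_<$ gives $Mx = Lx_\vee + Ax_<$, so $\pi_\vee \circ M = L \circ \pi_\vee$. Hence, for non-singular $\cps_x$,
\[
L\cps_x = \pi_\vee\!\big(M((\Gamma+x)\cap(\phy+W))\big) = \pi_\vee\!\big((\Gamma+Mx)\cap(\phy+A(W))\big),
\]
which is precisely the cut and project set for the window $A(W)$ at torus parameter $[Mx]$. Condition (4b) makes $A(W)$ a translate of $W$, so by Lemma \ref{lem: LI <=> equal windows} these windows generate the same hull; thus $L\cps_x \in \Omega(\mathcal{S},W)$ and, by repetitivity (Remark \ref{rem: fixing ns cps}), $L\cps_x \LIs \cps_x$, i.e.\ $L$ is a symmetry.

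For the forward implication I would first extend the symmetry to a homeomorphism $L\colon \Omega \to \Omega$ with $L(\cps + v) = L\cps + Lv$ for $v\in\phy$ (Lemma \ref{lem:LI => properties} and the discussion after the definition of a symmetry), and then push it through the torus parametrisation $\tau\colon \Omega \to \bT = \tot/\Gamma$. Since $\tau$ is the maximal equicontinuous factor and $\tau\circ L$ intertwines the $\phy$-action with translation by $[Lv]$ on $\bT$, by the maximal equicontinuous factor property (as used for Theorem \ref{thm:classification}) it descends to an affine automorphism $\bar L[t] = [Mt] + [c]$, whose linear part gives (1), $M(\Gamma)=\Gamma$. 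The intertwining forces $[Mv]=[Lv]$ for $v\in\phy$; as $Mv - Lv$ lies in the discrete set $\Gamma$ and tends to $0$ with $v$, linearity yields $Mv = Lv$ for all $v \in \phy$, which is (2).

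The main obstacle is then (3), $M(\intl)=\intl$, equivalently the vanishing of $B \coloneqq \pi_\vee\circ M|_\intl\colon\intl\to\phy$; here $L$ need not be expansive, so (unlike in Theorem \ref{thm:main}) I cannot identify $\intl$ with a contracting subspace and must instead exploit the compactness of $W$. Taking $\cps$ non-singular (so that $L\cps$ is too), I would argue as follows. Because $L\cps\in\Omega$, any two of its points differ by an element of $\Gamma_\vee$; such a difference also equals $L\delta_\vee = (M\delta)_\vee - B\delta_<$ for some $\delta\in\Gamma$ with $\delta_<\in W-W$, whence $B\delta_<\in\Gamma_\vee$. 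Tracking internal coordinates through $\tau$, the internal-coordinate difference of these two points of $L\cps$ works out to $(M\delta)_< + (-B\delta_<)^\star$, with $\star$ the (linear) star map. Since both internal coordinates lie in $W$ this is bounded, and $(M\delta)_<$ is bounded, so $(-B\delta_<)^\star$ is bounded. If $B\neq 0$, then as $\delta_<$ ranges over the dense set $\Gamma_<\cap(W-W)$ of internal return coordinates, the elements $B\delta_<$ take infinitely many values in a bounded region of $\phy$, all lying in $\Gamma_\vee$; their $\pi_\vee$-preimages are infinitely many lattice points with bounded physical component, forcing unbounded stars, a contradiction. The interplay of the boundedness of $W$ with the discreteness of $\Gamma$ and the unboundedness of $\star$ on bounded subsets of $\Gamma_\vee$ is the heart of the argument. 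Thus $B=0$, giving (3) with $A = M|_\intl$. Finally, with (1)--(3) secured, the backward computation yields $L\cps = \cps(\mathcal{S},A(W))$; as this is locally isomorphic to $\cps$, the windows $A(W)$ and $W$ share a hull and are therefore translates by Lemma \ref{lem: LI <=> equal windows}, which is (4b).
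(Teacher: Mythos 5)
Your backward implication is, modulo inlining the computation, exactly the paper's: your displayed identity is Lemma \ref{lem:rescaling of cps is cps}, and Lemma \ref{lem: LI <=> equal windows} then finishes. The forward implication is where you genuinely diverge. The paper dispatches it by reduction to Theorem \ref{thm:main}: a symmetry makes every \(\cps \in \Omega\) strongly \(L\)-substitutional (take predecessor \(\cps' = L^{-1}\cps\), so \(L\cps' = \cps\) trivially and \(\cps' \LIs \cps\) by repetitivity), so the automorphism \(M\) satisfying (1--3) comes for free from that theorem, and (4b) follows from Lemmas \ref{lem:rescaling of cps is cps} and \ref{lem: LI <=> equal windows} just as you conclude. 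You instead construct \(M\) from scratch: the descent of \(L\) through \(\tau\) to an affine torus automorphism gives (1) and (2), and your boundedness argument gives (3); I checked this last step and it is sound --- differences of points of \(L\cps\) lie in \(\Gamma_\vee\) with stars in \(W-W\), so \(B\delta_<\) takes only finitely many values in \(\Gamma_\vee\) as \(\delta_<\) ranges over a set of return coordinates dense in an open subset of \(\intl\), and a nonzero \(B\) would trap that dense set in finitely many proper affine subspaces. Two caveats on your write-up: your attribution of the descent to ``the MEF property as used for Theorem \ref{thm:classification}'' is slightly off, since the paper's Proposition \ref{prop:subs on torus} never invokes the universal property of the maximal equicontinuous factor (it verifies the intertwining directly from an \(M\) it already possesses); the MEF property of \(\tau\) is asserted in the paper but outsourced to the literature, so you are leaning on an external fact the paper's own proof avoids. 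Also, your parenthetical that \(L\cps\) is non-singular is unjustified --- but also unnecessary, since singular elements of \(\Omega\) still have all point differences in \(\Gamma_\vee\) with stars in \(W-W\); and to know the descended torus map is an automorphism (hence \(M(\Gamma)=\Gamma\), not merely \(\subseteq\)) you should note that \(L^{-1}\) is also a symmetry and gives the inverse descent. The trade-off: the paper's proof is three lines but rests on the limit-translation-module machinery behind the necessity direction of Theorem \ref{thm:main}; yours is independent of Theorem \ref{thm:main} entirely, replacing that machinery with the MEF property plus elementary compactness and discreteness, which is an attractive feature if one wants the symmetry characterisation without first developing the substitution theory.
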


Note that, in contrast to the previous results in this section, the above is not invariant under MLD equivalence, unless it is an MLD equivalence which respects not just translations but also general rigid motions, see the notion of S-MLD equivalence in \cite{BJS91}.

\begin{example}\label{ex:flip symmetry}
Consider \(M \colon \tot \to \tot\), given by \(M(x) = -x\). Then clearly \(M\) satisfies (1--3) of Theorem \ref{thm:main}. Moreover, the restriction of \(M\) to \(\phy\) and \(\intl\) are also the maps \(x \mapsto -x\). Thus, \(L(x) = -x\) is a symmetry of the elements of \(\Omega(\mathcal{S},W)\) if and only if \(W\) is a translate of \(-W\). This holds, for instance, for all codimension \(1\) cut and project sets with interval windows. Indeed, these have arbitrarily large palindromic patches.
\end{example}

In this aperiodic setting, with $L$ expansive, recall that LIDS is equivalent to being pseudo self-affine. This  means that a pattern substitutes to itself, for some substitution rule with inflation \(L\). More precisely, for \(\cps \in \Omega\) to be LIDS with respect to \(L\), we require that \(L\cps \MLD \cps\) without any LI replacement step. Thus, in place of (4) in Theorem \ref{thm:main}, instead of dropping the construction we now drop the translation relating \(W\) to \(A(W)\). However, due to singular points, we generally need to use a sufficiently high power:

\begin{theorem}\label{thm:PSA}
Consider a hull of Euclidean cut and project sets \(\Omega = \Omega(\mathcal{S},W)\) which consists of \(L\)-Sub patterns. Thus (1--4) of Theorem \ref{thm:main} hold with respect to a map \(M \colon \tot \to \tot\). Moreover, suppose that the fibres of the torus parametrisation \(\tau \colon \Omega \to \mathbb{T}\) are all finite. Then, for each \(s \in \intl\), the following are equivalent:
\begin{enumerate}
	\item[(A)] for all \(v \in \phy\) and \(\cps \in \Omega\) with \(\tau(\cps) = [v+s]\), we have that \(\cps\) is LIDS with respect to some power \(L^m\) of the inflation;
	\item[(B)] there exists some \(v \in \phy\) and some \(\cps \in \Omega\) with \(\tau(\cps) = [v+s]\), for which \(\cps\) is LIDS with respect to some power \(L^m\) of the inflation;
	\item[(4c)] \(W-s\) is mutually constructable from \(A^p(W-s)\) for some \(p \in \N\).
\end{enumerate}
If \(L\) is expansive, we may replace (4c) above with the weaker condition
\begin{enumerate}
	\item[(4d)] \(W-s\) is constructable from \(A^p(W-s)\) for some \(p \in \N\).
\end{enumerate}
\end{theorem}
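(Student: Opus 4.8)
The plan is to prove Theorem \ref{thm:PSA} by reducing it to the already-established Theorem \ref{thm:classification}, exploiting the fact that LIDS (equivalently, pseudo self-affine) with respect to $L^m$ means a pattern substitutes to \emph{itself}, with no intervening LI-replacement. Concretely, by Theorem \ref{thm:classification} applied to the power $M^p$ (which still satisfies (1--3), now with inflation $L^p$), every substitution map $\sub^p$ with inflation $L^p$ satisfies $\tau \circ \sub^p = T_u^{M^p} \circ \tau$ for some $[u] \in \mathbb{T}$, and such a $\sub^p$ exists precisely when $W - u_<$ is constructable from $A^p(W)$. The guiding idea is that $\cps$ with $\tau(\cps)=[v+s]$ being LIDS with respect to $L^m$ should correspond to $[v+s]$ being a \emph{fixed point} of the associated torus map $T_u^{M^m}$, which pins down the translation parameter $u$ in terms of $s$ and converts the ``translate $W-s$ from $A(W)$'' condition of (4) into the self-referential ``$W-s$ from $A^p(W-s)$'' condition of (4c).

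First I would establish the direction (4c)$\Rightarrow$(A). Assume $W-s$ is mutually constructable from $A^p(W-s)$. Here the key observation is that applying $M^p$ and the constructability to the \emph{re-centred} window $W-s$ yields a substitution map whose torus dynamics fix the point $[s]$: one checks, using the interplay between the torus parametrisation and window translation recorded in Section \ref{sec:torus parametrisation} (namely $\tau(\cps(\mathcal{S},W-s))=[s]$ together with $\tau(\cps+v)=\tau(\cps)+[v]$ for $v\in\phy$), that the value $[u]$ supplied by Theorem \ref{thm:classification} for this construction makes $T_u^{M^p}[v+s]=[v+s]$ for every $v \in \phy$. Since a point is singular/non-singular compatibly under $M^p$ (the map $M_\mathbb{T}$ permutes $\sing$ and $\nonsing$, as in Theorem \ref{thm:classification}), the finiteness-of-fibres hypothesis lets me deduce that $\cps$ with $\tau(\cps)=[v+s]$ is genuinely sent to \emph{itself} (not merely to an LI copy) under the appropriate power of the substitution; because mutual constructability gives an MLD rather than a one-sided derivation, this is exactly $L^m\cps \MLD \cps$, i.e.\ LIDS.

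The direction (B)$\Rightarrow$(4c) runs in reverse, and the trivial implication (A)$\Rightarrow$(B) needs only that the fibre over $[v+s]$ is nonempty. Given $\cps$ with $\tau(\cps)=[v+s]$ that is LIDS with respect to $L^m$, the self-substitution $L^m\cps \MLD \cps$ furnishes, via Theorem \ref{thm:recognisability} and the substitution-map machinery, a substitution $\sub^m \colon \Omega \to \Omega$ fixing $\cps$; by the uniqueness clause of Theorem \ref{thm:classification} the corresponding torus map $T_u^{M^m}$ must fix $\tau(\cps)=[v+s]$, which forces the translation parameter $u$ to satisfy $[u]=[v+s]-[M^m(v+s)]$, and feeding this back through the characterisation of which $u$ arise (again Theorem \ref{thm:classification}) yields that $W-s$ is constructable from $A^m(W-s)$, upgraded to mutual constructability by the MLD (rather than merely LD) nature of LIDS. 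Finally, the expansive case replacement of (4c) by the weaker (4d) follows from recognisability exactly as in the main theorem: when $L$ is expansive, $A$ is contractive, and a one-sided construction of $W-s$ from $A^p(W-s)$ automatically upgrades to mutual constructability because the induced local derivation is then invertible (Theorem \ref{thm:recognisability}), just as (4a) suffices in place of (4) in Theorem \ref{thm:main}.

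The main obstacle I anticipate is the careful bookkeeping at singular points, which is exactly why the finiteness-of-fibres hypothesis is imposed. The equation $\tau \circ \sub = T_u \circ \tau$ only determines $\sub$ up to its action on torus fibres, and over a singular point $[v+s]$ the fibre $\tau^{-1}([v+s])$ may contain several distinct patterns; I must ensure that the \emph{particular} element $\cps$ is fixed, not merely that the fibre is preserved setwise. Resolving this requires combining the uniqueness statement of Theorem \ref{thm:classification} (that $[u]$ determines $\sub$ completely) with a finite-combinatorial argument on the fibre, where passing to a suitable power $L^m$ of the inflation is essential so that the finite permutation $\sub$ induces on the fibre becomes the identity on the chosen point. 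This is also the source of the gap between the single power $A$ appearing in (4) and the arbitrary power $A^p$ in (4c).
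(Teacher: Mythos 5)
Your overall strategy coincides with the paper's: both directions are routed through Theorem \ref{thm:classification}, with LIDS corresponding to fixed points of the affine torus maps \(T_u\), and the finite-fibre hypothesis used to upgrade ``fibre preserved'' to ``pattern fixed'' after passing to a power. Your implications (A)\(\Rightarrow\)(B) and (B)\(\Rightarrow\)(4c) are essentially the paper's argument (fix the pattern, deduce \(T_u[v+s]=[v+s]\), solve for \(u_<\) modulo \(\Gamma_<\), and transport the mutual constructability of \(W-u_<\) from \(A^m(W)\) back to \(W-s\) versus \(A^m(W-s)\)), and your treatment of the (4d) weakening in the expansive case is also the paper's.

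However, your direction (4c)\(\Rightarrow\)(A) contains a genuine error: you assert that the single value \([u]\) supplied by Theorem \ref{thm:classification} (determined by \(s\) alone, with \(u_< = s - A^p(s)\)) makes \(T_u^{M^p}[v+s] = [v+s]\) for \emph{every} \(v \in \phy\). This is impossible in the cases of interest. Indeed, \(T_u[v+s] = [L^p(v) + A^p(s) + u]\), so fixing \([v+s]\) means \(L^p(v)+A^p(s)+u-(v+s) \in \Gamma\); comparing two such \(v\), \(v'\) gives \(L^p(v-v')-(v-v') \in \Gamma \cap \phy = \{\mathbf{0}\}\) (the intersection is trivial since \(\pi_<|_\Gamma\) is injective), hence \(v = v'\) whenever \(L^p - \Id\) is injective on \(\phy\), which is automatic for \(L\) expansive. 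So a fixed \(T_u\) fixes at most one point of the form \([v+s]\), and with your choice of \(u\) (having trivial physical component) the fibre over \([v+s]\), \(v \neq \mathbf{0}\), is not preserved at all but sent to the fibre over \([L^p(v)+s]\), so the finite-fibre permutation argument cannot even begin. The missing idea is that the constructability condition in Theorem \ref{thm:classification} constrains only the internal component \(u_<\), leaving the physical component \(u_\vee\) as a free parameter: one must choose \(u\) \emph{depending on} \(v\), namely \([u] = [(v - L^p(v)) + (s - A^p(s))]\) (equivalently, compose your substitution with the physical translation by \(v - L^p(v)\)). With this \(v\)-dependent choice, \(T_u\) fixes \([v+s]\), the associated substitution preserves the finite fibre \(\tau^{-1}[v+s]\), and some power of it is the identity there, giving LIDS with respect to \(L^m\) for \(m\) a suitable multiple of \(p\); this is exactly how the paper closes the argument.
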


The condition that the torus parametrisation has finite fibres holds in many cases of interest, such as for the case discussed in the following subsection, of polytopal windows. It seems feasible that it should always hold when \(\Omega\) consists of \(L\)-sub patterns for \(L\) expansive. This condition is not needed in showing that (A) or (B) implies (4c), and we are able to take \(p=m\) in this case. However, in proving (A) from (4c) (or (4d), when \(L\) is expansive), this condition is needed and generally we require \(m\) to be a multiple of \(p\), when \(W-s\) is in singular position (and thus corresponds to multiple elements of \(\Omega\)). Consideration of simple examples, such as the Fibonacci cut and project scheme, show why such a higher power is necessary (see Example \ref{ex:Fibonacci}).

\subsection{Results for polytopal windows}\label{sec:results for polytopal windows}
In the case that \(W\) is polytopal, we may describe the constructability conditions in the results above in more detail. We begin with the definition of polytopal and the corresponding supporting hyperplanes. 

\begin{definition} \label{def:polytopal}
Let \(W \subset \intl\) be compact and topologically regular. An (\textbf{affine}) \textbf{hyperplane} \(H \subset \intl\) is translation of a codimension \(1\) subspace. We call a hyperplane \(H \subset \intl\) \textbf{supporting} if there exists some \(w_H \in H \cap W\) and an open neighbourhood \(U_H \subset \intl\) of \(w_H\) so that \(W \cap U_H = H^+ \cap U_H\), where \(H^+\) is a closed half-space for \(H\) (i.e., the closure of a connected component of \(\intl \setminus H\)). We call \(W\) \textbf{polytopal} if \(\partial W\) is contained in a finite collection \(\sH\) of supporting hyperplanes. A multi-coloured window \(W = (W_i)_{i=1}^\ell\) is called \textbf{polytopal} if each \(W_i\) is polytopal, in which case we let \(\sH\) be the union of the supporting hyperplanes of the \(W_i\).

For an affine hyperplane \(H\) we let \(V(H) \coloneqq H-H\), that is, the subspace translate of \(H\) containing the origin. We let \(\sH_0 \coloneqq V(\sH) = \{V(H) \mid H \in \sH\}\) denote the collection of \textbf{supporting subspaces} of the window i.e., the subspaces parallel to its faces.
\end{definition}

\begin{remark}
In Definition \ref{def:polytopal}, each \(H \cap W\), for \(H \in \sH\), is a `face' of the polytope (or possibly containing a union of faces). Then, selecting \(w_H\) in the interior of a face, in a small neighbourhood about \(w_H\) the polytopal window will be the same as a closed half-space. We note that any supporting hyperplane \(H\) of \(W_i \subset \intl\) must be a member of \(\sH\) or else the hyperplanes of \(\sH\) would not cover \(\partial W_i\). 

We leave it as an exercise to check that Definition \ref{def:polytopal} is equivalent to \(W\) being a finite union of convex polytopes, where a {\bf convex polytope} is a convex hull of finitely many points with a non-empty interior. In particular, the requirement that \(W\) is compact, equal to the closure of its interior and has boundary contained in a finite union of affine hyperplanes means that \(W\) is a finite union of convex polytopes given as closures of some connected components of \(\intl \setminus (\bigcup_{H \in \sH} H)\).
\end{remark}

Assume now that the necessary conditions (1--3) from Theorem \ref{thm:main} hold for some fixed linear automorphism \(M \colon \tot \to \tot\), that is:
\begin{enumerate}
	\item \(M(\Gamma) = \Gamma\);
	\item \(M(\phy) = \phy\), where we denote \(L \coloneqq M|_{\phy} \colon \phy \to \phy\);
	\item \(M(\intl) = \intl\), where we denote \(A \coloneqq M|_{\intl} \colon \intl \to \intl\).
\end{enumerate}
The existence of such an \(M\) concerns only the scheme \(\mathcal{S}\) but not the window. We now also assume that \(W\) is polytopal (possibly multi-coloured) and that \(L\) is expansive. Without loss of generality in the results below, we will assume that \(W\) has a vertex over the origin, which simplifies some statements. One immediate consequence of this assumption is that $W$ is in a singular position. 

\begin{definition}
We say that \(W\) is \textbf{face direction invariant} (with respect to \(A\)), or \textbf{FD-invariant} for short, if \(A(V) = V\) for each \(V \in \sH_0\).
\end{definition}

If we merely have \(A(V) \in \sH_0\) for all \(V \in \sH_0\), then we at least have FD-invariance for some power (since \(A\) is a linear automorphism so acts injectively on the set of subspaces of \(\intl\), and \(\sH_0\) is finite). As we will see below, our main result classifies those cut and project sets which are substitutional with respect to some power \(L^m\) of the expansion, and so it makes no difference to the characterisation whether we take $A$ or some $A^m$ FD-invariant.

\begin{definition}
We call elements of \(\Gamma_<\) \textbf{integral} points. Similarly, if \(g = q \gamma_<\) for \(q \in \Q\) and \(\gamma \in \Gamma\), that is, \(g \in \Q\Gamma_< = \bigcup_{N \in \N} \frac{1}{N} \Gamma_<\), then we call \(g\) \textbf{rational}.

The polytope \(W\) (or each polytope \(W_i\), in the multi-coloured case) has a decomposition into \(k\)-dimensional cells, including the vertices (\(0\)-cells) and edges (\(1\)-cells). Let \(\mathrm{Ver}(W)\) denote the set of vertices of \(W\) (in the multi-coloured case, this is the union of vertices of each \(W_i\)). We say that \(W\) is \textbf{rational} if \(v - v' \in \Q\Gamma_<\) for each \(v\), \(v' \in \mathrm{Ver}(W)\); that is, there is some \(N \in \N\) so that \(v - v' \in \frac{1}{N} \Gamma_<\) for each \(v\), \(v' \in \mathrm{Ver}(W)\).
\end{definition}

Of course, if \(W\) is connected, it is necessary and sufficient for rationality that each edge \(e = [v_1,v_2] \subset W\) of adjacent vertices in \(W\) has \(v_2 - v_1\) rational. Generally, since we assume that \(\mathbf{0} \in \mathrm{Ver}(W)\), being rational is equivalent to each element of \(\mathrm{Ver}(W)\) being rational. The above definition is given to be invariant under translating the window. 

Our main characterisation for substitutional polytopal cut and project schemes may now be stated:

\begin{theorem} \label{thm:polytopal}
Assume that conditions (1--3) of Theorem \ref{thm:main} hold, where \(L \colon \phy \to \phy\) is expansive, and that \(W\) is polytopal. Then the following are equivalent:
\begin{enumerate}
	\item there is some \(m \in \N\) so that all elements of \(\Omega(\mathcal{S},W)\) are \(L^m\)-sub;
	\item \(W\) is FD-invariant with respect to \(A^p\), for some \(p \in \N\), and \(W\) is rational.
\end{enumerate}
\end{theorem}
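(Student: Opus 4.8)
Write $A := M|_{\intl}$ throughout, and note first that since $M(\Gamma) = \Gamma$ and $M(\intl) = \intl$, for $\gamma \in \Gamma$ we have $A(\gamma_<) = \pi_<(M\gamma) = (M\gamma)_< \in \Gamma_<$, so $A$ restricts to an automorphism of the group $\Gamma_<$. My first move is to collapse condition (1) to a pure constructability statement. Since $L$ is expansive and cut and project sets are aperiodic (Remark \ref{rem: fixing ns cps}), Theorem \ref{thm:recognisability} makes ``all elements of $\Omega(\mathcal{S},W)$ are $L^m$-sub'' equivalent to their being $L^m$-Sub, and Theorem \ref{thm:main} (applied with the expansive inflation $L^m$, using the weaker (4a)) turns this into: a translate $W - s$ is constructable from $A'(W)$, where $A' = M'|_{\intl}$ for some $M'$ satisfying (1--3) with $M'|_{\phy} = L^m$. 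But $M'$ is forced: for $\gamma \in \Gamma$ one computes $A'(\gamma_<) = (M'\gamma)_< = ((M'\gamma)_\vee)^\star = (L^m\gamma_\vee)^\star$, which is exactly $A^m(\gamma_<)$; as $A'$ and $A^m$ are linear and agree on the dense set $\Gamma_<$, we get $A' = A^m$. Hence (1) is equivalent to the condition $(\ast)$: \emph{there exist $m \in \N$ and $s \in \intl$ with $W - s$ constructable from $A^m(W)$.} It remains to prove $(\ast) \iff (2)$.

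\textbf{Forward direction, face directions.} Assume $(\ast)$. By \eqref{eq:construction}, $W - s$ is a finite Boolean combination of translates $A^m(W) + g$ with $g \in \Gamma_<$, so $\partial(W - s)$ lies in the finite union of affine hyperplanes $A^m(H') + g$ ($H' \in \sH$) occurring there. Each facet of $W - s$ lies in a translate $H - s$ of a supporting hyperplane and is $(n-1)$-dimensional; being covered by finitely many affine hyperplanes, it must be relatively open in one of them, and two affine hyperplanes sharing a relatively open set coincide. Thus $H - s = A^m(H') + g$, and in particular $V(H) = A^m(V(H'))$. Therefore every element of $\sH_0$ lies in the image of the finite set $\sH_0$ under the injective map $A^m$, so $A^m$ permutes $\sH_0$; a suitable power $A^p$ (with $p$ a multiple of $m$) fixes every $V \in \sH_0$, i.e.\ $W$ is FD-invariant with respect to $A^p$.

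\textbf{Forward direction, rationality (the main obstacle).} Now the vertices of $W - s$ are $0$-cells of the arrangement from the previous step, so each is the solution of a linear system determined by the (now matched) supporting hyperplanes of $A^m(W)$ and the lattice translations $g_i \in \Gamma_<$. Using that $A$ acts as an automorphism of $\Gamma_<$ (hence by an integer matrix in a $\Gamma$-basis) together with the description of the admissible supporting hyperplanes of a polytopal window, and of their intersection points, developed in \cite{KoiWalII}, one must show these solutions are forced into $\Q\Gamma_<$; subtracting the vertex coming from $\mathbf{0} \in \Ver(W)$ then yields $v - v' \in \Q\Gamma_<$ for all vertices, i.e.\ $W$ is rational. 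I expect this to be the hard part: extracting a genuine membership in $\Q\Gamma_<$ (rather than merely in the real span of $\Gamma_<$) from bare constructability is exactly what the stabiliser-rank and hyperplane machinery of \cite{KoiWalII} is needed for, and it is where the polytopal hypothesis does its real work.

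\textbf{Reverse direction.} Conversely, assume (2) and fix $N$ with $\Ver(W) \subset \tfrac{1}{N}\Gamma_<$ (permissible since $\mathbf{0} \in \Ver(W)$). As $A$ preserves both $\Gamma_<$ and $\tfrac{1}{N}\Gamma_<$, it induces a finite-order automorphism of $\tfrac{1}{N}\Gamma_< / \Gamma_< \cong (\Z/N)^k$; choosing $m$ to be a common multiple of $p$ and this order, $A^m$ simultaneously fixes every face direction and acts trivially on $\tfrac{1}{N}\Gamma_< / \Gamma_<$. Then $A^m(W)$ is a polytope with the same supporting subspaces as $W$, each of whose vertices is congruent modulo $\Gamma_<$ to a vertex of $W$; consequently the finitely many $\Gamma_<$-translates $A^m(W) + g$ needed to cover $W$ have vertices in $\tfrac{1}{N}\Gamma_<$ and facets parallel to $\sH_0$, lying on the same rational system of hyperplanes as the facets of $W$. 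A constructability argument for polytopal windows of the type in \cite{BJS91, KoiWalII} then expresses $W$ as a finite union of intersections of these translates and their complements, so $W \in \mathscr{B}(A^m(W))$; this gives $(\ast)$ with $s = \mathbf{0}$, and equality as topologically regular sets is checked off the boundary via Lemma \ref{lem:Booleans same on NS}.
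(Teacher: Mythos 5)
Your reduction of (1) to the constructability statement $(\ast)$ is correct (and the forced identification $A' = A^m$ is a legitimate shortcut, matching the paper's observation that $M$ is unique given $L$), and your face-direction argument is essentially the paper's Lemma \ref{lem: substitutional => face invariant}. However, both substantive halves of the theorem are left unproved. For rationality in the forward direction you explicitly defer to ``the stabiliser-rank and hyperplane machinery of \cite{KoiWalII}'', but those results cannot simply be invoked: the rational-intersection statements there and in \cite{Wal24} are \emph{conditional} on the low complexity property \textbf{C} (equivalently, on the stabiliser ranks summing to $(n-1)k$ over every flag), and deriving \textbf{C} from substitutionality is precisely the hard content. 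The paper does this in several genuinely new steps: Lemma \ref{lem:stabilisers contain basis} (constructability of $W$ from $A(W)$ forces each $\Gamma^V_<$, $V \in \sH_0$, to contain a basis of $V$, proved by passing to a high power so the covering pieces are small); the flag computation of Lemma \ref{lem: subspaces spanned by stabilisers}, whose two key steps use hyperbolicity of $M$ (the limit $M^{-n}(\lambda^n x) \to y$ along invariant eigenlines, and a determinant argument forcing $X_m' + X_{m+1} = \tot$); this yields \textbf{C} (Theorem \ref{thm:low complexity}) and hence rational flag intersections (Corollary \ref{cor:rational flag intersections}). Even granting all of that, one still needs each supporting hyperplane to be a \emph{rational translate} of its subspace before intersection points say anything about $\mathrm{Ver}(W)$; this requires a separate argument (finite $A$-orbits on $\sH/\approx$, then solving $A(H+x) = H+x$ with $x = (\Id - A)^{-1}(g_H) \in \Q\Gamma_<$, Corollary \ref{cor:rational hyperplanes}). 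None of these steps appear in your sketch, and without them membership of the vertices in $\Q\Gamma_<$, rather than merely in the real span of $\Gamma_<$, does not follow.

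The reverse direction has a parallel gap. Your congruence setup (choosing $m$ so that $A^m$ fixes $\sH_0$ and acts trivially on $\tfrac{1}{N}\Gamma_</\Gamma_<$) matches the paper's opening move, but the concluding appeal to ``a constructability argument for polytopal windows of the type in \cite{BJS91, KoiWalII}'' points at a result that does not exist off the shelf; that construction is the actual content of the paper's Theorem \ref{thm:polytopal direction one}. The missing ingredient is that $\Gamma_<$ intersected with each subspace $X$ parallel to a cell of $W$ is \emph{dense in $X$}: by FD-invariance $X$ is $A$-invariant, rationality supplies nonzero lattice vectors in $X$, and contractivity of $A$ produces arbitrarily short ones (Lemma \ref{lem:invariant subspaces}). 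It is this stabiliser density --- not mere density of $\Gamma_<$ in $\intl$, which only gives a covering of $W$ with protruding pieces --- that lets one place exactly aligned elements of $\mathscr{B}(A^m(W))$ first around vertices, then along edges, and inductively up the skeleton, obtaining $W \in \mathscr{B}(A^m(W))$ with boundaries matching the faces of $W$ exactly. You also need one further enlargement of the power so that $A^m$ preserves each side of every $V \in \sH_0$ (the paper requires $A^m$ to act trivially on the components of $\intl \setminus \bigcup_{V \in \sH_0} V$); otherwise the local cones of $A^m(W)$ at its vertices need not match those of $W$ after lattice translation, and the vertex-matching step of your covering fails.
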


By inspecting the proofs, the powers \(m\) and \(p\) in the statement can be controlled. 

The result below generalises the well-known case of Sturmian words (see also Theorem \ref{thm:2-to-1}), in the 2--to--1 case to general dimensions and general polytopal windows. Recall the torus parametrisation $\tau\colon \Omega\to \mathbb T$ from Section \ref{sec:torus parametrisation}. 

\begin{theorem}\label{thm:polytopal PSA}
Assume that the elements of \(\Omega = \Omega(\mathcal{S},W)\), for polytopal \(W\), are \(L^m\)-sub for some \(m \in \N\). Then \(\cps \in \Omega(\mathcal{S},W)\) is LIDS (equivalently, pseudo self-affine, or fixed by a substitution map \(\sub \colon \Omega \to \Omega\)), with respect to some power \(L^p\) of the expansion, if and only if \(\tau(\cps) = [u]\) for \(u_< \in \Q\Gamma_<\).
\end{theorem}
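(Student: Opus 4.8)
The plan is to characterize the pseudo self-affine elements of $\Omega$ by combining the general LIDS criterion of Theorem \ref{thm:PSA} (specifically the equivalence with condition (4d), available since $L$ is expansive) with the rationality analysis of vertices and supporting hyperplanes developed for the polytopal case in Theorem \ref{thm:polytopal}. Since we assume the hull consists of $L^m$-sub patterns, Theorem \ref{thm:polytopal} tells us that $W$ is FD-invariant with respect to some power $A^p$ and that $W$ is rational. Note also that, in the polytopal case, the fibres of the torus parametrisation $\tau$ are finite (this is the standard fact for polytopal windows, used to invoke Theorem \ref{thm:PSA} without extra hypotheses). The key translation is that $\cps$ is LIDS with respect to $L^p$ if and only if, writing $\tau(\cps) = [v+s]$ for $v \in \phy$ and $s \in \intl$, condition (4d) holds: $W-s$ is constructable from $A^p(W-s)$ for some $p$.

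First I would reduce to a statement purely about the window. By Theorem \ref{thm:PSA}, $\cps$ with $\tau(\cps)=[v+s]$ is LIDS (for some power $L^p$) precisely when $W-s$ is constructable from $A^p(W-s)$ for some $p \in \N$; crucially, this depends only on the internal-space component $s = (v+s)_<$ modulo $\Gamma_<$, and is independent of the physical component $v$. So the set of LIDS fibres is exactly $\{[u] \in \mathbb{T} : W - u_< \text{ is constructable from } A^p(W-u_<) \text{ for some } p\}$, and the theorem amounts to showing this set equals $\{[u] : u_< \in \Q\Gamma_<\}$.

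Next I would prove the two directions of constructability in terms of rationality. For the forward direction, suppose $u_< \in \Q\Gamma_<$. The translated window $W - u_<$ is still polytopal, its supporting subspaces are unchanged (hence FD-invariant under the same $A^p$), and since $W$ is rational and $u_< \in \Q\Gamma_<$, the vertices of $W - u_<$ remain rational, i.e. all vertex differences lie in $\Q\Gamma_<$ and each vertex lies in $\Q\Gamma_<$ after placing a vertex at the origin. Applying $A^p$ (which satisfies $A^p(\Gamma_<) = \Gamma_<$ since $M^p(\Gamma)=\Gamma$) preserves both FD-invariance of face directions and rationality of vertices, so $A^p(W-u_<)$ is a polytope with the same supporting subspaces and rational vertex data on the same $\frac 1N \Gamma_<$ grid. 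The constructability of $W - u_<$ from $A^p(W-u_<)$ then follows from a cell-assembly argument: both polytopes tile the same rational arrangement of hyperplanes, so $W-u_<$ is a finite Boolean combination of $\Gamma_<$-translates of $A^p(W-u_<)$, exactly as in the constructions underlying Theorem \ref{thm:polytopal} (the iterated-function-system style union from Theorem \ref{thm:existence}, Equation \eqref{eq:Winvariant}). For the converse, if $W - u_<$ is constructable from $A^p(W-u_<)$, I would argue that the vertices of $W-u_<$ must be expressible using the supporting hyperplanes and $\Gamma_<$-translates arising in the Boolean expression, forcing all vertices (hence $u_<$, since $\mathbf 0 \in \Ver(W)$) to lie in $\Q\Gamma_<$; this is precisely the vertex-rationality half of the argument in Theorem \ref{thm:polytopal}, run with $s = u_<$.

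The main obstacle I anticipate is the converse direction: carefully extracting rationality of $u_<$ from a bare constructability relation. Constructability only controls the windows up to negligible topological defects on the boundary (Lemma \ref{lem:Booleans same on NS}), so one cannot naively read off vertex coordinates; instead one must track how the supporting hyperplanes of $W-u_<$ and $A^p(W-u_<)$ interact. The clean way is to use the stabiliser-rank / supporting-hyperplane machinery from \cite{KoiWalII} invoked in the proof of Theorem \ref{thm:polytopal}: the supporting subspaces are $A^p$-invariant, so the only new hyperplanes appearing in $A^p(W-u_<)$ are rational translates of the existing face directions, and intersecting these recovers the vertices of $W-u_<$ as solutions of rational-coefficient linear systems over the $\frac 1N \Gamma_<$ grid. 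Ensuring this forces $u_< \in \Q\Gamma_<$ (rather than merely vertex differences being rational) is where the assumption $\mathbf 0 \in \Ver(W)$ is used to pin down the absolute position. Once these rationality bookkeeping steps are in place, the equivalence with $u_< \in \Q\Gamma_<$ follows.
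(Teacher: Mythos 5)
Your proposal is correct, and it shares the paper's skeleton: invoke Theorem \ref{thm:PSA} (with finite fibres of \(\tau\), valid for polytopal windows) to convert LIDS into the constructability condition for the translated window \(W - u_<\), and then tie that condition to rationality. Where you differ from the paper is in how that link is established for an \emph{arbitrary} translate: you re-run the geometric machinery of Theorem \ref{thm:polytopal} on each window \(W - u_<\) — the cell-assembly construction of Theorem \ref{thm:polytopal direction one} in the forward direction, and the supporting-hyperplane/stabiliser-rank analysis (Corollaries \ref{cor:rational hyperplanes} and \ref{cor:window rational}) in the converse, finally using \(\mathbf{0} \in \mathrm{Ver}(W)\) to pin down \(u_<\). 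This is legitimate, because everything that machinery needs (supporting subspaces, stabilisers, property \textbf{C}, FD-invariance) is insensitive to translating the window, and the cell-assembly argument only uses that \(\mathrm{Ver}(W - u_<) \subset \tfrac{1}{N}\Gamma_<\) in absolute position, which your hypothesis \(u_< \in \Q\Gamma_<\) supplies. The paper instead runs the heavy machinery exactly once, for a fixed positioning, and transfers to arbitrary translates by two short algebraic arguments: in the forward direction (Proposition \ref{prop:polytopal PSA1}), constructability of \(W\) itself from \(A^m(W)\) gives, via Theorem \ref{thm:classification}, a substitution with \(\tau \circ \sub = M_\mathbb{T}^m \circ \tau\); writing \(u_< \in \tfrac{1}{N}\Gamma_<\), some power \(A^\ell\) acts trivially on the finite group \(\tfrac{1}{N}\Gamma_< / \Gamma_<\), so \(\sub^\ell\) followed by a physical translation permutes the finite fibre \(\tau^{-1}[u]\) and a further power fixes each of its elements — no constructability statement for \(W - u_<\) is ever proved. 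In the converse (Proposition \ref{prop:polytopal PSA2}), condition (4c) puts the vertex \(-A^p(s)\) of \(A^p(W-s)\) in \(\Q\Gamma_< - s\) (since all polytopes in \(\mathscr{B}(W-s)\) have vertices there, by the fixed-position analysis), giving \((\Id - A^p)(s) \in \Q\Gamma_<\); invertibility of \(\Id - A^p\) together with \((\Id - A^p)(\Gamma_<)\) having finite index in \(\Gamma_<\) then forces \(s \in \Q\Gamma_<\). What each approach buys: yours is self-contained at the level of windows but repeats the geometry for every rational denominator and requires checking translation-robustness of the machinery; the paper's finite-quotient and \((\Id - A^p)^{-1}\) tricks are precisely the clean resolution of the ``pin down the absolute position'' obstacle you flag, avoiding any re-derivation of constructability or hyperplane rationality for translated windows.
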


Recall that we have chosen our polytopal windows to have a vertex over the origin in the above statement. So, when the hull admits an expansive substitution, those Delone (multi)sets \emph{fixed} by one are precisely those given by cutting and projecting translations of the lattice which are rational, in the internal space component. More specifically (see Proposition \ref{prop:polytopal PSA1}), that there is a single invertible substitution \(\sub \colon \Omega \to \Omega\) so that all LIDS points of \(\Omega\) are fixed under some power of \(\sub\), perhaps followed by a translation in the physical space.

\begin{example}\label{ex:Fibonacci}
Denote \(\varphi = \frac{1+\sqrt{5}}{2}\) and \(\varphi'\) its algebraic conjugate, \(\frac{1-\sqrt{5}}{2}\). Consider the Fibonacci cut and project scheme, with \(\tot = \R^2\) and \(\Gamma = \Z^2\) with \(\phy = \ang{(\varphi,1)}\) and \(\intl = \ang{(\varphi',1)}\) the expanding and contracting subspaces, respectively, of \(M = 
\begin{pmatrix}
1 & 1 \\
1 & 0
\end{pmatrix}
\),
 which restricts to \(L \colon \phy \to \phy\) given by \(x \mapsto \varphi x\) and \(A \colon \intl \to \intl\) given by \(x \mapsto \varphi' x\). A standard choice of window is \(W = [0,1]^2_<\), the projection to the internal space of the unit square, which is the line segment from \((e_1)_<\) to \((e_2)_<\). This does not have a vertex over the origin (as assumed in the above theorem), but it does after translating it to, say, \(W = [0,1]^2 -(e_1)_< \in \Gamma_<\), which now has the origin \(v_1 = \mathbf{0}\) as a vertex (the other vertex being \(v_2 = (e_2-e_1)_<\)). Since \((e_1)_< \in \Gamma_<\), this does not affect the statement of Theorem \ref{thm:polytopal PSA} (essentially, we just shift the torus parametrisation in the physical space component, taking a new origin of the lattice).

Thus, for example, the cut and project sets \(\cps\in \Omega\) with \(\tau(\cps) = [\mathbf{0}]\) should be LIDS. These are singular, since both \(\mathbf{0}\) and \(e_2-e_1 \in \Gamma\) project to \(\partial W\). Precisely, the singular patterns are given by excluding exactly one of these two, then cutting and projecting the remainder of the lattice. This results in two sequences of long and short intervals (denoted \(a\) and \(b\)) which are fixed under the second power of the standard Fibonacci substitution \(\sub\), given by \(a \mapsto ab\), \(b \mapsto a\), but not the first power.

Not all LIDS points are singular, since the singular points are integral translates (in the internal space) but from Theorem \ref{thm:polytopal PSA} we also have the rational translates. For example, consider the non-singular translates \(\Gamma + x_i\) for \(x_1 = c\), \(x_2 = c + (e_1)/2\) and \(x_3 = c + (e_2)/2\), where \(c = \frac{1}{2}(v_1+v_2) = \frac{1}{2}(e_2-e_1)_< \) is the centre of \(W\). The corresponding cut and project sets satisfy \(\tau(\cps_i) = [x_i]\), where \((x_i)_<\) is rational. Notice that the \(\Gamma + x_i\) are the only other translates of the lattice which are invariant under the map \(x \mapsto -x\) and, consequently, each \(\cps_i\) is palindromic. Taking the third power (no lower one is sufficient), one may check that \(W - s_i\) is constructable from \(A^3(W - s_i)\), where \(s_i = (x_i)_<\). And, indeed, each \(\cps_i\) is fixed under \(\sub^3\), up to a translation in the physical space. In fact, each \(\cps_i\) is given by the fixed point of the alternative (palindromic) substitution \(a \mapsto ababa\), \(b \mapsto aba\), where \(\cps_1\) has the origin on the boundary of two \(a\) tiles, \(\cps_2\) has the origin contained in the centre of an \(a\) tile and \(\cps_3\) has the origin in the centre of a \(b\) tile.
\end{example}

\subsection{Countability of substitutional Euclidean cut and project schemes}
In this, and the following subsections, we will derive some simple consequences of the theorems above.

\begin{definition}
We say that two (Euclidean) cut and project schemes \(\mathcal{S} = (\tot,\phy,\intl,\Gamma)\) and \(\mathcal{S}' = (\tot',\tot_\vee',\tot_<',\Gamma')\) are \textbf{isomorphic} if there is a linear isomorphism \(f \colon \tot \to \tot'\) for which \(f(\phy) = \tot_\vee'\), \(f(\intl) = \tot_<'\) and \(f(\Gamma) = \Gamma'\).
\end{definition}

Clearly, by choosing an isomorphism \(\Gamma \cong \Z^k\), every Euclidean cut and project scheme is isomorphic to \((\R^k,\phy,\intl,\Z^k)\) for some choice of physical and internal space. That is, modulo isomorphism, we may take \(\Gamma\) as the standard integer lattice. Note that if \(\mathcal{S}\) is isomorphic, under \(f\), to \(\mathcal{S}'\), then the elements of \(\Omega(\mathcal{S},W)\) are equal to those of \(\Omega(\mathcal{S'},f(W))\), up to a linear rescaling of the physical space (compare with Lemma \ref{lem:rescaling of cps is cps}). So, if we only wish to consider cut and project sets up to rescaling by a linear automorphism, we need only consider cut and project schemes up to isomorphism. Note that the property of being substitutional is preserved by linear automorphisms, and the map $L$ only changes to $f\circ L\circ f^{-1}$. 

In the proof of the next theorem we will need the following notion.  

\begin{definition}\label{def:GIFS}
For each  $i,j\in \{1, \dots, \ell\}$, let $f_{i,j}: \R^d\to \R^d$ be a contraction. Assume that the tuple $(W_1, \dots, W_\ell)\subset \R^{r\ell}$ of compact non-empty sets satisfies that for each $i=1, \dots, \ell$, 
\begin{equation}\label{eq:GIFSdef}
W_i=\bigcup_{j=1}^{\ell} f_{(i,j)}(W_j). 
\end{equation}
We then call the tuple $(W_1, \dots, W_\ell)$ {\bf attractor of a graph directed iterated function system (GIFS)}. Given the maps $f_{(i,j)}$, the existence of a unique tuple of compact, non-empty sets $(W_1, \dots, W_\ell)$ satisfying \eqref{eq:GIFSdef} is guaranteed by standard theory (see e.g. \cite[Theorem 1]{MW88}. 
\end{definition}

\begin{theorem}\label{thm:countably many}
Up to isomorphism, there are only a countable number of cut and project schemes \(\mathcal{S}\) for which there is some window \(W\) making the elements of \(\Omega(\mathcal{S},W)\) all \(L\)-sub for some expansive \(L\). For each such scheme \(\mathcal{S}\), there are only countably many such windows \(W\), up to translation.
\end{theorem}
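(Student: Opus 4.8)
The plan is to reduce both countability claims to the characterisation of $L$-Sub cut and project sets supplied by Theorem \ref{thm:main}, using the fact (Theorem \ref{thm:recognisability}) that for expansive $L$, being $L$-sub is equivalent to being $L$-Sub. As noted before the statement, working up to isomorphism of schemes lets us fix $\Gamma = \Z^k$ once and for all, so a scheme is determined by the pair $(\phy,\intl)$ of complementary subspaces of $\R^k$. By Theorem \ref{thm:main}, if $\Omega(\mathcal{S},W)$ consists of $L$-sub patterns for some expansive $L$, then there exists a linear automorphism $M \colon \R^k \to \R^k$ with $M(\Gamma) = \Gamma$, $M(\phy) = \phy$, $M(\intl) = \intl$, and $L = M|_\phy$. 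The condition $M(\Z^k) = \Z^k$ means $M \in \mathrm{GL}_k(\Z)$, and there are only \emph{countably many} such integer matrices. This is the crux: the scheme is forced to carry a nontrivial integral automorphism preserving both $\phy$ and $\intl$, and these automorphisms come from a countable set.

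First I would make precise how $M$ pins down the scheme up to isomorphism. Since $L = M|_\phy$ is expansive, $\phy$ must be contained in the sum of generalised eigenspaces of $M$ (over $\C$) associated to eigenvalues of modulus $> 1$; complementarily, since (by Theorem \ref{thm:main}) $A = M|_\intl$ is contractive, $\intl$ lies in the span of eigenspaces with eigenvalues of modulus $< 1$. As $\phy$ and $\intl$ are $M$-invariant, complementary, and of the correct dimensions, $M$ must in fact be hyperbolic with $\phy$ its expanding and $\intl$ its contracting subspace; hence $\phy$ and $\intl$ are \emph{determined} by $M$ as the sums of the corresponding generalised eigenspaces. (One must check no eigenvalue has modulus exactly $1$: an expanding/contracting splitting into exactly the dimensions $d$ and $n=k-d$ forces this, since any modulus-one eigenspace would have nowhere to go.) Therefore the map $M \mapsto (\phy,\intl)$ sends the countable set of admissible $M \in \mathrm{GL}_k(\Z)$ onto the set of schemes admitting some expansive $L$-sub window, and the latter is countable. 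Taking the union over all $k \in \N$ (also countable) gives the first claim.

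For the second claim, fix such a scheme $\mathcal{S}$ and a compatible $M$ (hence $A = M|_\intl$). By Theorem \ref{thm:main}(4a), for each admissible window $W$ there is some $s \in \intl$ with $W - s$ constructable from $A(W)$; and by Theorem \ref{thm:existence} (see also Corollary \ref{cor:sub=>GIFS} and the GIFS description of Definition \ref{def:GIFS}), up to MLD equivalence and translation the window is recovered as the attractor of a GIFS whose maps have the form $x \mapsto A(x) + z_<$ for finitely many $z \in \Gamma$. Since $\Gamma = \Z^k$ is countable, the set of finite tuples $Z \subset \Gamma$ is countable, and for each such finite collection of defining data the attractor tuple $(W_1,\dots,W_\ell)$ is \emph{unique} by the standard GIFS fixed-point theorem (\cite[Theorem 1]{MW88}). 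Thus each scheme admits at most countably many windows of this form, and hence—up to translation and the acceptance-domain refinement of Corollary \ref{cor:sub=>GIFS}—only countably many admissible windows $W$.

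The step I expect to be the main obstacle is the second claim, and specifically pinning down that only countably many windows arise \emph{up to translation}. The difficulty is that Theorem \ref{thm:main}(4a) only gives constructability of \emph{some} translate $W-s$, and the substitutional property is MLD-invariant rather than being literal self-similarity, so $W$ itself need not be a GIFS attractor—only its acceptance-domain refinement is (as flagged in Remark after Theorem \ref{thm:main} and in Corollary \ref{cor:sub=>GIFS}). I would therefore route the argument through that refinement: the genuinely self-referential object is the finite family of acceptance domains, which \emph{is} a GIFS attractor determined by a finite integral dataset, and the original window is a finite Boolean combination (via $\wedge,\vee,\neg$) of $\Gamma_<$-translates of these domains. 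Since there are countably many finite integral datasets, countably many resulting attractor families, and—for each—countably many finite Boolean reconstructions using $\Gamma_<$-translates, the total count of admissible $W$ up to translation is countable. Care is needed to confirm the reconstruction data (the finitely many lattice translations and the Boolean word) also range over a countable set, which they do since $\Gamma$ is countable and Boolean expressions of bounded finite length over a countable alphabet form a countable set.
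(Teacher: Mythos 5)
Your proposal is correct and follows essentially the same route as the paper: the first claim via the countability of $\mathrm{GL}(k,\Z)$ together with the fact (Theorem \ref{thm:existence}, which you re-derive from Theorem \ref{thm:main}) that the scheme is determined as the expanding/contracting splitting of a hyperbolic integral automorphism $M$, and the second claim via Corollary \ref{cor:sub=>GIFS}, writing the acceptance-domain refinement of $W$ as the attractor of a GIFS with maps $x \mapsto A(x)+g$ for $g \in \Gamma_<$, whose uniqueness plus the countability of the finite defining data yields countably many windows up to translation. Your extra Boolean-reconstruction step at the end is harmless (and slightly more careful than the paper, which simply notes that the attractor tuple determines $W$ since the acceptance domains tile it), but the argument is the same.
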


\begin{proof}
Consider a scheme \(\mathcal{S}\), which we may assume, by the above, has \(\tot = \R^k\) and \(\Gamma = \Z^k\). By Theorem \ref{thm:existence}, we must have that \(\phy\) and \(\intl\) are the expanding and contracting subspaces, respectively, of some hyperbolic linear map \(M\) on \(\tot\), with \(M(\Z^k) = \Z^k\), which thus corresponds to a matrix in \(\mathrm{GL}(k,\Z)\), of which there are countably many.

Now fix a cut and project scheme \(\mathcal{S}\) satisfying the above, thus the necessary conditions (1--3) of Theorem \ref{thm:main} hold, with respect to \(M\). We also obtain a contraction \(A \colon \intl \to \intl\). In the polytopal case, since the possible set of locations \(\Q\Gamma_<\) of vertices is countable, it quickly follows that there are only countably many polytopal windows making \(\Omega\) consist of \(L\)-sub patterns. 

For general windows, things are slightly more complicated. However, we will see in Corollary \ref{cor:sub=>GIFS} that,  \(W\) is a finite union of subsets \(W_1'\), \ldots, \(W_\ell' \in \mathscr{B}(W)\), so that for every $i=1, \dots, \ell$, 
\[
W'_i=\bigcup_{j=1}^{\ell} A(W_j)+g_j, 
\]
where for each $j=1, \dots, \ell$, the map \(x \mapsto A(x) - g_j\) is a contraction, with \(g_j \in \Gamma_<\). Clearly, the tuple $(W_1', \dots, W_{\ell}')$ is then the attractor of a GIFS and hence uniquely characterised by the above equality. Since \(\Gamma_<\) is countable, there are only countably many such choices of maps, so there are only countably many attractors of GIFS of this form. It follows that there are only countably many such windows $W$, up to translation.
\end{proof}

\subsection{Characterisation of substitutional cut and project sets in \(2\)--to--\(1\) case}
Theorems \ref{thm:main} and \ref{thm:polytopal} give a full characteristion of those cut and project sets which are substitutional. However, they are both by nature somewhat non-constructive, relying on the statement `there exists a linear map \(M\)'. In this section we demonstrate that they can, however, be used to characterise substitutional cut and project sets in concrete terms. Versions of the following result are well-known (see, for example \cite[Chapter 6]{PF02}), but depending on the exact set-up, there are minor discrepancies in formulation. In particular, our notion of being substitutional allows for adding locally derivable labels when defining the substitution rule (see Example \ref{ex:non-Sturm sub}).

\begin{theorem}\label{thm:2-to-1}
Consider a cut and project scheme \(\mathcal{S}\) with \(k=2\), \(d=1\), \(\Gamma = \Z^2\) and \(\phy = \ang{(1,\alpha)}_\R\) for some irrational \(\alpha \in \R\). Then, if the cut and project sets of \(\Omega = \Omega(\mathcal{S},W)\) are \(L\)-sub with \(L\) expansive, for some \(\intl\) and \(W\), then \(\alpha\) must be a quadratic irrational. Conversely, if \(\alpha\) is a quadratic irrational, there is a unique choice of \(\intl\), and there are choices of windows \(W\), for which the elements of \(\Omega\) are \(L\)-Sub, with \(L\) expansive. In this latter case, \(W\) may be taken, for example, to be any union of closed intervals with rational endpoints. 

In this case, \(\cps \in \Omega\) is pseudo self-affine, fixed by some (expansive) substitution \(\sub \colon \Omega \to \Omega\), if and only if \(\tau[\cps] = [u]\) for \(u_< \in \Q\Gamma_<\).
\end{theorem}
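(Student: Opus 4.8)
The plan is to obtain all three assertions from the general structure theorems already proved, so that the only genuinely new ingredient is an arithmetic construction of an integer matrix in the converse direction. Throughout I write $K = \Q(\alpha)$ and, when $\alpha$ is a quadratic irrational, $\alpha'$ for its Galois conjugate.

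\emph{Necessity of quadratic irrationality.} Suppose the elements of $\Omega(\mathcal{S},W)$ are $L$-sub with $L$ expansive. Since cut and project sets are aperiodic, Theorem \ref{thm:recognisability} upgrades this to $L$-Sub, so Theorem \ref{thm:main} supplies a linear map $M \colon \R^2 \to \R^2$ with $M(\Z^2) = \Z^2$ and $M(\phy) = \phy$, $L = M|_{\phy}$. Thus $M \in \mathrm{GL}(2,\Z)$ and $(1,\alpha)$ is an eigenvector, with eigenvalue $\lambda$ the scalar by which $L$ acts; expansiveness gives $|\lambda| > 1$. The characteristic polynomial of $M$ is monic of degree $2$ with integer coefficients, so $\lambda$ is an algebraic integer of degree at most $2$. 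I would then rule out $\lambda \in \Q$: were $\lambda$ rational it would be an integer, $M - \lambda \Id$ would be a nonzero singular integer matrix (nonzero as $M$ is hyperbolic, hence not scalar), hence of rank $1$ with rational kernel, contradicting that its kernel contains the irrational direction $(1,\alpha)$. Hence $\lambda$ is a quadratic irrational; reading $M(1,\alpha)^{\mathsf T} = \lambda(1,\alpha)^{\mathsf T}$ coordinatewise gives $\lambda = a + b\alpha$ with $b \neq 0$ (else $\lambda \in \Q$), so $\alpha = (\lambda - a)/b \in \Q(\lambda)$ is a quadratic irrational as well.

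\emph{Sufficiency and the window.} Conversely, assume $\alpha$ is a quadratic irrational with minimal relation $\alpha^2 = p\alpha + q$ over $\Q$. A direct computation shows that $M = \left(\begin{smallmatrix} a & b \\ bq & a + bp \end{smallmatrix}\right)$ has $(1,\alpha)$ as eigenvector with eigenvalue $\lambda = a + b\alpha$ whenever the entries are integral. To force $M \in \mathrm{GL}(2,\Z)$ and hyperbolicity I would choose $\lambda$ to be a unit of modulus greater than $1$ in a suitable order: fixing $B \in \N$ with $Bp, Bq \in \Z$, the set $\Z + B\alpha\,\Z = \Z[B\alpha]$ is an order in $K$, and by Dirichlet's unit theorem (equivalently, the solvability of Pell's equation in a real quadratic field) it contains a unit $\lambda = a + b\alpha$ with $b \in B\Z$ and $|\lambda| > 1$. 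Then every entry of $M$ is an integer, $\det M = N(\lambda) = \lambda\lambda' = \pm 1$ and $|\lambda'| < 1$, so $M$ is hyperbolic with expanding eigenline $\phy$; applying the Galois automorphism to the eigenvector equation identifies the contracting eigenline as $\langle(1,\alpha')\rangle$. Since Theorem \ref{thm:existence} forces $\intl$ to be the contracting subspace, and every admissible $M$ shares these two eigenlines, $\intl = \langle(1,\alpha')\rangle$ is uniquely determined. With conditions (1--3) of Theorem \ref{thm:main} in place I invoke Theorem \ref{thm:polytopal}: here $\intl$ is one-dimensional, so the only supporting subspace is $\{\mathbf{0}\}$ and face-direction invariance under $A$ is automatic, leaving rationality of $W$ as the sole condition. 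Hence any finite union of closed intervals with endpoints in $\Q\Gamma_<$ (in suitable coordinates, the intervals with rational endpoints), after translating a vertex to the origin, makes $\Omega$ consist of $L^m$-sub patterns; renaming $L := M^m|_{\phy}$ (still expansive) and applying Theorem \ref{thm:recognisability} gives the asserted $L$-Sub windows.

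\emph{Pseudo self-affine points.} The final characterization is a direct specialization of Theorem \ref{thm:polytopal PSA} to the one-dimensional polytopal situation just established: the hull consists of $L^m$-sub patterns, so $\cps \in \Omega$ is LIDS --- equivalently, in this aperiodic expansive setting, pseudo self-affine, equivalently fixed by some power of a substitution map up to a physical-space translation --- precisely when $\tau(\cps) = [u]$ with $u_< \in \Q\Gamma_<$. The main obstacle is the converse construction of $M$: all topological and dynamical content is packaged in the cited theorems, but producing an integer hyperbolic matrix with the prescribed irrational eigendirection requires the arithmetic of units in $\Z[B\alpha]$, where one must check integrality of the entries (the role of $B$) together with the existence of a unit of modulus greater than $1$ having $b$ divisible by $B$. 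I expect the precise match between ``rational endpoints'' and membership in $\Q\Gamma_<$, and the bookkeeping of the power $m$, to be routine once this matrix is in hand.
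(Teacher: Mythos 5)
Your proposal is correct, and its overall architecture matches the paper's: necessity via Theorem \ref{thm:main} (the eigenvector $(1,\alpha)$ of an integer matrix forces $\alpha$ quadratic), sufficiency by exhibiting a hyperbolic $M \in \mathrm{GL}(2,\Z)$ fixing $\phy$ and then invoking Theorems \ref{thm:existence} and \ref{thm:polytopal}, and the LIDS characterisation read off from Theorem \ref{thm:polytopal PSA}. The genuine difference is how $M$ is produced in the converse direction. The paper uses continued fractions: by Lagrange's theorem the expansion of $\alpha$ is eventually periodic, and after a reduction (via some $T \in \mathrm{GL}(2,\Z)$) to $0 < \alpha < 1$, the matrix is built explicitly as a product of elementary matrices $\left(\begin{smallmatrix}1&1\\0&1\end{smallmatrix}\right)$, $\left(\begin{smallmatrix}0&1\\1&0\end{smallmatrix}\right)$ realising one period, conjugated to handle the pre-periodic part, with expansiveness arranged by passing to $M^{-1}$ if needed. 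You instead write down the family $M = \left(\begin{smallmatrix} a & b \\ bq & a+bp \end{smallmatrix}\right)$ directly from the minimal relation $\alpha^2 = p\alpha + q$, and use the arithmetic of the order $\Z[B\alpha]$ (Dirichlet's unit theorem, equivalently Pell) to pick a unit $\lambda = a + b\alpha$ with $b \in B\Z$ and $|\lambda| > 1$, which simultaneously gives integrality of the entries, $\det M = N(\lambda) = \pm 1$, and hyperbolicity. Your route avoids the paper's case analysis (purely periodic versus eventually periodic expansions, and the reduction into $(0,1)$), and it makes the uniqueness of $\intl$ fully transparent: Galois-conjugating the eigenvector equation shows that \emph{every} admissible $M$ has eigenlines $\ang{(1,\alpha)}$ and $\ang{(1,\alpha')}$, the latter necessarily contracting. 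What it costs is the appeal to the unit theorem for non-maximal orders, whereas the paper's construction is self-contained and explicit in the partial quotients of $\alpha$. Your necessity argument is a correctly fleshed-out version of the paper's one-line observation, and your handling of the window (FD-invariance automatic in codimension one, rationality from Theorem \ref{thm:polytopal}, renaming $L$ as a power of $M|_{\phy}$) and of the pseudo self-affine points coincides with the paper's.
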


\begin{proof}
Assume first that such choices \(W\), and \(\intl\) exist for which \(\cps\) is \(L\)-sub (equivalently, \(L\)-Sub, using that \(L\) is expansive). We know from Theorem \ref{thm:main} that there is a linear automorphism \(M\colon \R^2 \to \R^2\) which preserves \(\Gamma\) and \(\E_\vee\). In particular, \((1,\alpha)\) is an eigenvector for an integer matrix, which makes $\alpha$ a quadratic irrational.

Now assume that \(\alpha\) is a quadratic irrational, and denote \(\ell \coloneqq \phy\). By Theorem \ref{thm:2-to-1}, we need to find \(M \in \mathrm{GL}(2,\Z)\) with \(M\ell = \ell\), with \(M |_{\phy}\) expansive. This makes $\phy$ an eigendirection of $M$, and we will then choose $\intl$ to be another one. 

Without loss of generality, we may take \(0 < \alpha < 1\): Otherwise, it is not hard to see that there is some \(T \in \mathrm{GL}(2,\Z)\) for which \(\ell' = T\ell\) intersects the line segment between \((1,0)\) and \((1,1)\). In this case, \(\ell' = \ang{(1,\alpha')}\) where \(0 < \alpha' < 1\) remains a quadratic irrational (since it has a rational expression in terms of \(\alpha\)). The below argument constructs \(M \in \mathrm{GL}(2,\Z)\) with \(M\ell' = \ell'\) and so may use \(T^{-1}MT \in \mathrm{GL}(2,\Z)\) with \(T^{-1}MT\ell = \ell\).

The continued fraction expansion of \(\alpha\) is eventually periodic. Let us first investigate the case when \(\alpha\) has a periodic continued fraction expansion, so \(\alpha=[\overline{a_0, \dots, a_n}]\), where the notation \(\overline{a_0, \dots, a_n}\) means that the finite word \(a_0, \dots, a_n\) is repeated indefinitely. Set the notation
\[
A\coloneqq \left(\begin{array}{cc}
1&1\\
0&1
\end{array}\right)
\ \textrm{and} \
B\coloneqq \left(\begin{array}{cc}
0&1\\
1&0
\end{array}\right)\ , \ \text{ thus }
A^{a_j}B=\left(\begin{array}{cc}
a_j&1\\
1&0
\end{array}\right)\ .  
\]
It is a matter of a simple computation to check that if a line \(k\) has the slope \(s\) (i.e., \(k = \ang{(1,s)}_{\R}\)), then \(A^{a_j}B k\) has the slope \(\frac{1}{a_j+s}\). If we now let 
\[
M = A^{a_0} B A^{a_1} B \dots B A^{a_n} B 
\]
and apply the observation on the slopes above repeatedly to \(\alpha\), we see that the slope of \(M\ell\) reduces to
\[
\frac{1}{a_0+\frac{1}{a_1+\frac{1}{\ddots+\frac{1}{a_n+\alpha}}}}=\alpha\ ,
\]
using \(\alpha=[\overline{a_0, \dots, a_n}]\). Thus, \(M\ell = \ell\). The matrix \(M\) also preserves \(\Gamma\) as it is an integer matrix of determinant \(\pm 1\), and hence is the matrix required.

We now check the case for an \(\alpha\) with an eventually periodic continued fraction expansion \(\alpha = [c_0\dots c_m,\overline{a_0, \dots, a_n}]\). Denote by \(\beta = [\overline{a_0, \dots, a_n}]\) the irrational corresponding to the periodic part of the continued fraction expansion, and denote by \(\ell'\) the line of slope \(\beta\). Analogously to the argument above, applying \(P \coloneqq A^{c_0} B \dots B A^{c_m} B\) to \(\ell'\) produces a line of slope 
\[
\frac{1}{c_0+\frac{1}{c_1+\frac{1}{\ddots+\frac{1}{c_n+\beta}}}} = \alpha\ . 
\]
That is, \(P\ell' = \ell\). Hence, choosing \(M' = A^{a_0} B A^{a_1} B \dots B A^{a_n} B\) and \(M = PM'P^{-1}\) gives a matrix for which \(M(\ell) = PM'P^{-1}(\ell) = PM'(\ell') = P(\ell') = \ell\). Notice that \(M'\) and \(P\) are integer matrices of determinant \(\pm 1\) and in particular, \(M\) preserves \(\Gamma\). Since \(M(\ell) = \ell\), with the latter irrational (i.e., not intersecting non-zero lattice points), it is clear that the eigenvector \((1,\alpha)\) has a real, irrational eigenvalue \(\lambda\), in particular \(|\lambda| \neq 1\). Replacing \(M\) with \(M^{-1}\), if necessary, it follows that \(M |_{\phy}\) is expansive, so the second claim now follows from Theorem \ref{thm:existence}.

If we now take \(W \subset \intl\) to be polytopal, it is a union of closed intervals. The only supporting subspace is \(\{\mathbf{0}\}\) so the scheme is FD-invariant (as is always the case in codimension \(1\)). Thus, \(\Omega(\mathcal{S},W)\) consists of \(L^m\)-sub elements, for some \(m \in \N\), if and only if \(W\) is rational, by Theorem \ref{thm:polytopal} (in fact, if all the boundary points of \(W\) are integral, it is not hard to show that we may take \(m=1\) here, as clearly \(W\) is constructable from \(A(W)\) in this case). Equivalently, if we choose a translate of \(W\) so that some boundary point is rational, all vertices of \(W\) are rational.

The final statement, on the pseudo self-affine elements, follows directly from Theorem \ref{thm:polytopal PSA}, at least when we choose one vertex over the origin. If, instead, vertices are merely in \(\Q\Gamma_<\), then the result clearly still holds, as translating a boundary point over the origin just changes the torus parametrisation by addition of an element in \(\Q\Gamma_<\).
\end{proof}

In the Sturmian case (where the window is a projection of \([0,1]^2\)), similar but slightly different results have appeared before, that connect the property of being substitutive with the slope being quadratic irrational. For instance, in \cite[Theorem 4.1]{PT11}, the slope (taken with \(\alpha \in [0,1]\)) needs to be a Sturm number i.e., with Galois conjugate not in \([0,1]\). In \cite[Theorem 1.5(iii)]{Lamb:OTCPM}, the slope \(\mu > 1\) (or its inverse, if \(0 < \mu < 1\)) needs to be a reduced quadratic irrational, meaning that its Galois conjugate is in \((-1,0)\).

We get the more satisfying condition of the slope just needing to be a quadratic irrational. Part of the explanation of the difference is that, in our result (which also needs to respect the geometric structure and not merely the symbolic sequence), we may choose an appropriate projection to the physical space i.e., choose the internal space, which is determined as the contracting eigenspace of \(M \in \mathrm{GL}(2,\Z)\). In the result of \cite{Lamb:OTCPM}, the orthogonal projection is used instead. The below explains this discrepancy with an explicit example:

\begin{example}\label{ex:non-Sturm sub}
Consider the cut and project scheme \(\mathcal{S}\) with \(\tot = \R^2\), \(\Gamma = \Z^2\) and \(M\) the linear map defined by the matrix \(\begin{pmatrix}
2 & -1 \\
1 & -1
\end{pmatrix} \in \mathrm{GL}(2,\Z)
\). This has expanding subspace \(\phy = \ang{(\alpha,1)}_\R\) and contracting subspace \(\intl = \ang{(\alpha',1)}_\R\), with eigenvalues \(\varphi\) and \(\varphi'\), respectively, where \(\alpha = \frac{3+\sqrt{5}}{2}\), \(\alpha' = \frac{3-\sqrt{5}}{2}\), \(\varphi = \frac{1+\sqrt{5}}{2}\) and \(\varphi' = \frac{1-\sqrt{5}}{2}\).

\begin{figure}
\includegraphics[width=\textwidth,page=2]{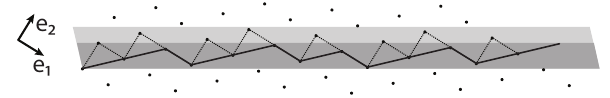}
\caption{The cut and project scheme arising from Example \ref{ex:non-Sturm sub}. }
\label{fig:backtracking}
\end{figure}

According to Theorem \ref{thm:2-to-1}, this cut and project scheme produces substitutional patterns, when equipped with a (for example) canonical window. As usual, for a strip defined by a (translate of) the canonical window, the lifted cut and project set defines a `staircase' of points connected by a sequence of moves by \(e_1 = (1,0)\) and \(e_2 = (0,1)\) (see Figure \ref{fig:backtracking}). However, the (inverse of) the slope, which is \(\alpha > 1\) has Galois conjugate \(\alpha' > 0\), so this is not a reduced quadratic irrational. As a result, unlike the standard case of a projected point set with a sequence of two gap lengths, we instead get three gap lengths:
\[
a = (1,0)_\vee \ , b = (1,1)_\vee \ \text{ and } c = (0,-1)_\vee .
\]
Note, in particular, that an \(e_2\) move in the total space projects down to a `backwards' move in the physical space. This backtracking behaviour is also seen upon applying the matrix \(M\), as also illustrated in Figure \ref{fig:backtracking}; compare to Figure \ref{fig:Fibonacci}.

Despite these complications, with the natural projection \(\pi_\vee\) (i.e., the one determined by \(\intl\)), the hull \(\Omega(\mathcal{S},W)\) consists of substitutional patterns. However, it is not quite so simple as these being generated, symbolically, by a substitution on the alphabet \(\{a,b,c\}\). We must enhance these labels (but not arbitrarily: we are constrained to use local rules, so the resulting patterns are still MLD and essentially equivalent to the original Delone sets).

Explicitly, this can be realised as follows. First, each Delone set is MLD to the tiling of \(a\), \(b\), \(c\) interval tiles, as above. Although already determined by their lengths, let each geometric tile also carry its label \(a\), \(b\) or \(c\). To these tilings, apply the local derivation which replaces the label of a \(b\) tile by a \(b'\), if the original \(b\) tile is followed by another \(b\). Then, replace any \(c\) tile that is followed by a \(b'\) with a \(c'\). Finally, for each \(b\) tile that is followed by a \(c\), replace its label with \(b^*\). On the enhanced alphabet \(\mathcal{A} = \{a,b,b',b^*,c,c'\}\), we define the following symbolic substitution:
\[
\sub =
a \mapsto b a \ , b \mapsto b c \ , b' \mapsto a \ , b^* \mapsto b c' \ , c \mapsto b' \ , c' \mapsto b .
\]
The geometric realisation of the above (which defines tile lengths using the left-eigenvector of the substitution matrix in the standard way) defines the same translational hull as those given by \(\Omega(\mathcal{S},W)\), after applying the MLD redecoration to tilings with labels in \(\mathcal{A}\). A useful translate of the window here is \(W = ([0,1]^2 + (3,1))_<\), since then the contracted interval window has the same lower endpoint as the original. That the above symbolic substitution defines the same patterns can then be checked by a simple (but lengthy) analysis of the acceptance domains for occurrences for each tile type, for which we omit the details.
\end{example}

\subsection{The codimension 1 case}
Some of the ideas above can be extended to the general codimension 1 case, that is, when \(k = d+1\) (and, without loss of generality, \(\Gamma = \Z^{d+1}\)). By Theorem \ref{thm:main}, if a Euclidean cut and project scheme generates \(L\)-sub Delone sets (with \(L\) expansive), we need a matrix \(M \in \mathrm{GL}(d+1,\Z)\) which has the physical space as an expanding subspace and \(\intl\) as the remaining (necessarily contractive) eigenline. So we need a matrix \(M \in \mathrm{GL}(d+1,\Z)\) with characteristic polynomial that has all but one of its eigenvalues strictly greater than one in modulus, where the projection of the expanding subspace to \(\mathbb{T} = \tot / \Gamma\) is dense. Conversely, given an integer polynomial \(p = x^{d+1} + c_dx^d + \cdots + c_1 x + c_0\), one can form its companion matrix, which has minimal and characteristic polynomial equal to \(p\) and is in \(\mathrm{GL}(d+1,\Z)\) if and only if \(c_0 = \pm 1\). If all but one of the roots are greater than \(1\) in absolute value, this has a \(d\)-dimensional expanding subspace and \(1\)-dimensional contracting subspace can be taken as, respectively, the physical and internal space. As in the \(2\)--to--\(1\) case, we can easily find a canonical window (an interval with endpoints in \(\Gamma_<\)) that results in the generated cut and project sets being \(L\)-sub.

\begin{figure}
\includegraphics[width=\textwidth,page=1]{Two_slopes_flat.pdf}
\caption{The Fibonacci cut and project scheme.}
\label{fig:Fibonacci}
\end{figure}

\subsection{The dimension 1 case}
Taking the inverse of \(M\) in the above, we obtain a matrix with \(1\)-dimensional expanding line and \(d\)-dimensional contracting subspace, which may serve as the physical and internal spaces of a \(k\)--to--\(1\) cut and project scheme (provided \(\Gamma_<\) is dense in \(\intl\)). One may find a window \(W \subset \intl\) which is the attractor to an IFS, with \(W\) a union of \(\Gamma_<\) translated copies of \(A(W)\), so \(W\) is constructable from \(A(W)\), see the proof of Theorem \ref{thm:existence} later. For any compatible window, the resulting \(L\)-sub cut and project sets must be linearly repetitive, so have patch-counting functions which grow like \(\asymp r^d = r\) \cite{LP02}. This is impossible for a polytopal window when \(k \geq 3\), where the complexity must be \(\asymp r^{k-1}\) (see \cite{Wal24,KoiWalII,KoiWal21}), so the windows here are necessarily more complicated, with fractal-like boundary. Such windows are well-known, such as the Rauzy fractal arising from the Tribonacci substitution for \(k = 3\) \cite{Arnoux:PSARF}.

\subsection{Ammann--Beenker and variant examples}

Consider the Ammann--Beenker cut and project scheme \(\mathcal{S}\), for which we have \(\tot = \R^4\), \(\Gamma = \Z^4\) and \(\phy\), \(\intl\) are the expanding and contracting subspaces, respectively, of the matrix
\[
M \coloneqq 
\begin{pmatrix}
1  & 1 & 0 & -1 \\
1  & 1 & 1 & 0 \\
0  & 1 & 1 & 1 \\
-1 & 0 & 1 & 1
\end{pmatrix}
\in \mathrm{GL}(2,\Z) .
\]
We have that \(L \coloneqq M |_{\phy}\) and \(A \coloneqq M |_{\intl}\) are the similarities \(x \mapsto (1+\sqrt{2})x\) and \(x \mapsto (1-\sqrt{2})x\), respectively. For a polytopal window \(W\), FD-invariance is automatic (as it always is, for \(A\) a similarity \(x \mapsto \lambda x\)), so any rational window defines substitutional cut and project sets, for an appropriate power. 

The choice of window for the classical Ammann--Beenker is given by \([0,1]^4_<\), or any translate of this, which is clearly rational. In fact, for the translate \(W = [0,1]^4 - c\), where \(c\) is the centre of the octagon \(W\), it turns out that \(W\) is non-singular, so defines a literal cut and project set \(\cps = \cps(\mathcal{S},W)\). Let us now indicate how to construct this \(W\) from lattice translates of \(A(W)\): The vertices of \(A(W)\) are \(\Gamma_<\) translates of the corresponding vertices of \(W\),  and one may write
\[
W = \bigcup_{g \in X} A(W) - g_< .
\]
Here \(X\) is a set of \(25\) elements of \(\Gamma\): one given by \(\mathbf{0}\), providing the centred \(A(W)\) in the above union,  one $g_<$ for each vertex of $W$, and another two $g_<$ from each edge of $W$ such that the translates of $A(W)$ by these cover the edge. See Figure \ref{fig:translatesAmmann}. 

\begin{figure}
\includegraphics[width=\textwidth]{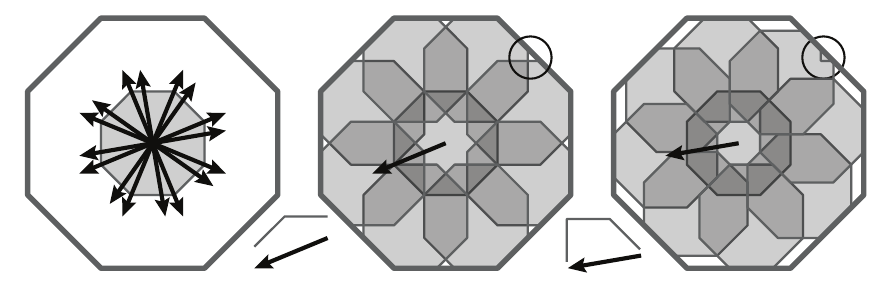}
\caption{This figure indicates how to choose the set $X_<$ so that $W$ is covered by the collection $\{A(W)+g_<\mid g_<\in X)_<$. Choosing the vertices of $W$ does not suffice, and a part of the edge remains uncovered, and we end up with a collection of 25 points.}
\label{fig:translatesAmmann}
\end{figure}

Since the make up of \(W\) as a union of $\Gamma_<$-translates of  $A(W)$ is so simple, we may express the substitution rule as \(\cps \mapsto L\cps + C\), where \(C = X_\vee\) i.e., we inflate and then replace all points with a fixed cluster. This cluster is invariant under the symmetry group \(D_8\) of the Ammann--Beenker patterns, by the choices we've made above.

One may easily come up with other symmetrical windows for the Ammann--Beenker scheme which are attractors to an IFS. For instance, there is a window \(W'\) satisfying
\[
W' = \bigcup_{g \in X'} A(W') - g_< ,
\]
where \(X' = \{\pm e_i\}\), resulting in a window with fractal boundary, given in Figure \ref{fig:AB variant cps}. We show a resulting cut and project set \(\cps\) in Figure \ref{fig:AB variant cps}.

\begin{figure}
\includegraphics[width=\textwidth]{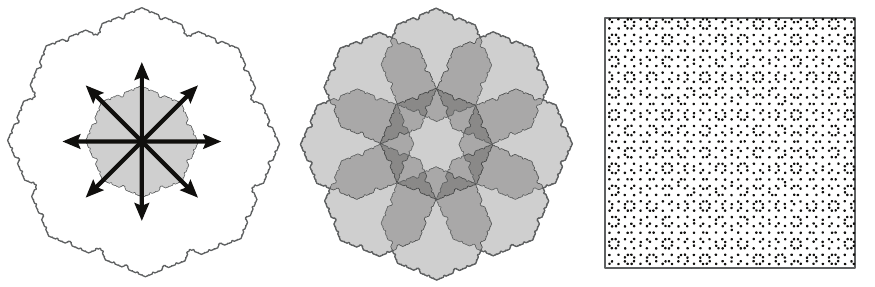}
\caption{In this figure an alternative Ammann-Beenker window with a fractal boundary is given. The resulting cut and project set is depicted on the right.}
\label{fig:AB variant cps}
\end{figure}

Generally, the substitution rules may not be so simple. Consider, for instance, a non-convex polytopal window, such as a star shape. It may be impossible to tile (a translate of) \(W\) with \(A(W)\) (or \(A^m(W)\) for any \(m \in \N\)) about vertices, due to protrusions in \(A(W)\). Nonetheless, if \(W\) is rational, it will still produce substitutional cut and project sets. The construction (see Equation \ref{eq:construction}) relating \(W\) to \(A(W)\) will need to involve also intersections (and maybe also complements), that is, the replacement clusters may need to depend on a finite set of neighbours. Up to adding this information through an MLD recolouring, though, it is not hard to see that one may always at least replace points with clusters only depending on the colour of a point.

\section{Acceptance domains, the limit translation module, and local derivation for cut and project sets}\label{sec:techniques for general windows}
In this section we introduce some constructions and tools needed to prove Theorems \ref{thm:main} and its corollaries from Section \ref{sec:main results}. Namely, we define and check basic properties of acceptance domains, the limit translation module, and local isomorphisms and local derivations, in the context of cut and project sets. In the next section, we then apply these techniques in the proof of Theorem \ref{thm:main} and related results in the case of general windows. Further tools specific to the polytopal window case will be given in the Section \ref{sec:proof polytopal}, where Theorems \ref{thm:polytopal} and \ref{thm:polytopal PSA} are proved.

\subsection{Indicator subsets and acceptance domains}
Fix a Euclidean cut and project scheme \(\mathcal{S}\) and, for now, non-singular window \(W \subset \intl\). Let \(\cps = \cps(\mathcal{S},W)\). Recall that, for bounded \(K \subset \phy\), we have \(K\)-patches \(\cps[x,K]\) for \(x \in \cps\), defined as \(\cps[x,K] = \cps\ang{K+x} - x\). For any \(K\)-patch \(P\) of \(\cps\), we have the \textbf{indicator set}
\begin{equation} \label{eq:patch indicator}
\cps_P \coloneqq \{x \in \cps \mid \cps[x,K] = P\}\ 
\end{equation}
i.e., the centres of occurrences of \(P\). This is a slight abuse of notation since \(\cps_P\) may depend on \(K\) as well as \(P\); one may like to think of \(P\) as a patch which records the existence but also the absence of any points potentially being in \(K\). By repetitivity, \(\cps_P\) is also a Delone set. In fact, it turns out to also be a cut and project set with window \(W_P\) constructable from \(W\), which we will call an acceptance domain.

Sometimes, especially for reducing complexity in practical applications, it can be convenient to work with a more flexible notion of patches. We define an \textbf{indicator} to be a set of pairs \(P = (P_i^\mathrm{in},P_i^\mathrm{out})_{i=1}^\ell\) of finite sets \(P_i^\mathrm{in}\), \(P_i^\mathrm{out} \subset \phy\), indexed over the colours \(i = \{1,\ldots,\ell\}\) of the window (in the single-coloured case, we just drop the indices \(i\)). We define its associated \textbf{indicator set} to be
\[
\cps_P \coloneqq \{x \in \cps \mid x+y \in \cps_i \text{ for each } y \in P_i^\mathrm{in} \text{ and } x+y' \notin \cps_i \text{ for each } y' \in P_i^\mathrm{out}\} .
\]
In other words, for \(x \in \cps\), we determine if \(x \in \cps_P\) simply by checking, for each colour \(i\), whether certain displacements from \(x\) are in \(\cps\) and colour \(i\), or that certain such displacements do not occur. Thus, it is obvious that \(\cps \LD \cps_P\). In the single-coloured case, we instead have just one pair \(P^\mathrm{in}\) and \(P^\mathrm{out}\).

Without loss of generality (since \(\cps - \cps \subset \Gamma_\vee\)), we may assume that each \(P_i^\mathrm{in} \subset \Gamma_\vee\). Indeed, otherwise, \(\cps_P = \emptyset\). Similarly, we assume that \(P_i^\mathrm{out} \subset \Gamma_\vee\) as, otherwise, \(x+y' \notin \cps\) for any \(x \in \cps\) so no relevant information is added. Moreover, if \(x \in \cps\), \(x+y \in \cps\) then \(x^\star \in W\) and \(x^\star + y^\star \in W\), thus \(x^\star \in W-y^\star \subset W-W\). Thus, we assume that \((P_i^\mathrm{in})^\star \subset W-W\), since otherwise \(\cps_P = \emptyset\). Similarly, we assume that \((P_i^\mathrm{out})^\star \subset W-W\), as otherwise \(x+y \notin \cps\) for any \(x \in \cps\) and again this adds no information in this case. In summary, we will assume that each \(P_i^\mathrm{in}\), \(P_i^\mathrm{out} \subset \mathscr{C}_\vee\) for the \textbf{cylinder} \(\mathscr{C} = \{\gamma \in \Gamma \mid \gamma_< \in W-W\}\) of lattice points contained in the thickened physical space \(\phy +(W-W)\). We denote the lifted versions of these sets as
\[
\Gamma_{P_i}^\mathrm{in} \coloneqq \{\gamma \in \Gamma \mid \gamma_\vee \in P_i^{\mathrm{in}}\}, \ \Gamma_{P_i}^\mathrm{out} \coloneqq \{\gamma \in \Gamma \mid \gamma_\vee \in P_i^{\mathrm{out}}\} \subset \mathscr{C} .
\]
Given our restrictions, to define an indicator it is thus equivalent to choose the finite lifted subsets of \(\mathscr{C}\). In the special case of a \(K\)-patch \(P = \cps[x,K] = (P_i)_{i=1}^\ell\), we define its associated indicator by
\[
\Gamma^\mathrm{in}_{P_i} = \{\gamma \in \Gamma_K \mid \gamma_\vee \in P_i\}, \ \Gamma^\mathrm{out}_{P_i} = \{\gamma \in \Gamma_K \mid \gamma_\vee \notin P_i\}, \ \text{ where } \Gamma_K \coloneqq \{ \gamma \in \mathscr{C} \mid \gamma_\vee \in K\} .
\]
By the above discussion, it is clear that the indicator set \(\cps_P\) results in the same definition as before, since \(P_i = (\Gamma_{P_i}^\mathrm{in})_\vee\) and \((\Gamma_{P_i}^\mathrm{out})_\vee\) excludes all possible points which \emph{could} be in \(P\), but are not. Note that \(\Gamma_K\) is a finite set, since it is contained in the bounded set \(K + (W-W)\), which is the disjoint union \(\Gamma_K = \Gamma^\mathrm{in}_{P_i} \cup \Gamma^\mathrm{out}_{P_i}\).

\begin{definition}\label{def:acceptance domains}
For an indicator (or patch) \(P\), defined by \(\Gamma_{P_i}^\mathrm{in}\), \(\Gamma_{P_i}^\mathrm{out} \subset \mathscr{C}\), we define its \textbf{acceptance domain} as
\[
W_P \coloneqq \bigwedge_{i=1}^\ell\left( \left(\bigwedge_{\gamma \in \Gamma_{P_i}^\mathrm{in}} W_i - \gamma_<\right) \wedge \left(\bigwedge_{\gamma \in \Gamma_{P_i}^\mathrm{out}} \neg W_i - \gamma_<\right)\right) \wedge W,
\]
using the Boolean operators \(\wedge\), \(\neg\) from Notation \ref{not:boolean operations}.
\end{definition}

\begin{remark}
In the single-coloured case, we have the slightly simpler formula
\[
W_P \coloneqq \left(\bigwedge_{\gamma \in \Gamma_P^\mathrm{in}} W - \gamma_<\right) \wedge \left(\bigwedge_{\gamma \in \Gamma_P^\mathrm{out}} \neg W - \gamma_<\right) \wedge W .
\]
We may remove the final \(\wedge W\) term if \(P\) is a \(K\)-patch with \(\mathbf{0} \in K\), since we always have \(\mathbf{0} \in \Gamma^\mathrm{in}_P\) in this case (so \(\wedge W\) is already included in the first intersection). In several proofs below, the single- and multi-coloured cases are proved very similarly, so to save on technical notation we will often only present the single-coloured case (or give brief notes on how the proofs are generalised), as the changes needed are otherwise obvious. However, our definitions and results will all be given for both the single- and multi-coloured cases.
\end{remark}

The lemma below gives the reassuring result that, in practice, we can essentially replace the Boolean operations, as used above, with the standard ones. The advantage of using their modifications, from Notation \ref{not:boolean operations}, is that the resulting acceptance domains are compact and topologically regular, so are themselves windows according to Definition \ref{def: cps} (and even polytopes, in the case \(W\) is polytopal).

\begin{lemma}\label{lem:Booleans same on NS}
If \(W' \in \mathscr{B}(W)\) then \(\partial W' \subseteq \nonsing\), explicitly, \(\partial W'\) is contained in the finite union of boundaries \(\partial W_{ij} = \partial(W)+g_{ij}\), where the \(g_{ij} \in \Gamma_<\) are those used to express \(W'\) in terms of \(W\), as in Equation \ref{eq:construction}. Let \(W''\) be given by replacing each operation \(\wedge\), \(\vee\) and \(\neg\) in this expression with its standard Boolean counterpart. Then, for \(x \in \nonsing\), we have that \(x \in W'\) if and only if \(x \in W''\).
\end{lemma}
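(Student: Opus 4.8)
The statement has two parts. First, I must show that $\partial W'$ is contained in the finite union of translated boundaries $\partial(W) + g_{ij}$ that appear in the expression \eqref{eq:construction} for $W'$; this immediately gives $\partial W' \subseteq \nonsing$, since those translates are exactly the singular locus contributions in internal space relative to a single window $W$. Second, I must show that $W'$ and its "naive" Boolean counterpart $W''$ agree on all nonsingular points. The plan is to prove both parts simultaneously by structural induction on the Boolean expression building $W'$ out of the generators $W + g_{ij}$ (and $\neg$).

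**The inductive approach.** For the base case, a generator $W + g$ has $\partial(W+g) = \partial W + g$, which is of the required form, and trivially $W' = W''$. For the inductive step I would separately handle the three operations. The crucial observation to isolate first is the general set-theoretic fact that for any sets $U, V$,
\[
\partial(\clin U) \subseteq \partial U, \quad \partial(U \cap V) \subseteq \partial U \cup \partial V, \quad \partial(U \cup V) \subseteq \partial U \cup \partial V, \quad \partial(\cl(\intl \setminus U)) \subseteq \partial U.
\]
Since $U \wedge V = \clin(U \cap V)$, $U \vee V = \clin(U \cup V)$ and $\neg U = \clin(\intl \setminus U)$, these inclusions show that each modified operation produces a boundary contained in the union of the boundaries of its inputs. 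By induction, each input boundary lies in the finite union $\bigcup \partial(W) + g_{ij}$, so the same holds for the output. This establishes the first claim, and $\partial W' \subseteq \nonsing$ follows because a point on $\partial(W) + g$ for $g = \gamma_<$, $\gamma \in \Gamma$, is by definition singular only when hit by a lattice translate — but here we want the reverse containment: every point of $\bigcup_{g_{ij}} (\partial W + (g_{ij})_<)$ lies in the singular set $\sing_<= \bigcup_{g\in\Gamma_<}\partial W + g$, hence its complement $\nonsing$ avoids $\partial W'$.

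**Agreement on nonsingular points.** For the second claim I would again induct, but now the key point is that the operation $\clin = \cl \circ \intr$ only alters a set on its boundary. Concretely, for any set $U$, the symmetric difference $U \mathbin{\triangle} \clin(U)$ is contained in $\partial U$. Therefore, if $x \in \nonsing$ and $x \notin \partial U$ (which holds because, by the first part applied to the relevant subexpression, all the boundaries involved lie in $\sing$), then $x \in U \iff x \in \clin(U)$. Thus at each step, passing from the standard operation to its closure-of-interior modification changes membership only at singular points. Combining this with the inductive hypothesis that $W'$ and $W''$ already agree at $x$ for the subexpressions, and that the standard operations $\cap, \cup, \setminus$ respect membership pointwise, I conclude $x \in W' \iff x \in W''$ for every $x \in \nonsing$.

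**Main obstacle.** The routine calculations are the boundary inclusions for the individual operations, which are standard point-set topology. The step requiring the most care is the bookkeeping that ties the two parts together: I must ensure that when proving agreement on $\nonsing$ at a point $x$, the point $x$ genuinely avoids \emph{all} the boundaries generated by \emph{every} subexpression, not merely the top-level one. This is exactly what the first part guarantees — all such boundaries lie in $\bigcup \partial W + (g_{ij})_<\subseteq \sing_<$ — so a nonsingular $x$ lies off every intermediate boundary, and the clin modifications are invisible to it at each stage. Organizing the induction so that the "boundary-containment" claim is available as a hypothesis precisely where the "agreement" claim needs it is the one genuinely delicate aspect; everything else is mechanical.
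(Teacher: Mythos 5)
Your proposal is correct and follows essentially the same route as the paper's proof: establish the boundary inclusions \(\partial(U \wedge V),\, \partial(U \vee V) \subseteq \partial U \cup \partial V\) and \(\partial(\neg U) \subseteq \partial U\) (equivalently, via \(\partial(\clin U) \subseteq \partial U\)), apply them inductively over the expression of Equation \ref{eq:construction} to trap all intermediate boundaries in \(\bigcup_{i,j} (\partial W + g_{ij})\), and then use that the modified operations agree with the standard ones away from these boundaries to get agreement of \(W'\) and \(W''\) at every non-singular point. You also correctly read the first claim the way it must be read — the union of translated boundaries lies in the \emph{singular} set, so \(\partial W'\) is disjoint from \(\nonsing\) — which is the intended content despite the statement's wording.
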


\begin{proof}
It is not hard to show that, for any \(U\), \(V \in \mathscr{B}\), we have that \(\partial(U \vee V) \subseteq \partial(U) \cup \partial(V)\), \(\partial(U \wedge V) \subseteq \partial(U) \cup \partial(V)\) and \(\partial(\neg U) = \partial U\). Applying this observation repeatedly to an expression \(W' \in \mathscr{B}(W)\), as in Equation \ref{eq:construction}, we see that \(\partial W'\) is contained in the finite union of boundaries \(\partial W + g_{ij}\), where the \(g_{ij} \in \Gamma_<\) are the translations of \(W\) (or \(\neg W\)) used in the expression. Thus, in particular, \(\partial W' \subset \nonsing\). The same is true, by the same argument, for \(W''\). One may check that, for any \(U\), \(V \in \mathscr{B}\), we have that \(U \vee V\), \(U \wedge V\) and \(\neg U\) agree with \(U \cup V\), \(U \cap V\) and \(\intl \setminus U\), respectively, on all points \(x \notin \partial(U) \cup \partial(V)\). Thus, given \(x \in \nonsing\), it follows that \(x \in W'\) if and only if \(x \in W''\) since, inductively under each application of an operation, \(x\) does not belong to the boundaries of any of the intermediate sets.
\end{proof}

The following lemma shows that, when non-empty, indicator sets are themselves cut and project sets, with windows constructable from the original:

\begin{lemma}\label{lem:acceptance domains indicate patches}
Let \(\cps = \cps(\mathcal{S},W)\) with \(W\) non-singular and \(P\) be a patch (or, more generally, an indicator). Then \(x \in \cps_P\) if and only if \(x^\star \in W_P\), equivalently, \(\cps_P = \cps(\mathcal{S},W_P)\). In particular, \(\cps_P \neq \emptyset\) if and only if \(W_P \neq \emptyset\), in which case \(\cps_P\) is a cut and project set with scheme \(\mathcal{S}\) and window \(W_P\) constructable from \(W\).
\end{lemma}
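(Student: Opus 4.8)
The plan is to unwind both sides of the asserted equivalence into membership conditions on the single point $x^\star \in \intl$ and check that they coincide. Throughout I restrict to $x \in \Gamma_\vee$, since $\cps_P \subseteq \cps \subseteq \Gamma_\vee$ and $x^\star$ is only defined there; I would treat the single-coloured case in detail, the multi-coloured one being identical after carrying the colour index $i$ through. The one computational input needed is additivity of the star map: for $x \in \Gamma_\vee$ and $\gamma \in \Gamma$ one has $x + \gamma_\vee = (\pi_\vee^{-1}(x)+\gamma)_\vee \in \Gamma_\vee$, and since $\star \colon \Gamma_\vee \to \Gamma_<$ is an isomorphism, $(x+\gamma_\vee)^\star = x^\star + \gamma_<$. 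First I would use this, together with $e \in \cps \iff e^\star \in W$ and the fact that every $y \in P^\mathrm{in}$ (resp.\ $P^\mathrm{out}$) has the form $\gamma_\vee$ for $\gamma \in \Gamma_P^\mathrm{in}$ (resp.\ $\Gamma_P^\mathrm{out}$), to rewrite the defining conditions $x + y \in \cps$ and $x + y' \notin \cps$ as $x^\star \in W - \gamma_<$ and $x^\star \notin W - \gamma_<$. Combined with the requirement $x \in \cps$, i.e.\ $x^\star \in W$, this exhibits $x \in \cps_P$ as a conjunction of membership and non-membership statements for $x^\star$ in the translated windows $W - \gamma_<$.

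The hard part will be recognising this conjunction as the single condition $x^\star \in W_P$, since the acceptance domain of Definition \ref{def:acceptance domains} is assembled from the \emph{modified} Boolean operations $\wedge,\vee,\neg$ of Notation \ref{not:boolean operations}, which differ from ordinary set operations on boundaries. I would resolve this by noting that membership is only ever tested at $x^\star$, which is automatically non-singular: as $x^\star \in \Gamma_<$, for any $g \in \Gamma_<$ we have $x^\star - g \in \Gamma_<$, so non-singularity of $W$ gives $x^\star - g \notin \partial W$ and hence $x^\star \notin \partial W + g$; thus $x^\star \in \nonsing$. Since $W_P \in \mathscr{B}(W)$, Lemma \ref{lem:Booleans same on NS} then permits replacing each $\wedge,\vee,\neg$ by $\cap,\cup$ and complement when evaluating $x^\star \in W_P$. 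Carrying out this replacement turns $x^\star \in W_P$ into precisely the conjunction produced in the previous step, yielding $x \in \cps_P \iff x^\star \in W_P$.

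It remains to deduce the final assertions. I would first check that $W_P$ is a bona fide non-singular window: it lies in $\mathscr{B}(W)$ and so is topologically regular, the trailing $\wedge W$ forces $W_P \subseteq W$ and hence compactness, and the explicit containment $\partial W_P \subseteq \bigcup_{ij}(\partial W + g_{ij})$ from Lemma \ref{lem:Booleans same on NS} (with $g_{ij} \in \Gamma_<$) shows, by the same computation as above, that $\Gamma_< \cap \partial W_P = \emptyset$. Therefore $\cps(\mathcal{S},W_P) = \{e \in \Gamma_\vee \mid e^\star \in W_P\}$ is defined and equals $\cps_P$ by the equivalence just established; moreover $W_P \in \mathscr{B}(W)$ gives $\mathscr{B}(W_P) \subseteq \mathscr{B}(W)$ (all $\Gamma_<$-translates of a Boolean expression in $W$ remain in $\mathscr{B}(W)$), so $W_P$ is constructable from $W$. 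For the emptiness equivalence, one implication is immediate, while if $W_P \neq \emptyset$ then topological regularity gives $\intr(W_P) \neq \emptyset$, and density of $\Gamma_<$ yields $\gamma \in \Gamma$ with $\gamma_< \in W_P$, whence $\gamma_\vee \in \cps_P$.
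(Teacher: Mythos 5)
Your proposal is correct and follows essentially the same route as the paper's proof: restrict to \(\Gamma_\vee\), use the star map homomorphism to translate the indicator conditions into membership of \(x^\star\) in the translates \(W - \gamma_<\), invoke non-singularity of \(x^\star\) together with Lemma \ref{lem:Booleans same on NS} to identify the resulting conjunction with \(x^\star \in W_P\), and finish the emptiness claim via topological regularity and density of \(\Gamma_<\). The only differences are that you spell out a few steps the paper leaves implicit (why \(x^\star \in \nonsing\), and why \(W_P\) is itself a non-singular window), which is harmless.
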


\begin{proof}
We just present the single-coloured case, with the multi-coloured version being similar. By definition, \(\cps_P \subset \cps \subset \Gamma_\vee\). So suppose that \(x \in \Gamma_\vee\). Then \(x \in \cps\) if and only if \(x^\star \in W\). In this case, \(x \in \cps_P\) if and only if \(x+y \in \cps\) for each \(y = \gamma_\vee \in P_\mathrm{in} = (\Gamma_\mathrm{in}^P)_\vee\) and \(x+y \notin \cps\) for each \(y \in P_\mathrm{out} = (\Gamma_\mathrm{out}^P)_\vee\). Applying the star map (which is a homomorphism) these are equivalent to \(x^\star+y^\star \in W\) i.e., \(x^\star \in W-y^\star = W-\gamma_<\), and \(x^\star \notin W-\gamma_<\), respectively, that is (with \(x^\star \in W\)),
\[
x^\star \in \left(\bigcap_{\gamma \in \Gamma^P_\mathrm{in}} W - \gamma_<\right) \cap \left(\bigcap_{\gamma \in \Gamma^P_\mathrm{out}} \intl \setminus (W - \gamma_<) \right) \cap W .
\]
Since \(x^\star \in \nonsing\), by Lemma \ref{lem:Booleans same on NS} the standard Boolean operations may be replaced with the modified ones of Notation \ref{not:boolean operations}, so this is equivalent to \(x^\star \in W_P\), by Definition \ref{def:acceptance domains}, as required. By definition, \(W_P \in \mathscr{B}(W)\), so it is constructable from \(W\), and compact since \(W_P \subseteq W\). If \(\cps_P \neq \emptyset\) then, since \(\cps_P = \cps(\mathcal{S},W_P)\), we have \(W_P \neq \emptyset\). Conversely, if \(W_P \neq \emptyset\), we have \(\intr(W) \neq \emptyset\) (since \(W_P \in \mathscr{B}\)), so \(\cps_P \neq \emptyset\) by density of \(\Gamma_<\).
\end{proof}

\begin{remark}
The requirement that \(\cps = \cps(\mathcal{S},W)\) for \(W\) non-singular allows for the given simpler statement above. For a singular cut and project set, one needs to be careful in defining which acceptance domain \(x^\star\) should belong to when it hits boundary points, which will be determined by the way in which \(\cps\) is given as a limit of non-singular cut and project sets.
\end{remark}

\begin{notation}
For bounded \(K \subset \phy\), let \(\mathscr{A}_K = \mathscr{A}_K(W)\) denote the set of all acceptance domains of \(K\)-patches. In the case that \(K = B(\mathbf{0},r)\), that is, when \(P\) is an \(r\)-patch, we call \(W_P\) an \textbf{\(r\)-acceptance domain} and let \(\mathscr{A}_r\) denote the set of \(r\)-acceptance domains over all \(r\)-patches.
\end{notation}

\begin{lemma}\label{lem:ADs tile}
The set \(\mathscr{A}_K\) of acceptance domains of \(K\)-patches tile \(W\), in the sense their union is \(W\) and their interiors are disjoint.
\end{lemma}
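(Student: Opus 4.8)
The plan is to verify the two required properties—that the union of the acceptance domains in \(\mathscr{A}_K\) equals \(W\), and that their interiors are pairwise disjoint—by transferring everything to the non-singular points, where Lemma \ref{lem:Booleans same on NS} allows us to replace the closure-regularised operations \(\wedge,\vee,\neg\) by the ordinary set operations. The inclusion \(\bigcup_P W_P \subseteq W\) is immediate, since every \(W_P\) carries a factor \(\wedge\,W\) in its defining expression (Definition \ref{def:acceptance domains}), so \(W_P \subseteq W\). I will present the single-coloured case; the multi-coloured case is identical after carrying the colour index \(i\) through the indicator sets \(\Gamma^\mathrm{in}_{P_i}\), \(\Gamma^\mathrm{out}_{P_i}\).

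For disjointness of interiors I would argue by contradiction. Each \(W_P\) lies in \(\mathscr{B}(W)\), so if \(\intr(W_P) \cap \intr(W_Q)\) were non-empty for two distinct \(K\)-patches \(P \neq Q\), then, being a non-empty open set, it would contain a non-singular point \(y \in \nonsing\) (recall that \(\nonsing\) is dense). By Lemma \ref{lem:Booleans same on NS}, membership of such a \(y\) in \(W_P\) is equivalent to \(y \in W - \gamma_<\) for every \(\gamma \in \Gamma^\mathrm{in}_P\) and \(y \notin W - \gamma_<\) for every \(\gamma \in \Gamma^\mathrm{out}_P\), and similarly for \(Q\). Since \(P \neq Q\) as patches, their indicators disagree on some \(\gamma \in \Gamma_K\), say \(\gamma \in \Gamma^\mathrm{in}_P\) but \(\gamma \in \Gamma^\mathrm{out}_Q\); then \(y \in W-\gamma_<\) and \(y \notin W-\gamma_<\) simultaneously, a contradiction.

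For the union, the key step is to realise, for each non-singular point \(y \in \nonsing \cap \intr(W)\), a \(K\)-patch whose acceptance domain contains \(y\). I would define an indicator \(P = P(y)\) by declaring \(\gamma \in \Gamma_K\) to lie in \(\Gamma^\mathrm{in}_P\) if \(y \in W - \gamma_<\) and in \(\Gamma^\mathrm{out}_P\) otherwise; this is well-defined precisely because \(y\) is non-singular and hence avoids every boundary \(\partial(W-\gamma_<) = \partial W - \gamma_<\). By construction and Lemma \ref{lem:Booleans same on NS} we get \(y \in W_{P(y)}\), so \(W_{P(y)} \neq \emptyset\); Lemma \ref{lem:acceptance domains indicate patches} then gives \(\cps_{P(y)} = \cps(\mathcal{S},W_{P(y)}) \neq \emptyset\), so \(P(y)\) genuinely occurs in \(\cps\) as a \(K\)-patch and \(W_{P(y)} \in \mathscr{A}_K\). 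Thus \(\bigcup_P W_P\) contains \(\nonsing \cap \intr(W)\), which is dense in \(W\) (the dense set \(\nonsing\) met with the open set \(\intr(W)\) is dense in \(\intr(W)\), itself dense in \(W\) by topological regularity). Since \(\cps\) is FLC there are only finitely many \(K\)-patches, so \(\bigcup_P W_P\) is a finite union of compact sets and hence closed, whence it contains \(\cl(\nonsing \cap \intr(W)) = \cl(\intr(W)) = W\).

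The step I expect to require the most care is this last one—promoting the pointwise covering of non-singular interior points to all of \(W\). Two ingredients make it work: that every combinatorially admissible patch type at a non-singular point is in fact realised, which is exactly the content of Lemma \ref{lem:acceptance domains indicate patches} via non-emptiness of the acceptance domain; and finiteness of the patch count from FLC, which makes the union closed so that density of the non-singular points recovers \(W\) upon taking closures. By contrast, the disjointness argument is essentially formal once one reduces to non-singular points and invokes Lemma \ref{lem:Booleans same on NS}.
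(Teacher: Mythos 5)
Your proof is correct, but it anchors the pointwise arguments at a different dense set than the paper does, which changes which lemmas do the work. The paper uses projected lattice points throughout: since \(W\) is in non-singular position, every \(\gamma_< \in \Gamma_< \cap W\) has \(\gamma_\vee \in \cps\), so Lemma \ref{lem:acceptance domains indicate patches} immediately places \(\gamma_<\) in the acceptance domain of the genuine patch \(\cps[\gamma_\vee,K]\); density of \(\Gamma_<\) plus closedness of the finite union gives the covering, and a lattice point common to two interiors would be the centre of a patch equal to both \(P\) and \(P'\), settling disjointness in one line. You instead anchor at arbitrary points of \(\nonsing\), testing membership via Lemma \ref{lem:Booleans same on NS}. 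This costs you an extra step in the covering argument: you must promote the combinatorial indicator \(P(y)\) read off at \(y\) to an actual \(K\)-patch, which you correctly do via \(W_{P(y)} \neq \emptyset \Rightarrow \cps_{P(y)} \neq \emptyset\) from Lemma \ref{lem:acceptance domains indicate patches} (and this promotion is genuinely valid: any \(x \in \cps_{P(y)}\) has \(\cps[x,K]\) with indicator exactly \(P(y)\), because \(\Gamma^\mathrm{in}_{P(y)} \sqcup \Gamma^\mathrm{out}_{P(y)} = \Gamma_K\) exhausts all candidate displacements and \(\pi_\vee|_\Gamma\) is injective). In exchange, your disjointness argument becomes purely Boolean, and your covering argument makes explicit a fact the paper's proof never needs: every in/out pattern consistent at a non-singular point of \(W\) is actually realised by a patch. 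Both proofs share the same skeleton (a dense set of good points, plus closedness of the finite union of compact acceptance domains); the paper's choice of \(\Gamma_<\) is simply the more economical one, since lattice points come pre-equipped with their patches.
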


\begin{proof}
By the previous lemma, \(x^\star \in W_P\) for any \(x \in \cps\), where \(P = \cps[x,K]\). Moreover, since \(x \in \cps\) if and only if \(x^\star \in W\), all points of \((\Gamma_<) \cap W\) belong to some \(K\)-acceptance domain. We have that \(\Gamma_<\) is dense in \(W\), so the acceptance domains cover a dense subset of \(W\). Moreover, the union of acceptance domains is closed, since \(\Gamma_K\) is clearly finite and each \(W_P\) is closed. So the union of the \(K\)-acceptance domains is closed, and thus equal to \(W\).

Now suppose that \(\intr(W_P) \cap \intr(W_{P'}) \neq \emptyset\) for distinct \(K\)-patches \(P\) and \(P'\). Since \(\Gamma_<\) is dense, there must exist \(\gamma \in \Gamma\) with \(\gamma_< \in \intr(W_P) \cap \intr(W_{P'})\). But then, by the previous lemma, \(P = \cps[\gamma_\vee,K] = P'\), so \(W_P = W_{P'}\), as required.
\end{proof}

The following is an elementary fact, and we omit the proof. 

\begin{lemma}\label{lem: arbitrarily small ADs}
For any \(\epsilon > 0\), for sufficiently large \(r > 0\) each \(W_P \in \mathscr{A}_r\) has diameter at most \(\epsilon\).
\end{lemma}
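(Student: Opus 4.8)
The plan is to argue by contradiction, reducing the claim to a \emph{separation} statement about the window and then exploiting compactness of $W$ together with density of $\Gamma_<$. First I would record the following reformulation of acceptance-domain membership, read straight off Definition \ref{def:acceptance domains}: if $p$, $q \in W_P$ lie in a common $r$-acceptance domain, then there is \emph{no} $\gamma \in \mathscr{C}$ with $\gamma_\vee \in B(\mathbf{0},r)$ for which $p + \gamma_< \in \intr(W)$ while $q + \gamma_< \in \intl \setminus W$. Indeed, such a $\gamma$ lies in $\Gamma_K$ (for $K = B(\mathbf{0},r)$), hence in $\Gamma_P^{\mathrm{in}} \sqcup \Gamma_P^{\mathrm{out}}$; using $\neg W = \cl(\intl \setminus W) = \intl \setminus \intr(W)$, the condition $p + \gamma_< \in \intr(W)$ rules out $\gamma \in \Gamma_P^{\mathrm{out}}$ (which would put $p+\gamma_<$ in $\neg W$), forcing $\gamma \in \Gamma_P^{\mathrm{in}}$; but then $q \in W_P$ gives $q + \gamma_< \in W$, contradicting $q + \gamma_< \in \intl \setminus W$. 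This uses only that $p$, $q \in W_P$ as Boolean sets, so no non-singularity assumption on $p$, $q$ is needed.

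The key geometric input I would isolate next is: for every $v \neq \mathbf{0}$ there is a point $u_0 \in \intr(W)$ with $u_0 + v \in \intl \setminus W$. This follows from a support-function argument. Letting $m = \max_{w \in W} \langle w, v\rangle$ (attained by compactness, say at $u^*$) and picking $u_0 \in \intr(W)$ within distance less than $|v|$ of $u^*$ (possible since $W = \cl(\intr(W))$ is topologically regular), one computes $\langle u_0 + v, v\rangle = \langle u_0, v\rangle + |v|^2 > m$, so $u_0 + v \notin W$; as $W$ is closed this places $u_0 + v$ in the open set $\intl \setminus W$, while $u_0$ sits in the open set $\intr(W)$.

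I would then assemble the contradiction. If the lemma fails, there is some $\epsilon > 0$, radii $r_n \to \infty$ and acceptance domains $W_{P_n} \in \mathscr{A}_{r_n}$ of diameter at least $\epsilon$; choose $p_n$, $q_n \in W_{P_n}$ with $|p_n - q_n| \geq \epsilon$. Since $W$ is compact, after passing to a subsequence $p_n \to p$ and $q_n \to q$, and $v \coloneqq q - p$ satisfies $|v| \geq \epsilon$, so $v \neq \mathbf{0}$. Applying the previous step to $v$ yields $u_0 \in \intr(W)$ with $u_0 + v \in \intl \setminus W$; note $u_0 - p \in \intr(W) - p \subseteq \intr(W - W)$. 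Since $\Gamma_<$ is dense, I may fix a \emph{single} $\gamma \in \Gamma$ with $\gamma_<$ so close to $u_0 - p$ that simultaneously $\gamma_< \in W - W$ (i.e.\ $\gamma \in \mathscr{C}$), $p + \gamma_< \in \intr(W)$, and $q + \gamma_< \in \intl \setminus W$; all three are open conditions holding at $\gamma_< = u_0 - p$, where $p + \gamma_< = u_0$ and $q + \gamma_< = u_0 + v$. Now $\gamma$ is fixed, so $\gamma_\vee$ is a fixed vector. For all large $n$ we have both $r_n > |\gamma_\vee|$, so that $\gamma_\vee \in B(\mathbf{0},r_n)$, and $p_n$, $q_n$ close enough to $p$, $q$ that $p_n + \gamma_< \in \intr(W)$ and $q_n + \gamma_< \in \intl \setminus W$ (again by openness). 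By the separation reformulation this contradicts $p_n$, $q_n \in W_{P_n}$, completing the proof.

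The only genuine obstacle is the uniformity in $r$: for a fixed pair $(p,q)$ a separating $\gamma$ is trivial to produce by density, but a priori its physical component $\gamma_\vee$ could be forced to grow as the pair varies, giving no single threshold on $r$. Passing to a convergent subsequence and fixing one $\gamma$ that robustly separates the \emph{limit} pair $(p,q)$ — and hence all nearby pairs $(p_n,q_n)$ — is exactly what converts the problem into a single bound $r_n > |\gamma_\vee|$. The multi-coloured case is handled identically: each $p \in W_P$ lies in some component $W_{i_0}$, and one runs the support-function and separation arguments with $W_{i_0}$ in place of $W$.
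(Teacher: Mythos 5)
Your proof is correct and complete. There is nothing in the paper to compare it against: the authors state this lemma as an elementary fact and omit the proof entirely, so your argument fills that gap. It supplies exactly the two ingredients such a proof needs: (i) the separation reformulation, which rests on the (correct) observation that \(W_P\) is contained in every Boolean term of Definition \ref{def:acceptance domains}, so that two points \(p, q\) in a common \(r\)-acceptance domain forbid any \(\gamma \in \mathscr{C}\) with \(\gamma_\vee \in B(\mathbf{0},r)\), \(p + \gamma_< \in \intr(W)\) and \(q + \gamma_< \in \intl \setminus W\) — and, as you note, this needs no non-singularity of \(p\) or \(q\); and (ii) the uniformity in \(r\), which you rightly identify as the only non-trivial issue and resolve by passing to a convergent subsequence of offending pairs and fixing a single lattice vector \(\gamma\) that robustly separates the limit pair, so that the one threshold \(r_n > \|\gamma_\vee\|\) settles all large \(n\) at once. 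The support-function perturbation producing \(u_0 \in \intr(W)\) with \(u_0 + v \notin W\) is a correct use of topological regularity (the extremal point may lie on \(\partial W\), and your estimate \(\langle u_0 + v, v\rangle > m\) survives a perturbation of size less than \(|v|\)), and the reduction of the multi-coloured case to a single component \(W_{i_0}\) goes through verbatim.
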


Our main theorems apply to Euclidean cut and project schemes where the windows can be multi-coloured. The below shows that there is no loss of generality, though (at least for those referencing the MLD-invariant notions of \(L\)-(S)sub, LIDS etc.), in working with single-coloured windows.

\begin{proposition}\label{prop:reducing to one colour}
For a Euclidean cut and project scheme \(\mathcal{S}\) and multi-coloured window \(W = (W_i)_{i=1}^\ell\), there is another, single-coloured window \(W'\) that is mutually constructable from \(W\). If each \(W_i\) is polytopal then so is \(W'\) and, in this case, \(W\) is rational and FD-invariant with respect to a linear map \(A \colon \intl \to \intl\) if and only if \(W'\) is.
\end{proposition}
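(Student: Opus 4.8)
The plan is to build $W'$ by pulling the window components apart using lattice translations and then taking their union, after which the two directions of mutual constructability are handled separately. Since $\Gamma_<$ is dense in $\intl$ and each $W_i$ is compact, I can choose $h_1,\dots,h_\ell \in \Gamma_<$ so that the translated components $P_i \coloneqq W_i + h_i$ are pairwise disjoint (place their centres far apart and use density); being disjoint compacta, they are then separated by some $\delta \coloneqq \min_{i\neq j} d(P_i,P_j) > 0$. Set $W' \coloneqq \bigvee_{i=1}^\ell P_i$, which equals the honest disjoint union $\bigsqcup_i P_i$ and is therefore a single-coloured, compact, topologically regular window. As each $P_i = W_i + h_i$ is a $\Gamma_<$-translate of a generator and $\mathscr{B}(W)$ is closed under $\Gamma_<$-translation, we get $W' \in \mathscr{B}(W)$ and hence $\mathscr{B}(W') \subseteq \mathscr{B}(W)$, exactly as in the discussion following Definition \ref{def: constructable}.

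The main obstacle is the reverse inclusion, namely recovering each component as $W_i = P_i - h_i \in \mathscr{B}(W')$; the difficulty is that a single $P_i$ need not be a connected, or otherwise obviously Boolean-definable, part of $W'$. Here I would exploit the acceptance domains of $W'$. By Lemma \ref{lem: arbitrarily small ADs} I fix $r$ large enough that every $W_P \in \mathscr{A}_r(W')$ has diameter strictly less than $\delta$, and by Lemma \ref{lem:ADs tile} these acceptance domains tile $W'$. Because $W' = \bigsqcup_i P_i$ with the $P_i$ being $\delta$-separated, any such $W_P$ of diameter $< \delta$ can meet at most one $P_i$, so $W_P \subseteq P_i$ for a unique $i$. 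Grouping the (finitely many) $r$-acceptance domains by the piece containing them, I claim $P_i = \bigvee\{W_P \in \mathscr{A}_r(W') : W_P \subseteq P_i\}$: the right-hand side is a finite join contained in $P_i$, and it covers the dense set $\Gamma_< \cap \intr(P_i)$, since each such point lies in a tiling acceptance domain that is then forced inside $P_i$, so the closure of the join is all of $P_i$. As every $W_P \in \mathscr{B}(W')$ by Definition \ref{def:acceptance domains}, this exhibits $P_i \in \mathscr{B}(W')$, whence $W_i \in \mathscr{B}(W')$ and $\mathscr{B}(W) \subseteq \mathscr{B}(W')$, giving mutual constructability. One technical point to verify is that Lemmas \ref{lem:acceptance domains indicate patches}--\ref{lem:ADs tile} are applied to $W'$, which may be singular even after integer translations; this is harmless, because the tiling of $W'$ by acceptance domains is a purely geometric statement about the arrangement of the finitely many translates $W' + \gamma_<$ entering the definition of $r$-acceptance domains, and does not depend on non-singularity.

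For the final statement, suppose each $W_i$ is polytopal. Then each $P_i = W_i + h_i$ is polytopal and $W' = \bigsqcup_i P_i$ is a finite union of polytopes, hence polytopal, and its supporting subspaces satisfy $\sH_0(W') = \bigcup_i \sH_0(W_i) = \sH_0(W)$, since a supporting hyperplane of a separated piece $P_i$ is supporting for $W'$ and vice versa, and translation does not change face directions. Thus FD-invariance of $W$ and of $W'$ are the same condition, $A(V)=V$ for all $V \in \sH_0(W)$, and so are equivalent. For rationality, $\Ver(W') = \bigcup_i(\Ver(W_i)+h_i)$, and for $v \in \Ver(W_i)$ and $v' \in \Ver(W_j)$ we have $(v+h_i)-(v'+h_j) = (v-v') + (h_i - h_j)$ with $h_i - h_j \in \Gamma_< \subseteq \Q\Gamma_<$; hence this vertex difference lies in $\Q\Gamma_<$ if and only if $v - v'$ does, so $W'$ is rational exactly when $W$ is, completing the equivalence.
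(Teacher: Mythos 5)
Your proposal is correct and follows essentially the same route as the paper: translate the components apart by elements of \(\Gamma_<\), take their union to get \(W'\) (giving \(\mathscr{B}(W') \subseteq \mathscr{B}(W)\) at once), and recover each component inside \(\mathscr{B}(W')\) via acceptance domains of small diameter, with polytopality, rationality and FD-invariance unaffected since only \(\Gamma_<\)-translations were used. The only cosmetic difference is in the reverse inclusion: you group the full tiling of \(W'\) by small \(r\)-acceptance domains (Lemma \ref{lem:ADs tile}) according to which piece \(P_i\) contains them, whereas the paper covers each \(W_i\) by \(\Gamma_<\)-translates of a single small acceptance domain and intersects with \(W'\); both versions are sound (and both, as you note, apply the acceptance-domain lemmas to a possibly singular \(W'\), which is harmless since the tiling and small-diameter statements hold for the purely Boolean definition of acceptance domains).
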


\begin{proof}
Given a multi-coloured window \(W = (W_i)_{i=1}^\ell\), the window \((W_i + (\gamma_i)_<)_{i=1}^\ell\) is clearly mutually constructable from \(W\), for any choice of \(\gamma_i \in \Gamma\). In particular, by taking appropriate \(\gamma_i\), we may assume, up to MLD equivalence, that the \(W_i\) are all disjoint. If we then drop colours from \(W\), this clearly produces a single-coloured window \(W'\) that, by definition, is constructable from \(W\). Conversely, using Lemma \ref{lem: arbitrarily small ADs}, find an \(r\)-acceptance domain \(W'_P\) of \(W'\) with diameter \(\epsilon\), where \(2\epsilon\) is strictly smaller than the minimal distance between two of the colour components (which, as above, we assume is strictly positive). By density of \(\Gamma_<\), we may cover each \(W_i\) with translates \(W'_P + g\), for \(g \in Z_i\), for some finite set \(Z_i \subset \Gamma_<\), where each \((W'_P+g) \cap W_i \neq \emptyset\). By definition, the \(W'_P + g\) are constructable from \(W'\), thus so is each \(W_i = \bigcup_{g \in Z_i} (W' \cap (W'_P + g))\), as required.

By construction, since we only moved window components by lattice vectors in the above argument, it is clear that the properties of being polytopal, rational and FD-invariant are preserved.
\end{proof}

\subsection{The limit translation module for Euclidean cut and project sets}

The following definition can be found in \cite[Section 5.1.2]{AOI} and will be crucial in determining structure on the cut and project scheme from an \(L\)-sub cut and project set.

\begin{definition}
For a Delone (multi)set \(\Lambda\) and bounded \(K \subset E\), let
\[
D_K = D_K(\Lambda) \coloneqq \left\{(y-x) \in E \mid \Lambda[x,K] = \Lambda[y,K] \text{ for some } x, y \in \cps \right\} .
\]
That is, \(D_K\) is the set of displacements between translation-equivalent \(K\)-patches in \(\Lambda\). In the particular case of \(K = B(\mathbf{0},r)\), we write \(D_r \coloneqq D_K\). We let \(\Delta_K = \Delta_K(\Lambda) = \langle D_K \rangle_\Z\) be the \(\Z\)-module generated by \(D_K\), denoted \(\Delta_r\) for \(K = B(\mathbf{0},r)\). Clearly, for \(K \subseteq K'\), we have \(D_K \supseteq D_{K'}\) and thus \(\Delta_K \supseteq \Delta_{K'}\). Similarly, for any two bounded \(K\), \(K' \subset E\) we have that \(\Delta_{K \cup K'} \subseteq \Delta_K \cap \Delta_{K'}\). The {\bf limit translation module} \(\Delta = \Delta(\Lambda)\) is given by the inductive limit of this diagram of Abelian groups, that is
\[
\Delta = \bigcap_{\mathrm{bdd} K \subset E} \Delta_K = \bigcap_{r > 0} \Delta_r\ .
\]
\end{definition}

In the literature, it is more usual to define \(D_K\) by letting the \(x\), \(y \in E\), not just \(\cps\). However, this does not affect the limit translation module. Indeed, let \(D'_K\) denote this standard version of \(D_K\). Obviously, \(D'_K \supseteq D_K\). On the other hand, let \(r > 0\) be such that every \(x \in E\) is within \(r\) of some point of \(\cps\). Then, if \(\cps[x,K+B_r] = \cps[y,K+B_r]\), we also have \(\cps[x+z,K] = \cps[y+z,K]\) for some \(z \in B_r\), where \(x+z \in \cps\). Provided \(\mathbf{0} \in K\), we have \(y \in \cps\) too and thus, \(D'_{K+B_r} \subseteq D_K\).

The following result (see \cite{AOI}) is a simple exercise:

\begin{proposition} \label{prop: LTM invariant under LI and MLD}
If \(\Lambda' \LI \Lambda\) then \(\Delta(\Lambda') \leq \Delta(\Lambda)\). Similarly, if \(\Lambda \LD \Lambda'\) then \(\Delta(\Lambda) \leq \Delta(\Lambda')\). In particular, if \(\Lambda \LIs \Lambda'\) or \(\Lambda \MLD \Lambda'\) then \(\Delta(\Lambda) = \Delta(\Lambda')\).
\end{proposition}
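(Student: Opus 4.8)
The plan is to establish the two implications separately; the two ``in particular'' claims then follow at once, since $\Lambda \LIs \Lambda'$ is the conjunction of $\Lambda \LI \Lambda'$ and $\Lambda' \LI \Lambda$ (apply the first implication in both directions to get the two reverse inclusions), and $\Lambda \MLD \Lambda'$ is the conjunction of $\Lambda \LD \Lambda'$ and $\Lambda' \LD \Lambda$ (apply the second implication both ways).

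For the first implication I would prove the stronger pointwise containment $D_r(\Lambda') \subseteq D_r(\Lambda)$ for every $r > 0$, from which $\Delta_r(\Lambda') \leq \Delta_r(\Lambda)$ and hence, intersecting over $r$, $\Delta(\Lambda') \leq \Delta(\Lambda)$. So fix $w \in D_r(\Lambda')$, realised by $x', y' \in \Lambda'$ with $\Lambda'[x',r] = \Lambda'[y',r]$ and $w = y' - x'$. The key device is to record both occurrences in a single patch: set $K \coloneqq B(\mathbf{0},r) \cup (B(\mathbf{0},r)+w)$ and consider the $K$-patch $P \coloneqq \Lambda'[x',K]$, which simultaneously witnesses the two matching $r$-patches at displacement $w$. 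Since $P$ is a finite patch of $\Lambda'$ and $\Lambda' \LI \Lambda$, it reappears in $\Lambda$, i.e.\ there is $z \in \Lambda$ with $\Lambda[z,K] = P$. Reading off this occurrence shows $z, z+w \in \Lambda$ and $\Lambda[z,r] = \Lambda[z+w,r]$ (both equal to the corresponding sub-patches of $P$), so $w \in D_K(\Lambda)$; since $K \supseteq B(\mathbf{0},r)$, the monotonicity $D_K \subseteq D_r$ yields $w \in D_r(\Lambda)$, as wanted.

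For the second implication, suppose $\Lambda \LD \Lambda'$ with derivation radius $c$. Here it is cleanest to use the variant $D'_K$ in which the centres $x,y$ range over all of $E$ rather than only the support; as noted after the definition this leaves the limit module unchanged, so one may work with it throughout. Fix $r > 0$ and take $v \in \Delta(\Lambda)$. Since $v \in \Delta_{r+c}(\Lambda)$, I can write $v = \sum_i n_i w_i$ with each $w_i \in D_{r+c}(\Lambda) \subseteq D'_{r+c}(\Lambda)$, say realised by $\Lambda[x_i,r+c] = \Lambda[y_i,r+c]$ and $w_i = y_i - x_i$. By the local derivation property, using Remark \ref{rem:LD on supports or punctures} to upgrade the derived radius from $0$ to $r$ at the cost of enlarging the inspection radius by $r$, the equality of $(r+c)$-patches in $\Lambda$ forces $\Lambda'[x_i,r] = \Lambda'[y_i,r]$, so $w_i \in D'_r(\Lambda')$. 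Hence $v \in \Delta'_r(\Lambda')$; as $r$ was arbitrary, $v \in \Delta(\Lambda')$.

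The patch bookkeeping is routine, but the step deserving genuine care is the second implication, where the mismatch between the supports of $\Lambda$ and $\Lambda'$ is the main obstacle: the points $x_i, y_i$ witnessing a displacement in $\Lambda$ need not lie in $\Lambda'$, so the argument only closes once one passes to the $E$-valued version $D'_K$ and applies the derivation-radius arithmetic ($r+c$ in $\Lambda$ controls the $r$-patch in $\Lambda'$) consistently. I would verify explicitly that the monotonicity $D_K \supseteq D_{K'}$ for $K \subseteq K'$ and the equivalence of $D'_K$ with $D_K$ at the level of the limit module are precisely the two facts that make both chains of inclusions close up.
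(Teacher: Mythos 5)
Your proof is correct. Note that the paper itself gives no argument for this proposition --- it is stated as ``a simple exercise'' with a pointer to \cite{AOI} --- so there is no in-paper proof to compare against; your write-up is a complete solution of that exercise, and it isolates exactly the two points where care is needed. For the first implication, the device of bundling both occurrences into the single region \(K = B(\mathbf{0},r) \cup (B(\mathbf{0},r)+w)\) and transporting the \(K\)-patch via \(\LI\) is the right move; the only blemish is the intermediate claim ``so \(w \in D_K(\Lambda)\)'', which does not follow from what you established (that would require \(\Lambda[z,K] = \Lambda[z+w,K]\), which need not hold since the \(K\)-patch at \(z+w\) extends beyond the region controlled by \(P\)). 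What you actually proved --- \(z, z+w \in \Lambda\) and \(\Lambda[z,r] = \Lambda[z+w,r]\) --- already gives \(w \in D_r(\Lambda)\) directly, so the detour through \(D_K\) and monotonicity is superfluous but harmless. For the second implication you correctly identified the genuine subtlety, namely that the witnesses \(x_i, y_i \in \Lambda\) need not lie in \(\Lambda'\), and your resolution --- passing to the \(E\)-centred sets \(D'_K\), which the paper notes generate the same limit module, together with the radius bookkeeping of Remark \ref{rem:LD on supports or punctures} (\(c+r\) in \(\Lambda\) controls the \(r\)-patch in \(\Lambda'\)) --- closes the argument. The deduction of the two ``in particular'' statements by applying each implication in both directions is exactly as intended.
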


We now make some simple observations regarding the return vectors and the limit translation module of an \(L\)-Sub pattern, and then a cut and project set. The below follows directly from Proposition \ref{prop: LTM invariant under LI and MLD} and Definition \ref{def:L-sub}:

\begin{corollary} \label{cor: sub preserves LTM}
Suppose that \(\cps\) is \(L\)-Sub. Then \(\Delta = L(\Delta)\).
\end{corollary}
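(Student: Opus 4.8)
The plan is to derive the identity from two ingredients: the behaviour of the limit translation module under the linear isomorphism $L$, and its invariance under the relations $\MLD$ and $\LIs$ recorded in Proposition \ref{prop: LTM invariant under LI and MLD}. The content of the $L$-Sub hypothesis is unpacked via Definition \ref{def:strongly L-sub}, giving a predecessor $\cps'$ with $L\cps' \MLD \cps$ and $\cps \LIs \cps'$.

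First I would establish the equivariance $\Delta(L\cps') = L(\Delta(\cps'))$, valid for any FLC Delone (multi)set $\cps'$ and linear isomorphism $L$. The point is that $L$ carries patches of $\cps'$ bijectively onto patches of $L\cps'$, scaling displacement vectors by $L$. Concretely, for $x \in \cps'$ and bounded $K \subset E$, the relation $Lz \in K + Lx \iff z - x \in L^{-1}K$ gives
\[
(L\cps')[Lx,K] = L\bigl(\cps'[x, L^{-1}K]\bigr).
\]
Since $L$ is injective, $(L\cps')[Lx,K] = (L\cps')[Ly,K]$ holds if and only if $\cps'[x,L^{-1}K] = \cps'[y,L^{-1}K]$, in which case the displacement between the two occurrences in $L\cps'$ is $Ly - Lx = L(y-x)$. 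Hence $D_K(L\cps') = L\bigl(D_{L^{-1}K}(\cps')\bigr)$ and therefore $\Delta_K(L\cps') = L\bigl(\Delta_{L^{-1}K}(\cps')\bigr)$.

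Next I would pass to the limit. As $L$ is a bijection it commutes with intersection, and as $K$ ranges over all bounded subsets of $E$ so does $L^{-1}K$; consequently
\[
\Delta(L\cps') = \bigcap_{\mathrm{bdd}\, K} \Delta_K(L\cps') = L\Bigl( \bigcap_{\mathrm{bdd}\, K} \Delta_{L^{-1}K}(\cps') \Bigr) = L(\Delta(\cps')).
\]
Finally I would combine this with the invariance of $\Delta$. By Proposition \ref{prop: LTM invariant under LI and MLD}, $L\cps' \MLD \cps$ gives $\Delta(L\cps') = \Delta(\cps) = \Delta$, while $\cps \LIs \cps'$ gives $\Delta(\cps') = \Delta(\cps) = \Delta$. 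Substituting into the equivariance yields $\Delta = \Delta(L\cps') = L(\Delta(\cps')) = L(\Delta)$, as required. The multi-coloured case is identical, the argument applying componentwise to each colour.

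The only genuine computation is the patch-transformation identity underlying $\Delta(L\cps') = L(\Delta(\cps'))$; everything else is a direct appeal to the cited proposition, so I anticipate no serious obstacle. The one technical point worth keeping in mind is that $\Delta$ is defined using displacements between points of the pattern rather than arbitrary points of $E$, but the remark following the definition of the limit translation module shows this choice does not affect $\Delta$.
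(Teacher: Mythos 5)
Your proof is correct and follows exactly the route the paper intends: the paper states this corollary as following directly from Proposition \ref{prop: LTM invariant under LI and MLD} together with the definition of (strongly) substitutional, which is precisely your combination of $\Delta(L\cps') = \Delta(\cps)$ (from $L\cps' \MLD \cps$), $\Delta(\cps') = \Delta(\cps)$ (from $\cps \LIs \cps'$), and the equivariance $\Delta(L\cps') = L(\Delta(\cps'))$. The only difference is that you write out the patch-transformation identity behind the equivariance, which the paper leaves implicit; that computation is valid as stated.
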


Similarly, if \(\cps\) is \(L\)-sub we at least have \(L(\Delta) < \Delta\) when \(\cps\) is repetitive or \(L\) is expansive (as, by Theorem \ref{thm:recognisability}, we have that \(\cps \LIs \cps'\), its predecessor, in this case). The following slightly more specific version of this type of result will be used elsewhere:

\begin{lemma}\label{lem:action of expansion on return vectors}
Suppose that \(\cps\) is \(L\)-sub, with predecessor \(\cps' \LIs \cps\) (for instance, \(L\) is expansive or \(\cps\) is repetitive). Then there exists some \(\kappa \geq 0\) so that, for all bounded \(K\) and \(v \in D_K\), we have that \(L(v) \in D_{K'}\), provided that \(K' \subseteq L(K)_{-\kappa}\), where \(L(K)_{-\kappa}\) is the set of points whose \(\kappa\)-balls are fully contained in \(L(K)\).
\end{lemma}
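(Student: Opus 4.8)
The plan is to route the implication $v \in D_K \Rightarrow L(v)\in D_{K'}$ through the predecessor $\cps'$ and the subdivision map, tracking the radius at each stage. Let $c$ denote the derivation radius of $L\cps' \LD \cps$ (Definition \ref{def:LD}), extended by Lemma \ref{lem: LD over hull} to a local derivation map $S \colon \Omega_{L\cps'} \to \Omega_{\cps}$ with $S(L\cps') = \cps$, and let $R$ be a relative density radius for $\cps$. I claim $\kappa = c + R$ works.

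First I would transfer the return vector from $\cps$ to $\cps'$, at the \emph{same} radius. Given $v \in D_K(\cps)$ there are support points $x,y\in\cps$ with $\cps[x,K]=\cps[y,K]$ and $v=y-x$. The finite patch of $\cps$ recording both occurrences together with their support centres, namely $\cps\ang{\{\mathbf 0,v\}\cup K\cup (K+v)}$ recentred at $x$, is a patch of $\cps$; since $\cps \LI \cps'$ (Definition \ref{def:local indistinguishable}) it reappears up to translation in $\cps'$, producing $x'',y''\in\cps'$ with $\cps'[x'',K]=\cps'[y'',K]$ and $y''-x''=v$. Hence $v\in D_K(\cps')$. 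This is the radius-refined analogue of Proposition \ref{prop: LTM invariant under LI and MLD}.

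Next I would inflate and then push back down through $S$. Applying the linear automorphism $L$ and using $(L\cps')[La,LK]=L(\cps'[a,K])$ for all $a$, the equality $\cps'[x'',K]=\cps'[y'',K]$ gives $(L\cps')[Lx'',LK]=(L\cps')[Ly'',LK]$, so $L(v)\in D_{LK}(L\cps')$ with centres $Lx'',Ly''\in L\cps'$. Now I would feed this through $S$: from Lemma \ref{lem: LD over hull} together with Remark \ref{rem:LD on supports or punctures}, whenever $(L\cps')[a,J]=(L\cps')[b,J]$ with $J\supseteq J'+B(\mathbf 0,c)$ one has $\cps[a,J']=\cps[b,J']$ (apply the radius-$c$ rule at each translate $a+q$, $b+q$ for $q\in J'$). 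Taking $a=Lx''$, $b=Ly''$, $J=LK$ and $J'=K'+B(\mathbf 0,R)$, the hypothesis $K'\subseteq L(K)_{-\kappa}$ with $\kappa=c+R$ gives exactly $J'+B(\mathbf 0,c)\subseteq LK$, so $\cps[Lx'',\,K'+B(\mathbf 0,R)]=\cps[Ly'',\,K'+B(\mathbf 0,R)]$.

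Finally I would recentre on genuine support points of $\cps$, since $Lx''$ need not be one (subdivision can move the control point of the cluster replacing $Lx''$). By relative density pick $p\in\cps$ with $p=Lx''+w$, $\abs{w}\le R$. Reading the previous equality on the collar $B(\mathbf 0,R)$ forces $p'\coloneqq Ly''+w=p+L(v)$ to lie in $\cps$ as well, and comparing on $K'$ (using $K'+w\subseteq K'+B(\mathbf 0,R)$) yields $\cps[p,K']=\cps[p',K']$ with $p,p'\in\cps$ and $p'-p=L(v)$; thus $L(v)\in D_{K'}(\cps)$. I expect the main obstacle to be precisely this radius bookkeeping: one must pad the target region by the $R$-collar \emph{before} applying $S$, so that after locating a true support point near $Lx''$ the comparison still covers all of $K'$ and certifies $p+L(v)$ as a support point — this is what pins down $\kappa=c+R$. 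For the case of interest, $K=B(\mathbf 0,r)$, the set $LK$ contains a ball about the origin and all centring issues are automatic; the general bounded $K$ (where one may harmlessly enlarge $K'$ to contain $\mathbf 0$) needs only routine additional care.
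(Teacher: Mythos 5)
Your proof is correct and follows essentially the same route as the paper's: transfer the return vector to the predecessor via local indistinguishability, inflate by \(L\), push through the local derivation at radius \(c\), and use relative density (radius \(R\)) to recentre on genuine support points, yielding \(\kappa = c + R\). The only differences are cosmetic bookkeeping --- the paper erodes \(L(K)\) stepwise to \(L(K)_{-(c+r)}\) where you instead pad \(K'\) by the collars \(B(\mathbf{0},R)\) and \(B(\mathbf{0},c)\) --- and the origin-centring subtlety you flag at the end is present (silently) in the paper's own proof as well.
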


\begin{proof}
We have a predecessor \(\cps'\) for which \(L(\cps') \LD \cps \LIs \cps'\). From \(\cps' \LIs \cps\), we have \(D_K = D_K(\cps) = D_K(\cps')\) (and similarly for \(K'\) satisfying the statement). So, given \(v \in D_K\), there exist \(x\), \(y \in \cps'\) with \(v = x-y\) and \(\cps'[x,K] = \cps'[y,K]\). Then, clearly, \((L\cps')[L(x),L(K)] = (L\cps')[L(y),L(K)]\). Let \(L\cps' \LD \cps\) have derivation radius \(c\). It follows that \(\cps[L(x),L(K)_{-c}] = \cps[L(x),L(K)_{-c}]\). Let \(r > 0\) be such that all points of the ambient space are within distance \(r\) of a point of \(\cps\). Then there exists some \(z\) with \(\|z\| \leq r\) with \(\cps[L(x)+z,L(K)_{-(c+r)}] = \cps[L(y)+z,L(K)_{-(c+r)}]\) and \(L(x)+z\), \(L(y)+z \in \cps\). Taking \(\kappa = c+r\), we see that \((L(x)+z)-(L(y)+z) = L(v) \in D_{L(K)_{-\kappa}}\), as required.
\end{proof}

\begin{corollary}\label{cor:action of L on return vectors}
There exists some \(C > 0\) so that, for sufficiently large \(r\), if \(v \in D_r\) then \(L(v) \in D_{Cr}\), where we may take \(C > 1\) if \(L\) is expansive. In particular, for some sufficiently large \(r > 0\), for all \(v \in D_r\) we have that \(L(v) \in D_0 = \cps - \cps\).
\end{corollary}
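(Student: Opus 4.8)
The plan is to deduce this directly from Lemma \ref{lem:action of expansion on return vectors} by specialising to balls \(K = B(\mathbf{0},r)\), for which \(D_K = D_r\). Fixing the \(\kappa \geq 0\) provided by that lemma, the task reduces to exhibiting, for all large \(r\), a radius of the form \(Cr\) with \(B(\mathbf{0},Cr) \subseteq L(B(\mathbf{0},r))_{-\kappa}\); the lemma then immediately gives \(L(v) \in D_{B(\mathbf{0},Cr)} = D_{Cr}\) for every \(v \in D_r\).

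The geometric input I would establish first is that \(L(B(\mathbf{0},r))\) contains a ball about the origin. Since \(L\) is a linear isomorphism, there is a constant \(c > 0\) (one may take \(c = \|L^{-1}\|^{-1}\)) with \(\|L(x)\| \geq c\|x\|\) for all \(x\); then, given \(\|y\| \leq cr\), the preimage \(x = L^{-1}(y)\) satisfies \(\|x\| \leq \|y\|/c \leq r\), so \(y \in L(B(\mathbf{0},r))\), whence \(L(B(\mathbf{0},r)) \supseteq B(\mathbf{0},cr)\). When \(L\) is expansive, the distinguished norm fixed in Section \ref{sec:setup} gives \(\|L(x)\| \geq \lambda\|x\|\) for some \(\lambda > 1\), so we may take \(c = \lambda > 1\). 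Eroding, any \(y\) with \(\|y\| \leq cr - \kappa\) has \(B(y,\kappa) \subseteq B(\mathbf{0},cr) \subseteq L(B(\mathbf{0},r))\), whence \(B(\mathbf{0}, cr - \kappa) \subseteq L(B(\mathbf{0},r))_{-\kappa}\).

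Next I would fix any constant \(C\) with \(0 < C < c\), choosing \(C = \tfrac{1}{2}(1+\lambda) > 1\) in the expansive case. For \(r\) large enough that \((c - C)r \geq \kappa\), equivalently \(Cr \leq cr - \kappa\), the previous containment gives \(B(\mathbf{0},Cr) \subseteq L(B(\mathbf{0},r))_{-\kappa}\). Applying Lemma \ref{lem:action of expansion on return vectors} with \(K = B(\mathbf{0},r)\) and \(K' = B(\mathbf{0},Cr)\) then yields \(L(v) \in D_{Cr}\) for all \(v \in D_r\), which is the first assertion (with \(C > 1\) in the expansive case).

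For the final ``in particular'', I would simply invoke monotonicity of the \(D_K\): since \(B(\mathbf{0},0) \subseteq B(\mathbf{0},Cr)\) we have \(D_{Cr} \subseteq D_0\), so \(L(v) \in D_0\) already for the same large \(r\). It remains to record that \(D_0 = \cps - \cps\): for any \(x,y\) in the support, the \(0\)-patch \(\cps[x,0]\) records only the (colour of the) point at its centre, so \(\cps[x,0] = \cps[y,0]\) holds for all such \(x,y\) of matching colour, giving \(D_0 = \cps - \cps\). The only real care needed anywhere is the conversion of the additive erosion loss \(-\kappa\) into a purely multiplicative bound, which is precisely what forces the qualifier ``for sufficiently large \(r\)'' and the choice \(C < c\); everything else is immediate from Lemma \ref{lem:action of expansion on return vectors}.
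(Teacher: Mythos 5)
Your proof is correct and follows essentially the same route as the paper's: both specialise Lemma \ref{lem:action of expansion on return vectors} to \(K = B(\mathbf{0},r)\) and reduce to the containment \(B(\mathbf{0},Cr) \subseteq L(B(\mathbf{0},r))_{-\kappa}\) for any \(C\) strictly below the expansion constant \(\lambda\) (with \(\lambda > 1\) in the distinguished norm when \(L\) is expansive) and \(r\) large, then conclude via monotonicity that \(D_{Cr} \subseteq D_0 = \cps - \cps\). You merely spell out the erosion bookkeeping and the identification of \(D_0\) in more detail than the paper does.
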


\begin{proof}
Applying Lemma \ref{lem:action of expansion on return vectors} to the case of \(K = B(\mathbf{0},r)\) leads directly to the result, noting that, for any \(\kappa > 0\), we have that \(B(\mathbf{0},Cr + \kappa) \subseteq L(B(\mathbf{0},r))\) for sufficiently large \(r\) and any \(C < \lambda\), where \(\lambda > 0\) is such that \(B(\mathbf{0},\lambda) \subseteq L(B(\mathbf{0},1))\). Thus, for sufficiently large \(r\), if \(v \in D_r\) then \(Lv \in D_{Cr} \subseteq D_0 = \cps - \cps\). In the case that \(L\) is expansive, we may take a norm with \(\lambda > 1\), and thus also \(C > 1\).
\end{proof}

Next, we consider return vectors for Euclidean cut and project sets. In this setting, we will see that \(\Delta(\cps) = \Gamma_\vee\), and in particular determines the total space dimension \(k\), from \(\Gamma_\vee \cong \Z^k\).

\begin{lemma}\label{lem:return vectors of cps}
Let \(\cps \in \Omega(\mathcal{S},W)\). For all bounded \(K \subset \phy\), we have that \(D_K \subset \Gamma_\vee\). For each \(r > 0\), there exists some \(\epsilon > 0\) such that, for all \(\gamma \in \Gamma\), if \(\gamma_\vee \in D_r\), we have that \(\|\gamma_<\| \leq \epsilon\). Conversely, for all \(\epsilon > 0\), there exists some \(r > 0\) so that, if \(\gamma \in \Gamma\) with \(\|\gamma_<\| \geq \epsilon\), then \(\|\gamma_\vee\| \notin D_r\).
\end{lemma}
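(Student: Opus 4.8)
The plan is to translate the return-vector condition into the internal space and control it through acceptance domains, the tools being Lemmas \ref{lem:acceptance domains indicate patches} and \ref{lem: arbitrarily small ADs}; I read the final inequality as asserting \(\gamma_\vee \notin D_r\). For the containment \(D_K \subseteq \Gamma_\vee\), I would note that each element of the hull lies in a single translate of the projected lattice: if \(\tau(\cps) = [t]\) then \(\cps \subseteq (\Gamma+t)_\vee = \Gamma_\vee + t_\vee\) by the torus parametrisation of Section \ref{sec:torus parametrisation}. Since \(D_K\) consists of differences \(y-x\) with \(x,y \in \cps\) and \(\Gamma_\vee\) is a subgroup of \(\phy\), the translate \(t_\vee\) cancels and \(D_K \subseteq \Gamma_\vee - \Gamma_\vee = \Gamma_\vee\).

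For the two quantitative statements the first step is to reduce to a non-singular cut and project set, since Lemma \ref{lem:acceptance domains indicate patches} is available only there. Fix a non-singular translate \(W' = W+s\) and set \(\cps_0 = \cps(\mathcal{S},W')\); as \(\cps \in \Omega(\mathcal{S},W) = \Omega_{\cps_0}\) we have \(\cps \LI \cps_0\). Given \(\gamma_\vee \in D_r(\cps)\), witnessed by \(x,y \in \cps\) with \(\cps[x,r] = \cps[y,r]\) and \(y - x = \gamma_\vee\), the single patch of \(\cps\) on the ball of radius \(r + \|\gamma_\vee\|\) about \(x\) records both occurrences at relative displacement \(\gamma_\vee\); by local indistinguishability it reappears in \(\cps_0\), yielding support points \(x', x' + \gamma_\vee \in \cps_0\) with equal \(r\)-patches. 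Hence \(D_r(\cps) \subseteq D_r(\cps_0)\), and it suffices to bound \(\gamma_<\) for \(\cps_0\).

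The key identity is then immediate: if \(\gamma_\vee \in D_r(\cps_0)\) with \(P \coloneqq \cps_0[x,r] = \cps_0[y,r]\), then by Lemma \ref{lem:acceptance domains indicate patches} both \(x^\star\) and \(y^\star\) lie in the \emph{same} \(r\)-acceptance domain \(W'_P\); since the star map is a homomorphism, \(\gamma_< = (\gamma_\vee)^\star = y^\star - x^\star \in W'_P - W'_P\), so \(\|\gamma_<\| \leq \mathrm{diam}(W'_P)\). For the second assertion I would take \(\epsilon \coloneqq \max_P \mathrm{diam}(W'_P)\), a finite maximum over the finitely many \(r\)-acceptance domains (by FLC), these diameters being independent of the translate \(s\) since translating the window translates each acceptance domain. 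For the third, I would instead invoke Lemma \ref{lem: arbitrarily small ADs}: given \(\epsilon > 0\), choose \(r\) so large that every \(r\)-acceptance domain has diameter strictly below \(\epsilon\), whence \(\gamma_\vee \in D_r\) forces \(\|\gamma_<\| < \epsilon\) and the contrapositive is the stated claim. I expect the only point needing genuine care to be the reduction to the non-singular case, which is what lets us sidestep how lattice points meeting \(\partial W'\) are resolved for a singular \(\cps\); after that the argument is a direct application of the acceptance-domain formalism.
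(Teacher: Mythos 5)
Your argument is internally correct as a proof of the lemma exactly as printed: the containment \(D_K \subseteq \Gamma_\vee\) (via the torus parametrisation rather than the paper's appeal to repetitivity and a non-singular representative, both fine), the careful reduction of \(D_r(\cps)\) to \(D_r(\cps_0)\) for a non-singular \(\cps_0\), and the final assertion via Lemma \ref{lem: arbitrarily small ADs} all match the paper's proof in substance. In particular, the observation that both centres of a repeated \(r\)-patch lie in the same \(r\)-acceptance domain, so that \(\gamma_< \in W_P - W_P\), is exactly how the paper handles the last claim.

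However, there is an important divergence on the middle assertion, and it is worth flagging because the printed statement appears to be misworded. You prove the implication as literally stated: \(\gamma_\vee \in D_r \Rightarrow \|\gamma_<\| \leq \epsilon\). That direction is correct but nearly trivial (any difference of two points of \(\cps_0\) already has star image in \(W - W\), so \(\epsilon = \mathrm{diam}(W-W)\) works uniformly in \(r\)). The paper's own proof establishes the \emph{converse}: for each \(r > 0\) there is \(\epsilon > 0\) such that every \(\gamma \in \Gamma\) with \(\|\gamma_<\| \leq \epsilon\) has \(\gamma_\vee \in D_r\). This requires a genuinely different argument: take a ball \(B\) of radius greater than \(\epsilon\) inside some \(r\)-acceptance domain \(W_P\); by density of \(\Gamma_<\) find \(g \in \Gamma\) with \(g_<\) and \(g_< + \gamma_<\) both in \(B\); then \(g_\vee\) and \(g_\vee + \gamma_\vee\) lie in \(\cps_0\) with equal \(r\)-patches by Lemma \ref{lem:acceptance domains indicate patches}, so \(\gamma_\vee \in D_r\). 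The converse direction is precisely what is cited later: Corollary \ref{cor:LTM is projected lattice} (showing \(\Delta(\cps) = \Gamma_\vee\), in combination with Lemma \ref{lem:small_basis}) and the proof of Theorem \ref{thm:main} both invoke ``\(\gamma_\vee \in D_r\) provided \(\|\gamma_<\| < \epsilon\)''. So while your proof is faithful to the wording, it does not establish the claim the paper actually proves and relies on, and your diameter bound on acceptance domains cannot yield it: that direction needs the density of \(\Gamma_<\) together with the fact that acceptance domains have non-empty interior, not just bounded size.
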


\begin{proof}
Assume, without loss of generality, by repetitivity, \(\cps = \cps(\mathcal{S},W)\) for non-singular \(W\). Then, by definition, each \(D_K \subset \cps - \cps \subset \Gamma_\vee - \Gamma_\vee = \Gamma_\vee\). Take any \(r > 0\) and any \(r\)-acceptance domain \(W_P \in \mathscr{A}_r\). We have that \(W_P\) contains a ball \(B\), say of radius greater than some \(\epsilon > 0\). Take any \(\gamma \in \Gamma\) with \(\|\gamma_<\| \leq \epsilon\). Then, by density of \(\Gamma_<\), there exists some \(g \in \Gamma\) with \(g_< \in B\) and \(g_<+\gamma_< \in B\). Then \(g_\vee\), \(g_\vee + \gamma_\vee \in \cps\) and, by Lemma \ref{lem:acceptance domains indicate patches}, we have that \(\cps[g_\vee,r] = \cps[g_\vee + \gamma_\vee,r]\). Hence, \((g_\vee + \gamma_\vee) - g_\vee = \gamma_\vee \in D_r\), as required. Conversely, given any \(\epsilon > 0\), by Lemma \ref{lem: arbitrarily small ADs} we may choose \(r > 0\) so that all \(r\)-acceptance domains all have diameter no greater than \(\epsilon\). Suppose that \(\gamma \in \Gamma\) and take any \(x \in \cps\). Then if \(\cps[x,r] = \cps[x+\gamma_\vee,r] = P\), by Lemma \ref{lem:acceptance domains indicate patches}, \(x^\star\), \(x^\star + \gamma_\vee \in W_P\) for some \(r\)-patch \(P\) so \(\|\gamma_<\| = \|(x^\star + \gamma_\vee) - x^\star\| < \epsilon\), as required.
\end{proof}

The above shows that return vectors for `large' patches are projections of lattice vectors that lie close to the physical space. In combination with the following simple lemma, we see that the limit translation module is precisely \(\Gamma_\vee\).

\begin{lemma}\label{lem:small_basis}
For all \(\epsilon > 0\) there is a finite set \(A \subset \Gamma\) with \(\ang{A}_\Z = \Gamma\) and \(A_< \subset B(\mathbf{0},\epsilon)\).
\end{lemma}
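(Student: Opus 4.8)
The plan is to extract the required finite set $A$ from the collection of \emph{all} lattice points with small internal projection, and to show this collection already generates $\Gamma$. Concretely, set $\Gamma_\epsilon \coloneqq \{\gamma \in \Gamma \mid \gamma_< \in B(\mathbf{0},\epsilon)\}$ and let $H \coloneqq \langle \Gamma_\epsilon \rangle_\Z \leq \Gamma$ be the subgroup it generates. Since $\Gamma \cong \Z^k$ is finitely generated, once I establish $H = \Gamma$ the lemma follows at once: any finitely generated group generated by a set $X$ is generated by a finite subset of $X$ (each of finitely many chosen generators is a word in finitely many elements of $X$, so collect those), and applying this with $X = \Gamma_\epsilon$ produces a finite $A \subseteq \Gamma_\epsilon$ with $\langle A \rangle_\Z = \Gamma$ and, by construction, $A_< \subseteq B(\mathbf{0},\epsilon)$. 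Thus the entire content is the equality $H = \Gamma$.

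First I would record that $H_< \coloneqq \pi_<(H)$ is dense in $\intl$. Indeed $H_< \supseteq (\Gamma_\epsilon)_< = \Gamma_< \cap B(\mathbf{0},\epsilon)$, which is dense in $B(\mathbf{0},\epsilon)$ because $\Gamma_<$ is dense in $\intl$; hence the closure of $H_<$ is a closed subgroup of $\intl$ containing the neighbourhood $B(\mathbf{0},\epsilon)$ of the origin, and the only closed subgroup of $\intl \cong \R^n$ containing a neighbourhood of $\mathbf{0}$ is $\intl$ itself. The key step is then a ``subtract to make small'' argument: given an arbitrary $g \in \Gamma$, density of $H_<$ lets me choose $h \in H$ with $\|g_< - h_<\| \leq \epsilon$, so that $g - h \in \Gamma$ has $(g-h)_< \in B(\mathbf{0},\epsilon)$, i.e.\ $g - h \in \Gamma_\epsilon \subseteq H$. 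Since $h \in H$ as well, $g = (g-h) + h \in H$, and as $g$ was arbitrary we conclude $H = \Gamma$.

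The main obstacle is a conceptual trap rather than a technical one. The tempting route is to bound the $\Z$-rank of $H$ (or to argue $H$ is saturated in $\Gamma$) by studying the real span of the small vectors, but this runs into genuine subtleties, since a priori the small vectors might concentrate near a proper subspace, and controlling this via the discontinuous ``star map'' is delicate. The plan above sidesteps all of that: the only facts that need care are the standard observation that a subgroup generated by a neighbourhood of $\mathbf{0}$ is dense (used to upgrade ``$H_<$ projects densely'' to ``$H_<$ is dense''), and the use of injectivity of $\pi_<|_\Gamma$ only implicitly, through working directly in $\tot$ with the decomposition $\tot = \phy + \intl$. I expect the write-up to be short, with the density of $H_<$ being the one point worth stating carefully.
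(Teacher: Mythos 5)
Your proof is correct, but it follows a genuinely different route from the paper's. The paper is constructive: it chooses $k-d$ lattice vectors whose internal projections lie in $B(\mathbf{0},\epsilon/k)$ and span an $\epsilon/k$-dense sublattice of $\intl$, extends them to a basis $b = \{b_1,\ldots,b_k\}$ of $\tot$ consisting of elements of $\Gamma$ with every $(b_i)_< \in B(\mathbf{0},\epsilon/k)$ (correcting the added vectors by integer combinations of the first ones), and then repairs the possible finite index of $\ang{b}_\Z$ in $\Gamma$ by adjoining coset representatives chosen in the fundamental parallelotope $P$ of $b$, whose projection $P_<$ lies in $B(\mathbf{0},\epsilon)$. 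You instead argue softly: the subgroup $H \leq \Gamma$ generated by \emph{all} lattice points with internal projection in $B(\mathbf{0},\epsilon)$ has $H_<$ dense in $\intl$ (its closure is a closed subgroup of $\intl$ containing a neighbourhood of $\mathbf{0}$), so an arbitrary $g \in \Gamma$ can be reduced, by subtracting some $h \in H$ with $\|g_< - h_<\| \leq \epsilon$, to an element of $\Gamma_\epsilon \subseteq H$; hence $H = \Gamma$, and finite generation of $\Gamma \cong \Z^k$ then extracts the required finite $A \subseteq \Gamma_\epsilon$. Both arguments are complete. Yours is shorter and avoids the two fiddly points of the paper's construction (arranging the $\epsilon/k$-dense projected sublattice, and the fundamental-domain bookkeeping); the paper's buys more explicit structure, namely a generating set containing an $\R$-basis of $\tot$ together with at most $[\Gamma : \ang{b}_\Z]$ extra points with quantitative control on its size, though this extra structure is never exploited --- the lemma's sole application, Corollary \ref{cor:LTM is projected lattice}, needs only what your version delivers.
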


\begin{proof}
Take \(b_1\), \ldots, \(b_{k-d} \in \Gamma\) so that each \((b_i)_< \in B(\mathbf{0},\epsilon/k)\) and, further, \(\{(b_i)_<\}_{i=1}^{k-d}\) spans a lattice in \(\intl\) which is \(\epsilon/k\)-dense. Clearly, since \(\Gamma_<\) is dense, this can be done. Choose further \(b_{k-d+1}'\), \ldots, \(b_k' \in \Gamma\) so that \(\{b_1,\ldots,b_{k-d},b_{k-d+1}',\ldots,b_k'\}\) is linearly independent. Since \(\ang{b_1,\ldots,b_{k-d}}_\Z\) is \(\epsilon/k\)-dense, for each \(b_i'\) there is some \(b_i = b_i' + (c_1^i b_1 + \cdots + c_{k-d}^i b_{k-d})\), for \(c_j^i \in \Z\), with \(b_i \in B(\mathbf{0},\epsilon/k)\).

Thus, \(b = \{b_1,\ldots,b_k\}\) is a basis of \(\tot\) of elements in \(\Gamma\) with the property that each \((b_i)_< \in B(\mathbf{0},\epsilon/k)\). However, \(b\) may only have \(\Z\)-span \(\ang{b}_\Z = G \lneq \Gamma\) with index \(N > 1\). In this case, the parallelotope \(P\) generated by \(b\) is a fundamental domain for \(G\), so we may find \(g_1\), \ldots, \(g_N \in P\) such that, with \(A \coloneqq b \cup \{g_1,\ldots,g_N\}\), we have \(\ang{A}_\Z = \Gamma\). Since each \((b_i)_< \in B(\mathbf{0},\epsilon/k)\), we have \(P_< \subseteq B(\mathbf{0},\epsilon)\) and, in particular, \(a_< \in B(\mathbf{0},\epsilon)\) for each \(a \in A\), as required.
\end{proof}

\begin{corollary}\label{cor:LTM is projected lattice}
Let \(\cps \in \Omega(\mathcal{S},W)\). Then \(\Delta(\cps) = \Gamma_\vee\).
\end{corollary}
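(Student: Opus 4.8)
The plan is to prove the two inclusions $\Delta(\cps) \subseteq \Gamma_\vee$ and $\Gamma_\vee \subseteq \Delta(\cps)$ separately, the first being immediate and the second being the main content. Since Lemma \ref{lem:return vectors of cps} and Lemma \ref{lem:small_basis} are stated for any $\cps \in \Omega(\mathcal{S},W)$ and for the lattice itself, no reduction to the non-singular case is needed. For the easy inclusion, I would note that by Lemma \ref{lem:return vectors of cps} we have $D_r \subseteq \Gamma_\vee$ for every $r > 0$. Since $\Gamma_\vee = \pi_\vee(\Gamma)$ is a $\Z$-module (the injective linear image of the lattice $\Gamma$), taking $\Z$-spans preserves this containment, giving $\Delta_r = \langle D_r \rangle_\Z \subseteq \Gamma_\vee$ for all $r$, and hence $\Delta(\cps) = \bigcap_{r > 0} \Delta_r \subseteq \Gamma_\vee$.

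The reverse inclusion is where the two lemmas combine. I would show $\Gamma_\vee \subseteq \Delta_r$ for each \emph{fixed} $r > 0$, which then yields $\Gamma_\vee \subseteq \bigcap_{r > 0} \Delta_r = \Delta(\cps)$. Fix $r$. Lemma \ref{lem:return vectors of cps} supplies an $\epsilon > 0$ with the property that any $\gamma \in \Gamma$ whose internal component satisfies $\|\gamma_<\| \leq \epsilon$ has $\gamma_\vee \in D_r$ (intuitively: lattice points sitting sufficiently close to the physical space project to return vectors of $r$-patches, via a shared acceptance domain containing a ball of radius $\epsilon$). With this $\epsilon$ now in hand, apply Lemma \ref{lem:small_basis} to produce a finite set $A \subset \Gamma$ with $\langle A \rangle_\Z = \Gamma$ and $A_< \subset B(\mathbf{0},\epsilon)$. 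Then each $a \in A$ has $\|a_<\| \leq \epsilon$, so $a_\vee \in D_r$, i.e. $A_\vee \subseteq D_r$. Passing to $\Z$-spans and using that $\pi_\vee$ is a homomorphism, $\Gamma_\vee = \pi_\vee(\langle A \rangle_\Z) = \langle A_\vee \rangle_\Z \subseteq \langle D_r \rangle_\Z = \Delta_r$, as required.

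The one point requiring care, and the reason the argument must be ordered as above, is the matching of the two occurrences of $\epsilon$: the radius $\epsilon$ controlling which lattice points yield return vectors of a given patch size comes out of Lemma \ref{lem:return vectors of cps} as a function of the fixed $r$, and only afterwards is it fed into Lemma \ref{lem:small_basis} to furnish a generating set of $\Gamma$ whose internal parts are all small enough. Fixing $r$ first, extracting $\epsilon(r)$, and then invoking the small-basis lemma keeps the quantifiers aligned; no genuine obstacle remains beyond this bookkeeping. Combining the two inclusions gives $\Delta(\cps) = \Gamma_\vee$, which in particular recovers $k = \rk \Gamma_\vee$ intrinsically from $\cps$.
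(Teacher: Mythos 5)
Your proof is correct and follows essentially the same route as the paper: the easy inclusion $\Delta(\cps) \subseteq \Gamma_\vee$ from $D_r \subseteq \Gamma_\vee$ (Lemma \ref{lem:return vectors of cps}), and the reverse inclusion by fixing $r$, extracting the $\epsilon = \epsilon(r)$ from that lemma, and then invoking Lemma \ref{lem:small_basis} to produce generators of $\Gamma$ with internal parts in $B(\mathbf{0},\epsilon)$, whose projections therefore lie in $D_r$ and span $\Gamma_\vee$ inside $\Delta_r$. Your explicit attention to the quantifier order ($r$ first, then $\epsilon$, then the small basis) is exactly the bookkeeping implicit in the paper's argument.
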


\begin{proof}
By Lemma \ref{lem:return vectors of cps}, we have \(\Delta < \Gamma_\vee\). Conversely, by the same lemma, for any \(r > 0\) there exists some \(\epsilon > 0\) so that \(\gamma_\vee \in D_K\) for all \(\gamma \in \Gamma\) with \(\|\gamma_<\| < \epsilon\). By Lemma \ref{lem:small_basis}, such \(\gamma\) generate all of \(\Gamma\) and thus \(\Gamma_\vee = \ang{D_r}_\Z\). Since this holds for all \(r > 0\), we have that \(\Gamma_\vee = \ang{D_r}_\Z\).
\end{proof}

Combining the above with Corollary \ref{cor: sub preserves LTM}, we obtain:

\begin{corollary}\label{cor:projected lattice invariant under L}
Let \(\Lambda \in \Omega(\mathcal{S},W)\) be \(L\)-Sub. Then \(L(\Gamma_\vee) = \Gamma_\vee\).
\end{corollary}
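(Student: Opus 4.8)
The plan is to chain the two immediately preceding corollaries, which between them already contain all the content needed. The statement is essentially a bookkeeping combination, so I do not anticipate any genuine obstacle — the work has been done in establishing Corollaries \ref{cor: sub preserves LTM} and \ref{cor:LTM is projected lattice}.

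First I would invoke Corollary \ref{cor:LTM is projected lattice}. Since \(\Lambda \in \Omega(\mathcal{S},W)\), this identifies the limit translation module with the projected lattice, giving \(\Delta(\Lambda) = \Gamma_\vee\). This is the step that converts the abstract module \(\Delta\), defined purely in terms of return vectors of patches, into the concrete geometric object \(\Gamma_\vee\) living in physical space.

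Next I would apply Corollary \ref{cor: sub preserves LTM}. The hypothesis of that corollary is exactly met here, since \(\Lambda\) is assumed \(L\)-Sub, so it yields \(\Delta(\Lambda) = L(\Delta(\Lambda))\). The underlying reason this holds — that \(\Delta\) is an MLD- and LI-invariant of the pattern (Proposition \ref{prop: LTM invariant under LI and MLD}), together with the \(L\)-Sub relation \(L(\cps') \MLD \cps \LIs \cps'\) forcing \(L(\Delta)\) and \(\Delta\) to coincide — has already been dispatched, so I would simply cite it.

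Finally I would substitute the first equality into the second. Replacing \(\Delta(\Lambda)\) by \(\Gamma_\vee\) on both sides of \(\Delta(\Lambda) = L(\Delta(\Lambda))\) gives \(\Gamma_\vee = L(\Gamma_\vee)\), which is the desired conclusion. If one wished to be fully explicit, one could write \(L(\Gamma_\vee) = L(\Delta(\Lambda)) = \Delta(\Lambda) = \Gamma_\vee\), reading the equalities in that order. No estimate, no compactness argument, and no appeal to the window is required, so the proof is a single line.
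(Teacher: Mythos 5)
Your proof is correct and is exactly the paper's argument: the paper derives this corollary by combining Corollary \ref{cor: sub preserves LTM} (\(L\)-Sub implies \(\Delta = L(\Delta)\)) with Corollary \ref{cor:LTM is projected lattice} (\(\Delta(\cps) = \Gamma_\vee\) for cut and project sets), precisely as you do.
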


\subsection{Local isomorphism and local derivations for cut and project sets}
The concepts of local isomorphism and local derivations have simple descriptions in the context of cut and project sets. First, we consider those which are locally isomorphic. Note that, given our definitions, \(\cps \LI \cps'\) if and only if \(\cps \LIs \cps'\) for cut and project sets \(\cps\) and \(\cps'\), since all our cut and project sets are repetitive.

\begin{lemma} \label{lem: LI <=> equal windows}
Let \(\mathcal{S}\) be a cut and project scheme, \(W\) and \(W'\) be windows, \(\cps \in \Omega(\mathcal{S},W)\) and \(\cps' \in \Omega(\mathcal{S},W')\). Then \(\cps \LIs \cps'\) if and only if \(W = W'+s\) for some \(s \in \intl\).
\end{lemma}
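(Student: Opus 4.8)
The statement has two implications, and I would treat them separately. For the forward (easy) direction, the plan is to use that translating the window internally corresponds merely to translating the lattice: as recorded in Section~\ref{sec:torus parametrisation}, for $s\in\intl$ with $W-s$ non-singular one has $\tau(\cps(\mathcal{S},W-s))=[s]$, so $\cps(\mathcal{S},W-s)\in\Omega(\mathcal{S},W)$. Hence if $W=W'+s$ then $\Omega(\mathcal{S},W)=\Omega(\mathcal{S},W')$, and since both hulls consist of mutually locally isomorphic repetitive patterns, $\cps\LIs\cps'$.

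For the converse, the plan is to reconstruct the window intrinsically from a non-singular cut and project set, and then pin down the torus parameter of $\cps'$ inside $\Omega_\cps$. First I would reduce to the case $\cps=\cps(\mathcal{S},W)$ and $\cps'=\cps(\mathcal{S},W')$ with both windows non-singular: replacing a window by an internal translate changes neither the hull nor the conclusion (if the translated windows differ by an internal vector, so do the originals). The key reconstruction observation is that, since $\Gamma_<$ is dense in $\intl$ and $W$ is topologically regular, $\{e^\star:e\in\cps\}=\Gamma_<\cap W$ is dense in $W$, whence $W=\cl(\{e^\star:e\in\cps\})$, and likewise $W'=\cl(\{e^\star:e\in\cps'\})$.

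Now $\cps\LIs\cps'$ gives $\Omega_\cps=\Omega_{\cps'}$, so $\cps'\in\Omega(\mathcal{S},W)$ and $\tau(\cps')=[x]$ for some $x\in\tot$. The crucial step is to show $[x]\in[\intl]$. I would pick any $e\in\cps'\subset\Gamma_\vee$ and translate by $-e\in\phy$: the pattern $\cps'-e\in\Omega(\mathcal{S},W)$ contains $\mathbf{0}$, and by $\phy$-equivariance of $\tau$ we have $\tau(\cps'-e)=[x]-[e]$. By the description of hull elements via the torus parametrisation, a point of the element at parameter $[y]$ has the form $(\gamma+y)_\vee$ for some $\gamma\in\Gamma$; applied to $\mathbf{0}\in\cps'-e$, the equation $(\gamma+y)_\vee=\mathbf{0}$ forces $y_\vee=-\gamma_\vee\in\Gamma_\vee$, i.e. $[y]\in[\intl]$. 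Combined with $[e]\in[\intl]$ (as $e\in\Gamma_\vee$), this gives $\tau(\cps')=[x]=[s']$ for some $s'\in\intl$. I expect this to be the main obstacle, and the device of translating a lattice point to the origin is precisely what makes it work uniformly, whether or not $\cps'$ happens to be singular as an element of $\Omega(\mathcal{S},W)$.

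Finally, with $s'\in\intl$ (so $s'_\vee=\mathbf{0}$), the torus parametrisation description squeezes the points of $\cps'$: $\{\gamma_\vee:\gamma_<+s'\in\intr W\}\subseteq\cps'\subseteq\{\gamma_\vee:\gamma_<+s'\in W\}$. Applying $\star$ and taking closures squeezes $W'=\cl(\{e^\star:e\in\cps'\})$ between $\cl(\Gamma_<\cap\intr(W-s'))$ and $\cl(\Gamma_<\cap(W-s'))$, both of which equal $W-s'$ by regularity of the window and density of $\Gamma_<$. Hence $W'=W-s'$, that is $W=W'+s'$ with $s'\in\intl$, completing the proof. The multi-coloured case is identical componentwise, recovering each $W_i=\cl(\{e^\star:e\in\Lambda_i\})$.
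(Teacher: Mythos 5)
Your proof is correct, but it takes a genuinely different route from the paper's. The paper's own argument works directly with patches and Hausdorff approximation: for small \(\epsilon\), the star image of a large ball patch \(P\) of \(\cps\) is \(\epsilon\)-dense in \(W\); since \(P\) also occurs, up to a \(\Gamma_\vee\)-translation, as a full patch of \(\cps'\), the same finite point set is simultaneously contained and \(\epsilon\)-dense in some translate \(W'+s_\epsilon\) with \(s_\epsilon \in \Gamma_<\); hence the Hausdorff distance between \(W\) and \(W'+s_\epsilon\) tends to \(0\) and a compactness argument extracts \(s_\epsilon \to s\) with \(W = W'+s\). You instead reconstruct \(W'\) intrinsically as \(\cl(\{e^\star : e \in \cps'\})\) and then pin down \(\tau(\cps')\) inside \(\Omega(\mathcal{S},W)\); the device of translating a point of \(\cps'\) to the origin to force \(\tau(\cps') \in [\intl]\) is sound, and the final squeeze between the interior and closed-window point sets correctly yields \(W' = W - s'\). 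The trade-off is this: the paper's proof is self-contained, using only density of \(\Gamma_<\), topological regularity of \(W\) and compactness, whereas your two crucial steps lean on the precise fibre structure of the torus parametrisation — namely that \emph{every} element at parameter \([y]\), singular or not, has all its points of the form \((\gamma+y)_\vee\), contains all those with \((\gamma+y)_< \in \intr(W)\), and only those with \((\gamma+y)_< \in W\). The paper states this only informally in Subsection \ref{sec:torus parametrisation}, deferring details to the literature, and proving it requires a limit-of-non-singular-patterns argument of comparable substance to the paper's direct proof; so if the lemma is meant to be self-contained you should either prove that description or flag explicitly that you import it. What your route buys in exchange is a sharper conclusion: it identifies the translation concretely as the internal-space representative \(s'\) of \(\tau(\cps')\), which is precisely the compatibility between window translates and the torus parametrisation that the paper exploits later (e.g., in the proofs of Theorems \ref{thm:classification} and \ref{thm:PSA}).
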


\begin{proof}
It also follows directly from our definitions that translating the window does not affect \(\Omega(\mathcal{S},W)\), so clearly if \(W = W'+s\) then \(\Omega(\mathcal{S},W) = \Omega(\mathcal{S},W')\) and all these cut and project sets are locally isomorphic. Conversely, suppose that \(\cps \LIs \cps'\). By repetitivity, without loss of generality \(\cps = \cps(\mathcal{S},W)\) and \(\cps' = \cps(\mathcal{S},W')\) with \(W\) and \(W'\) non-singular and containing the origin. By density of \(\Gamma_<\) and \(W\) being compact and equal to the closure of its interior, for arbitrarily small \(\epsilon > 0\), there is a sufficiently large \(r > 0\) so that \(P_r = \cps\ang{B(\mathbf{0},r)} \subset \cps\) is such that the \(\epsilon\)-neighbourhood \(N_\epsilon\) of \(P_<\) covers \(W\). Since \(P\) also appears in \(\Lambda'\), up to translation, the same properties hold for \(W' + s_\epsilon\) for some \(s_\epsilon = \gamma_< \in \Gamma_<\). Thus, \(P_< \subset W, W+s_\epsilon \subset N_\epsilon\), where \(N_\epsilon\) is an \(\epsilon\)-neighbourhood of \(P_<\). It follows that the Hausdorff distance between \(W\) and \(W'+s_\epsilon\) converges to \(0\) as \(\epsilon \to 0\), and it easily follows that \(s_\epsilon \to s\) for some \(s \in \intl\), as \(\epsilon \to 0\), with \(W = W'+s\).
\end{proof}

We now want to show that constructability is necessary and sufficient for local derivability. However, it is also useful to see how a local derivation can be explicitly described by a construction between windows. We do this through the following lemma and proposition. In these results, if \(W\) is multi-coloured, we let \(W\) also denote the union of its colour components, where in each instance this should be clear, given the context.

\begin{lemma}\label{lem:constructables from ADs}
Let \(A \in \mathscr{B}(W)\), that is, \(A\) is constructable from \(W\), where we assume that \(A \subseteq W\). Then \(A = W_P\) for some indicator \(P\).
\end{lemma}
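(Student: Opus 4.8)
The plan is to read an indicator off a Boolean expression for \(A\) and then to verify, through the arrangement of the finitely many window translates involved, that the resulting acceptance domain recovers \(A\). Since \(A \in \mathscr{B}(W)\), I would first put it into the normal form of Equation \ref{eq:construction}, writing \(A = \bigvee_{i=1}^{m} X_i\) with \(X_i = \bigwedge_{j=1}^{m_i} W_{ij}\) and each \(W_{ij}\) equal to \(W_p + (g_{ij})_<\) or \(\neg W_p + (g_{ij})_<\) for a colour \(p\) and some \(g_{ij} \in \Gamma\). Any factor with \(g_{ij} \notin \mathscr{C}\) may be discarded, since on non-singular points it is either all of \(\intl\) or disjoint from \(W\), by the cylinder reduction preceding Definition \ref{def:acceptance domains}. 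In the \emph{single-clause} case \(m = 1\) the lemma is then immediate: intersecting with \(W\) is harmless as \(A \subseteq W\), the positive factors prescribe an in-set and the negated factors an out-set (matching signs of the \(\gamma\) against the \((g_{ij})_<\)), and Definition \ref{def:acceptance domains} yields an indicator \(P\) with \(W_P = X_1 \wedge W = A\).

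For a general expression I would pass to the common arrangement. Let \(G \subset \mathscr{C}\) collect \(\mathbf{0}\) together with the lattice elements whose projections occur among the \((g_{ij})_<\), and fix a bounded \(K \subset \phy\) with \(G_\vee \subset K\). By Lemma \ref{lem:ADs tile} the \(K\)-acceptance domains tile \(W\), and these atoms refine the arrangement of the translates \(W_p - \gamma_<\) for \(\gamma \in G\). Lemma \ref{lem:Booleans same on NS} then lets me compute membership of any point of \(\nonsing\) in \(A\), and in each \(W_P\), using the ordinary Boolean operations applied to these same translates; consequently every \(K\)-acceptance domain lies wholly inside or wholly outside \(A\) off \(\sing\), so that \(A = \bigvee_{P \in \mathcal{P}} W_P\) for the subcollection \(\mathcal{P} \subseteq \mathscr{A}_K\) of those contained in \(A\). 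Since two topologically regular sets agreeing on the dense set \(\nonsing\) coincide, this equality is genuine.

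The step I expect to be the main obstacle is the collapse of this finite union into a \emph{single} indicator, i.e.\ showing that the local rule ``\(x^\star \in A\)'' can be expressed as one conjunction of occupation and vacancy conditions rather than a disjunction of several. When the atoms of \(\mathcal{P}\) are exactly those sharing a fixed pattern of in/out conditions, with the remaining conditions ranging freely, their union simply drops those free conditions and is again the acceptance domain of a single indicator; the crux is to organise the construction, using \(A \subseteq W\) and the precise positions of \(\Gamma_<\), so that this reduction always applies. I would attempt it by reading the in-set and out-set of the sought indicator directly off the conditions common to all members of \(\mathcal{P}\), and then checking that no further condition is needed to carve out exactly \(\bigvee_{P \in \mathcal{P}} W_P\); isolating precisely when this reduction goes through — as opposed to only yielding \(A\) as a finite union of acceptance domains — is the delicate heart of the argument.
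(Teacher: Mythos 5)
Your first paragraph already contains, in essence, the paper's entire proof. The paper writes \(A\) in the normal form of Equation \ref{eq:construction}, absorbs \(\wedge\, W\) into each clause (legitimate since each clause is contained in \(A \subseteq W\)), restricts the translates to the cylinder \(\mathscr{C}\) exactly as you do, and then observes that each clause \(X_i\) is literally the acceptance domain of the indicator read off from the signs. It stops there: what is actually established (and the only form in which the lemma is ever invoked --- the proof of Proposition \ref{prop:LD from construction} cites it as ``\(A(i) = \bigcup_{P \in P(i)} W_P\)'') is that \(A\) is a \emph{finite union} of acceptance domains, one per clause. Your second paragraph, passing to the tiling of \(W\) by \(K\)-acceptance domains via Lemmas \ref{lem:ADs tile} and \ref{lem:Booleans same on NS}, is a correct, if heavier, alternative route to this same union statement.

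The step you flag as the ``delicate heart'' --- collapsing the union into a single indicator --- is not in the paper, is not needed for any application, and in fact cannot be carried out in general: read literally, with \(W_P\) a single conjunction as in Definition \ref{def:acceptance domains}, the statement fails. Take a codimension-one scheme with single-coloured window \(W = [0,1] \subset \intl \cong \R\) and choose \(g < h < g' < h'\) in \(\Gamma_< \cap (0,1)\). Then
\[
A = \bigl(W \wedge (W+g) \wedge \neg(W+h)\bigr) \vee \bigl(W \wedge (W+g') \wedge \neg(W+h')\bigr) = [g,h] \cup [g',h']
\]
belongs to \(\mathscr{B}(W)\), is contained in \(W\), and is disconnected with two components of non-empty interior. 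But every acceptance domain of this scheme is an interval (or empty): the conjunction of \(W\) with the in-terms \(W - \gamma_<\) is an intersection of unit-length closed intervals, hence an interval \(I\) of length at most one, and each further meet with an out-term \(\neg W - \gamma_<\) removes an open gap of length exactly one from \(I\), which after applying \(\clin\) leaves again an interval or the empty set. So no single indicator \(P\) satisfies \(W_P = A\). Your suspicion at the end was therefore well-founded, but the resolution is not a cleverer reduction: the lemma's conclusion should be read as (or corrected to) ``\(A\) is a finite union of acceptance domains \(W_P\)'', and that statement is proved in full generality by applying your first-paragraph argument clause by clause.
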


\begin{proof}
Since \(A \in \mathscr{B}(W)\), we may express \(A\) in terms of \(\Gamma_<\) translates of \(W\), as in Equation \ref{eq:construction}. Namely, we may write \(A\) as a union of sets \(X\) of the form
\[
X = \bigwedge_{j=1}^m W_j ,
\]
where each \(W_j = W+(g_j)_<\) or \(\neg W+(g_j)_<\) for \(g_j \in \Gamma\). Without loss of generality (since we assume that \(A \subseteq W\) and thus each \(X \subseteq W\)) we may include \(-\wedge W\) into the above. Thus, similarly to in the discussion to the definition of indicators, we may assume without loss of generality that each \(g_j\) belongs to the cylinder \(\mathscr{C}\). Hence, defining \(P\) by letting \(\Gamma^\mathrm{in}_P\) be the set of \(g_j\) with \(W_j = W+(g_j)_<\) and \(\Gamma^\mathrm{out}_P\) be the set of \(g_j\) with \(W_j = \neg W + (g_j)_<\), clearly \(X = W_P\), as required. The case of multiple colours is proved analogously.
\end{proof}

\begin{proposition}\label{prop:LD from construction}
Consider a cut and project scheme \(\mathcal{S}\) and two windows \(W\) and \(W'\) for it, in non-singular position. Denote \(\cps \coloneqq \cps(\mathcal{S},W)\) and \(\cps' \coloneqq \cps(\mathcal{S},W')\). Suppose that we may write each component of \(W'\) as
\begin{equation}\label{eq:LD from construction}
W'_j = \bigcup_{i=1}^m A(i) + Y(i)_< ,
\end{equation}
where each \(Y(i) \subset \Gamma\) is finite and each \(A(i) \subseteq W\) with \(A(i) \in \mathscr{B}(W)\), that is, \(A(i)\) is constructable from \(W\). Then \(\cps \LD \cps'\). The local derivation may be described, explicitly, as follows. Express each \(A(i)\) as a union \(A(i) = \bigcup_{P \in P(i)} W_P\) of acceptance domains, for a finite set of indicators \(P(i)\). Then we may construct \(\cps'_j\) by replacing each \(x \in \cps\) with the set \(x+\bigcup_{i \in \mathcal{I}_x} Y(i)_\vee\), where \(\mathcal{I}_x = \{i \mid x \in \cps_P \text{ for some } P \in P(i)\}\).
\end{proposition}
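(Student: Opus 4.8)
The goal is to show that $\cps \LD \cps'$, and to do so by exhibiting an explicit local rule of the promised form. The plan is to verify Definition \ref{def:LD} directly, using Remark \ref{rem:LD on supports or punctures} so that it suffices to check the derivation condition on points $x, y$ of the support $\cps$ (rather than all of $E$). First I would fix, for each component index $j$ and each summand $i$ in Equation \ref{eq:LD from construction}, the decomposition $A(i) = \bigcup_{P \in P(i)} W_P$ into acceptance domains; such a decomposition exists by Lemma \ref{lem:constructables from ADs} together with Lemma \ref{lem:ADs tile}, since each $A(i) \in \mathscr{B}(W)$ is constructable from $W$ and $A(i) \subseteq W$ (take $P(i)$ to be the acceptance domains of a sufficiently fine patch refinement whose union gives $A(i)$). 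The key technical input is Lemma \ref{lem:acceptance domains indicate patches}: for $x \in \cps$ we have $x \in \cps_P$ if and only if $x^\star \in W_P$, and membership in $\cps_P$ is itself determined by inspecting $\cps$ within a bounded radius of $x$ (the radius being controlled by the cylinder $\mathscr{C}$ and the displacements occurring in the indicators $P$).

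The central computation is to show that the proposed rule reproduces $\cps'$ exactly. I would argue that a point $w \in \cps'_j$ occurs at $w = e_\vee$ for $e \in \Gamma$ precisely when $e^\star \in W'_j$, and then use Equation \ref{eq:LD from construction} to rewrite this as: $e^\star \in A(i) + y_<$ for some $i$ and some $y \in Y(i)$. Writing $x \coloneqq e_\vee - y_\vee$, equivalently $x^\star = e^\star - y_<$, this says $x^\star \in A(i) = \bigcup_{P \in P(i)} W_P$, i.e. $x \in \cps_P$ for some $P \in P(i)$, which is exactly the condition $i \in \mathcal{I}_x$. Hence $w = x + y_\vee$ with $x \in \cps$ and $y \in Y(i)_\vee$ for some $i \in \mathcal{I}_x$, showing $w$ lies in the set produced by the rule; and conversely every point produced by the rule arises this way, so the rule produces exactly $\cps'_j$. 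One point requiring care here is that $x = e_\vee - y_\vee$ is genuinely a point of $\cps$: this holds because $A(i) \subseteq W$ forces $x^\star \in W$, so $x \in \cps$ by the defining property of the cut and project set (using that $W$ is non-singular).

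Finally I would confirm that this rule is genuinely \emph{local}, i.e. has a bounded derivation radius. The outputs attached to $x$ are the finitely many clusters $Y(i)_\vee$, which depend only on the set $\mathcal{I}_x$; and whether $i \in \mathcal{I}_x$ depends only on which $\cps_P$ (for $P \in \bigcup_{i} P(i)$, a finite collection of indicators) contain $x$. By the discussion preceding Definition \ref{def:acceptance domains}, each such membership $x \in \cps_P$ is decided by inspecting $\cps$ within the bounded region $K + (W-W)$ about $x$, where $K$ bounds the supports of the finitely many indicators; taking $c$ large enough to contain all these inspection regions (and enlarging it to absorb the output displacements $Y(i)_\vee$, as in Remark \ref{rem:LD on supports or punctures}) gives a derivation radius witnessing $\cps \LD \cps'$. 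The step I expect to be the main obstacle is the careful bookkeeping in the equivalence of the previous paragraph --- in particular ensuring that the shift from the condition ``$e^\star \in W'_j$'' to ``$x \in \cps_P$ for $P \in P(i)$'' is a genuine biconditional and does not silently lose or gain points on the boundary $\partial W$; this is where non-singularity of both $W$ and $W'$, and the closure-of-interior conventions of Notation \ref{not:boolean operations} combined with Lemma \ref{lem:Booleans same on NS}, must be invoked to guarantee that all relevant $x^\star$ avoid the singular set.
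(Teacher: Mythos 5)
Your proposal is correct and takes essentially the same route as the paper's proof: decompose each \(A(i)\) into acceptance domains via Lemma \ref{lem:constructables from ADs}, verify via the star map and Lemma \ref{lem:acceptance domains indicate patches} that the proposed rule reproduces \(\cps'\) exactly (with non-singularity handling boundary issues), and observe that locality follows since membership in each \(\cps_P\) is locally determined. The only cosmetic difference is that the paper packages the output of the rule as an auxiliary set \(\cps''\) and proves \(\cps' = \cps''\), while you phrase the same biconditional pointwise.
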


\begin{proof}
By Lemma \ref{lem:constructables from ADs}, we can indeed write each \(A(i) = \bigcup_{P \in P(i)} W_P\) as stated. Define \(\cps''\) as the proposition claims \(\cps'\) can be written, that is, \(\cps'' \coloneqq \bigcup_{x \in \cps}x+D(x)\), where we let \(D(x) \coloneqq \bigcup_{i \in \mathcal{I}_x} Y(i)_\vee\). Since \(\cps \LD \cps_P\) for any indicator \(P\), we may also determine \(\mathcal{I}_x\) from \(x\) locally, thus also the cluster \(D(x)\), so it is clear that \(\cps \LD \cps''\). Thus, we just need to show that \(\cps' = \cps''\).

Since \(\cps'\), \(\cps'' \subset \Gamma_\vee\) (the latter due to each \(Y(i) \subset \Gamma\)), we may restrict attention to \(\Gamma_\vee\). For \(x \in \Gamma_\vee\), say \(x = \gamma_\vee\) for \(\gamma \in \Gamma\), we have \(x \in \cps'\) if and only if \(x^\star \in W'\), equivalently (by Equation \ref{eq:LD from construction}) \(x^\star  = \gamma_< \in A(i) + Y(i)_<\) for some \(i\). Since \(A(i) = \bigcup_{P \in P(i)} W_P\), this is in turn equivalent to \(\gamma_< \in W_P + Y(i)_<\), for some \(i\) and \(P \in P(i)\). Projecting down, this is equivalent to \(x = \gamma_\vee \in \cps(\mathcal{S},W_P) + Y(i)_\vee\). By Lemma \ref{lem:acceptance domains indicate patches}, \(\cps(\mathcal{S},W_P) = \cps_P\), so we see that \(x \in \cps'\) if and only if \(x \in \cps_P + Y(i)_\vee\), for some \(i\) with \(P \in P(i)\), that is, \(x \in  \cps''\), as required.
\end{proof}

For a single-coloured window, in particular, if we may take each \(A(i) = W\) in the above (that is, \(W'\) is a finite union of \(\Gamma_<\) translates of \(W\)), then we obtain a local derivation \(\cps \LD \cps'\) by replacing each \(x \in \cps\) with \(x + D\), for some fixed cluster \(D = Y_\vee\) of projected lattice points. More generally, we just need to replace each \(x \in \cps\) with a cluster \(x+D(x)\) that can depend on points of \(\cps\) within a bounded radius of \(x\). In particular, if the \(A(i)\) may be defined just through intersections, we only need to check a finite set of displacements from \(x\) which are \emph{in} the cut and project set. If we also need to use complements, we also use information of which points are \emph{out} of the cut and project set. It is largely due to this variety of potentially simpler situations, which may add computational advantages in practice, that we extended from just patches to indicators when defining acceptance domains.

\begin{theorem} \label{thm: MLD <=> constructable}
Let \(\cps = \cps(\mathcal{S},W)\) with \(W\) in non-singular position, and let \(\cps' \subset \Gamma_\vee\) be any Delone (multi)set in \(\phy\). Then \(\cps \LD \cps'\) if and only if \(\cps' = \cps(\mathcal{S},W')\) for a window \(W'\) that is constructable from \(W\) (and also non-singular). In particular, in this case, we may write \(W'\) (or each colour component of \(W'\)) as a finite union of \(\Gamma_<\) translated acceptance domains of \(W\), as in Equation \ref{eq:LD from construction}, and Proposition \ref{prop:LD from construction} applies. We have \(\Lambda \MLD \Lambda'\) if and only if \(\cps\) and \(\cps'\) may be expressed as cut and project sets of the same scheme \(\mathcal{S}\) with mutually constructable windows.
\end{theorem}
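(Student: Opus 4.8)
The plan is to establish the three assertions in turn, presenting the single-coloured case throughout (the multi-coloured case being identical, carried out componentwise). I would begin with the easy direction of the first equivalence: given $\cps' = \cps(\mathcal{S},W')$ with $W'$ constructable from $W$ and non-singular, I show $\cps \LD \cps'$ directly from Proposition \ref{prop:LD from construction}. Since each $W'_j$ is compact and $\Gamma_<$ is dense, finitely many lattice translates $W+g$, $g \in G_j \subset \Gamma_<$, cover $W'_j$, so in the Boolean algebra $\mathscr{B}(W)$ we may write $W'_j = \bigvee_{g \in G_j}(W'_j \wedge (W+g))$. Setting $A(g) \coloneqq (W'_j - g) \wedge W$, each $A(g) \subseteq W$ lies in $\mathscr{B}(W)$ (translating a construction by $-g \in \Gamma_<$ and intersecting with $W$ stays in $\mathscr{B}(W)$), and $W'_j = \bigcup_{g \in G_j} A(g) + g$. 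This is exactly the form of Equation \ref{eq:LD from construction}, so Proposition \ref{prop:LD from construction} yields $\cps \LD \cps'$ with the stated explicit replacement rule; by Lemma \ref{lem:constructables from ADs} each $A(g)$ is a union of acceptance domains, exhibiting $W'$ as a finite union of $\Gamma_<$-translated acceptance domains.

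For the converse, suppose $\cps \LD \cps'$ with derivation radius $c$, where $\cps' \subset \Gamma_\vee$ is Delone. The crucial preliminary is to \emph{enlarge $c$}: taking $c$ at least the relative-density radius $R$ of $\cps$, every ball $B(\gamma_\vee,c)$ with $\gamma \in \Gamma$ meets $\cps$, so no lattice point has empty $c$-patch. By Remark \ref{rem:LD on supports or punctures} the derivation condition then reads: if $\cps[\gamma_\vee,c] = \cps[\eta_\vee,c]$ for $\gamma,\eta \in \Gamma$, then $\gamma_\vee \in \cps'_i \iff \eta_\vee \in \cps'_i$. I would observe that $\cps[\gamma_\vee,c]$ is determined by the finite labelled displacement set $\{\delta \in \Gamma : \delta_\vee \in B(\mathbf{0},c),\ (\gamma+\delta)_< \in W\}$ (finite because $W$ and $B(\mathbf{0},c)$ are bounded), hence by $\gamma_<$ alone. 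Thus membership of $\gamma_\vee$ in $\cps'_i$ depends only on $\gamma_<$ and is constant on each region cut out by the relevant translates of $\partial W$.

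To identify the window, I would fix an \emph{accepting} $c$-patch $P$ (one whose occurrences lie in $\cps'_i$; there are finitely many up to translation by FLC). The set of $\gamma_<$ realising $P$ is the Boolean region $U_P = \bigl(\bigwedge_{\delta \in \Gamma^{\mathrm{in}}_P} W - \delta_<\bigr) \wedge \bigl(\bigwedge_{\delta \in \Gamma^{\mathrm{out}}_P} \neg W - \delta_<\bigr) \in \mathscr{B}(W)$, where $\Gamma^{\mathrm{in}}_P, \Gamma^{\mathrm{out}}_P$ record which displacements $\delta$ (with $\delta_\vee \in B(\mathbf{0},c)$) land in or out of the strip; because $c \geq R$ the in-part is non-empty, so $U_P$ is bounded. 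Choosing any $\delta^0 \in \Gamma^{\mathrm{in}}_P$ and translating by $\delta^0_<$ sends the $\delta^0$-term onto $W$, exhibiting $U_P + \delta^0_<$ as a genuine acceptance domain $W_Q$ (via Lemma \ref{lem:acceptance domains indicate patches}), so $U_P = W_Q - \delta^0_<$. Collecting the accepting patches for colour $i$ gives $W'_i \coloneqq \bigvee_P (W_{Q(P)} - \delta^0_<) \in \mathscr{B}(W)$, a compact, topologically regular window, constructable from $W$ and written as a finite union of $\Gamma_<$-translated acceptance domains. By construction $\cps'_i = \{e \in \Gamma_\vee : e^\star \in W'_i\} = \cps(\mathcal{S},W'_i)$, and $W'$ is non-singular since, by Lemma \ref{lem:Booleans same on NS}, $\partial W'_i$ lies in $\Gamma_<$-translates of $\partial W$, which by non-singularity of $W$ contain no points of $\Gamma_<$.

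Finally, for the MLD statement, the direction ($\Leftarrow$) is immediate by applying the first equivalence in both directions. For ($\Rightarrow$), $\cps \MLD \cps'$ gives $\cps \LD \cps'$, so the converse above expresses $\cps' = \cps(\mathcal{S},W')$ with $W'$ constructable from $W$ and non-singular; applying the converse again to $\cps' \LD \cps$ writes $\cps = \cps(\mathcal{S},W'')$ with $W''$ constructable from $W'$. Since $W$ and $W''$ are both non-singular and define the \emph{same} cut and project set $\cps$, density of $\Gamma_<$ together with topological regularity forces $W'' = W$ (two regular windows with identical lattice points must coincide). Hence $W$ is constructable from $W'$, and the windows are mutually constructable. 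I expect the main obstacle to be securing compactness of $W'$ in the converse: this is exactly what enlarging $c$ past the relative-density radius $R$ delivers, by eliminating empty patches that would otherwise force $\cps'$ to contain a non-compact family of lattice points incompatible with being Delone.
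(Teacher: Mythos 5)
Your proposal is correct. The easy direction (constructable \(\Rightarrow\) LD) and the final MLD equivalence essentially coincide with the paper's argument: the paper likewise covers \(W'\) by lattice translates of \(W\), writes \(W' = \bigcup_{z \in Z} (W \wedge (W'-z_<)) + z_<\), and feeds this into Proposition \ref{prop:LD from construction}; the MLD claim is handled by applying the equivalence in both directions (you usefully make explicit the step that the two non-singular regular windows \(W\) and \(W''\) defining the same cut and project set must coincide, which the paper leaves implicit). The converse direction is where you genuinely diverge. The paper picks \(r\) so that every point of \(\cps'\) lies within \(r\) of a point of \(\cps\), sets \(D(x) \coloneqq \cps'[x,r]\) for \(x \in \cps\), observes that the derivation with radius \(\kappa = r + c\) makes \(D(x)\) depend only on the \(\kappa\)-patch \(P = \cps[x,\kappa]\), and decomposes \(\cps' = \bigcup_P \cps_P + D_P\), each term being \(\cps(\mathcal{S},W_P + g_<)\). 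You instead exploit that Definition \ref{def:LD} quantifies over all \(x, y \in E\): membership of each lattice point \(\gamma_\vee\) in \(\cps'_i\) is a function of its \(c\)-patch, which in turn is a function of \(\gamma_<\), so \(\cps'_i\) is cut out by the internal-space regions of the finitely many accepting patch types. Both routes are valid; the paper's never considers patches centred off \(\cps\), so its indicator machinery applies verbatim, while yours is more pointwise and avoids the \(\kappa = r+c\) bookkeeping, at the cost of requiring your enlargement \(c \geq R\).

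One step needs patching. As written, your region \(U_P\) is not visibly a finite Boolean expression: the set of displacements \(\delta \in \Gamma\) with \(\delta_\vee \in B(\mathbf{0},c)\) landing \emph{outside} the strip is infinite, since the cylinder over \(B(\mathbf{0},c)\) in \(\tot\) contains lattice points of arbitrarily large internal component. So the claim \(U_P \in \mathscr{B}(W)\) is not justified at the point where you assert it. The repair is exactly your re-centring move, made explicit: once you pass to the centre \(\gamma + \delta^0\) with \((\gamma+\delta^0)_< \in W\), any displacement \(\eta\) from that centre with \((\gamma+\delta^0+\eta)_\vee \in \cps\) must satisfy \(\eta_< \in W - W\), i.e.\ lie in the cylinder \(\mathscr{C}\); hence all out-conditions except the finitely many indexed by \(\{\eta \in \mathscr{C} \mid \eta_\vee \in B(-\delta^0_\vee,c)\}\) are vacuous on the compact in-region, and the re-centred indicator is a legitimate one in the sense of Definition \ref{def:acceptance domains}. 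With that observation inserted, the identification \(U_P = W_Q - \delta^0_<\) via Lemma \ref{lem:acceptance domains indicate patches} is correct and the rest of your argument goes through.
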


\begin{proof}
First, suppose that \(\cps' = \cps(\mathcal{S},W')\) with \(W'\) constructable from \(W\)  (and, as usual in these proofs, everything is single-coloured). It easily follows from this that \(W'\) is also non-singular (since \(\mathscr{B}(W) \supseteq \mathscr{B}(W')\), see also Lemma \ref{lem:Booleans same on NS}). Since \(W\) has non-empty interior, \(W'\) is compact and \(\Gamma_<\) is dense, it is clear that we may write
\begin{equation}\label{eq:LD<=>constructable}
W' = \bigcup_{z \in Z} ((W+z_<) \wedge W') = \bigcup_{z \in Z} ((W \wedge (W'-z_<))+z_<) = \bigcup_{z \in Z} W_z + z_<
\end{equation}
for some finite set \(Z \subset \Gamma\) (we just take enough lattice elements \(z\) so that the \(W+z_<\) cover \(W'\)), where we denote \(W_z \coloneqq W \wedge (W'-z_<)\). Clearly each \(W_z \subseteq W\) and is constructable from \(W\), since \(W'\) is, so Proposition \ref{prop:LD from construction} applies and \(\cps \LD \cps'\), as required. For a multi-coloured window, we just repeat the above for each colour component \(W'_j\) of \(W'\).

For the converse, assume that \(\cps \LD \cps'\) with derivation radius \(c \geq 0\), where \(\cps' \subset \Gamma_\vee\). Again, to remove technicalities, we will present the proof in the single-coloured case but indicate the simple changes needed otherwise afterwards. Let \(r > 0\) be such that all points of \(\cps'\) are within distance \(r\) of a point of \(\cps\) and \(\kappa \coloneqq r + c\). Given \(x \in \cps\), let \(D(x) \coloneqq \cps'[x,r]\). From the local derivation, if \(\cps[x,\kappa] = \cps[y,\kappa]\) then \(D(x) = D(y)\). That is, each \(D(x) = D_P\) only depends on the patch \(P = \cps[x,\kappa]\) and thus we may write
\[
\cps' = \bigcup_{\kappa\text{-patches } P} \cps_P + D_P .
\]
By Lemma \ref{lem:acceptance domains indicate patches}, each \(\cps_P = \cps(\mathcal{S},W_P)\) where, by definition, each \(W_P\) is constructable from \(W\). Since we assume that \(\cps' \subset \Gamma_\vee\), we also have \(D_P \subset \Gamma_\vee - x = \Gamma_\vee\) (since each \(x \in \Gamma_\vee\)), so one quickly verifies \(\cps_P + z = \cps(\mathcal{S},W_P) + z = \cps(\mathcal{S},W_P+z^\star)\) for each \(z \in D_P\). Thus, \(\cps' = \cps(\mathcal{S},W')\), where \(W'\) is a union of elements \(W_P + g_<\) for \(g \in \Gamma\). Of course, each \(W_P + g_<\) is still constructable from \(W\), so \(W'\) is still constructable from \(W\), as required. Since \(\mathscr{B}(W') \subseteq \mathscr{B}(W)\), it again follows that \(W'\) must be in non-singular position. In the case that \(\cps\) and \(\cps'\) are multi-coloured, a similar proof works, we just need to consider (multi-coloured) sets \(D(x)_i\), in place of \(D(x)\), to construct each component \(W_j'\) for \(W'\). The equivalence for \(\cps \MLD \cps'\) follows by applying the above in both directions.
\end{proof}

\begin{remark}
Perhaps in contrast to other references, we do not assume that \(\cps'\) is a cut and project set in the above, the proof above derives this fact (something which we need later). For simplicity, we do assume that \(\cps' \subset \Gamma_\vee\), although an inspection of the proof of the second direction reveals the extent to which this is necessary. If this is dropped then, instead, \(\cps'\) is a union of translates of cut and project sets (note that such an object may not be constructable from a cut and project scheme, by choosing algebraically incommensurate shifts in the physical space). One could, in principle, handle this situation by allowing different translates to be applied to the cut and project set for each colour component of a window. However, this will not be required later, so we again do not say much more on this technicality.
\end{remark}

\section{Proofs of main theorems}\label{sec:proof for general windows}

In this section we will prove the main structural Theorem \ref{thm:main}, which we break down into a few smaller steps. This section also contains the proofs of Theorems \ref{thm:existence}, \ref{thm:classification}, \ref{thm:symmetries}, and \ref{thm:PSA}. 

\begin{lemma}\label{lem:rescaling of cps is cps}
Let \(\cps = \cps(\mathcal{S},W)\) for non-singular \(W\) and \(L \colon \phy \to \phy\) be a linear automorphism. Suppose there exists a linear automorphism \(M \colon \tot \to \tot\) satisfying (1--3) of Theorem \ref{thm:main}. Then \(L\cps = \cps(\mathcal{S},A(W))\) with \(A(W)\) non-singular.
\end{lemma}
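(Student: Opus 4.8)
The plan is to exploit that, by conditions (2) and (3), $M$ preserves both complementary subspaces of the decomposition $\tot = \phy + \intl$, and is therefore ``block diagonal'' with respect to it. First I would record the two intertwining identities
\[
\pi_\vee \circ M = L \circ \pi_\vee \quad\text{and}\quad \pi_< \circ M = A \circ \pi_< .
\]
These are immediate: writing $x = \pi_\vee(x) + \pi_<(x)$ with $\pi_\vee(x) \in \phy$ and $\pi_<(x) \in \intl$, conditions (2) and (3) give $M(x) = L(\pi_\vee x) + A(\pi_< x)$, where the first summand lies in $\phy$ and the second in $\intl$; projecting reads off each summand. Inverting the second identity (noting that $A = M|_{\intl}$ is invertible, since $M$ is an automorphism preserving $\intl$) yields the companion identity $\pi_< \circ M^{-1} = A^{-1} \circ \pi_<$.

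Next I would check that $A(W)$ is a non-singular window, so that $\cps(\mathcal{S}, A(W))$ is even defined. Being the image of the compact, topologically regular set $W$ under the linear homeomorphism $A$, the set $A(W)$ is again compact and topologically regular, hence a window, and $\partial(A(W)) = A(\partial W)$. If some $\gamma_< \in \Gamma_<$ were to lie in $\partial(A(W)) = A(\partial W)$, then $A^{-1}(\gamma_<) \in \partial W$; but $A^{-1}(\gamma_<) = A^{-1}(\pi_< \gamma) = \pi_<(M^{-1}\gamma) = (M^{-1}\gamma)_<$, and $M^{-1}\gamma \in \Gamma$ because $M^{-1}(\Gamma) = \Gamma$ by condition (1). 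This would exhibit a projected lattice point on $\partial W$, contradicting non-singularity of $W$. Hence $A(W)$ is non-singular.

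Finally I would carry out the main computation. Since $M$ is a bijection it commutes with intersection, and by conditions (1)--(3) it maps $\Gamma$ to $\Gamma$, $\phy$ to $\phy$, and the window $W \subset \intl$ to $M(W) = A(W)$; hence it carries the strip to the strip,
\[
M\bigl(\Gamma \cap (\phy + W)\bigr) = M(\Gamma) \cap \bigl(M(\phy) + M(W)\bigr) = \Gamma \cap (\phy + A(W)) .
\]
Applying $\pi_\vee$ to both sides and using $\pi_\vee \circ M = L \circ \pi_\vee$ turns the left-hand side into $L\bigl(\pi_\vee(\Gamma \cap (\phy + W))\bigr) = L\cps$, while the right-hand side is by definition $\cps(\mathcal{S}, A(W))$; this gives $L\cps = \cps(\mathcal{S}, A(W))$, as required.

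I do not anticipate a serious obstacle: the whole argument is a transport of structure along the automorphism $M$. The only step needing any real care is the non-singularity transfer, where the essential point is that $M^{-1}$ also preserves $\Gamma$ (so that $A^{-1}$ sends a putative boundary lattice point of $A(W)$ back to a genuine projected lattice point on $\partial W$). Everything else reduces to the two intertwining identities, which in turn rest solely on $M$ respecting the direct-sum splitting $\tot = \phy + \intl$.
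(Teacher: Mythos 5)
Your proof is correct and takes essentially the same route as the paper's: both transport the cut-and-project data along $M$, using $M(\Gamma)=\Gamma$, $M|_{\phy}=L$, $M|_{\intl}=A$, and the same $M^{-1}$-preserves-$\Gamma$ argument for non-singularity of $A(W)$. The only cosmetic difference is that you phrase the main step as the set identity $M\bigl(\Gamma\cap(\phy+W)\bigr)=\Gamma\cap(\phy+A(W))$ and then project via the intertwining relation $\pi_\vee\circ M=L\circ\pi_\vee$, whereas the paper runs the equivalent pointwise chain $\gamma_\vee\in\cps \iff \gamma_<\in W \iff M(\gamma)_<\in A(W) \iff L(\gamma_\vee)\in\cps(\mathcal{S},A(W))$ for $\gamma\in\Gamma$ (and notes that the multi-coloured case follows componentwise, which your argument also covers verbatim).
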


\begin{proof}
By definition, both \(\cps\) and \(\cps' \coloneqq \cps(\mathcal{S},A(W)) \subset \Gamma_\vee\) and \(A(W)\) is non-singular, since if \(\gamma_\vee \in A(W)\) for \(\gamma \in \Gamma\) then \(M^{-1}(\gamma)_< = A^{-1}(\gamma_<) \in A^{-1}(A(W)) = W\) and \(M^{-1}(\gamma) \in \Gamma\), contradicting \(W\) being non-singular. Then, for arbitrary \(\gamma \in \Gamma\), we have
\[
(\gamma_\vee)^\star \in \cps \iff \gamma_< \in W \iff M(\gamma)_< \in A(W) \iff (L(\gamma_\vee))^\star \in A(W) \iff L(\gamma_\vee) \in \cps' .
\]
Since \(\cps\), \(\cps' \subset \Gamma_\vee\), we have that \(x \in \cps\) if and only if \(L(x) \in \cps'\), as required. If \(W\) is multi-coloured, then the above holds for each colour component so the argument is analogous.
\end{proof}

\begin{corollary}\label{cor:constructable window from rescaling}
Suppose that (1--3) of Theorem \ref{thm:main} hold with respect to \(M \colon \tot \to \tot\). Then, if the elements of \(\Omega(\mathcal{S},W)\) are \(L\)-Sub, (4) holds too, that is, a translate of \(W\) is mutually constructable from \(A(W)\).
\end{corollary}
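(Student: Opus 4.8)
## Proof proposal

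The goal is to show: if $\mathcal{S}$ satisfies (1--3) via some $M$, and the elements of $\Omega(\mathcal{S},W)$ are $L$-Sub (with $L = M|_{\phy}$), then a translate $W-s$ of $W$ is mutually constructable from $A(W)$, where $A = M|_{\intl}$.

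The plan is to exploit the strong form of the substitutional property together with the dictionary built up in Section~\ref{sec:techniques for general windows} translating MLD-equivalence of cut and project sets into mutual constructability of windows (Theorem~\ref{thm: MLD <=> constructable}). First I would fix a non-singular representative $\cps = \cps(\mathcal{S},W) \in \Omega(\mathcal{S},W)$; by Remark~\ref{rem: fixing ns cps} it suffices to verify the $L$-Sub property for this one element. By hypothesis there is a predecessor $\cps'$ with $L\cps' \MLD \cps$ and $\cps' \LIs \cps$. Since $\cps' \LIs \cps$ and both are generated by the scheme $\mathcal{S}$, Lemma~\ref{lem: LI <=> equal windows} tells us that $\cps' = \cps(\mathcal{S}, W-s)$ for some translate $W-s$ of the window $W$ (after passing to a non-singular representative of $\cps'$ in its LI-class).

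Next I would apply the rescaling lemma to the inflated predecessor. By Lemma~\ref{lem:rescaling of cps is cps}, applied to the window $W-s$ and the automorphism $M$, we have $L\cps' = L\cps(\mathcal{S},W-s) = \cps(\mathcal{S}, A(W-s))$, with $A(W-s) = A(W) - A(s)$ non-singular. So now both $\cps$ and $L\cps'$ are realised as cut and project sets for the same scheme $\mathcal{S}$: namely with windows $W$ and $A(W)-A(s)$ respectively. The key input is then the MLD statement $L\cps' \MLD \cps$. By the final sentence of Theorem~\ref{thm: MLD <=> constructable}, two cut and project sets of the same scheme are MLD if and only if their windows are mutually constructable. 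Hence $W$ is mutually constructable from $A(W) - A(s)$, i.e.\ $\mathscr{B}(W) = \mathscr{B}(A(W) - A(s))$.

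The last step is cosmetic: I must convert ``mutually constructable from $A(W) - A(s)$'' into ``a translate $W-t$ mutually constructable from $A(W)$,'' as stated in (4). Since $\mathscr{B}(\cdot)$ is generated by $\Gamma_<$-translates, one observes that $\mathscr{B}(A(W) - A(s)) = \mathscr{B}(A(W))$ precisely when $A(s)$ lies in the appropriate translation class; more robustly, mutual constructability is preserved under translating \emph{both} windows by the same vector, so from $\mathscr{B}(W) = \mathscr{B}(A(W)-A(s))$ one deduces $\mathscr{B}(W + A(s)) = \mathscr{B}(A(W))$, and setting $t = -A(s) \in \intl$ gives that $W - t$ is mutually constructable from $A(W)$, as required. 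I expect the main subtlety to lie in this bookkeeping of translations: one must be careful that the translate absorbed into the window is genuinely an internal-space vector and that the equality of Boolean algebras is invariant under simultaneously shifting both arguments, which follows since a construction of one window from another may always be translated term-by-term (as noted in the discussion after Definition~\ref{def: constructable}). The substantive content---that MLD descends to mutual constructability and that rescaling the physical space corresponds to applying $A$ to the window---is entirely supplied by Theorem~\ref{thm: MLD <=> constructable} and Lemma~\ref{lem:rescaling of cps is cps}, so the proof is short modulo these tools.
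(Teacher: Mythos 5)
Your toolkit is the right one---Lemma \ref{lem:rescaling of cps is cps}, Theorem \ref{thm: MLD <=> constructable} and Lemma \ref{lem: LI <=> equal windows} are exactly the ingredients the paper combines---but you run the substitution \emph{backwards}, from the fixed non-singular \(\cps = \cps(\mathcal{S},W)\) to its predecessor \(\cps'\), and this is where a genuine gap appears. The \(L\)-Sub hypothesis only hands you \emph{some} pattern \(\cps' \in \Omega\) with \(L\cps' \MLD \cps\) and \(\cps' \LIs \cps\); it does not hand it to you in the form \(\cps(\mathcal{S},W-s)\). A general element of the hull is either singular (a limit of non-singular cut and project sets, not literally of this form) or of the form \(\cps(\mathcal{S},W-x_<)+x_\vee\), and Lemma \ref{lem: LI <=> equal windows} cannot repair this: it compares the windows of two hulls, it does not express a chosen hull element as a literal cut and project set. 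Your parenthetical fix---``after passing to a non-singular representative of \(\cps'\) in its LI-class''---is not legitimate: if you replace \(\cps'\) by an LI-equivalent pattern \(\cps''\), the relation \(L\cps'' \MLD \cps\) is lost, because being MLD to a fixed pattern is not an LI-invariant property (this is the same phenomenon as the paper's warning that LIDS/pseudo self-affinity is MLD-invariant but \emph{not} LI-invariant). Without that relation, Theorem \ref{thm: MLD <=> constructable} has nothing to bite on. The fact you need (that the predecessor of a non-singular pattern is itself non-singular) is true, but it is a theorem, not a free move: in the paper it only emerges from Proposition \ref{prop:subs on torus}, via the torus parametrisation and injectivity of the substitution map.

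The paper sidesteps the issue by running the argument \emph{forwards}: take \(\cps' = \cps(\mathcal{S},W)\) with \(W\) non-singular and, using that \emph{all} elements of \(\Omega\) are \(L\)-Sub (Lemma \ref{lem:L-sub stable under LIs}), apply the subdivision map \(S \colon L\Omega \to \Omega\) to \(L\cps'\). The successor \(\cps \coloneqq S(L\cps')\) then satisfies \(L\cps' \MLD \cps\) and \(\cps \LIs \cps'\) by construction, and (after composing \(S\) with a shift so that \(\cps \subset \Gamma_\vee\)) the identifications you wanted go through verbatim: \(L\cps' = \cps(\mathcal{S},A(W))\) by Lemma \ref{lem:rescaling of cps is cps}, the MLD gives \(\cps = \cps(\mathcal{S},W')\) with \(W'\) mutually constructable from \(A(W)\) by Theorem \ref{thm: MLD <=> constructable}, and \(W' = W-s\) by Lemma \ref{lem: LI <=> equal windows}. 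This also makes your final translation bookkeeping unnecessary, since \(A(W)\) enters un-translated from the start. If you insist on the backwards formulation, you must first prove that \(\sub^{-1}\) preserves non-singularity, which is the content of Proposition \ref{prop:subs on torus}.
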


\begin{proof}
Take any \(\cps' = \cps(\mathcal{S},W)\). Without loss of generality, \(W\) is in non-singular position, as translating \(W\) does not affect \(\Omega = \Omega(\mathcal{S},W)\) or property (4). Assuming that one (equivalently, by Lemma \ref{lem:L-sub stable under LIs}, any) element of \(\Omega\) is \(L\)-Sub, recall that we have a subdivision (local derivation) map \(S \colon L\Omega \to \Omega\). Then \(L(\cps') \MLD \cps\), where \(\cps \coloneqq S(L\cps')\) with \(\cps \LIs \cps'\). Without loss of generality, applying an extra shift to \(S\) if required (which keeps \(S\) a local derivation map), we can assume that \(\cps \subset \Gamma_\vee\) too. We have that \(L\cps' = \cps(\mathcal{S},A(W))\), by Lemma \ref{lem:rescaling of cps is cps}. Since \(L\cps' \MLD \cps\), by Theorem \ref{thm: MLD <=> constructable} we have that \(\cps = \cps(\mathcal{S},W')\) for a window \(W'\) that is mutually constructable from \(A(W)\). Finally, since \(\cps \LIs \cps'\), by Lemma \ref{lem: LI <=> equal windows} we have that \(W' = W-s\) for some \(s \in \intl\), as required.
\end{proof}

By the above result, to establish the first main direction of Theorem \ref{thm:main}, we just need to show the existence of the linear automorphism \(M \colon \tot \to \tot\) satisfying (1--3). We start with the reverse direction though, that (1--4) implies \(L\)-Sub, which is more easily checked.

\begin{proof}[Proof of equivalence in Theorem \ref{thm:main}]
Suppose that (1--3) hold and take \(\cps' = \cps(\mathcal{S},W) \in \Omega = \Omega(\mathcal{S},W)\). We assume, without loss of generality, that \(W\) is non-singular, since translating \(W\) does not affect \(\Omega\) (and thus also does not affect whether its elements are \(L\)-Sub), nor properties (1--4). Then, by Lemma \ref{lem:rescaling of cps is cps}, we have that \(L\cps' = \cps(\mathcal{S},A(W))\). Now, suppose also (4a), that \(W-s\) is constructable from \(A(W)\) for some \(s \in \intl\). By Theorem \ref{thm: MLD <=> constructable} we have that \(L(\cps') \LD \cps(\mathcal{S},W-s) \eqqcolon \cps\). By Lemma \ref{lem: LI <=> equal windows}, we have that \(\cps \LIs \cps'\). Thus, we have shown that there exists some \(\cps \in \Omega(\mathcal{S},W)\) which is \(L\)-sub. If \(L\) is expansive, then it is also \(L\)-Sub. If \(L\) is not expansive, but we have the stronger (4), that is, \(A(W)\) is also constructable from \(W-s\), then these are mutually constructable, so by Theorem \ref{thm: MLD <=> constructable}, \(L\cps' \MLD \cps\) and \(\cps\) is again \(L\)-Sub, as required.

For the converse direction, suppose that \(\cps \in \Omega(\mathcal{S},W)\) is \(L\)-Sub, so \(L\cps' \MLD \cps \LIs \cps'\) where, without loss of generality, \(\cps' = \Omega(\mathcal{S},W)\) for non-singular \(W\). By Corollary \ref{cor:projected lattice invariant under L}, \(L\) induces an isomorphism \(L|_{\Gamma_\vee} \colon \Gamma_\vee \to \Gamma_\vee\). This lifts to an automorphism of \(\Gamma\) and hence to a linear automorphism \(M \colon \tot \to \tot\) in the obvious way, with \(M(\Gamma) = \Gamma\) and \(M(g)_\vee = L(g_\vee)\) for all \(g \in \Gamma\). In particular, (1) of Theorem \ref{thm:main} holds for this choice of \(M\).

Next we show (3), that \(M(\intl) = \intl\). Let \(r > 0\) be such that every point of \(\tot\) is within distance \(r\) of \(\Gamma\). Fix a basis \(b\) of \(\intl\) and take an arbitrary \(R > 0\). For each \(b_i \in b\), choose \(\gamma_i \in \Gamma\) within radius \(r\) of \(R \cdot b_i\). Then \(\|(\gamma_i)_\vee\| \leq r\) (we may choose a norm on \(\phy\) and \(\intl\), and max norm on \(\tot\)) and thus \(M(\gamma_i)_\vee = L((\gamma_i)_\vee)\) has norm at most \(Cr\), with \(C\) only depending on \(L\). We see that, by increasing \(R > 0\), we may make the subspace \(H\) spanned by the \(\gamma_i\) as close to \(\intl\) as desired, with \(M(H)\) remaining arbitrarily close. It follows that \(M(\intl) = \intl\).

Let us show (2), that \(M(\phy) = \phy\) and \(M |_{\phy} = L\). Choose lines \(\ell_1\), \ldots, \(\ell_d < \phy\) with \(\ell_1 + \cdots + \ell_d = \phy\). We wish to show that \(M(\ell_i) < \phy\), from which it follows \(M(\phy) = \phy\). By Corollary \ref{cor:action of L on return vectors}, there exists some \(r > 0\) so that, for all \(v \in D_r\), we have that \(L(v) \in D_0\). By Lemma \ref{lem:return vectors of cps}, there exists some \(\epsilon > 0\) so that, for all \(\gamma \in \Gamma\), we have that \(\gamma_\vee \in D_r\) provided \(\|\gamma_<\| < \epsilon\). Thus, for such \(\gamma\), we have that \(L(\gamma_\vee) \in D_0\), that is, there exist \(x\), \(x+L(\gamma_\vee) \in \cps\) and so \(x^\star\), \(x^\star+L(\gamma_\vee) \in W\), thus \(L(\gamma_\vee) \in W - x^\star \subset W - W\).

Hence, we have shown that for all \(\gamma \in \Gamma\), whenever \(\|\gamma_<\| < \epsilon\), we have \(\|M(\gamma)_<\| \leq \rho\), where \(W-W \subset B(\mathbf{0},\rho)\). For arbitrary \(R > 0\), take \(\gamma_1\), \ldots, \(\gamma_d \in \Gamma\) with each \(\|\gamma_i\| \geq R\) and each \(\gamma_i\) within radius \(\epsilon\) of \(\ell_i\). Clearly such \(\gamma_i\) exist, as the line spanned by \(\ell_i\) is dense in the torus \(\mathbb{T} = \tot/\Gamma\), or at least some sub-torus that contains the origin. As we can take \(R > 0\) arbitrarily large, we can make the line through \(\gamma_i\) lie as close as we wish to \(\ell_i\). Moreover, each \(\|M(\gamma_i)\| \geq cR\), where \(c\) only depends on \(M\), so by taking \(R\) large we may ensure that the line through \(M(\gamma_i)\) is as close to \(\phy\) as we wish. By continuity, it follows that each \(M(\ell_i) \subset \phy\), as required, so \(M(\phy) = \phy\).

Now, given \(g \in \Gamma\), we have \(M(g)_\vee = M(g_\vee + g_<)_\vee = M(g_\vee)_\vee + M(g_<)_\vee = M(g_\vee)_\vee = M(g_\vee)\), since by (3) above \(M(g_<) \in \intl\) (so that \(M(g_<)_\vee = \mathbf{0}\)) and \(M(g_\vee) \in \phy\) (so that \(M(g_\vee)_\vee = M(g_\vee)\)). It follows that \(M(g_\vee) = M(g)_\vee = L(g_\vee)\), the latter equality by definition of \(M\), for all \(g \in \Gamma\). Since \(\Gamma_\vee\) contains a basis for \(\phy\), we have \(M |_{\phy} = L\), so have established (2).

Since we have constructed \(M \colon \tot \to \tot\) satisfying (1--3), (4) also holds by Corollary \ref{cor:constructable window from rescaling}, so we have shown that the elements of \(\Omega\) being \(L\)-Sub implies (1--4), thus (1--3) and the weaker (4a), as required.
\end{proof}

To complete the proof of Theorem \ref{thm:main}, we have the following. Note that the proof makes use of Theorem \ref{thm:classification}, given a little later.

\begin{proposition}
The map \(A\) of Theorem \ref{thm:main} is contractive, if \(L\) is expansive.
\end{proposition}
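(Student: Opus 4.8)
The plan is to prove the strictly stronger statement that $A^j \to 0$ in operator norm as $j \to \infty$, which immediately forces the spectral radius of $A$ to be less than $1$, i.e.\ $A$ is contractive. It is worth noting at the outset why the obvious determinant computation is not enough: since $M(\Gamma) = \Gamma$ we have $|\det M| = 1$, and since $L$ is expansive $|\det L| > 1$, so $|\det A| = |\det L|^{-1} < 1$; but this permits $A$ to have individual eigenvalues of modulus $\geq 1$, and ruling these out is the real content. The engine for doing so is the intertwining of $L$ and $A$ by the star map. From the proof of Theorem \ref{thm:main} we have, for every $g \in \Gamma$, that $M(g)_\vee = L(g_\vee)$ and $M(g)_< = A(g_<)$; since $(g_\vee)^\star = g_<$, this reads $(Lv)^\star = A(v^\star)$ for all $v \in \Gamma_\vee$, and hence $(L^j v)^\star = A^j(v^\star)$ for every $j \geq 0$.

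First I would fix a sufficiently large radius $r$. As $\cps$ is $L$-Sub with $L$ expansive, Corollary \ref{cor:action of L on return vectors} supplies a constant $C > 1$ with $v \in D_r \Rightarrow L(v) \in D_{Cr}$ for all sufficiently large $r$; since the radii $C^j r$ only increase, the hypothesis remains valid at each stage and iteration yields $v \in D_r \Rightarrow L^j(v) \in D_{C^j r}$ for all $j$. Next, using the part of Lemma \ref{lem:return vectors of cps} guaranteeing that sufficiently small $\gamma_<$ forces $\gamma_\vee \in D_r$, together with Lemma \ref{lem:small_basis}, I would choose a finite set $\sA \subset \Gamma$ with $\ang{\sA}_\Z = \Gamma$ and $a_< \in B(\mathbf{0},\epsilon)$ for all $a \in \sA$, where $\epsilon$ is small enough that $a_\vee \in D_r$ for each $a \in \sA$. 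In particular $\sA_<$ spans $\intl$ (each $a \neq \mathbf{0}$ has $a_< \neq \mathbf{0}$ by injectivity of $\pi_<|_\Gamma$).

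Then for each $a \in \sA$ we have $L^j(a_\vee) \in D_{C^j r}$, so by the intertwining $A^j(a_<) = (L^j(a_\vee))^\star$. The converse half of Lemma \ref{lem:return vectors of cps} states that for every $\delta > 0$ there is an $R$ with $\gamma_\vee \in D_R \Rightarrow \|\gamma_<\| < \delta$; since $D_{C^j r} \subseteq D_R$ once $C^j r \geq R$ (as $D_K \supseteq D_{K'}$ for $K \subseteq K'$) and $C^j r \to \infty$, this gives $\|A^j(a_<)\| \to 0$ for each of the finitely many $a \in \sA$. As $\sA_<$ spans $\intl$, linearity yields $A^j \to 0$ in operator norm, whence $\rho(A) < 1$ and $A$ is contractive.

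The main obstacle is securing \emph{decay} rather than mere boundedness of $\|A^j(a_<)\|$: this is exactly what the pairing of the multiplicative growth of patch radii under $L$ (Corollary \ref{cor:action of L on return vectors}, crucially with $C > 1$) against the shrinking of acceptance-domain diameters (Lemma \ref{lem: arbitrarily small ADs}, encoded in the converse part of Lemma \ref{lem:return vectors of cps}) delivers, and it rules out eigenvalues of modulus both $> 1$ and $= 1$ in one stroke. The two points to verify carefully are that the constant $C$ in Corollary \ref{cor:action of L on return vectors} is uniform across scales, so that the chain $D_r \to D_{Cr} \to D_{C^2 r} \to \cdots$ is legitimate, and that the single large $r$ can be used simultaneously in both Corollary \ref{cor:action of L on return vectors} and in the selection of $\sA$. (An alternative route, closer to the torus dynamics, would instead analyse the automorphism $M_{\mathbb{T}}$ induced on $\mathbb{T}$ through Theorem \ref{thm:classification}; the argument above is more self-contained and avoids this.)
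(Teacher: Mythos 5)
Your proof is correct, and it takes a genuinely different route from the paper's. The paper argues through the hull dynamics and the torus parametrisation: it first notes that the substitution map improves agreement of patterns near the origin (if two patterns agree on an \(r\)-ball, their substitutes agree on a \(\lambda' r\)-ball), so iterating \(\sub\) shrinks the set \(\Omega[U]\) of patterns whose torus parameter lies in a small ball \([U]\) to arbitrarily small diameter in the local topology; transporting this through the relation \(\tau \circ \sub = T_u \circ \tau\) (a forward reference to Theorem \ref{thm:classification}, in essence Proposition \ref{prop:subs on torus}) and the fact that \(\pi_<\) is a local isomorphism onto \(\mathbb{T}\) shows that \(M^n(U)\) becomes arbitrarily small, whence \(A\) is a contraction. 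You instead stay entirely within the return-vector formalism of Section \ref{sec:techniques for general windows}: the star-map intertwining \((L^j v)^\star = A^j(v^\star)\) on \(\Gamma_\vee\), the growth of patch radii under \(L\) with factor \(C>1\) (Corollary \ref{cor:action of L on return vectors}), and the decay of internal components of return vectors at large radii (the converse half of Lemma \ref{lem:return vectors of cps}) combine to give \(A^j(a_<) \to 0\) for a generating set of \(\Gamma\) supplied by Lemma \ref{lem:small_basis}, and spanning plus \(\rho(A^{j_0}) \leq \|A^{j_0}\| < 1\) finish the job. Both arguments are sound, and the two uniformity points you flag do hold: the threshold radius in Corollary \ref{cor:action of L on return vectors} is uniform (it comes from a single \(\kappa\) in Lemma \ref{lem:action of expansion on return vectors}), so the chain \(D_r \to D_{Cr} \to D_{C^2 r} \to \cdots\) is legitimate, and the single \(r\) can indeed serve both purposes. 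What your route buys is logical economy: it uses only machinery already invoked in the equivalence proof of Theorem \ref{thm:main}, avoids the forward dependence on the classification theorem, and never touches singular points or the hull topology. What the paper's route buys is the geometric picture that substitution contracts cylinder sets over the torus, which is precisely the mechanism reused in the later results (Theorems \ref{thm:classification} and \ref{thm:PSA}). One cosmetic caveat: the implication of Lemma \ref{lem:return vectors of cps} you quote (small \(\gamma_<\) forces \(\gamma_\vee \in D_r\)) is the one its proof actually establishes and the one used elsewhere in the paper, although the lemma's first sentence as printed states the reverse implication.
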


\begin{proof}
Suppose that \(\Omega\) consists of \(L\)-sub patterns, recall that substitution \(\sub \colon \Omega \to \Omega\) is given by \(\sub(\cps) = S(L\cps)\), where \(S\) is a local derivation map, with derivation radius some \(c \geq 0\). As usual, take \(\lambda > 1\) so that \(LB \supseteq \lambda B\) (where \(B\) is the unit ball in \(\phy\)), and let \(1 < \lambda' < \lambda\). Then, for \(r\) large enough so that \(\lambda' r \leq \lambda r - c\), if \(\cps_1[\mathbf{0},r] = \cps_2[\mathbf{0},r]\) then \((L\cps_1)[\mathbf{0},\lambda r] = (L\cps_2)[\mathbf{0},\lambda r]\) and thus \(\sub(\cps_1)[\mathbf{0},\lambda' r] = \sub(\cps_2)[\mathbf{0},\lambda' r]\). Thus, the patches about the origin of such \(\cps_1\) and \(\cps_2\) agree on larger and larger balls, under repeated application of \(\sub\).

In particular, consider the set \(\Omega[U]\) of all \(\cps \in \Omega\) with \(\tau(\cps) \in [U]\), where \(U \subseteq \intl\) is a ball centred at the origin, small enough to be wholly contained in \(W_P\) for \(P = \cps'[\mathbf{0},r]\), for the (without loss of generality) non-singular \(\cps' = \cps(\mathcal{S},W)\). These are given by translating the lattice by an element of \(U\), cutting and projecting (with a bit more care for singular patterns). Clearly, even for the singular such patterns, these all have the same \(r\)-patch over the origin as \(\cps'\). Thus, by the above and iterating substitution, we see that \(\sub^n(\Omega[U])\) can be made arbitrarily small in \(\Omega\) (with respect to the tiling metric/uniformity). Thus, \(\tau(\sub^n(\Omega[U])) = T^n(\tau(\Omega[U])) = T^n[U] = M^n_\mathbb{T}[U] + [u]\) is contained in an arbitrarily small ball (centred at \([u]\)), equivalently, \(M^n_\mathbb{T}[U] = [M^n(U)]\) is contained in an arbitrarily small ball (centred at the origin), by making \(n \in \N\) sufficiently large. Since the projection \(\pi_< \colon \tot \to \mathbb{T}\) is a local isomorphism, we see that \(M^n(U)\) can be made arbitrarily small, say, much smaller in radius than the fixed ball \(U\). It follows that \(M^n\) is a contraction on \(\intl\), that is, all its eigenvalues are strictly less than \(1\) in modulus. Thus, the same must be true of \(M\), so \(A = M|_{\intl}\) is a contraction, as required.
\end{proof}

The existence result, Theorem \ref{thm:existence}, follows quickly from Theorem \ref{thm:main}:

\begin{proof}[Proof of Theorem \ref{thm:existence}]
As in the statement of the result, let \(\tot\), \(\phy\), \(\Gamma\) and any expansion \(L \colon \phy \to \phy\) be given. First, assume that we may find an internal space \(\intl\) and window \(W\) for which \(\Omega(\mathcal{S},W)\) consists of \(L\)-Sub patterns. Thus, in particular, (1--3) of Theorem \ref{thm:main} hold, that is, there exists a linear map \(M \colon \tot \to \tot\) with \(M(\Gamma) = \Gamma\) and \(M(\phy) = \phy\) with \(M |_{\phy} = L\). Also by Theorem \ref{thm:main}, we have that \(A \coloneqq M|_{\intl}\) is contractive (and so \(\intl\) is always uniquely defined as the contractive subspace), so \(M\) is the required hyperbolic map.

Conversely, suppose that a hyperbolic automorphism \(M \colon \tot \to \tot\) is given, for which \(M(\Gamma) = \Gamma\), \(M(\phy) = \phy\) and \(L \coloneqq M|_{\phy}\) is expansive. Let \(\intl\) be given by the complementary contractive subspace of \(M\). Note that \(\Gamma \cap \intl = \{\mathbf{0}\}\). Indeed, otherwise, \(\gamma \in \intl\) for a non-zero \(\gamma \in \Gamma\). But then \(M^n(\gamma) = A^n(\gamma) \to \mathbf{0}\) are other such points, arbitrarily close to the origin, contradicting that \(\Gamma\) is a lattice. By assumption, \(\Gamma \cap \phy = \{\mathbf{0}\}\) and \([\phy]\) is dense in \(\mathbb{T}\), equivalently, \(\Gamma_<\) is dense in \(\intl\), so this data defines a cut and project scheme \(\mathcal{S}\) satisfying (1--3) of Theorem \ref{thm:main}. So we just need to demonstrate the existence of some compact and topologically regular window \(W \subset \intl\) also satisfying (4). 

Let \(B \subset \intl\) be the unit ball and take a sufficiently large but finite set \(Z \subset \Gamma\) of lattice points for which \(B \subseteq \bigcup_{z \in Z} A(B) + z_<\). This defines an iterated function system (see Remark \ref{rem:IFS}) in \(\intl\), with contractions \(x \mapsto A(x) + z_<\). It follows from e.g. \cite[Theorem 9.1]{Falconer03} that there is a unique, compact, non-empty $W$ such that 
\[
W \coloneqq \bigcup_{z \in Z} A(W) + z_< .
\]
In particular, \(W\) is constructable from \(A(W)\), indeed it satisfies an equation of the form of Equation \ref{eq:construction} (using only a union and not the \(\wedge\) or \(\neg\) Boolean operators). So we just need to show that \(W\) is topologically regular.

To see this, define the operator \(X\) on non-empty, compact subsets of \(\intl\), given by
\[
X(K) \coloneqq \bigcup_{z \in Z} A(K) + z_< .
\]
By the choice of $Z$, we have that \(B \subseteq X(B)\). Clearly, for \(U \subseteq V\), we have \(X(U) \subseteq X(V)\), so it follows that \(B \subseteq X(B) \subseteq X^2(B) \subseteq X^3(B) \subseteq \cdots\). From the proof of \cite[Theorem 9.1]{Falconer03}, we know that \(X^n(B) \to W\) as \(n \to \infty\), with respect to the Hausdorff distance, so that the sets \(X^n(B) \subset W\) are dense in \(W\). Moreover, each \(X^n(B)\) is a finite union of topological balls, so has dense interior. Hence, \(W\) also has dense interior, that is, it is topologically regular, as required.
\end{proof}

Next, we wish to prove the result classifying all substitutions \(\sub \colon \Omega \to \Omega\) on a hull of Euclidean cut and project sets in terms of the corresponding map on the torus. Firstly, we observe that the existence of the toral automorphism \(M_\mathbb{T}\) defined by Theorem \ref{thm:main} already imposes a lot of structure (also note that \(M_\mathbb{T}\) is uniquely defined as the continuous extension of \([x] \mapsto [Lx]\) on \([\phy]\), which is dense in \(\mathbb{T}\), so \(M\) is always uniquely defined, given \(L\), when it exists). Recall, for the below, that given \([u] \in \mathbb{T}\), we let \(T_u[x] = M_\mathbb{T}[x] + [u]\) and that all substitution maps are assumed to be invertible (see the comment before Theorem \ref{thm:classification}).

\begin{proposition}\label{prop:subs on torus}
Consider a hull of Euclidean cut and project sets \(\Omega = \Omega(\mathcal{S},W)\). Let \(M \colon \tot \to \tot\) satisfying (1--3) of Theorem \ref{thm:main} be given, which thus defines a toral automorphism \(M_\mathbb{T} \colon \mathbb{T} \to \mathbb{T}\). Then, for any substitution map \(\sub \colon \Omega \to \Omega\) with inflation \(L = M|_{\phy}\), we have \(\tau \circ \sub = T_u \circ \tau\) for some \([u] \in \mathbb{T}\). Moreover, we may express this \([u] \in \mathbb{T}\) as
\[
[u] = \tau(\sub\cps) - M_\mathbb{T}(\tau(\cps)) 
\]
for any \(\cps \in \Omega\). We have that \(\nonsing+[u] = \nonsing\) and \(\sing+[u] = \sing\). Any other substitution map \(\sub' \colon \Omega \to \Omega\) with \(\tau \circ \sub' = T_u \circ \tau\) must have \(\sub' = \sub\).
\end{proposition}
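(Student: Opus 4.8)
The plan is to factor an arbitrary substitution map as \(\sub = S \circ L\), where \(L \colon \Omega \to L\Omega\) is the expansion \(\cps \mapsto L\cps\) and \(S \colon L\Omega \to \Omega\) is a local derivation map, and to track each factor through the torus parametrisation. Write \(\Omega = \Omega(\mathcal{S},W)\); by Lemma \ref{lem:rescaling of cps is cps} we have \(L\Omega = \Omega(\mathcal{S},A(W))\), with its own torus parametrisation \(\tau_L \colon L\Omega \to \mathbb{T}\). First I would handle the expansion. Since \(M(\Gamma)=\Gamma\), \(M(\phy)=\phy\) and \(M(\intl)=\intl\) yield \(\pi_\vee \circ M = L \circ \pi_\vee\) and \(M\big((\Gamma+x)\cap(\phy+W)\big)=(\Gamma+Mx)\cap(\phy+A(W))\), one checks directly that for a non-singular \(\cps_x \in \Omega\) the pattern \(L\cps_x\) is exactly the non-singular cut and project set of \(\Gamma+Mx\) with window \(A(W)\), so \(\tau_L(L\cps_x)=[Mx]=M_\mathbb{T}(\tau(\cps_x))\). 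As the non-singular patterns are dense and both maps are continuous, \(\tau_L \circ L = M_\mathbb{T} \circ \tau\).

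Next I would show the subdivision acts on the torus as a pure translation. Consider \(h \colon L\Omega \to \mathbb{T}\), \(h(\mathcal{P}) \coloneqq \tau(S\mathcal{P}) - \tau_L(\mathcal{P})\). This is continuous, and \(\phy\)-invariant: since \(S\) is a local derivation map it commutes with translation, and \(\tau,\tau_L\) are \(\phy\)-equivariant factor maps, so \(h(\mathcal{P}+v)=\tau(S\mathcal{P})+[v]-\tau_L(\mathcal{P})-[v]=h(\mathcal{P})\) for \(v\in\phy\). As \(L\Omega\) is minimal (being the hull of a repetitive pattern), a continuous \(\phy\)-invariant map into the Hausdorff space \(\mathbb{T}\) is constant; call its value \([u]\). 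Thus \(\tau \circ S = \tau_L + [u]\), and composing with the expansion relation gives, for every \(\cps\in\Omega\),
\[
\tau(\sub\cps)=\tau(S(L\cps))=\tau_L(L\cps)+[u]=M_\mathbb{T}(\tau(\cps))+[u]=T_u(\tau(\cps)).
\]
Hence \(\tau\circ\sub=T_u\circ\tau\), and rearranging gives the stated formula \([u]=\tau(\sub\cps)-M_\mathbb{T}(\tau(\cps))\), valid (and thus independent of) any \(\cps\).

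For invariance of the singular and non-singular loci I would argue via fibre counts. Both \(\sub\) and \(T_u=M_\mathbb{T}+[u]\) are bijections (substitution maps are assumed invertible, and \(T_u\) is a toral automorphism followed by a translation), and \(\tau\circ\sub=T_u\circ\tau\) gives \(\tau\circ\sub^{-1}=T_u^{-1}\circ\tau\); hence \(\sub\) restricts to a bijection \(\tau^{-1}([x])\to\tau^{-1}(T_u[x])\) for each \([x]\). So the number of \(\tau\)-preimages is constant along \(T_u\)-orbits, and since non-singular (respectively singular) points of \(\mathbb{T}\) are precisely those with a single (respectively multiple) preimage, \(T_u(\nonsing)=\nonsing\) and \(T_u(\sing)=\sing\). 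To reconcile this with the translational form in the statement I would supply the window computation: \(\pi_<\circ M^{-1}=A^{-1}\circ\pi_<\) and \(\partial(A(W))=A(\partial W)\) give \(M_\mathbb{T}(\sing(W))=\sing(A(W))\), while mutual constructability of \(W-u_<\) from \(A(W)\) (condition (4), available because the existence of \(\sub\) forces \(\Omega\) to consist of \(L\)-Sub patterns, so Theorem \ref{thm:main} applies), combined with Lemma \ref{lem:Booleans same on NS} and the fact that translating a window by \(s\) shifts its singular set by \([s]\), gives \(\sing(A(W))=\sing(W)-[u]\). Thus \(M_\mathbb{T}(\sing)=\sing-[u]\), which is exactly \(T_u(\sing)=\sing\), and the same for \(\nonsing\).

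Finally, for uniqueness, suppose \(\sub'\) is another invertible substitution map with \(\tau\circ\sub'=T_u\circ\tau\). For any non-singular \(\cps\), with \(\tau(\cps)=[x]\), we have \(\tau(\sub\cps)=\tau(\sub'\cps)=T_u[x]\), which is again non-singular by the invariance just established; since non-singular points have a unique \(\tau\)-preimage, \(\sub\cps=\sub'\cps\). The non-singular patterns are dense in \(\Omega\) and \(\sub,\sub'\) are continuous, so \(\sub=\sub'\). I expect the main obstacle to be the subdivision step, namely pinning down that \(S\) descends to an honest translation of the torus (the constancy of \(h\)), where minimality and the \(\phy\)-equivariance of \(\tau\) are essential; the passage from \(T_u\)-invariance to the precise translational form of the singular-set statement is the other delicate point, resting on the constructability relation between \(W\) and \(A(W)\).
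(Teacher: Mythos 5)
Your proof of the main identity is correct but takes a genuinely different route from the paper's. The paper verifies \(\tau \circ \sub = T_u \circ \tau\) directly on a single dense orbit \(\cps + \phy\), using \(\sub(\cps + x) = \sub(\cps) + Lx\) together with the \(\phy\)-equivariance of \(\tau\), and then extends by continuity and density; you instead factor \(\sub = S \circ L\), prove \(\tau_L \circ L = M_\mathbb{T} \circ \tau\) by the window computation (essentially Lemma \ref{lem:rescaling of cps is cps} spread over the hull), and dispose of the subdivision by showing that the defect \(h = \tau \circ S - \tau_L\) is a continuous \(\phy\)-invariant map on the minimal system \(L\Omega\), hence constant. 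Both arguments hinge on the same equivariance; yours is more structural, the paper's more economical. Your uniqueness argument coincides with the paper's, and your fibre-counting proof that \(T_u\) preserves \(\sing\) and \(\nonsing\) is the same argument the paper gives (phrased there as: by injectivity and surjectivity of \(\sub\) and injectivity of \(T_u\), singular patterns map to singular patterns and non-singular to non-singular).

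There is, however, a genuine gap at the claim \(\sing + [u] = \sing\). The statement asserts invariance of \(\sing\) under the \emph{pure translation} \([x] \mapsto [x] + [u]\), not under \(T_u = M_\mathbb{T} + [u]\). Your fibre count gives \(T_u(\sing) = \sing\), and your reconciliation computation \(M_\mathbb{T}(\sing) = \sing(A(W)) = \sing - [u]\) (via constructability and Lemma \ref{lem:Booleans same on NS}) is correct as far as it goes — but that identity is \emph{equivalent} to \(T_u(\sing) = \sing\), so your final sentence merely re-proves what the fibre count already gave and never delivers \(\sing + [u] = \sing\). Given \(T_u(\sing) = \sing\), the translational form is equivalent to the additional statement \(M_\mathbb{T}(\sing) = \sing\), i.e.\ that \(M_\mathbb{T}\) alone preserves the singular locus; this is exactly the step the paper supplies (``since \(M\phy = \phy\) and \(M\Gamma = \Gamma\), we have \(M_\mathbb{T}(\nonsing) = \nonsing\)'') and which is absent from your write-up. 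Concretely, \(M_\mathbb{T}(\sing) = \sing\) amounts to \(\partial A(W) + \Gamma_< = \partial W + \Gamma_<\), and this does not follow from your relation \(\partial A(W) + \Gamma_< = \partial W - u_< + \Gamma_<\) unless one also knows that translation by \(u_<\) preserves the set \(\partial W + \Gamma_<\) — which is precisely the claim at stake. So to close the argument you must either prove \(M_\mathbb{T}(\sing) = \sing\) (the paper's route) or show directly that \(u_<\) preserves \(\partial W + \Gamma_<\); note that this is the one place where the positioning of \(W\), and hence the normalisation of \(\tau\) (cf.\ Remark (c) after Theorem \ref{thm:classification}), genuinely enters, which is why it cannot be extracted from the \(T_u\)-invariance alone.
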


\begin{proof}
Let \(\cps \in \Omega\) be arbitrary, \([a] \coloneqq \tau(\cps)\) and \([b] \coloneqq \tau(\sub \cps) \in \mathbb{T}\). Since \(\sub = S \circ L\) satisfies \(\sub(\cps+x) = \sub(\cps) + Lx\),
\[
\tau(\sub(\cps+x)) = \tau(\sub(\cps) + Lx) = \tau(\sub(\cps)) + [Lx] = [b] + M_\mathbb{T}[x] 
\]
for all \(x \in \phy\). On the other hand,
\[
T_u(\tau(\cps+x)) = T_u([a] + [x]) = M_\mathbb{T}[a+x] + ([b] - M_\mathbb{T}[a]) = M_\mathbb{T}[x] + [b] 
\]
so \(\tau \circ \sub = T_u \circ \tau\) on the translational orbit \(\cps + \phy \subset \Omega\) for the above choice of \([u] = [b]-M_\mathbb{T}[a]\). Since this orbit is dense, and \(\tau\), \(T_u\) and \(\sub\) are continuous, \(\tau \circ \sub = T_u \circ \tau\) on all of \(\Omega\), as required.

Now, suppose that \(\cps \in \Omega\) is singular, so there exists some other \(\cps' \neq \cps\) with \(\tau(\cps) = \tau(\cps') = [x] \in \mathbb{T}\), where \(x \in \sing\). By the above, \(\tau(\sub \cps) = T_u(\tau \cps) = T_u(\tau \cps') = \tau(\sub \cps')\). Since \(\sub\) is injective, we have \(\tau(\sub \cps) = \tau(\sub \cps')\) for distinct \(\sub \cps \neq \sub \cps'\), so each is singular. Thus, \(\sub\) maps singular points to singular points. Similarly, \(\sub\) maps non-singular points to non-singular points. Indeed, if \(\cps\) is non-singular but \(\sub \cps\) is not, we have \(\tau(\sub \cps) = \tau(\cps')\) for some \(\cps' \neq \sub \cps\). We may write \(\cps' = \sub \cps''\), since \(\sub\) is surjective. From \(\tau(\sub \cps) = \tau(\sub \cps'')\), applying \(T_u \circ \tau = \tau \circ \sub\), we have \(T_u(\tau \cps) = T_u(\tau \cps'')\). But \(\cps \neq \cps''\) (or else \(\sub \cps = \sub \cps'' = \cps'\)). Since \(T_u\) is injective, we see that \(\tau \cps = \tau \cps''\), contradicting that \(\cps\) is non-singular.

We see that \(\sub\), and thus \(T_u[x] = M_\mathbb{T}[x] + [u]\), preserves the singular and non-singular points of \(\mathbb{T}\). Since \(M\phy = \phy\) and \(M\Gamma = \Gamma\), we have \(M_\mathbb{T}(\nonsing) = \nonsing\), so \(\nonsing + [u] = \nonsing\) and similarly \(M_\mathbb{T}(\sing) = \sing\) so \(\sing + [u] = \sing\), as required.

Finally, suppose that \(\sub'\) also satisfies \(\tau \circ \sub' = T_u \circ \tau\). Let \(\cps \in \Omega\) be non-singular (thus, by the above, \(\sub \cps\) and \(\sub' \cps\) are also non-singular). Then
\[
(\tau \circ \sub)(\cps) = (T_u \circ \tau)(\cps) = (\tau \circ \sub')(\cps) .
\]
Since \(\sub\cps\) is non-singular and \(\tau(\sub \cps) = \tau(\sub' \cps)\), we must have that \(\sub \cps = \sub' \cps\). But then \(\sub(\cps+x) = \sub(\cps) + Lx = \sub'(\cps) + Lx = \sub'(\cps+x)\), so \(\sub = \sub'\) on the dense orbit \(\cps+\phy\) in \(\Omega\). By continuity, \(\sub = \sub'\).
\end{proof}

\begin{proof}[Proof of Theorem \ref{thm:classification}]
Suppose that \(\Omega = \Omega(\mathcal{S},W)\) consists of \(L\)-Sub patterns. Thus, in particular, (1--3) of Theorem \ref{thm:main} hold with respect to a linear automorphism \(M \colon \tot \to \tot\), which induces the toral automorphism \(M_\mathbb{T} \colon \mathbb{T} \to \mathbb{T}\). Thus, Proposition \ref{prop:subs on torus} applies and we have \(\tau \circ \sub = T_u \circ \tau\) for some \([u] \in \mathbb{T}\), which uniquely characterises it, and satisfies \(\nonsing + [u] = \nonsing\) and \(\sing + [u] = \sing\). So we just need to show that the set of such \([u]\) associated with a substitution are precisely those for which \(W-u_<\) is mutually constructable from \(A(W)\) (or just constructable, when \(L\) is expansive).

So, firstly suppose that \(W-s\) is constructable from \(A(W)\), for \(s = u_< \in \intl\). Equivalently, \(W-s-A(x)\) is constructable from \(A(W-x) = A(W) - A(x)\) for any \(x \in \intl\). Choose \(x \in \intl\) so that \(W-x\) is non-singular. Then, by Lemma \ref{lem:rescaling of cps is cps}, we have \(L\cps' = \cps(\mathcal{S},A(W-x))\), where \(\cps' \coloneqq \cps(\mathcal{S},W-x)\). Since \(W-s-A(x)\) is constructable from \(A(W-x)\), we have a local derivation \(L\cps' \LD \cps\), where \(\cps \coloneqq \cps(\mathcal{S},W-s-A(x))\). We have that \(\cps \LIs \cps'\), so \(\cps\) is \(L\)-sub, with predecessor \(\cps'\), and we have an associated substitution map \(\sub \colon \Omega \to \Omega\) with \(\sub(\cps') = \cps\). Moreover, if \(L\) is expansive then \(\sub\) is injective and \(\cps \MLD L\cps'\) (by Theorem \ref{thm:recognisability}), or we similarly obtain this by simply assuming that \(W-s\) is \emph{mutually} constructable from \(A(W)\).

As discussed in Subsection \ref{sec:torus parametrisation}, for \(z \in \intl\) and \(W-z\) non-singular, we have \(\tau(\cps_z) = z\) for \(\cps_z = \cps(\mathcal{S},W-z) \in \Omega\). Thus, in the above, we have \(\tau(\cps) = s+A(x)\) and \(\tau(\cps') = x\). By Proposition \ref{prop:subs on torus}, the substitution constructed satisfies \(\tau \circ \sub = T_s \circ \tau\) since \(\tau(\sub \cps') - M_\mathbb{T}(\tau \cps') = [s+A(x)] - [Mx] = [s+A(x)] - [A(x)] = [s]\). Thus, we have constructed the required substitution when \(u = s\).

If, instead, we merely have \(u_< = s\), write \(u = v+s\) for \(v = u_\vee \in \phy\). Then we may define the substitution map \(\sub' \colon \Omega \to \Omega\) as \(\sub'(\cps) \coloneqq \sub(\cps) + v\) (where \(\cps \in \Omega\) is now arbitrary). Then
\[
\tau(\sub' \cps) = \tau((\sub \cps) + v) = \tau(\sub \cps) + [v] = T_s(\tau \cps) + [v] = T_{(v+s)}(\tau \cps) = T_u(\tau \cps) ,
\]
so \(\sub'\) satisfies \(\tau \circ \sub' = T_u \circ \tau\), as required.

Conversely, suppose that we have an (invertible) substitution map \(\sub \colon \Omega \to \Omega\) satisfying \(\tau \circ \sub = T_u \circ \tau\), so each element of \(\Omega\) is \(L\)-Sub. Take any non-singular \(\cps' \in \Omega\), say \(\tau\cps' = [a]\) with \(a \in \intl\). Thus, \(\cps' = \cps(\mathcal{S},W-a)\). By Proposition \ref{prop:subs on torus}, \(\cps \coloneqq \sub \cps'\) is also non-singular. Thus, at least after shifting by some \(v \in \intl\), we may write \(\cps-v = \cps(\mathcal{S},W-b)\) for some \(b \in \intl\), so \(\tau(\cps-v) = [b]\) and \(\tau(\cps) = [b+v]\). Hence,
\[
[b+v] = \tau(\cps) = \tau(\sub \cps') = T_u(\tau \cps) = T_u[a] = M_\mathbb{T}[a] + [u] = [A(a)] + [u],
\]
so \(u-v = b - A(a) + \gamma\) for some \(\gamma \in \Gamma\). Projecting to internal space, \(u_< = b - A(a) + \gamma_<\).

Now, since \(L\cps' \MLD \cps\), we also have \(L\cps' \MLD \cps-v\). Since \(L\cps' = \cps(\mathcal{S},A(W-a))\) (Lemma \ref{lem:rescaling of cps is cps}), by Theorem \ref{thm: MLD <=> constructable} we have that \(W-b\) is mutually constructable from \(A(W-a) = A(W) - A(a)\). Equivalently, shifting everything by \(A(a)\), we have \(W-(b-A(a))\) is mutually constructable from \(A(W)\). Since shifting by a projected lattice vector does not affect constructability, we also have that \(W-(b-A(a)+\gamma_<) = W-u_<\) is mutually constructable from \(A(W)\), as required.
\end{proof}

Next, we prove the characterisations of symmetries and LIDS points of the hull:

\begin{proof}[Proof of Theorem \ref{thm:symmetries}]
Suppose that \(L\) is a symmetry of \(\Omega = \Omega(\mathcal{S},W)\) where, without loss of generality, \(W\) is non-singular. By repetitivity, since \(L\) is a symmetry, we have that \(\cps \LIs L\cps\) for all \(\cps \in \Omega\) so, in particular, each \(\cps \in \Omega\) is \(L\)-Sub, so (1--4) of Theorem \ref{thm:main} hold. Let \(\cps \coloneqq \cps(\mathcal{S},W) \in \Omega\). By Lemma \ref{lem:rescaling of cps is cps}, \(L\cps = \cps(\mathcal{S},A(W))\). Since \(L\cps \LIs \cps\), by Lemma \ref{lem: LI <=> equal windows}, we have that \(A(W) = W-s\) for some \(s \in S\), so (4c) holds, as required. Conversely, suppose that (1--3) and (4c) hold. Taking \(\cps = \cps(\mathcal{S},W)\), again, \(L\cps = \cps(\mathcal{S},A(W))\), by the Lemma \ref{lem:rescaling of cps is cps}. Since we assume that \(A(W) = W-s\), we have \(L\cps \LIs \cps\), from the reverse direction of Lemma \ref{lem: LI <=> equal windows}, so \(L\) is a symmetry, as required.
\end{proof}

\begin{proof}[Proof of Theorem \ref{thm:PSA}]
Clearly (A) implies (B). Assuming (B), we have \(\cps \in \Omega\), with \(\tau \cps = [v+s]\), is LIDS with respect to \(L^m\). Thus, there is some (invertible) substitution map \(\sub \colon \Omega \to \Omega\), with expansion \(L^m\) and \(\sub(\cps) = \cps\). By Theorem \ref{thm:classification}, we have that \(\tau \circ \sub = T_u \circ \tau\) for some \(u \in \tot\), where \(T_u[x] = M_\mathbb{T}^m[x] + [u]\). Since \(\sub(\cps) = \cps\), we have that
\[
[v+s] = \tau \cps = \tau(\sub \cps) = T_u(\tau \cps) = T_u[v+s] = [M^m(v+s)] + [u] = [L^m(v) + A^m(s) + u] .
\]
Thus, \((L^m(v) + A^m(s) + u) - (v+s) \in \Gamma\). Projecting to \(\intl\), we obtain \(A^m(s) + u_< - s \in \Gamma_<\). By Theorem \ref{thm:classification}, we have that \(W-u_<\) is mutually constructable from \(A^m(W)\). Shifting a set by a projected lattice element does not affect constructability, so \(W-u_< + (A^m(s) + u_< - s) = W + A^m(s)-s\) is mutually constructable from \(A(W)\). Shifting both by \(-A^m(s)\), we see that \(W-s\) is mutually constructable from \(A^m(W)-A^m(s) = A^m(W-s)\), so (4c) holds, and thus so does the weaker (4d).

Finally, suppose that (4c) holds for power \(p\). Using \(M^p\) in Theorem \ref{thm:main}, we have that the elements of \(\Omega\) are \(L^p\)-Sub. By the same theorem, this also holds given the weaker (4d) if \(L\) is expansive. Thus, \(W-s\) is constructable from \(A^p(W-s) = A^p(W)-A^p(s)\), so \(W-(s-A^p(s))\) is constructable from \(A^p(W)\). By Theorem \ref{thm:classification}, for any \([u] \in \mathbb{T}\) with \(u_< = s-A^p(s)\), there is a substitution \(\sub \colon \Omega \to \Omega\) which satisfies \(\tau(\sub \cps) = T_u(\tau \cps)\) for all \(\cps \in \Omega\). Then, given any \(\cps \in \Omega\) with \(\tau \cps = [v+s]\) for some \(v \in \phy\), take in particular \([u] = [(v-L^p(v))+(s-A^p(s))]\) and let \(\sub\) be the associated substitution. Then
\[
\tau(\sub \cps) = T_u(\tau \cps) = [M^p(v+s)] + [u] = [L^p(v)+A^p(s) + (v-L^p(v)+s-A^p(s))] = [v+s] ,
\]
so \(\tau(\sub \cps) = \tau \cps\). Thus, \(\sub(F) \subseteq F\) for the fibre \(F = \tau^{-1}[v+s]\). By assumption, \(F\) is finite. Moreover, since \(\sub\) is bijective, some power \(\sub^i\) is the identity on \(F\), thus each element of \(F\) is fixed by the (invertible) substitution \(\sub^i\), which has inflation map \(L^m\) for some multiple \(m\) of \(p\) and (A) holds, as required.
\end{proof}

\section{Substitutional polytopal windows}\label{sec:proof polytopal}

In this section we prove Theorem \ref{thm:polytopal}, giving a simple necessary and sufficient condition for a cut and project scheme with polytopal window to produce substitutional cut and project sets, as well as Theorem \ref{thm:polytopal PSA}, which characterises the LIDS (equivalently, pseudo self-affine) points in terms of the rational points. Recall, from Section \ref{sec:main results}, the finite sets \(\sH\) and \(\sH_0\) of supporting hyperplanes and subspaces, respectively. The following concept will be useful, especially when analysing the complexity of polytopal cut and project sets:

\begin{definition}
A subset \(f \subseteq \sH_0\) of exactly \(n = \dim(\intl)\) elements is called a \textbf{flag} if 
\[
\bigcap_{V \in f} V = \{\mathbf{0}\} .
\]
Similarly, a subset \(f \subseteq \sH\) of \(n\) elements is called a \text{flag} when the intersection is a singleton, equivalently, \(V(f)\) is a flag. 
\end{definition}

The following stabiliser subgroups will be particularly useful in establishing that (1) implies (2) in Theorem \ref{thm:polytopal}.

\begin{definition}
For a subspace \(X \leqslant \intl\), we define the \textbf{stabiliser} subgroup
\[
\Gamma^X \coloneqq \{\gamma \in \Gamma \mid X + \gamma_< = X\} = \{\gamma \in \Gamma \mid \gamma_< \in X\} \leqslant \Gamma .
\]
In particular, for each \(V \in \sH_0\) we have a stabiliser \(\Gamma^V\). For a translation \(Y \subset \intl\) of a subspace \(V(Y)\), we similarly define the \textbf{stabiliser} subgroup
\[
\Gamma^Y \coloneqq \{\gamma \in \Gamma \mid Y + \gamma_< = Y\} = \Gamma^{V(Y)} .
\]
In particular, for each \(H \in \sH\) we have a stabiliser \(\Gamma^H\).
\end{definition}

Throughout this Section, as in Section \ref{sec:results for polytopal windows}, we assume that the necessary conditions (1--3) of Theorem \ref{thm:main} (see also Theorem \ref{thm:existence}) hold, relative to a hyperbolic map \(M \colon \tot \to \tot\), restricting to an expansion \(L \colon \phy \to \phy\) on the physical space. A marked difference to the non-polytopal case is the second conclusion of the following:

\begin{lemma} \label{lem:invariant subspaces}
If \(W\) is FD-invariant then \(A(X) = X\) for any subspace \(X\) that is an intersection of subspaces from \(\sH_0\). In particular, \(A\) is diagonalisable and has only real, irrational eigenvalues of modulus strictly less than one.
\end{lemma}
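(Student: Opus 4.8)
The plan is to treat the two assertions separately, deriving the ``In particular'' clause from the first. For the first assertion, suppose \(X = \bigcap_{i \in I} V_i\) with each \(V_i \in \sH_0\). Since FD-invariance gives \(A(V_i) = V_i\), and \(A = M|_{\intl}\) is a linear automorphism (hence injective), I would use that injective maps commute with intersections: \(A(X) = A\!\left(\bigcap_i V_i\right) = \bigcap_i A(V_i) = \bigcap_i V_i = X\). This is immediate and uses nothing beyond FD-invariance and invertibility of \(A\).

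The substance is in the ``In particular'' clause. First I would produce a \emph{flag}, i.e.\ \(n = \dim(\intl)\) supporting subspaces \(V_1, \dots, V_n \in \sH_0\) with \(\bigcap_i V_i = \{\mathbf{0}\}\). To see such a flag exists, I claim \(\bigcap_{V \in \sH_0} V = \{\mathbf{0}\}\). Writing each \(V = \nu_V^\perp\) for a normal vector \(\nu_V\), suppose for contradiction that some \(v \neq \mathbf{0}\) lies in every \(V \in \sH_0\); then \(v\) is parallel to every hyperplane in \(\sH\). Since \(\bigcup_{H \in \sH} H\) has measure zero while \(\intr(W)\) does not, pick \(x \in \intr(W)\) avoiding all \(H \in \sH\). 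The line \(x + \R v\) then meets no \(H\), hence misses \(\partial W \subseteq \bigcup_H H\) entirely, yet it is unbounded and so must leave the compact set \(W\) — a contradiction. Thus the normals \(\{\nu_V\}\) span \(\intl\), and choosing \(n\) linearly independent ones yields \(V_1, \dots, V_n\) with \(\bigcap_i V_i = \{\mathbf{0}\}\): a flag. For each \(i\), set \(\ell_i := \bigcap_{j \neq i} V_j\); since the \(n-1\) relevant normals are independent, each \(\ell_i\) is a line, and a short dual-basis computation shows \(\ell_1, \dots, \ell_n\) are linearly independent, hence a basis of \(\intl\). By the first assertion each \(\ell_i\) is \(A\)-invariant, and an \(A\)-invariant real line is an eigenline with a real eigenvalue \(\lambda_i\). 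In the basis given by spanning vectors of the \(\ell_i\), the map \(A\) is therefore diagonal with real entries, establishing real diagonalisability. The modulus bound \(|\lambda_i| < 1\) is exactly contractivity of \(A\), already recorded.

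The hard part — or rather, the one genuinely using arithmetic rather than geometry — is irrationality. Here I would observe that, identifying \(\Gamma \cong \Z^k\), the map \(M\) lies in \(\mathrm{GL}(k,\Z)\), so its characteristic polynomial is monic with integer coefficients; since \(\intl\) is \(M\)-invariant, the eigenvalues \(\lambda_i\) of \(A\) are among the roots of this polynomial and hence are algebraic integers. They are also nonzero (as \(M\), and so \(A\), is invertible) and satisfy \(0 < |\lambda_i| < 1\). If some \(\lambda_i\) were rational, then being a rational algebraic integer it would be an ordinary integer; but no nonzero integer has modulus strictly less than \(1\), a contradiction. Hence every \(\lambda_i\) is irrational, completing the proof. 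I expect the only real care needed is in the flag-existence step and in cleanly invoking that the roots of the characteristic polynomial of \(M\) are algebraic integers.
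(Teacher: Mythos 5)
Your proof is correct, and for the most part it follows the paper's own route: the invariance of intersections is the same one-line observation, and the diagonalisability argument — produce a flag, intersect all but one of its elements to get \(A\)-invariant lines spanning \(\intl\), conclude \(A\) is real-diagonalisable with the modulus bound coming from contractivity — is exactly the paper's, except that you spell out the flag-existence step which the paper dispatches with the parenthetical remark that otherwise \(W\) would not be compact; your argument (a line through a point of \(\intr(W)\) avoiding all hyperplanes of \(\sH\), parallel to every one of them, would never meet \(\partial W\) yet must exit the compact set \(W\)) is precisely the justification intended there. Where you genuinely diverge is the irrationality step. The paper argues that a rational eigenvalue of \(A\) would be a rational eigenvalue of the integer matrix \(M\), hence would admit an eigenvector \(\gamma \in \Gamma\); since the eigenvalue has modulus less than one, \(\gamma\) would lie in the contracting subspace \(\intl\), contradicting \(\Gamma \cap \intl = \{\mathbf{0}\}\). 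You instead observe that every eigenvalue of \(A\) is a root of the monic integer characteristic polynomial of \(M \in \mathrm{GL}(k,\Z)\), hence an algebraic integer, and a rational algebraic integer of modulus in \((0,1)\) cannot exist. Your version is slightly leaner: it uses only integrality of \(M\) and contractivity of \(A\), whereas the paper's additionally invokes that rational eigenvalues of integer matrices have integral eigenvectors and that \(\Gamma\) meets \(\intl\) trivially (both available in context, but extra moving parts). Conversely, the paper's argument stays entirely within lattice geometry and never appeals to the fact that rational algebraic integers are integers. Both arguments are complete.
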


\begin{proof}
Since each \(V \in \sH_0\) is invariant under \(A\), the same is true of intersections \(X\) of such subspaces. Intersecting all but one of the elements of a flag in \(\sH_0\) (of which there must be one, or else \(W\) would not be compact) defines a \(1\)-dimensional subspace \(Y\), and these subspaces span \(\intl\). Since each such subspace is invariant, we see that \(A\) is diagonalisable with real eigenvalues. They must be irrational, or else \(M\) has a rational eigenvalue too, which consequently has corresponding eigenvector \(\gamma\) for some \(\gamma \in \Gamma\) (this is most easily seen by reparameterising to \(\Gamma = \Z^k\), so that \(M\) is represented by an integer matrix). Since \(\intl\) is the contracting subspace of \(M\), it follows that \(\gamma \in \intl\), contradicting that \(\intl \cap \intl = \{\mathbf{0}\}\).
\end{proof}

\subsection{Proof that face direction invariant and rational implies substitutional}
In this subsection we prove the first direction of Theorem \ref{thm:polytopal}, establishing that a cut and project scheme is substitutional when it is FD-invariant and rational. So, in this subsection we will assume:
\begin{enumerate}
	\item face direction invariance with respect to \(A^p\), for some \(p \in \N\);
	\item rationality.
\end{enumerate}
For the proof below we assume for simplicity (and without loss of generality, as neither FD-invariance, rationality or being substitutional depends on the translate of \(W\) used) that \(W\) is in \emph{singular} position, with a vertex \(v\) over the origin. Moreover, by Proposition \ref{prop:reducing to one colour}, we may assume throughout that \(W\) is single-coloured.

\begin{theorem}\label{thm:polytopal direction one}
For \(W\) as above, we have that \(W\) is constructable from \(A^m(W)\) for some \(m \in \N\). Thus, the elements of \(\Omega\) are \(L^m\)-Sub.
\end{theorem}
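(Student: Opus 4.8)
The plan is to first establish the purely geometric claim that, for a suitable $m \in \N$, every supporting hyperplane of $W$ is a $\Gamma_<$-translate of a supporting hyperplane of $A^m(W)$, and then to assemble $W$ from finitely many $\Gamma_<$-translates of $A^m(W)$ using the Boolean operations $\wedge$, $\vee$, $\neg$. Once $W$ is shown constructable from $A^m(W)$, the final clause is immediate from Theorem \ref{thm:main} applied to $M^m$: conditions (1--3) pass to powers, $L^m$ is expansive, and constructability of $W$ from $A^m(W)$ is exactly condition (4a) for $M^m$, so the elements of $\Omega$ are $L^m$-Sub.

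For the first step, recall the origin is a vertex, so by rationality every vertex lies in $\tfrac1N\Gamma_<$ for a fixed $N$. Since $M(\Gamma)=\Gamma$ and $M$ preserves $\intl$, the contraction $A$ satisfies $A(\Gamma_<)=\Gamma_<$ and hence $A(\tfrac1N\Gamma_<)=\tfrac1N\Gamma_<$, so $A$ induces an automorphism of the finite group $\tfrac1N\Gamma_</\Gamma_<$. Choosing $m$ to be a common multiple of $p$, of the order of this automorphism, and of $2$, I arrange simultaneously that $A^m$ fixes every supporting subspace $V\in\sH_0$ (FD-invariance with respect to $A^p$), that $A^m$ acts as the identity on $\tfrac1N\Gamma_</\Gamma_<$, and that $A^m$ has positive eigenvalues (being diagonalisable with real eigenvalues by Lemma \ref{lem:invariant subspaces} and fixing each $V$, it then preserves each supporting half-space rather than flipping it). The identity action on $\tfrac1N\Gamma_</\Gamma_<$ gives $c-A^m(c)\in\Gamma_<$ for every vertex $c$; writing $\gamma_<=c-A^m(c)$ and using $A^m(V)=V$ shows that each supporting hyperplane $H=V+c$ is recovered as $A^m(H)+\gamma_<=H$, and moreover that the translate $A^m(W)+\gamma_<$ is a genuine $\Gamma_<$-translate of $A^m(W)$ whose vertex cone at $c$ coincides with that of $W$.

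The second step — assembling $W$ — is where I expect the main difficulty. The idea is to realise, for each supporting hyperplane $H\in\sH$ with inner half-space $H^+$, a constructable set agreeing with $H^+$ on the non-singular points of a fixed bounded region: first fix a constructable bounded $\Box\supseteq W$ obtained by covering a large ball with finitely many translates of $A^m(W)$, then take the finite union of those translates $A^m(W)+t$, $t\in\Gamma_<$, lying in $H^+\wedge\Box$. Translates with facet lying exactly on $H$ are available by the matching of the first step, and the crucial point is that they suffice to cover the whole face $F=H\cap W$. This is exactly where rationality re-enters: the facet $F$ is $(\dim\intl-1)$-dimensional and its edge directions, being rational, lie in $\Q(\Gamma^{V(H)})_<$ and span $V(H)$, so the stabiliser projection $(\Gamma^{V(H)})_<$ is relatively dense in $V(H)$ and its translates of $A^m(F)$ cover $F$; density of $\Gamma_<$ in $\intl$ then fills the interior. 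Cutting each convex cell of $W$ as a finite intersection $\bigwedge_H$ of such half-space pieces and taking the union (with $\neg$ to handle non-convexity) over the finitely many cells expresses $W$ as an element of $\mathscr{B}(A^m(W))$.

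The principal obstacle is the boundary bookkeeping in this last assembly: one must verify that the finite Boolean combination agrees with $W$ at \emph{every} non-singular point, including points lying between $H$ and the jagged boundary of a crude union of translates. The content resolving this is precisely the relative density of each stabiliser $(\Gamma^{V(H)})_<$ in its face direction $V(H)$, furnished by rationality together with FD-invariance, which lets the available $A^m(W)$-translates carry facets exactly along $H$ across all of $F$ rather than merely approximating it. The remaining verifications (that the intersections with $\Box$ are harmless because $\partial\Box$ stays far from $W$, and that replacing standard Boolean operations by $\wedge,\vee,\neg$ changes nothing on non-singular points) are routine given Lemma \ref{lem:Booleans same on NS}.
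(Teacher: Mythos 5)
Your overall strategy is sound and, in its essentials, parallels the paper's own proof: the power-selection step (forcing $A^m$ to fix every $V\in\sH_0$, to act as the identity on $\tfrac{1}{N}\Gamma_</\Gamma_<$, and to preserve the sides of the supporting hyperplanes) is the same, and your assembly of $W$ --- half-space by half-space, cell by cell --- is a reorganisation of the paper's induction up the skeleton of $W$ (neighbourhoods of vertices, then of edges, then of higher cells). Both assemblies stand or fall on the same covering claim along faces, and that is exactly where your proposal has a genuine gap.

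You justify covering the face $F=H\cap W$ by translates of $A^m(F)$ using only \emph{relative} density of the stabiliser projection $(\Gamma^{V(H)})_<$ in $V(H)$, deduced (correctly) from rationality: the stabiliser contains a basis of $V(H)$, hence a full-rank lattice. But relative density does not imply the covering: you are tiling $F$ by $\Gamma_<$-translates of the \emph{contracted} facet $A^m(F)$, whose diameter shrinks geometrically in $m$, while the covering radius of a fixed lattice in $V(H)$ does not shrink at all --- and you are forced to take $m$ large to fix the quotient group and the face directions. What you actually need, and what the paper proves at the corresponding point, is that $(\Gamma^{V(H)})_<$ is \emph{dense} in $V(H)$. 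This follows from ingredients you already hold but requires one further observation you never make: since $M(\Gamma)=\Gamma$ and $A^m(V(H))=V(H)$ by FD-invariance, the group $(\Gamma^{V(H)})_<$ is invariant under the contraction $A^m$, so applying powers of $A^m$ to a spanning set of stabiliser vectors yields arbitrarily short elements spanning $V(H)$, and a subgroup of $V(H)$ containing arbitrarily short spanning vectors is dense. With density in hand your facet-covering works, using compactness of $F$ to extract a finite subfamily; note this compactness step is also needed to repair your phrase ``the finite union of those translates $A^m(W)+t$ lying in $H^+\wedge\Box$'', which as stated is an infinite family because $\Gamma_<$ is dense. A second, trivial, repair: to get positive eigenvalues you need $m$ to be an even multiple of $p$, not merely a common multiple of $p$ and $2$ (if $p=2$ then $\mathrm{lcm}(p,2)=p$ and $A^m=A^p$ may still have negative eigenvalues); alternatively, argue as the paper does, taking a further power so that $A^m$ fixes each connected component of $\intl\setminus\bigcup_{V\in\sH_0}V$.
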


\begin{proof}
By rationality (and our choice of translate above), the vertices of \(W\) belong to \(\Q\Gamma_<\), that is, \(\mathrm{Ver}(W) \subset \frac{1}{N}\Gamma_<\) for some \(N \in \N\). We also have that \(A\) maps \(\Gamma_<\) isomorphically to itself, so similarly for \(\frac{1}{N}\Gamma_<\). Then \(A\) quotients to an isomorphism \(A \colon Q \to Q\) on the quotient group \((\frac{1}{N}\Gamma_<) / \Gamma_<\), which is finite since \(\frac{1}{N}\Gamma_<\) is finite index in \(\Gamma_<\). It follows that \(A^m\) acts as the identity on \(Q\) for a sufficiently high power \(m=jp\), with \(j \in \N\). Moreover, by assumption, \(A^m\) acts as the identity on \(\sH_0\). Note that \(A^m\) permutes the connected components of \(\intl \setminus \bigcup_{H \in \sH_0}\). By taking an even larger power, if necessary, we may assume that \(A^m\) acts as the identity on this finite set. Henceforth we set \(m = 1\), to simplify notation, by starting with some such power of \(M\).

Construct \(B \in \mathscr{B}(A(W))\) containing a ball around the origin, but also with very small diameter \(\kappa\). For \(v \in \mathrm{Ver}(W)\), define \(B'_v \coloneqq (B+\gamma_<) \wedge A(W)\) for \(\gamma_<\) very close to \(Av \in \mathrm{Ver}(AW)\). It is not hard to see that, if \(\kappa\) is small enough and \(\gamma_<\) is close enough to \(Av\), then \(B_v \coloneqq B'_v + (v-Av) \subset W\) and \(B_v = W \cap U\) on some ball \(U\) centred at \(v\). Indeed, by our choice of the power of \(A\), a small region around the vertex at \(v\) transforms, under \(A\), to a translate of this region in \(A(W)\) about \(Av\). Since \(v-Av \in \Gamma_<\), we also have \(B_v \in \mathscr{B}(A(W))\). Thus, by taking the union of elements \(B_v\) over all \(v \in \mathrm{Ver}(W)\), we define a set \(B_0 \in \mathscr{B}(A(W))\) with \(B_0 \subseteq W\) which contains a neighbourhood of the vertex set in \(W\).

The above may now be repeated on higher dimensional cells. For example, for each edge \(e\) between vertices \(v\), \(w \in \mathrm{Ver}(W)\), start with \(B_v\) as above. By rationality, \(N(w-v) \in \Gamma_<\). Moreover, by Lemma \ref{lem:invariant subspaces}, so is \(A^i(N(w-v))\) for all \(i \in \N\), since the line parallel to \(e\) (which is an intersection of hyperplanes of \(\sH_0\)) is invariant under \(A\). Since \(A\) is contractive, it follows that the subspace parallel to \(e\) contains a dense subset in \(\Gamma_<\). Thus, by taking sufficiently many translates of \(B_v'\) with elements of \(\Gamma_<\) lying on this line, we may cover a neighbourhood of the edge \(e \subseteq W\) with a union of such elements, which by construction is some subset \(B_e \in \mathscr{B}(A(W))\). Moreover, if we initially intersect \(B_v\) with a much smaller diameter set containing \(v\) (which we may always easily construct), we may easily ensure that \(B_e\) does not intersect \(W\) across any other face not incident with \(e\) (except perhaps near the vertices \(v\) and \(w\), but these are already correctly covered by \(B_v\) and \(B_w\)). Thus, taking the union \(B_1\) of these sets and \(B_0\), we have constructed a subset \(B_1 \in \mathscr{B}(A(W))\) with \(B_1 \subseteq W\) and covering a neighbourhood of its \(1\)-skeleton.

The above argument may be repeated to construct \(B_2\), \ldots, \(B_n \in \mathscr{B}(W)\), where \(B_n = W\), where in each step we observe that, for each cell of \(W\), we have that \(\Gamma_<\) is dense in the parallel subspace, using Lemma \ref{lem:invariant subspaces}. Thus, \(W \in \mathscr{B}(A(W))\) and we have proved (4a). Since we assume (1--3) throughout this section, and \(L\) is expansive, we have that \(\Omega\) consists of \(L^m\)-sub patterns for the appropriate power \(m\) noted at the start of the proof, by Theorem \ref{thm:main}.
\end{proof}

Since \(W\) (without any translation) is constructable from \(A^m(W)\) for a suitable power \(m \in \N\), it follows quickly from Theorem \ref{thm:PSA} that all rational translates (in the internal space) are LIDS. In fact, we have the following more specific result; note that an analogous statement and proof works whenever \(\tau \colon \Omega \to \mathbb{T}\) has finite fibres, and \(\tau \circ \sub = T \circ \tau\) where \(T = M_\mathbb{T}^m\).

\begin{proposition}\label{prop:polytopal PSA1}
There is an (injective) substitution map \(\sub \colon \Omega \to \Omega\), with inflation \(L^m\) for some \(m \in \N\). Moreover, this satisfies \(\tau \circ \sub = T \circ \tau\), where \(T[x] = [M^m(x)]\). We have that every \(\cps \in \Omega\) with \(\tau \cps = [u]\), for \(u_< \in \Q\Gamma_<\), satisfies \(\sub^\ell(\cps) = \cps + e\) for some power \(\ell \in \N\) and \(e \in \phy\). In particular, all such elements are LIDS.
\end{proposition}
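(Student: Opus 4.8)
The plan is to obtain the substitution map directly from the two preceding results and then run an arithmetic-plus-finiteness argument to control the iterated image of each rational fibre. First I would invoke Theorem~\ref{thm:polytopal direction one}, which gives that $W$ (with no translation) is constructable from $A^m(W)$ for a suitable $m \in \N$. Applying Theorem~\ref{thm:classification} with the automorphism $M^m$ in place of $M$ (which still satisfies (1--3), with $M^m|_\phy = L^m$ expansive) and with $u = 0$, the constructability of $W$ from $A^m(W)$ yields an injective substitution map $\sub \colon \Omega \to \Omega$ with inflation $L^m$ satisfying $\tau \circ \sub = T \circ \tau$, where $T[x] = M^m_\mathbb{T}[x] = [M^m(x)]$. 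This settles the first two assertions.

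For the third, fix $\cps$ with $\tau(\cps) = [u]$ and $u_< \in \Q\Gamma_<$, say $u_< \in \tfrac{1}{N}\Gamma_<$. Since $M(\Gamma) = \Gamma$ and $M$ preserves the splitting $\tot = \phy + \intl$, we have $(M\gamma)_< = A(\gamma_<)$, so $A(\Gamma_<) = \Gamma_<$ and $A(\tfrac1N\Gamma_<) = \tfrac1N\Gamma_<$; thus $A^m$ induces an automorphism of the finite group $Q \coloneqq (\tfrac1N\Gamma_<)/\Gamma_<$, and some power $A^{m\ell}$ fixes the class $[u_<] \in Q$, i.e. $(A^{m\ell} - \Id)u_< = \gamma_<$ for some $\gamma \in \Gamma$. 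I would then compute in $\mathbb{T}$, using $[\gamma_<] = [-\gamma_\vee]$,
\[
T^\ell[u] - [u] = \big[(L^{m\ell} - \Id)u_\vee + (A^{m\ell} - \Id)u_<\big] = \big[(L^{m\ell} - \Id)u_\vee - \gamma_\vee\big] = [v],
\]
where $v \coloneqq (L^{m\ell} - \Id)u_\vee - \gamma_\vee \in \phy$. Hence $\tau(\sub^\ell \cps) = T^\ell[u] = [u] + [v] = \tau(\cps + v)$.

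The final, and main, step is to upgrade this torus-level equality to a genuine equality of patterns, which is delicate precisely when $[u]$ is singular (for instance when $u_< \in \Gamma_<$, as at a vertex). Here I would use that, for polytopal windows, the fibres of $\tau$ are finite (the standing hypothesis of Theorem~\ref{thm:PSA}). Writing $F \coloneqq \tau^{-1}[u]$, the computation above gives $\sub^\ell(F) \subseteq \tau^{-1}([u]+[v]) = F + v$, so the injective map $\psi \coloneqq (\,\cdot\, - v) \circ \sub^\ell$ sends the finite set $F$ into itself and is therefore a permutation of $F$. Choosing $j$ with $\psi^j = \Id$ on $F$ and using $\sub^\ell(\cps' - v) = \sub^\ell(\cps') - L^{m\ell}v$, a short telescoping yields $\sub^{\ell'}(\cps) = \cps + e$ with $\ell' = j\ell$ and $e = \sum_{i=0}^{j-1} L^{im\ell}v \in \phy$, as required. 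Finally, since $\sub$ is strongly substitutional by recognisability (Theorem~\ref{thm:recognisability}, applicable as cut and project sets are aperiodic), iterating gives $L^{m\ell'}\cps \MLD \sub^{\ell'}(\cps) = \cps + e \MLD \cps$, so $\cps$ is LIDS with respect to $L^{m\ell'}$. I expect the main obstacle to be exactly this singular case: passing from the set-theoretic permutation of the fibre to a single honest power of $\sub$ fixing $\cps$ up to a physical translation requires both the finiteness of fibres and careful bookkeeping of the accumulated physical-space shift under iteration.
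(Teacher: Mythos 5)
Your proposal is correct and follows essentially the same route as the paper's proof: the substitution comes from Theorem \ref{thm:polytopal direction one} combined with Theorem \ref{thm:classification} (taking \(s = u = \mathbf{0}\) there), a power of \(A\) acting trivially on the class of \(u_<\) in \((\tfrac{1}{N}\Gamma_<)/\Gamma_<\) yields \(T^\ell[u] = [u] + [v]\) with \(v \in \phy\), and the injective map \(\cps' \mapsto \sub^\ell(\cps') - v\) permutes the finite fibre \(\tau^{-1}[u]\), so a power of it is the identity. If anything, your torus computation is slightly more careful than the paper's, which writes \([L^\ell(v)+A^\ell(s)] = [L^\ell(v)+s]\) even though \(A^\ell(s)-s \in \Gamma_<\) need not lie in \(\Gamma\); your identity \([\gamma_<] = [-\gamma_\vee]\) supplies the missing physical-space correction, which is harmless since it only alters \(v\) within \(\phy\).
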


\begin{proof}
Since, by the above proof, \(W\) is constructable from \(A^m(W)\), by Theorem \ref{thm:classification} we have a substitution map \(\sub\) for which \(\tau \circ \sub = T \circ \tau\), since we may take \(s = u = \mathbf{0}\) in that result. Here, we take the power \(M^m\) in place of \(M\), so the inflation map is \(L^m\) on \(\phy\). To simplify notation, we replace \(M\) with \(M^m\) at this point (so, for instance, we replace \(A^m\) with \(A\)).

Take \(\cps\) as in the statement and write \(u = v+s\), where \(v = u_\vee\) and \(s = u_\vee \in \Q\Gamma_<\). So \(s = \frac{1}{N}\gamma_<\) for some \(\gamma \in \Gamma\) and \(N \in \N\). As in the previous proof, consider a power \(A^\ell\) of \(A\) which acts as the identity on the quotient \(\frac{1}{N}\Gamma_< / \Gamma_<\). Thus, in particular, \(A^\ell(s) - s \in \Gamma_<\) so
\[
T^\ell[u] = [M^\ell(u)] = [L^\ell(v) + A^\ell(s)] = [L^\ell(v) + s] = [L^\ell(v) - v] + [v + s] = [v'] + [u] ,
\]
where \(v' = L^\ell(v) - v \in \phy\). Then, for any \(\cps' \in \Omega\) with \(\tau \cps' = [u]\), we have that
\[
\tau(\sub^\ell(\cps') - v') = \tau(\sub^\ell \cps') - [v'] = T^\ell(\tau \cps') - [v'] = [u] ,
\]
so the map \(\cps' \mapsto \sub^\ell(\cps') - v'\) sends the (finite) fibre \(\tau^{-1}[u]\) to itself. Since this map is injective, a power is the identity, so it follows that all such patterns are fixed under some power of \(\sub^\ell\), followed by a translation and thus they are LIDS.
\end{proof}

We will see a converse to the above in a later result, that the LIDS points \(\cps \in \Omega\) satisfy \(\tau \cps = [u]\) for \(u_< \in \Q\Gamma_<\).

\subsection{Proof that substitutional implies face direction invariant and rational}

Throughout this subsection we assume that \(\Omega = \Omega(\mathcal{S},W)\), for \(W\) polytopal (and, without loss of generality by Proposition \ref{prop:reducing to one colour}, single-coloured), consists of \(L\)-Sub cut and project sets, where \(L\) is expansive. If they are all, instead, merely \(L^m\) substitutional for some \(m > 1\) then we pass to the corresponding power \(M^m\). This does not affect the properties of FD-invariance or rationality, nor conditions (1--3) required of the linear automorphism \(M^m \colon \tot \to \tot\). Thus, the elements of \(\Omega\) are assumed to be \(L\)-Sub, so (1--4) of Theorem \ref{thm:main} hold.

\begin{lemma} \label{lem: substitutional => face invariant}
We have that \(W\) is FD-invariant with respect to some power \(A^\ell\).
\end{lemma}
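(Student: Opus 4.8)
The plan is to show that the expansion $A = M|_\intl$ must preserve the set $\sH_0$ of supporting subspaces, from which FD-invariance for a suitable power follows immediately (since $\sH_0$ is finite and $A$ acts injectively on subspaces). The key geometric fact to exploit is that $W$ being constructable from $A(W)$ means, up to translation and the inconsequential boundary defects handled by Lemma~\ref{lem:Booleans same on NS}, that $A(W)$ and its $\Gamma_<$-translates generate $W$ via Boolean operations. I first recall from Corollary~\ref{cor:constructable window from rescaling} that a translate $W-s$ is mutually constructable from $A(W)$. The guiding principle is that Boolean operations and lattice translations cannot create genuinely new face directions: every supporting hyperplane of a constructable set must be parallel to a supporting hyperplane of one of the generating translates.

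First I would make precise the claim that the supporting subspaces of a constructable window are controlled by those of the generators. If $W-s = \bigcup_i \bigwedge_j W_{ij}$ as in Equation~\ref{eq:construction}, where each $W_{ij}$ is a $\Gamma_<$-translate of $A(W)$ or $\neg A(W)$, then any supporting hyperplane $H$ of $W-s$ must, in a neighbourhood of a generic boundary point $w_H$ lying in the interior of a face, coincide with $W-s$ looking locally like a half-space. Near such a point, the Boolean expression locally agrees (by Lemma~\ref{lem:Booleans same on NS}, away from the measure-zero lower-dimensional strata) with the corresponding standard Boolean expression, whose boundary is contained in $\bigcup_{ij} \partial W_{ij}$. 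Hence $w_H$ lies on some $\partial W_{ij}$, and the local half-space structure of $W-s$ at $w_H$ forces $H$ to be parallel to a supporting hyperplane of $A(W)$ at that point. Since the supporting subspaces of $A(W)$ are exactly $A(\sH_0)$ (as $A$ is a linear isomorphism sending each face of $W$ to a face of $A(W)$), I conclude $V(H) \in A(\sH_0)$. Running over all $H$ yields $\sH_0 = V(\sH\text{ of }W-s) \subseteq A(\sH_0)$, and since translation by $s$ does not alter face directions, this gives $\sH_0 \subseteq A(\sH_0)$.

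By the same argument applied in the reverse direction — using that $A(W)$ is also constructable from $W$ (the mutual constructability from Corollary~\ref{cor:constructable window from rescaling}) — I obtain $A(\sH_0) \subseteq \sH_0$, so in fact $A(\sH_0) = \sH_0$; alternatively, since $\sH_0$ is finite and $A$ is injective on subspaces, the single inclusion $\sH_0 \subseteq A(\sH_0)$ already forces equality. Thus $A$ permutes the finite set $\sH_0$, and some power $A^\ell$ fixes each element pointwise (as a subspace), giving $A^\ell(V) = V$ for all $V \in \sH_0$, which is precisely FD-invariance with respect to $A^\ell$.

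The main obstacle I anticipate is the careful local analysis justifying that a supporting hyperplane of $W-s$ must be \emph{parallel} to a supporting hyperplane of one of the translates $A(W)+g_<$, rather than merely touching its boundary. The subtlety is that intersections and complements can in principle produce boundary pieces where two faces of different generators meet, and one must argue that at a generic point of a face of $W-s$ only a single generator's face is locally active, so that the half-space condition transfers cleanly. This is where the polytopal structure and the closure-of-interior convention $\clin$ are essential: choosing $w_H$ in the relative interior of a top-dimensional face of $W-s$ and taking a sufficiently small neighbourhood $U_H$ ensures that the local picture is a single half-space, pinning down $V(H)$ unambiguously.
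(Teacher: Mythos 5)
Your proposal is correct and follows essentially the same route as the paper: constructability of a translate of \(W\) from \(A(W)\) plus Lemma \ref{lem:Booleans same on NS} gives a covering of \(\partial W\) by finitely many translates of \(\partial A(W)\), whose face directions are exactly \(A(\sH_0)\); a dimension/measure count on a face then forces \(\sH_0 \subseteq A(\sH_0)\), hence equality by finiteness and injectivity of \(A\) on subspaces, so some power of \(A\) fixes \(\sH_0\) pointwise. The paper phrases the key step as the impossibility of covering a face whose direction lies outside \(A(\sH_0)\), which is precisely the local-parallelism argument you spell out in more detail.
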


\begin{proof}
Since the elements of \(\Omega\) are \(L\)-Sub, \(W\) is constructable from some translate of \(A(W)\), by Theorem \ref{thm:main}. By Lemma \ref{lem:Booleans same on NS}, we may thus cover \(\partial W\) by a finite set of translates of \(\partial A(W)\). Since \(A\) is a linear automorphism it is bijective as a function \(V \mapsto A(V)\) on codimension \(1\) subspaces. In particular, the supporting subspaces \(\sH_0\) of \(W\) are in bijective correspondence with those of \(A(W)\) under the map \(A\). Then, if \(A(V') \notin \sH_0\) for some \(V' \in \sH_0\), then there must be some \(V \in \sH_0\) with \(V \neq A(V')\) for all \(V' \in \sH_0\). Then it is impossible to cover \(\partial W\) using a finite number of translates of \(\partial A(W)\). Thus, \(A\) defines a bijection \(A \colon \sH_0 \to \sH_0\) and, since \(\sH_0\) is finite, some power \(A^\ell\) acts as the identity on \(\sH_0\).
\end{proof}

Given the above lemma, we will henceforth assume FD-invariance throughout the remainder of this subsection.  Thus, \(A\) is diagonalisable with real, irrational eigenvalues strictly less than \(1\) in modulus (by Lemma \ref{lem:invariant subspaces}). To simplify constructions, we will now position the window so that \(W\) it is constructable from \(A(W)\), rather than merely a translate of it being so.

\begin{lemma}\label{lem:notranslation}
For an appropriate translate of \(W\), we have that \(W\) is mutually constructable from \(A(W)\).
\end{lemma}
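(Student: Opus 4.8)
The plan is to phrase everything in terms of the Boolean algebras $\mathscr{B}(\cdot)$ and then cancel the residual translation by exploiting that $A$ is a contraction. Since the elements of $\Omega$ are $L$-Sub, Theorem \ref{thm:main}(4) gives some $s \in \intl$ for which $W-s$ is mutually constructable from $A(W)$, that is, $\mathscr{B}(W-s) = \mathscr{B}(A(W))$. My goal is to find a translate $W-t$ with $\mathscr{B}(W-t) = \mathscr{B}(A(W-t))$, i.e.\ with no leftover shift, after which I may simply relabel $W-t$ as $W$.

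The key observation I would record first is the translation equivariance of $\mathscr{B}(\cdot)$. For any $t \in \intl$, the rigid translation $V \mapsto V-t$ is a homeomorphism of $\intl$, so it commutes with $\intr$, $\cl$, and the set-theoretic operations, and hence with $\wedge$, $\vee$, $\neg$. It carries each generator $W+g$ (for $g \in \Gamma_<$) of $\mathscr{B}(W)$ to the generator $(W-t)+g$ of $\mathscr{B}(W-t)$, so it restricts to a bijection $\mathscr{B}(W) \to \mathscr{B}(W-t)$. Writing this as $\mathscr{B}(W-t) = \mathscr{B}(W)-t$ (and likewise $\mathscr{B}(A(W)-A(t)) = \mathscr{B}(A(W))-A(t)$, using $A(W-t)=A(W)-A(t)$), the hypothesis becomes $\mathscr{B}(A(W)) = \mathscr{B}(W)-s$.

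Next I would pin down $t$. Having assumed FD-invariance, $A$ has all eigenvalues of modulus strictly less than $1$ by Lemma \ref{lem:invariant subspaces}, so $1$ is not an eigenvalue of $A$ and $\Id - A$ is invertible on $\intl$. Setting $t := (\Id - A)^{-1}(s)$ gives $t - A(t) = s$, i.e.\ $s + A(t) = t$, and then
\[
\mathscr{B}(A(W-t)) = \mathscr{B}(A(W))-A(t) = \big(\mathscr{B}(W)-s\big)-A(t) = \mathscr{B}(W)-(s+A(t)) = \mathscr{B}(W)-t = \mathscr{B}(W-t),
\]
so $W-t$ is mutually constructable from $A(W-t)$. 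Finally I would note that translating the window alters neither the scheme $\mathcal{S}$, the hull $\Omega(\mathcal{S},W)$, nor the properties of being polytopal, FD-invariant, or rational; thus after replacing $W$ by $W-t$ we may assume $W$ itself is mutually constructable from $A(W)$.

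The only genuine content is the contraction fixed-point step: because $A$ contracts, the affine map $x \mapsto s + A(x)$ has the unique fixed point $t$, which is exactly what lets the shift be absorbed. The point that requires a little care — though it is routine — is the equivariance identity $\mathscr{B}(U-t) = \mathscr{B}(U)-t$, precisely because $t \notin \Gamma_<$ in general, so $\mathscr{B}(U)$ is not itself invariant under the shift by $t$, only transformed isomorphically onto $\mathscr{B}(U-t)$.
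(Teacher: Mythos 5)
Your proof is correct and is essentially the paper's own argument: the paper likewise starts from the $s$ given by Theorem \ref{thm:main}(4), observes that translating both windows by $-A(x)$ preserves mutual constructability (the equivariance you spell out via $\mathscr{B}(U-t)=\mathscr{B}(U)-t$), and solves the same fixed-point equation $(\Id-A)(x)=s$ using invertibility of $\Id - A$, which holds since $A$ is contractive. The only difference is presentational: you make the translation-equivariance of the Boolean algebras explicit, which the paper uses tacitly.
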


\begin{proof}
By Theorem \ref{thm:polytopal} (4) we already know that \(W - s\) is mutually constructable from \(A(W)\) for some \(s \in \intl\). Thus, \(W - (s + A(x))\) is mutually constructable from \(A(W-x) = A(W) - A(x)\) for all \(x \in \intl\), so we just need \(x \in \intl\) with \(s+A(x) = x\), equivalently \((\Id - A)(x) = s\). We have that \(\Id - A\) is invertible (since \(A\) is contractive, so \(1\) is not an eigenvalue), so such an \(x \in \intl\) exists, as required.
\end{proof}

Our proof of this direction of Theorem \ref{thm:polytopal} will end up establishing that \(\Omega\) consists of low complexity patterns, in the sense below. In the case that \(L\) is a similarity, we would know that the elements of \(\Omega\) are linearly repetitive, in which case the property would hold automatically. However, we are able to show this without restricting to the case of \(L\) being a similarity.

\begin{definition}
We say that \(\Omega\) has \textbf{low complexity} (and say it has property \textbf{C}, for short) if there is some \(C > 0\) so that for some (equivalently any) \(\cps \in \Omega\), the number of \(r\)-patches is bounded above by \(Cr\) for all \(r \geq 1\).
\end{definition}

The proof of rationality now breaks into two steps. Firstly, we will show that the stabilisers \(\Gamma^V\) of \(V \in \sH_0\) have sufficiently high rank to force \textbf{C}, making use of results from \cite{KoiWalII} (see also \cite{Wal24} for the case of non-convex windows). Amongst other properties, this will imply that singleton intersections of \(\Gamma_<\) translates of the supporting subspaces are rational, that is, they are contained in some \(\frac{1}{N}\Gamma_<\). We then show that all supporting hyperplanes are in fact rational translates of the supporting subspaces.

\begin{lemma}\label{lem:stabilisers contain basis}
Each \(V \in \sH_0\) has a basis in \(\Gamma^V_<\).
\end{lemma}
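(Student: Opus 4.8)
The plan is to reduce the statement to a spanning assertion and then manufacture enough stabiliser vectors by covering a facet of $W$ with highly contracted pieces. Since $\pi_<|_\Gamma$ is injective, $\Gamma^V \cong \Gamma^V_<$ and $\Gamma^V_< \subseteq V$, so it suffices to show that $\Gamma^V_<$ spans $V$ over $\R$. I would work with the translate of $W$ furnished by Lemma \ref{lem:notranslation}, so that $W$ is mutually constructable from $A(W)$, and recall from Lemma \ref{lem:invariant subspaces} that $A$ is diagonalisable with $A(V)=V$. Iterating Lemma \ref{lem:notranslation} (constructability is transitive, and $A(\Gamma_<)=\Gamma_<$ since $M(\Gamma)=\Gamma$) gives $W \in \mathscr{B}(A^i(W))$ for every $i \in \N$. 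Hence, by Lemma \ref{lem:Booleans same on NS}, $\partial W$ is covered by finitely many $\Gamma_<$-translates of the facets of $A^i(W)$.

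Next I would localise at the facet $F = W \cap H$ with $V(H)=V$ and fix a ball $B \subseteq \mathrm{relint}(F)$ of some radius $\delta > 0$ inside the $(n-1)$-plane $H$. A translated facet can cover an $(n-1)$-dimensional portion of $B \subseteq H$ only if its affine hull equals $H$; by FD-invariance $A^i$ preserves face directions, so such pieces are translates $A^i(F^{(l)}) + (g_j)_<$ of the finitely many facets $F^{(l)}$ of $W$ of direction $V$, each lying in $H$ (facets of other directions meet $H$ in a set of measure zero). The key bookkeeping point is that, for a fixed source $l$, all of its pieces lie in the single $V$-coset $H$, while $A^i(F^{(l)})$ lies in the $V$-coset $A^i(H^{(l)})$; thus the component of $(g_j)_<$ transverse to $V$ is pinned down by $l$, and the translations coming from one source differ only by vectors of $V$. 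Consequently these differences lie in $\Gamma^V$, with $\pi_<$-images in $\Gamma^V_<$.

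It remains to force the single-source translations to span $V$, and this is the main obstacle: a priori they could accumulate on a lower-dimensional affine subspace and still cover part of $B$. I would defeat this by exploiting that $W$ is constructable from $A^i(W)$ for \emph{every} $i$, taking $i$ large. By pigeonhole some source $l^*$ covers a fixed positive fraction of $B$, say $(n-1)$-volume at least $v_0 > 0$ independent of $i$. Each of its pieces has $(n-1)$-volume $\lvert\det(A^i|_V)\rvert\,\mathrm{vol}(F^{(l^*)})$ and diameter both tending to $0$ as $i \to \infty$, since $A|_V$ is a contraction. If the $V$-projections $w_j$ of the source-$l^*$ translations lay in a proper affine subspace $w_1 + V'$ of $V$, then the union of the corresponding pieces would lie in the $\epsilon_i$-neighbourhood of $w_1 + V'$ within the bounded region $B$; as $V'$ has codimension at least one in $V$, this neighbourhood has volume at most $C\epsilon_i \to 0$, contradicting the lower bound $v_0$ for $i$ large. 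Hence for large $i$ the $w_j$ affinely span $V$, so their differences span $V$, giving a basis of $V$ inside $\Gamma^V_<$, as required.
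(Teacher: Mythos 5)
Your proof is correct and takes essentially the same route as the paper's: use Lemma \ref{lem:notranslation} and Lemma \ref{lem:Booleans same on NS} to cover the face \(F = W \cap H\) by finitely many \(\Gamma_<\)-translates of the faces of \(A^i(W)\), observe that only same-direction (FD-invariant) faces can cover positive \((n-1)\)-measure so that translation differences from a fixed source face lie in \(\Gamma^V_<\), and then take \(i\) large so the contracted pieces force these differences to span \(V\). Your pigeonhole-plus-volume argument simply makes explicit the step the paper compresses into ``small enough so that we need at least \(n\) distinct translates whose differences form a basis for \(V\)''.
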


\begin{proof}
Fix an arbitrary \(V \in \sH_0\), with parallel supporting hyperplanes \(\{H_1, \ldots, H_\ell\} \subset \sH\), where we denote \(H \coloneqq H_1\). By FD-invariance, we have that \(\{A(H_1), \ldots, A(H_\ell)\}\) are the supporting hyperplanes of \(A(W)\) parallel to \(V\).

By Lemma \ref{lem:notranslation}, \(W \in \mathscr{B}(A(W))\), so by  Lemma \ref{lem:Booleans same on NS} we have that \(\partial W \subset \bigcup_{g \in Z} \partial A(W) + g_<\) for a finite set \(Z \subset \Gamma\). In particular, for a generalised `face' \(F = \partial W \cap H\), which is a compact set with non-empty interior in \(H\), we have \(F \subset \bigcup_{g \in S} \partial A(W) + g_<\). We may write \(\partial A(W)\) as the union of closed sets \(F_{H'} \coloneqq \partial A(W) \cap A(H')\), over all \(H' \in \sH\). Of course, \((F_{H'} + x) \cap F\) has zero measure for \(A(H') + x \neq H\), so we must have that \(F\) is contained in the union of \(F_{H_i} + g_<\) with \(A(H_i) + g_< = H\). By passing to a sufficiently large power, if necessary, we may ensure that the \(F_{H_i}\) have very small diameter relative to some ball contained in \(F \subset H\), small enough so that we need at least \(n\) distinct translates \(F_{H_i} + (g_j)_< \subset H\) used in covering \(F\), for some fixed \(i\) and \(j \in \{1,\ldots,n\}\), where \(\{(g_j-g_1)_< \mid j \in \{2,\ldots,n\}\}\) is a basis for \(V\) and each \(g_j \in \Gamma\). Thus, the projected lattice points \((g_j-g_1)_< \in \Gamma_< \cap V = \Gamma_<^V\) span \(V\), as required.
\end{proof}

Since each \(\Gamma^V_<\) contains a basis for \(V\), it is rank at least \(n-1\). In fact, by FD-invariance and the fact that \(A\) is a contraction, we in fact know that it must have strictly higher rank than this. We aim to show that their ranks are, in fact, generally as high as they can be, which will prove \textbf{C}, the low complexity condition. To this end, we introduce the following technical lemma:

\begin{lemma} \label{lem: subspaces spanned by stabilisers}
Let \(f = \{V_1,\ldots,V_n\} \subseteq \sH_0\) be a flag and let \(G_m \coloneqq \Gamma^{V_m}\). For \(m = 0\), \ldots, \(n\), we also define \(V_m' \coloneqq \bigcap_{i=1}^m V_i\) (with \(V_0' \coloneqq \intl\)), \(G_m' \coloneqq \bigcap_{i=1}^m G_i\) (with \(G_0' \coloneqq \Gamma\)), \(X_m \coloneqq \langle G_m \rangle_\R\) and \(X_m' \coloneqq \langle G_m' \rangle_\R\) (hence \(X_0' = \tot\)). Then
\begin{enumerate}
	\item \(X_i' = X_1 \cap X_2 \cap \cdots \cap X_i\) for \(i=0\), \ldots, \(n\);
	\item \(X_i' + X_{i+1} = \tot\) for \(i=0\), \ldots, \(n-1\).
\end{enumerate}
\end{lemma}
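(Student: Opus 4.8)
My plan is to organise everything around the codimension-one subspaces $U_m := \phy + V_m$ of $\tot$. The three facts I would record first are: that $\gamma_< \in V_m$ iff $\gamma \in \phy + V_m$, so $G_m = \Gamma^{V_m} = \Gamma \cap U_m$; that FD-invariance gives $M(U_m) = \phy + A(V_m) = U_m$, whence (with $M\Gamma = \Gamma$) each $G_m$, and so each $X_m = \langle G_m\rangle_\R$, is $M$-invariant; and that $X_m \subseteq U_m$, which forces $\Gamma \cap X_m = G_m$ (the inclusion $G_m \subseteq \Gamma \cap X_m$ is clear, while $\Gamma \cap X_m \subseteq \Gamma \cap U_m = G_m$). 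The same facts hold verbatim for the $G_m'$, $X_m'$ and $V_m'$. Since $X_m$ is $M$-invariant and $M$ is hyperbolic, I will also repeatedly use that $X_m$ splits as $(X_m\cap\phy)\oplus(X_m\cap\intl)$, and that $X_m\cap\intl = \langle (G_m)_<\rangle_\R = V_m$ by Lemma \ref{lem:stabilisers contain basis}.

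For part (1), the inclusion $X_i' \subseteq \bigcap_{m\le i} X_m$ is immediate from $G_i' \subseteq G_m$. For the reverse inclusion I would invoke only elementary lattice theory: each $X_m$ is rational (spanned by lattice vectors), hence so is $Z := \bigcap_{m\le i} X_m$, and any rational subspace satisfies $Z = \langle \Gamma \cap Z\rangle_\R$. As $\Gamma \cap Z = \bigcap_{m\le i}(\Gamma \cap X_m) = \bigcap_{m\le i} G_m = G_i'$, this gives $Z = \langle G_i'\rangle_\R = X_i'$. Notably this step needs neither FD-invariance nor the flag hypothesis.

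For part (2) I would dispose of $i=0$ ($X_0' = \tot$) and then exploit the additive spectral splitting above. Applying it to $S := X_i' + X_{i+1}$ and using part (1) to identify $X_i'\cap X_{i+1} = X_{i+1}'$, the internal part of $S$ is $(X_i'\cap\intl)+(X_{i+1}\cap\intl) = V_i' + V_{i+1}$. This equals $\intl$ by a pure dimension count off the flag: the flag forces $\dim V_m' = n-m$, so $\dim(V_i' + V_{i+1}) = (n-i)+(n-1)-(n-i-1) = n$. Hence $\intl \subseteq S$, and (2) reduces to the statement that the physical parts fill up, i.e. $(X_i'\cap\phy)+(X_{i+1}\cap\phy) = \phy$, equivalently $\dim(X_i'+X_{i+1}) = k$.

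This expanding/physical part is where the real content lies and is the step I expect to be the main obstacle, since, unlike the internal side, the dimensions of the $X_m\cap\phy$ are not determined by the flag alone. My plan is to bring in the rational $M$-module structure, decomposing $\tot$ into its $M$-invariant rational primary components $\tot = \bigoplus_p \tot_p$, one per irreducible factor $p$ of the characteristic polynomial of $M$. Using that $A = M|_\intl$ is diagonalisable with irrational eigenvalues (Lemma \ref{lem:invariant subspaces}) and that each $V_m$ is a codimension-one $A$-invariant subspace, the single contracting direction dropped by $V_m$ lives in one primary component $\tot_{p_m}$, and the largest rational subspace $X_m \subseteq U_m$ is then forced to omit that whole component: $X_m = \bigoplus_{p \ne p_m}\tot_p$. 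Distinct flag members give distinct $p_m$, so $X_i' = \bigoplus_{p\notin\{p_1,\dots,p_i\}}\tot_p$ while $X_{i+1}$ omits only $p_{i+1}\notin\{p_1,\dots,p_i\}$; as no component is dropped by both, $X_i' + X_{i+1} = \tot$. The delicate point to pin down rigorously is precisely that $X_m$ omits an \emph{entire} primary component — i.e. that the dropped contracting line is a genuinely rational direction — which is subtle when $M$ has repeated or higher-degree irreducible factors; here the diagonalisability of $A$, together with the fact that every nonzero rational $M$-submodule of $\tot_{p_m}$ must contain its contracting eigendirection, is what makes the argument go through.
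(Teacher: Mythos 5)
Your proof of part (1) is correct, and is in fact more elementary than the paper's: the paper only obtains (1) inductively, interleaved with (2), through a rank-and-dimension count, whereas your rationality argument (intersections of rational subspaces are rational, together with \(\Gamma \cap X_m = G_m\)) gives it directly. Your reduction of part (2) to the expanding parts is also sound: the hyperbolic splitting \(X = (X\cap\phy)\oplus(X\cap\intl)\) of \(M\)-invariant subspaces, the identity \(X_m \cap \intl = V_m\) (from Lemma \ref{lem:stabilisers contain basis}), part (1), and the flag dimension count in \(\intl\) do show \(\intl \subseteq X_i' + X_{i+1}\); this cleanly replaces the paper's explicit limiting argument \(M^{-n}(\lambda^n x) \to y\). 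The gap is exactly in the step you single out as carrying the real content.

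The claim \(X_m = \bigoplus_{p \neq p_m}\tot_p\) is false in general: it fails whenever the characteristic polynomial of \(M\) has a repeated irreducible factor, and that situation is fully compatible with every hypothesis of the lemma. Concretely, let \(\tot = \R^4\), \(\Gamma = \Z^4\), and let \(M\) be block diagonal with two copies of \(\left(\begin{smallmatrix}2 & 1\\ 1 & 1\end{smallmatrix}\right)\); let \(\phy\) be spanned by the two expanding eigenvectors \((u,\mathbf{0})\), \((\mathbf{0},u)\) and \(\intl\) by the two contracting eigenvectors \((w,\mathbf{0})\), \((\mathbf{0},w)\), and take \(W\) to be a parallelogram with vertices \(\mathbf{0}, g_1, g_2, g_1+g_2\), where \(\mathbf{0} \neq g_i \in \Gamma_< \cap V_i\) for \(V_1 = \langle(w,\mathbf{0})\rangle_\R\) and \(V_2 = \langle(\mathbf{0},w)\rangle_\R\) (a product of two Fibonacci schemes). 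Then \(W\) is polytopal, rational and FD-invariant, so by Theorem \ref{thm:polytopal} the hull is substitutional; \(\{V_1,V_2\}\) is a flag with \(\Gamma^{V_1} = \Z^2\times\{0\}\) and \(\Gamma^{V_2} = \{0\}\times\Z^2\), so Lemma \ref{lem:stabilisers contain basis} holds. Yet the characteristic polynomial is \((x^2-3x+1)^2\), so there is a single primary component \(\tot_p = \tot\), and your formula would force \(X_1 = X_2 = \{\mathbf{0}\}\), while in truth \(X_1 = \R^2\times\{0\}\) and \(X_2 = \{0\}\times\R^2\). The repair you sketch fails at the same point: \(\R^2\times\{0\}\) is a nonzero rational \(M\)-submodule of \(\tot_p\) containing one contracting direction but not the other (and when a primary component has contracting part of dimension \(\geq 2\), ``its contracting eigendirection'' is not even well defined). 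The atoms your argument needs are irreducible \(\Q[M]\)-submodules rather than primary components, but their decomposition is non-unique and would have to be chosen adapted to the flag, and proving that an adapted choice exists is essentially the lemma itself. The paper's proof sidesteps all of this with an argument insensitive to the factorisation: \(Z \coloneqq X_i' + X_{i+1}\) is spanned by the \(M\)-invariant subgroup \(G_i'+G_{i+1} \leqslant \Gamma\), which is a lattice in \(Z\), so \(|\det(M|_Z)| = 1\); since \(Z \supseteq \intl\) already accounts for all eigenvalues of modulus less than \(1\), and every omitted eigenvalue has modulus greater than \(1\), this forces \(Z = \tot\). Some determinant/volume argument of this kind is what your proposal is missing for the expanding half of part (2).
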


\begin{proof}
We have that (1) is true for \(i=0\) since \(X_0' = \tot\), which is also the vacuous intersection of subspaces of \(\tot\) in (1), by convention. Moreover, \(X_0' + X_1 = \tot + X_1 = \tot\), so (2) is trivial.

So suppose that both (1) and (2) hold for \(i=m-1\), with \(m \leq n\). By a standard dimension formula,
\[
\dim(X_1 \cap \cdots \cap X_{m-1}) + \dim(X_m) = \dim((X_1 \cap \cdots \cap X_{m-1}) \cap X_m) + \dim((X_1 \cap \cdots \cap X_{m-1})+X_m) .
\]
By induction, we thus have \(\dim(X_{m-1}') + \dim(X_m) = \dim(X_1 \cap \cdots \cap X_m) + k\). By definition, \(\dim(X_m) = \rk(G_m)\) and \(\dim(X_{m-1}') = \rk(G_{m-1}')\) so that
\begin{equation} \label{eq: dim formula}
\rk(G_{m-1}') + \rk(G_m) = \dim(X_1 \cap \cdots \cap X_m) + k .
\end{equation}
On the other hand, using the short exact sequence
\[
0 \to G_{m-1}' \cap G_m \to G_{m-1}' \times G_m \to G_{m-1}' + G_m \to 0 ,
\]
we have the rank formula \(\rk(G_{m-1}') + \rk(G_m) = \rk(G_m') + \rk(G_{m-1}' + G_m)\). Combining with Equation \ref{eq: dim formula}, we see that
\begin{equation} \label{eq: rk formula}
\rk(G_m') + \rk(G_{m-1}' + G_m) = \dim(X_1 \cap \cdots \cap X_m) + k .
\end{equation}
Since each \(G_i \leqslant X_i\), it is clear that \(\rk(G_m') \leq \dim(X_1 \cap \cdots \cap X_m)\). Similarly, since \(G_{m-1}' + G_m \leqslant \Gamma\), we have that \(\rk(G_{m-1}' + G_m) \leq k\). So both of these inequalities must in fact be equalities to ensure Equation \ref{eq: rk formula} holds, so that
\[
\rk(G_m') = \dim(X_1 \cap \cdots \cap X_m) \ \ \text{ and } \ \ \rk(G_{m-1}' + G_m) = k .
\]
Since \(X_m' \coloneqq \langle G_m' \rangle \leqslant X_1 \cap \cdots \cap X_m\) and \(\dim(X_m') = \rk(G_m') = \dim(X_1 \cap \cdots \cap X_m)\), we have \(X_m' = X_1 \cap \cdots \cap X_m\), as required by claim (1) of the lemma for \(i=m\).

To prove (2), now suppose additionally that \(m < n\). So (1) holds for \(i=m\), by the above, and (2) holds for \(i = m-1 < n-1\). We have that \(G_m' + G_{m+1}\) is a lattice in its span, denoted
\[
Z \coloneqq \langle G_m' + G_{m+1} \rangle = \langle G_m' \rangle + \langle G_{m+1} \rangle = X_m' + X_{m+1} = (X_1 \cap \cdots \cap X_m) + X_{m+1}\ .
\]
Recall (Lemmas \ref{lem:invariant subspaces} and  \ref{lem: substitutional => face invariant}) that \(A\) is diagonalisable, with eigenvectors that may be chosen to lie on the lines given as the intersections of all but one of the hyperplanes of the flag \(f\). Take any such line \(Y\), where \(A(Y) = Y\), say \(A(y) = \lambda y\), where \(|\lambda| < 1\) since \(A\) is contractive. We claim that, for any \(V_i \in f\), if \(Y \subseteq V_i\) then \(Y \subseteq X_i\). Indeed, take any \(y \in Y\). By Lemma \ref{lem:stabilisers contain basis}, \(\Gamma_<^{V_i}\) contains a basis for \(V_i\) and hence there is at least some \(x \in X_i\) so that \(x_< = y\). We may write \(x = v + y\) for \(v = x_\vee \in \phy\). Then \(M^{-n}(\lambda^n x) = \lambda^n L^{-n}(v) + \lambda^n A^{-n}(y) = \lambda^n L^{-n}(v) + y\). Since \(L\) is expansive, \(L^{-n}\) is contractive so that \(L^{-n}(v) \to \mathbf{0}\), and so of course \(\lambda^n L^{-n}(v) \to \mathbf{0}\) too as \(n \to \infty\). Thus, \(y_n \coloneqq M^{-n}(\lambda^n x) \to y\) as \(n \to \infty\). But \(X_i\) is invariant under \(M\) and \(M^{-1}\), so each \(y_n \in X_i\). Since \(X_i\) is closed, we have that \(y \in X_i\), as required.

Thus, for each eigenline \(Y \subseteq \intl\) defined by the flag, we must have that \(Y \subseteq X_1 \cap X_2 \cap \cdots \cap X_m = X_m'\), or \(Y \subseteq X_{m+1}\) (since every such \(Y\) belongs to all but one of the \(X_i\) by the above). These lines span \(\intl\), so we see that \(\intl \subseteq Z = X_m' + X_{m+1}\). Now, since \(Z = (X_1 \cap \cdots \cap X_m) + X_{m+1}\), and each \(X_i\) is invariant under \(M\), we also have that \(Z\) is invariant under \(M\). Moreover, the sub-lattices \(G_m' = G_1 \cap \cdots \cap G_m\) and \(G_{m+1}\) are invariant under \(M\), and \(G_m + G_{m+1}'\) is a lattice in \(Z\) by the above. It follows that \(M|_Z\) has determinant, or product of eigenvalues, of modulus \(1\). Since \(Z \supseteq \intl\), to achieve this it must also contain all of the other eigenvalues (with multiplicity) of \(M\), hence \(Z = \tot\). Since \(Z = X_m' + X_{m+1}\), we have shown (2) for value \(i=m\), as required. The lemma thus holds, by induction.
\end{proof}

\begin{theorem} \label{thm:low complexity}
The cut and project scheme satisfies property \textbf{C}.
\end{theorem}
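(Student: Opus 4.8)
The plan is to derive property \textbf{C} from the complexity theory of polytopal cut and project sets in \cite{KoiWalII} (and \cite{Wal24} for the non-convex case), using as the decisive input the rank information just established in Lemma \ref{lem: subspaces spanned by stabilisers}. Recall that, taking a non-singular $\cps = \cps(\mathcal{S},W)$, the $r$-patches are in bijection with the $r$-acceptance domains $\mathscr{A}_r$ by Lemma \ref{lem:acceptance domains indicate patches}, and these tile $W$ by Lemma \ref{lem:ADs tile}. Since $W$ is polytopal, each acceptance domain is a cell of the finite hyperplane arrangement cut into $W$ by the translates $H + \gamma_<$, for $H \in \sH$ and $\gamma \in \Gamma$ with $\gamma_\vee \in B(\mathbf{0},r)$ (only finitely many meet $W$, as we also need $\gamma_< \in W-W$). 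Counting $r$-patches thus reduces to counting cells of this arrangement, whose dominant term comes from its vertices; each vertex is a transverse intersection of $n$ of the hyperplanes, whose directions necessarily form a flag $f \subseteq \sH_0$.

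First I would record the per-direction and per-flag counts. For $V \in \sH_0$, two translates $H + \gamma_<$ and $H + \gamma'_<$ coincide exactly when $\gamma - \gamma' \in \Gamma^V$, so the number of distinct hyperplanes of direction $V$ meeting $W$ within physical radius $r$ is the number of images of $\{\gamma \in \Gamma : \gamma_\vee \in B(\mathbf{0},r),\ \gamma_< \in W-W\}$ under the quotient $\Gamma \to \Gamma/\Gamma^V$. The growth of this count, and of the vertex count attached to a flag $f$, is therefore governed by the ranks $\rk(\Gamma^{V})$ and by the ranks of the successive intersections $\Gamma^{V_1} \cap \cdots \cap \Gamma^{V_m}$ along $f$. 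This is precisely the data from which \cite{KoiWalII} computes the complexity growth rate, so the task is to show that for every flag these ranks are as large as the ambient geometry allows, forcing the cell count down to the bound of property \textbf{C}.

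This maximality is exactly the content of Lemma \ref{lem: subspaces spanned by stabilisers}, and supplying it is the second step. For a flag $f = \{V_1,\ldots,V_n\}$ the lemma gives, for each $m$, the rank equality $\rk(G_m') = \dim(X_1 \cap \cdots \cap X_m)$ (so the lattice $\Gamma^{V_1} \cap \cdots \cap \Gamma^{V_m}$ spans the full intersection of the real spans $X_i = \langle \Gamma^{V_i} \rangle_\R$, with no rank defect) together with the transversality $X_m' + X_{m+1} = \tot$. I would feed these two statements into the estimate of \cite{KoiWalII}: the rank equalities pin down the number of coincidences forced among the chosen hyperplanes of a flag, while the transversality guarantees that each newly adjoined direction contributes only the minimal additional growth. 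Combining these collapses the complexity exponent to the extremal value, yielding property \textbf{C}.

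The main obstacle is expected to be bookkeeping rather than conceptual: matching the abstract rank equalities of Lemma \ref{lem: subspaces spanned by stabilisers} to the precise hypotheses of the complexity bound in \cite{KoiWalII}, and checking that the lower-dimensional faces of the arrangement, together with the boundary effects coming from $W$ being only a finite union of polytopes in $\intl$ rather than all of $\intl$, contribute no more than the vertex count. As these are the standard reductions of the polytopal complexity literature, I would cite \cite{KoiWalII} for the reduction to vertices and for the rank-to-exponent dictionary, and present as the new ingredient only the verification that Lemma \ref{lem: subspaces spanned by stabilisers} certifies the extremal stabiliser ranks.
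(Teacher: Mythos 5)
Your proposal takes essentially the same route as the paper: the decisive input is Lemma \ref{lem: subspaces spanned by stabilisers}, and the conversion of that rank data into property \textbf{C} is delegated to the Koivusalo--Walton polytopal complexity machinery (the paper cites \cite[Theorem 5.7]{KoiWal21}, with \cite[Corollary 2.11]{Wal24} for non-convex windows), exactly as you intend. The only difference is that the paper carries out explicitly the ``bookkeeping'' you defer: a short telescoping dimension count, alternating the two conclusions of Lemma \ref{lem: subspaces spanned by stabilisers} with the formula $\dim(X)+\dim(Y)=\dim(X+Y)+\dim(X\cap Y)$ and using $G_1\cap\cdots\cap G_n=\{\mathbf{0}\}$, which shows that $\rk(G_1)+\cdots+\rk(G_n)=(n-1)k$ for every flag --- precisely the hypothesis of the cited criterion.
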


\begin{proof}
Take an arbitrary flag \(f \subseteq \sH_0\). Then using a standard dimension formula and iteratively applying Lemma \ref{lem: subspaces spanned by stabilisers}, we have
\begin{align*}
  & \rk(G_1) + \rk(G_2) + \rk(G_3) + \cdots + \rk(G_n)\\
= & \dim(X_1) + \dim(X_2) + \dim(X_3) + \cdots + \dim(X_n) \\
= & \dim(X_1 + X_2) + \dim(X_1 \cap X_2) + \dim(X_3) + \cdots + \dim(X_n) \\
= & \dim(X_1' + X_2) + \dim(X_2') + \dim(X_3) + \cdots + \dim(X_n) \\
= & k + \dim(X_2' + X_3) + \dim(X_2' \cap X_3) + \dim(X_4) + \cdots + \dim(X_n)\\
= & k + k + \dim(X_3') + \dim(X_4) + \cdots + \dim(X_n) \\
= & \cdots \\
= & (n-2)k + \dim(X_{n-1}') + \dim(X_n) \\
= & (n-2)k + \dim(X_{n-1}' + X_n) + \dim(X_{n-1}' \cap X_n) \\
= & (n-2)k + k + \dim(X_n') = (n-1)k \ .
\end{align*}
In the last line, note that \(X_n' = \langle G_1 \cap \cdots \cap G_n \rangle\) and that \(G_1 \cap \cdots \cap G_n = \{\mathbf{0}\}\), since no element of \(\Gamma\) can stabilise all elements of \(f\) simultaneously (since it is a flag). Since the flag was arbitrary, the sum of stabiliser ranks is equal to \((n-1)k\) for each of them and so \textbf{C} holds by \cite[Theorem 5.7]{KoiWal21} (or see \cite[Corollary 2.11]{Wal24} for the case of arbitrary, non-convex polytopal windows).
\end{proof}

\begin{corollary} \label{cor:rational flag intersections}
There exists an \(M \in \N\), such that the following holds: for any flag \(f = \{V_1, \ldots, V_n\} \subseteq \sH_0\), any \(N \in \N\) and any \(\gamma_1\), \ldots, \(\gamma_n \in \frac{1}{N}\Gamma\), the `vertex'
\[
\{v\} \coloneqq \bigcap_{i=1}^n (V_i + (\gamma_i)_<)
\]
is such that \(v \in \frac{1}{M} \Gamma_<\).
\end{corollary}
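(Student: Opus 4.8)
The plan is to lift the intersection problem from the internal space $\intl$ up to the total space $\tot$, where the stabiliser lattices provide an integral structure suitable for controlling denominators. Write $G_i \coloneqq \Gamma^{V_i}$ and $X_i \coloneqq \langle G_i \rangle_\R \leqslant \tot$, as in Lemma \ref{lem: subspaces spanned by stabilisers}. By Lemma \ref{lem:stabilisers contain basis} we have $\pi_<(X_i) = V_i$, since $(G_i)_< = \Gamma^{V_i}_<$ spans $V_i$. Hence the affine subspace $\tilde H_i \coloneqq X_i + \gamma_i \leqslant \tot$ satisfies $\pi_<(\tilde H_i) = V_i + (\gamma_i)_<$, so $\tilde H_i$ is a lift of the $i$-th translated supporting subspace.

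First I would show that $\bigcap_{i=1}^n \tilde H_i$ is a single point $\tilde v \in \tot$. Uniqueness is immediate: the linear part of the intersection is $\bigcap_i X_i = X_n'$, which is $\{\mathbf{0}\}$ because $G_1 \cap \cdots \cap G_n = \{\mathbf{0}\}$ (no nonzero lattice element stabilises every member of a flag), exactly as used at the end of the proof of Theorem \ref{thm:low complexity}. For existence I would induct on the partial intersections $A_m \coloneqq \bigcap_{i=1}^m \tilde H_i$: assuming $A_m$ is a nonempty affine subspace with linear part $X_m'$, the intersection $A_{m+1} = A_m \cap \tilde H_{m+1}$ of two affine subspaces is nonempty precisely when the difference of their base points lies in $X_m' + X_{m+1}$, and $X_m' + X_{m+1} = \tot$ by part (2) of Lemma \ref{lem: subspaces spanned by stabilisers}, so the step never obstructs. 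Thus $A_n = \{\tilde v\}$, and since $\tilde v \in \gamma_i + X_i$ for every $i$ we obtain $\pi_<(\tilde v) \in (\gamma_i)_< + V_i$ for all $i$, whence $\pi_<(\tilde v) = v$.

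It remains to bound the denominator of $\tilde v$, and this is the main obstacle. The key point is that the construction is integral up to bounded index: the rank computation inside the proof of Lemma \ref{lem: subspaces spanned by stabilisers} gives $\rk(G_{m-1}' + G_m) = k$, so each $G_{m-1}' + G_m$ is a full-rank, and hence finite-index, sublattice of $\Gamma$. Fixing an identification $\Gamma \cong \Z^k$, the $X_i$ become rational subspaces and the assignment $(\gamma_1,\ldots,\gamma_n) \mapsto \tilde v$ is $\Q$-linear; the finite-index statements are what let me bound uniformly the denominators of the direct-sum projections used to solve the system inductively. Carrying this through, I expect to obtain $\tilde v \in \tfrac{1}{M}\Gamma$ and therefore $v = \pi_<(\tilde v) \in \tfrac{1}{M}\Gamma_<$, with $M$ assembled from these fixed integral indices; since $\sH_0$ is finite there are only finitely many flags, so a common multiple of the resulting constants gives a single $M$ valid for all flags. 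The delicate part — and where I would concentrate the effort — is making the index bounds in the inductive step genuinely uniform, so that the chosen $M$ controls the denominator in the required form; this is precisely the integral refinement of the complementarity $X_m' + X_{m+1} = \tot$ that Lemma \ref{lem: subspaces spanned by stabilisers} supplies, and verifying it cleanly is the crux of the argument.
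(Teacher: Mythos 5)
Your route is genuinely different from the paper's. The paper's proof is essentially a citation: it observes that property \textbf{C} (Theorem \ref{thm:low complexity}) is unaffected by replacing $\Gamma$ with $\frac{1}{N}\Gamma$, and then quotes \cite[Proposition 2.20]{Wal24}. You instead extract the statement directly from the paper's own Lemma \ref{lem: subspaces spanned by stabilisers}, and the first two-thirds of your argument is correct and complete: $\pi_<(X_i)=V_i$ by Lemma \ref{lem:stabilisers contain basis}; uniqueness of the lifted intersection point from $\bigcap_i X_i = \langle G_1\cap\cdots\cap G_n\rangle_\R = \{\mathbf{0}\}$ (part (1) of the lemma plus the flag condition); existence by induction from the complementarity $X_m'+X_{m+1}=\tot$ (part (2)); and $\pi_<(\tilde v)=v$. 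If completed, this would make the corollary self-contained rather than dependent on an external reference, which is a real gain.

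The proof is not finished, however: the denominator bound, which you yourself call ``the crux'', is only sketched, and you are aiming at the wrong target there. Two points. First, the step is easier than you fear and needs no uniform index-tracking through the induction: by uniqueness, the solution map $T\colon(\gamma_1,\ldots,\gamma_n)\mapsto\tilde v$ is $\R$-linear, and the defining conditions $\tilde v-\gamma_i\in X_i$ form a linear system with rational coefficients in any $\Z$-basis of $\Gamma$ (each $X_i$ is spanned by lattice vectors); a consistent rational system with a unique solution has that solution rational, so $T(\Gamma^n)\subseteq\Q\Gamma$, and since $T(\Gamma^n)$ is finitely generated it lies in $\frac{1}{M_f}\Gamma$ for some $M_f\in\N$; take the least common multiple of $M_f$ over the finitely many flags. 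Second, and more importantly, the uniformity of $M$ over all $N$ that you say you would ``concentrate the effort'' on is unattainable, because it is false as literally stated: already in codimension one, $\sH_0=\{\{\mathbf{0}\}\}$ and the ``vertex'' is $(\gamma_1)_<$ itself, and $\frac{1}{N}\gamma_<\in\frac{1}{M}\Gamma_<$ would force $M\gamma=Ng$ for some $g\in\Gamma$ (by injectivity of $\pi_<|_\Gamma$), hence $N\mid M$ for primitive $\gamma$; in general, homogeneity of $T$ forces denominators to scale with $N$. What your argument, completed as above, actually gives is $v\in\frac{1}{M_f N}\Gamma_<$, i.e.\ a denominator bound depending linearly on $N$. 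This is also exactly what the paper's own proof yields (it applies \cite[Proposition 2.20]{Wal24} to the rescaled lattice $\frac{1}{N}\Gamma$), and it is all the later applications use, since Corollary \ref{cor:window rational} and Proposition \ref{prop:polytopal PSA2} only need $v\in\Q\Gamma_<$. So: replace the final step by the linear-algebra argument above, abandon the attempt at $N$-uniformity, and your proof is complete.
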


\begin{proof}
Replacing the lattice \(\Gamma\) with \(\frac{1}{N}\Gamma\) does not affect property \textbf{C}, by the complexity formula for \cite[Theorem 6.1]{KoiWal21} since the formula only concerns the stabiliser ranks. Then the result follows directly from \cite[Proposition 2.20]{Wal24} (also see \cite{KoiWalII}).
\end{proof}

For every vertex \(v \in \mathrm{Ver}(W)\), we have that \(\{v\} = \bigcap_{i = 1}^n H_i\) for a flag \(f = \{H_1,\ldots,H_n\}\) of supporting hyperplanes \(H_i \in \sH\). Therefore, the above result implies rationality of the window if we can ensure that each supporting hyperplane is a rational translate of a supporting subspace. We show this in the following results:

\begin{notation}
We let \(\approx\) be the equivalence relation on affine hyperplanes of \(\intl\) given by letting \(H_1 \approx H_2\) if there is some \(g \in \Gamma_<\) with \(H_1 = H_2 + g\). We let \(\mathfrak{X}\) denote the equivalence classes of all translates of supporting hyperplanes of \(W\), that is
\[
\mathfrak{X} \coloneqq \{H + x \mid H \in \sH_0, x \in \intl \} / \approx .
\]
For \([H] \in \mathfrak{X}\), we let \(A [H] \coloneqq [A(H)]\), which gives a well-defined bijection \(A \colon \mathfrak{X} \to \mathfrak{X}\). Indeed, if \([H_1] = [H_2]\) then \(H_1 = H_2 + g\) for \(g \in \Gamma_<\), so \(A(H_1) = A(H_2+g) = A(H_2) + A(g)\). Thus, since \(A(g) \in \Gamma_<\) (since \(A(\Gamma_<) = \Gamma_<\)), we have \(A(H_1) \approx A(H_2)\) and the map is well-defined. Given \([H+x] \in \mathfrak{X}\) we have \(A[A^{-1}(H+x)] = [H+x]\), where \([A^{-1}(H+x)] \in \mathfrak{X}\), by FD-invariance, so the map is surjective. Finally, if \(A[H_1] = A[H_2]\) then \(A(H_1) = A(H_2) + g\) for some \(g \in \Gamma_<\), thus \(H_1 = H_2 + A^{-1}(g)\) hence \([H_1] = [H_2]\), since \(A^{-1}(g) \in \Gamma_<\), so the map is injective.
\end{notation}

\begin{lemma}
The map \(A \colon \mathfrak{X} \to \mathfrak{X}\) has finite orbits when applied to elements of \(\sH/\approx\).
\end{lemma}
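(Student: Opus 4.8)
The plan is to show that $A$ restricts to a \emph{permutation} of the finite set $\sH/\approx$; since every orbit of a permutation of a finite set is finite, the lemma follows at once. Note that $\sH/\approx$ is finite, being a quotient of the finite set $\sH$, and recall that $A \colon \mathfrak{X} \to \mathfrak{X}$ is injective, as established when $\mathfrak{X}$ was introduced. So it suffices to prove the single inclusion $\sH/\approx \subseteq A(\sH/\approx)$ and then feed it into a soft counting argument.

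First I would exploit the repositioning of Lemma \ref{lem:notranslation}, under which $W \in \mathscr{B}(A(W))$. By Lemma \ref{lem:Booleans same on NS} this forces $\partial W \subseteq \bigcup_{g \in Z} \partial(A(W)) + g_<$ for some finite $Z \subseteq \Gamma$. Since $A$ is a homeomorphism, $\partial(A(W)) = A(\partial W) \subseteq \bigcup_{H \in \sH} A(H)$, and therefore
\[
\partial W \subseteq \bigcup_{g \in Z,\, H \in \sH} \bigl( A(H) + g_< \bigr) .
\]
Next, I would fix an arbitrary $H' \in \sH$. By definition of a supporting hyperplane, $\partial W \cap H'$ contains an $(n-1)$-dimensional face of $W$, hence has positive $(n-1)$-dimensional measure inside $H'$. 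This face is covered by the finitely many sets $(A(H) + g_<) \cap H'$ appearing above. Whenever $A(H)+g_<$ is distinct from $H'$ as an affine hyperplane, the intersection $(A(H)+g_<) \cap H'$ lies in an affine subspace of dimension at most $n-2$, so is null in $H'$; a positive-measure set cannot be covered by finitely many null sets, so at least one term must satisfy $A(H)+g_< = H'$. Passing to $\approx$-classes gives $[H'] = [A(H)] = A[H]$, whence $[H'] \in A(\sH/\approx)$. As $H'$ was arbitrary, $\sH/\approx \subseteq A(\sH/\approx)$.

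Finally I would combine these facts. Since $A$ is injective on $\mathfrak{X}$, the image $A(\sH/\approx)$ has the same finite cardinality as $\sH/\approx$; together with the inclusion just proved, this forces $A(\sH/\approx) = \sH/\approx$, so $A$ maps the finite set $\sH/\approx$ bijectively onto itself and is a permutation of it. Every $A$-orbit of an element of $\sH/\approx$ therefore remains inside this finite set, and is finite, as required. I expect the dimension-counting step to be the only real obstacle: one must argue carefully that a top-dimensional face of $W$ lying in $H'$ cannot be accounted for purely by the lower-dimensional intersections of $H'$ with the \emph{other} translated hyperplanes, so that $H'$ genuinely coincides with some $A(H)+g_<$; everything after that is formal.
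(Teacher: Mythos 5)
Your proof is correct and takes essentially the same route as the paper's: both rest on the repositioning of Lemma \ref{lem:notranslation} (so that \(W \in \mathscr{B}(A(W))\)), the covering of \(\partial W\) by \(\Gamma_<\)-translates of \(\partial A(W)\) supplied by Lemma \ref{lem:Booleans same on NS}, a dimension/measure count showing a top-dimensional face cannot be covered by lower-dimensional intersections, and finally finiteness of \(\sH/\approx\) together with injectivity of \(A\) on \(\mathfrak{X}\) to conclude that \(A\) permutes \(\sH/\approx\). The only cosmetic difference is which inclusion you establish directly (\(\sH/\approx \subseteq A(\sH/\approx)\) rather than the paper's \(A(\sH/\approx) \subseteq \sH/\approx\), the latter obtained by citing the proof of Lemma \ref{lem: substitutional => face invariant}); given injectivity and finiteness these are equivalent.
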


\begin{proof}
We recall that we choose a translation of the window so that \(W\) is constructable from \(A(W)\) and that \(A\) is FD-invariant (see Lemmas \ref {lem: substitutional => face invariant}, \ref{lem:notranslation} and comments below them). As seen in the proof of Lemma \ref{lem: substitutional => face invariant}, it follows from constructability and Lemma \ref{lem:Booleans same on NS} that, for each \(H \in \sH\), we have \(A(H) + g \in \sH\) for some \(g \in \Gamma_<\). Thus, \(A \colon \mathfrak{X} \to \mathfrak{X}\) restricts to a map \(A \colon \sH /\approx \to \sH / \approx\). Since \(\sH\) is finite and \(A\) is a bijection, it follows that \(A\) has finite orbits on \(\sH / \approx\).
\end{proof}

By the above, there is a power of \(A\) so that \(A \colon \mathfrak{X} \to \mathfrak{X}\) is the identity on \(\sH / \approx\). So, without loss of generality, by passing to this power we now assume that, for all \(H \in \sH\), we have \(A(H) = H + g_H\) for some \(g_H \in \Gamma_<\).

\begin{corollary}\label{cor:rational hyperplanes}
All supporting hyperplanes are rational, that is, for each \(H \in \sH\) we have that \(H = V(H) + \frac{1}{N}g\) for some \(g \in \Gamma_<\) and \(N \in \N\).
\end{corollary}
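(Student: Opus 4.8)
The plan is to analyse each supporting hyperplane separately in the one-dimensional quotient of $\intl$ by the parallel supporting subspace, where $A$ becomes a scalar. Fix $H \in \sH$ and set $V \coloneqq V(H)$, so that $H = V + w$ for some $w \in \intl$, determined modulo $V$. By FD-invariance we have $A(V) = V$, and by the normalisation established just before the corollary, $A(H) = H + g_H$ for some $g_H \in \Gamma_<$. Since $A(H) = A(V) + A(w) = V + A(w)$ while $H + g_H = V + (w + g_H)$, comparing the two base points of these cosets of $V$ yields the key relation $A(w) - w - g_H \in V$.

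First I would pass to the quotient $p \colon \intl \to \intl/V$. As $V$ has codimension one, $\intl/V$ is a one-dimensional real vector space, and because $A(V) = V$ the map $A$ descends to $\bar A = \lambda \cdot \Id$ for a scalar $\lambda$, which is an eigenvalue of $A$ and so, by Lemma \ref{lem:invariant subspaces}, is real with $|\lambda| < 1$; in particular $\lambda \neq 1$. Writing $\bar w \coloneqq p(w)$ and $\bar g_H \coloneqq p(g_H)$, the key relation becomes $(\bar A - \Id)(\bar w) = \bar g_H$ in $\intl/V$.

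The crux is then to show that $\bar w$ is rational over $p(\Gamma_<)$. Since $\Gamma_< \cong \Z^k$ is finitely generated and $p$ is a homomorphism, $p(\Gamma_<)$ is finitely generated, so $U \coloneqq \Q \cdot p(\Gamma_<) \subseteq \intl/V$ is a finite-dimensional $\Q$-vector space. Because $A(\Gamma_<) = \Gamma_<$, we have $\bar A(p(\Gamma_<)) = p(\Gamma_<)$, so $\bar A$, and hence $\bar A - \Id$, preserves $U$. The map $\bar A - \Id$ is injective on $\intl/V$ (as $\lambda \neq 1$), hence injective on $U$, and since $U$ is finite-dimensional over $\Q$ it is therefore a bijection of $U$. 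As $\bar g_H \in p(\Gamma_<) \subseteq U$, we obtain $\bar w = (\bar A - \Id)^{-1}(\bar g_H) \in U$. Clearing denominators, there is some $N \in \N$ with $N\bar w \in p(\Gamma_<)$, say $N \bar w = p(g)$ with $g \in \Gamma_<$. Lifting, $w - \tfrac{1}{N} g \in V$, whence $H = V + w = V(H) + \tfrac{1}{N} g$, which is exactly rationality of $H$.

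I expect the one genuine point to be the passage $\bar w \in U$: this is where finite-dimensionality of $U$ over $\Q$ (from $\Gamma_<$ being finitely generated) combines with the $\bar A$-invariance of $p(\Gamma_<)$ (from $A(\Gamma_<) = \Gamma_<$), via the elementary fact that an injective $\Q$-endomorphism of a finite-dimensional $\Q$-space is surjective. The remaining steps---descending to the quotient, reading off the scalar relation, and clearing denominators---are routine.
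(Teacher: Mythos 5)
Your proof is correct. It shares its algebraic heart with the paper's argument — both exploit the normalisation \(A(H) = H + g_H\), \(g_H \in \Gamma_<\), together with the fact that \(\Id - A\) is invertible ``over the rationals'' on the projected lattice — but the execution is genuinely different. The paper stays in \(\intl\): it solves \((\Id - A)(x) = g_H\) there, gets rationality of \(x\) from the fact that an injective endomorphism of \(\Gamma_< \cong \Z^k\) has finite-index image, and then needs a separate geometric step, namely that the \(A\)-fixed hyperplane \(H + x\) must pass through the origin because \(A\) is a contraction, whence \(H + x = V(H)\). You instead quotient by \(V(H)\) first, which collapses the problem to a scalar equation \((\lambda - 1)\bar{w} = \bar{g}_H\) in the one-dimensional space \(\intl/V(H)\) and makes the geometric step unnecessary; rationality of \(\bar{w}\) then comes from the elementary fact that an injective \(\Q\)-endomorphism of the finite-dimensional \(\Q\)-space \(\Q \cdot p(\Gamma_<)\) is bijective. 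Your route has the merit of never invoking discreteness of the lattice — only finite generation of \(p(\Gamma_<)\) matters, which is the right hypothesis since \(p(\Gamma_<)\) is dense in the quotient, not a lattice there — and of isolating the one place where contractivity is used (\(\lambda \neq 1\)). The paper's route is slightly more constructive: it exhibits the explicit translate \(x = (\Id - A)^{-1}(g_H)\) with \(H + x = V(H)\) exactly, a normal form that is convenient for the weak homogeneity statement used later in the linear repetitivity theorem. Both are complete proofs of the corollary.
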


\begin{proof}
Let \(H \in \sH\) be arbitrary and \(V \coloneqq V(H)\). By the above, \(A(H) = H + g\) for some \(g \in \Gamma_<\). Since \(A\) is contractive, \(\Id - A \colon \Gamma_< \to \Gamma_<\) is an injection so that \((\Id - A)(\Gamma_<)\) is a finite index subgroup of \(\Gamma_<\). It follows that \(x \coloneqq (\Id - A)^{-1}(g) \in \frac{1}{N}\Gamma_<\) for some \(N \in \N\).

Now, \(A(H+x) = A(H) + A(x) = H + g + A(x) = H+x\), so \(H+x\) is fixed by \(A\). By FD-invariance \(H+x = V\), since if \(H+x\) does not contain the origin then the parallel \(A(H+x)\) is closer to the origin, since \(A\) is a contraction, contradicting \(H+x = A(H+x)\). So \(H = V-x\) for \(x \in \Q\Gamma_<\), as required.
\end{proof}

Since each supporting hyperplane is a rational translate of a subspace, the result now follows from \textbf{C} and earlier results on the intersection points of flags of low complexity schemes:

\begin{corollary}\label{cor:window rational}
We have that \(\mathrm{Ver}(W) \subset \Q\Gamma_<\). In particular, the window is rational.
\end{corollary}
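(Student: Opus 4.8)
The plan is to combine the two immediately preceding corollaries, which between them already supply everything needed. First I would fix an arbitrary vertex $v \in \mathrm{Ver}(W)$ and express it as the intersection $\{v\} = \bigcap_{i=1}^n H_i$ of a flag $f = \{H_1,\ldots,H_n\} \subseteq \sH$ of supporting hyperplanes; such a flag exists because $v$ is a vertex ($0$-cell) of the polytope, and by the definition of a flag in $\sH$ the parallel subspaces $V(H_i)$ then constitute a flag $V(f) \subseteq \sH_0$.

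Next I would invoke Corollary \ref{cor:rational hyperplanes}, which tells us that each supporting hyperplane is a rational translate of its supporting subspace: $H_i = V(H_i) + \tfrac{1}{N_i} g_i$ with $g_i \in \Gamma_<$ and $N_i \in \N$. Passing to a common denominator $N$ (say the product of the $N_i$), each translation vector lies in $\tfrac{1}{N}\Gamma_<$, so we may write $H_i = V(H_i) + (\gamma_i)_<$ with $\gamma_i \in \tfrac{1}{N}\Gamma$. Thus
\[
\{v\} = \bigcap_{i=1}^n \big(V(H_i) + (\gamma_i)_<\big),
\]
which is exactly the situation to which Corollary \ref{cor:rational flag intersections} applies, for the flag $V(f)$ and the rational shifts $\gamma_i$. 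That corollary then yields $v \in \tfrac{1}{M}\Gamma_<$ for the single $M \in \N$ it provides, and in particular $v \in \Q\Gamma_<$.

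Finally, since this argument applies to every vertex and $\tfrac{1}{M}\Gamma_<$ is a subgroup of $\intl$, any difference $v - v'$ of vertices again lies in $\tfrac{1}{M}\Gamma_< \subseteq \Q\Gamma_<$, which is precisely the definition of $W$ being rational. There is essentially no obstacle beyond bookkeeping: the only point requiring care is that the uniform $M$ of Corollary \ref{cor:rational flag intersections} is independent of the chosen flag and of $N$, so that one and the same $M$ serves all vertices simultaneously and genuinely places the whole difference set inside a single copy $\tfrac{1}{M}\Gamma_<$.
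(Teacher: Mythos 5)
Your proof is correct and follows essentially the same route as the paper: express each supporting hyperplane as a rational translate of its supporting subspace via Corollary \ref{cor:rational hyperplanes}, then apply Corollary \ref{cor:rational flag intersections} to the flag of hyperplanes meeting at each vertex. Your explicit handling of the common denominator and of vertex differences via the subgroup $\tfrac{1}{M}\Gamma_<$ is just a more careful write-up of the bookkeeping the paper leaves implicit.
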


\begin{proof}
By Corollary \ref{cor:rational hyperplanes}, each \(H \in \sH\) may be written as \(H = V(H) + z_H\) for some \(z_H \in \frac{1}{N}\Gamma_<\). By Lemma \ref{cor:rational flag intersections}, an intersection point of a rational translates of supporting subspaces gives rational points in \(\frac{1}{M}\Gamma_<\) for some \(M \in \N\). Since each supporting hyperplane is a rational translate of a supporting subspace, the vertices in particular are rational.
\end{proof}

We have thus proved that a polytopal substitutional cut and project set is FD-invariant and rational, completing the second direction of Theorem \ref{thm:polytopal}. With a similar argument to the above, we also have the following which, along with Proposition \ref{prop:polytopal PSA1}, proves Theorem \ref{thm:polytopal PSA}. For the result below, we now revert to the convention that \(W\) is positioned so that \(\mathbf{0} \in \mathrm{Ver}(W)\).

\begin{proposition}\label{prop:polytopal PSA2}
Any vertex of a polytope in \(\mathscr{B}(W)\) is rational, that is, it belongs to \(\Q\Gamma_<\). Thus, if \(\cps \in \Omega(\mathcal{S},W)\) is LIDS, with respect to some power \(L^m\) (\(m \in \N\)) of the inflation, then \(\tau(\cps) = [u]\) with \(u_< \in \Q\Gamma_<\).
\end{proposition}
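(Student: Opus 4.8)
The plan is to establish the two assertions in turn. The first, that every vertex of a polytope $P\in\mathscr{B}(W)$ lies in $\Q\Gamma_<$, should follow quickly from the rationality already assembled. First I would invoke Lemma \ref{lem:Booleans same on NS} to conclude that $\partial P$ is contained in a finite union of translates $\partial W+g$ with $g\in\Gamma_<$; since $W$ is polytopal, $\partial W\subseteq\bigcup_{H\in\sH}H$, so every facet of $P$ spans an affine hyperplane of the form $H+g$ with $H\in\sH$ and $g\in\Gamma_<$. By Corollary \ref{cor:rational hyperplanes} each such $H$ is a rational translate $H=V(H)+z_H$ of its supporting subspace, with $z_H\in\frac{1}{N}\Gamma_<$, so the facet spans $V(H)+(z_H+g)$, a rational translate of a member of $\sH_0$. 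A vertex $v$ of the $n$-dimensional polytope $P$ (with $n=\dim(\intl)$) is the intersection of its incident facets, whose outward normals positively span $\intl$; choosing $n$ of these facets with linearly independent normals yields a flag $f=\{V_1,\ldots,V_n\}\subseteq\sH_0$ together with rational shifts $w_i\in\frac{1}{N}\Gamma_<$ so that $\{v\}=\bigcap_{i=1}^n(V_i+w_i)$. Corollary \ref{cor:rational flag intersections} then delivers $v\in\frac{1}{M}\Gamma_<\subseteq\Q\Gamma_<$.

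For the second assertion I would run a variant of the fixed-point argument behind Corollary \ref{cor:rational hyperplanes}. Suppose $\cps\in\Omega$ is LIDS with respect to $L^m$ and write $\tau(\cps)=[u]$ with $s\coloneqq u_<$ and $u_\vee\in\phy$. Property \textbf{C} (Theorem \ref{thm:low complexity}) guarantees the fibres of $\tau$ are finite, so Theorem \ref{thm:PSA} is available; as $\cps$ realises condition (B) for this $s$ (taking $v=u_\vee$), condition (4c) holds, namely $W-s$ is mutually constructable from $A^p(W-s)$ for some $p$. Since LIDS with respect to $L^m$ implies LIDS with respect to every $L^{mj}$, I would harmlessly replace $m$ by a large common multiple so that $p=m$ and, simultaneously, $A^m$ is FD-invariant and fixes each $\approx$-class of $\sH$ with $A^m(H)=H+g_H$, $g_H\in\Gamma_<$, exactly as in the set-up preceding Corollary \ref{cor:rational hyperplanes}.

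I would then exploit the window relation as in Lemma \ref{lem: substitutional => face invariant}: each facet of $W-s$ lies in $H-s$ for some $H\in\sH$, and from $W-s\in\mathscr{B}(A^m(W-s))$ together with Lemma \ref{lem:Booleans same on NS} this facet is covered by $\Gamma_<$-translates of facets of $A^m(W-s)$, which span the hyperplanes $A^m(H')-A^m(s)$ with $H'\in\sH$. Matching the hyperplane that carries a positive $(n-1)$-dimensional portion of the facet forces $H-s=A^m(H')-A^m(s)+g$ for some $H'\in\sH$ and $g\in\Gamma_<$. Passing to supporting subspaces and using FD-invariance gives $V(H)=V(H')$; substituting $A^m(H')=H'+g_{H'}$ and using that $H,H'$ are rational translates of $V(H)=V(H')$ (Corollary \ref{cor:rational hyperplanes}, Corollary \ref{cor:window rational}), I obtain $(A^m-\Id)(s)\in\Q\Gamma_<$. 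As $A^m$ is contractive, $1$ is not an eigenvalue, so $\Id-A^m$ is invertible and maps $\Gamma_<$ into a finite-index sublattice; hence $(\Id-A^m)^{-1}(\Q\Gamma_<)\subseteq\Q\Gamma_<$ and therefore $s=u_<\in\Q\Gamma_<$.

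The main obstacle I anticipate is in the second part, in disentangling the translation $s$ from the inherent $\Gamma_<$-ambiguity of the constructability relation: one must verify that the hyperplane-matching step can be carried out so that $A^m(H')-A^m(s)+g$ agrees with $H-s$ as full affine hyperplanes rather than only on a lower-dimensional overlap. This is secured by first passing to a power of $A$ high enough that the facets of $A^m(W-s)$ are small relative to those of $W-s$, precisely as in Lemma \ref{lem:stabilisers contain basis}, and by checking that parallel-but-distinct supporting hyperplanes of $W$ contribute only rational offsets, which is exactly what the rationality of $\mathrm{Ver}(W)$ in Corollary \ref{cor:window rational} provides.
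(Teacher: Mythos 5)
Your first part is correct and is essentially the paper's intended argument (the paper compresses it to ``by an analogous argument'' to Corollary \ref{cor:window rational}): by Lemma \ref{lem:Booleans same on NS} the boundary of any polytope in \(\mathscr{B}(W)\) lies in finitely many \(\Gamma_<\)-translates of \(\partial W\), hence every facet spans a rational translate of a supporting subspace by Corollary \ref{cor:rational hyperplanes}, and vertices are rational by Corollary \ref{cor:rational flag intersections}. Your reduction of the second part to Theorem \ref{thm:PSA} (B)\(\Rightarrow\)(4c) and your final inversion of \(\Id - A^m\) also match the paper; the appeal to finite fibres is harmless but unnecessary, since the paper notes that the direction (B)\(\Rightarrow\)(4c) does not use that hypothesis.

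The gap is at the step ``I obtain \((A^m-\Id)(s)\in\Q\Gamma_<\)''. Your matching produces an equality of affine hyperplanes, \(H-s = A^m(H')-A^m(s)+g\), and an equality of two translates of the common subspace \(V \coloneqq V(H)=V(H')\) constrains the translation vectors only modulo \(V\). Concretely, writing \(H = V+z_H\) and \(H'=V+z_{H'}\) with \(z_H, z_{H'} \in \Q\Gamma_<\) and \(A^m(H')=H'+g_{H'}\), the identity yields only \((A^m-\Id)(s) \in V + \Q\Gamma_<\), so a single facet cannot pin down \(s\). To close the argument you must run the matching over \(n\) facets of \(W-s\) whose directions \(V_1,\ldots,V_n\) form a flag in \(\sH_0\) (one exists since \(W\) is compact), obtaining \((A^m-\Id)(s) \in \bigcap_{i=1}^n (V_i+q_i)\) with each \(q_i \in \Q\Gamma_<\); this intersection is a singleton, and Corollary \ref{cor:rational flag intersections} --- which you already invoke in part one --- says that singleton is rational. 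With that insertion your route works, but note that the paper avoids hyperplane matching entirely by using part one directly: mutual constructability gives \(A^p(W-s) \in \mathscr{B}(W-s) = \mathscr{B}(W)-s\), so every vertex of \(A^p(W-s)\) lies in \(\Q\Gamma_< - s\); in particular the vertex \(-A^p(s)\), the image of \(\mathbf{0} \in \mathrm{Ver}(W)\), does, which is precisely \((\Id - A^p)(s) \in \Q\Gamma_<\) in one line. Both proofs then conclude identically, using contractivity of \(A^p\) and the finite index of \((\Id-A^p)(\Gamma_<)\) in \(\Gamma_<\).
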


\begin{proof}
Recall that, since Lemma \ref{lem:notranslation} in this subsection, we have been using a repositioning \(W-s\) of the window (although dropping \(s\) from our notation) for which \(W-s\) is mutually constructable from \(A(W-s)\), instead of a positioning as in Theorem \ref{thm:polytopal PSA} for which \(\mathbf{0} \in \mathrm{Ver}(W)\). However, we saw in Corollary \ref{cor:window rational} that also \(\mathrm{Ver}(W-s) \subset \Q\Gamma_<\), so \(s \in \Q\Gamma_<\). By an analogous argument to that proof, it follows that all polytopes of \(\mathscr{B}(W)\) (i.e., those constructable from \(W\)) have vertices in \(\Q\Gamma_<\).

Take any \(\cps \in \Omega\), say with \(\tau(\cps) = [u]\). By Theorem \ref{thm:PSA}, if \(\cps\) is LIDS for some power of the inflation, it is necessary that \(W-s\) is (mutually) constructable from \(A^p(W-s)\) for some \(p \in \N\), where now \(s \coloneqq u_<\). We have \(-A^p(s) \in \mathrm{Ver}(A^p(W-s))\), since \(\mathbf{0} \in \mathrm{Ver}(A^p(W))\) and, as above, all polytopes in \(\mathscr{B}(W-s)\) have vertices in \(\Q\Gamma_< - s\). Thus, \(-A^p(s) \in \Q\Gamma_< - s\), that is, \(s - A^p(s) = (\Id - A^p)(s) \in \Q\Gamma_<\). Now, since \(A^p\) is contractive, we have that \(\Id - A^p\) is invertible (on \(\intl\)) and, since \((\Id - A^p)(\Gamma_<) \leq \Gamma_<\) with finite index, we have that \((\Id - A^p)^{-1}(\Gamma_<) \leq \frac{1}{N}\Gamma_<\) for some \(N \in \N\). Hence, \(s \in (\Id - A^p)^{-1}(\Q\Gamma_<) \leq \Q\Gamma_<\), as required.
\end{proof}

Before considering some examples, we show that under these assumptions \textbf{LR} must also hold. This follows from the characterisation of \textbf{LR} for polytopal windows (\cite{KoiWalII} and \cite{Wal24}) and a theorem of Perron \cite{Per21} on systems of numbers preserved by a linear automorphism:

\begin{theorem}
Suppose that the conditions of Theorem \ref{thm:polytopal} hold. Then the cut and project sets are \textbf{LR}.
\end{theorem}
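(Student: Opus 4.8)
The plan is to deduce \textbf{LR} from the characterisation of linear repetitivity for polytopal cut and project schemes established in \cite{KoiWalII} (see also \cite{Wal24}). That characterisation splits \textbf{LR} into two logically independent ingredients: a \emph{complexity} part, namely that the stabiliser ranks of the supporting subspaces are as large as possible (equivalently, property \textbf{C}), and a \emph{Diophantine} part, asserting that $\Gamma$ is positioned in a badly approximable way relative to the supporting hyperplanes of $W$. We have already verified property \textbf{C} in Theorem \ref{thm:low complexity}, so the whole task reduces to checking the Diophantine condition under the hypotheses of Theorem \ref{thm:polytopal}.

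First I would translate the Diophantine condition into a statement about the eigendirections of $A = M|_{\intl}$. By Lemmas \ref{lem:invariant subspaces} and \ref{lem: substitutional => face invariant} (after passing to a power), $A$ is diagonalisable with real irrational eigenvalues of modulus strictly below $1$, each supporting subspace $V \in \sH_0$ is spanned by eigenvectors of $A$, and by Corollary \ref{cor:window rational} the window is rational. The condition of \cite{KoiWalII} then unwinds to a lower bound of the following shape: there is a constant $c > 0$ so that for every $\gamma \in \Gamma$ with $\gamma_\vee \neq \mathbf{0}$, and every supporting hyperplane $H \in \sH$ (and its rational translates arising in $\mathscr{B}(W)$), the distance from $\gamma_<$ to $H$ is at least $c$ times an appropriate negative power of $\|\gamma_\vee\|$, the power being dictated by the eigenvalues of $A$ transverse to $H$.

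The key step is to produce this lower bound from the self-similarity carried by $M$, packaged through Perron's theorem \cite{Per21}. Since $M(\Gamma) = \Gamma$ with $M|_{\phy} = L$ expansive and $M|_{\intl} = A$ contractive, applying a suitable power $M^{-n}$ to a lattice point $\gamma$ contracts its physical component (by $L^{-n}$), expands its internal component (by $A^{-n}$), and preserves the supporting hyperplane arrangement by FD-invariance. Were the claimed bound to fail, there would be lattice points approaching $\partial W$ faster than their physical size permits; rescaling each by the matching power of $M$ would yield nonzero elements of $\Gamma$ with bounded physical component but internal component tending to $\mathbf{0}$, contradicting discreteness of $\Gamma$. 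The clean quantitative form of this dichotomy is precisely Perron's theorem on the approximation of a system of numbers preserved by a linear integer automorphism: the eigendata of $M$ is automatically badly approximable, the higher-dimensional analogue of the classical fact that a quadratic irrational (eventually periodic continued fraction) has bounded partial quotients. Feeding this into the reduction of the previous paragraph gives the Diophantine condition, and hence \textbf{LR}.

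The hard part will be bookkeeping rather than conceptual. One must match the exact formulation of the badly approximable condition used in \cite{KoiWalII} with the output of Perron's theorem, and in particular handle the genuinely multidimensional nature of $\intl$: since $A$ may have several eigenvalues of distinct moduli, approximation occurs at different rates along different eigendirections, so the rescaling by powers of $M$ has to be calibrated to the relevant eigendirection for each hyperplane. A second technical point is to extract a single uniform constant $c$ valid simultaneously for the finitely many supporting hyperplanes and flags, and to confirm that passing from literal supporting hyperplanes to the arbitrary rational translates appearing in $\mathscr{B}(W)$ (as in Corollary \ref{cor:rational flag intersections}) does not degrade the estimate.
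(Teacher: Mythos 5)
Your proposal is correct and follows essentially the same route as the paper: reduce \textbf{LR} to property \textbf{C} (already established in Theorem \ref{thm:low complexity}) plus the Diophantine condition of \cite{KoiWalII}/\cite{Wal24}, and then verify the Diophantine part by noting that the relevant approximation data is preserved by the integer automorphism $M$ (restricting to $A$ on the invariant directions), so that Perron's theorem \cite{Per21} yields bad approximability. The only difference is presentational: the paper pins down the precise equivalent form of the Diophantine condition (via \cite[Remark 3.16]{Wal24}, restricted to the stabiliser subgroups $G_i = \Gamma^{Y_i}$ of the lines $Y_i$, with exponent $k_i-1$ given by $\rk(G_i)$), whereas you state it as a hyperplane-distance bound and defer the matching of formulations to bookkeeping, which is exactly the step the cited remark handles.
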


\begin{proof}
By Theorem \ref{thm:low complexity}, \textbf{C} holds. By Corollary \ref{cor:rational hyperplanes}, \(W\) is also \textbf{weakly homogeneous} meaning that, for some translate of the window and some \(N \in \N\), for each \(H \in \sH\) we have \(H = V(H) + g_H\) for \(g_H \in \frac{1}{N}\Gamma_<\), see \cite{KoiWalII}. By \cite[Theorem A]{KoiWalII} (or \cite[Theorem A]{Wal24} for general, non-convex polytopes), \textbf{LR} is equivalent to \textbf{C} and a Diophantine condition on the lattice holding. In the decomposable case (with stabilisers of different hyperplanes of different ranks), this condition requires some care to state, although by \cite[Remark 3.16]{Wal24} it is equivalent to the following: for any flag \(f \subseteq \sH_0\), consider the \(1\)-dimensional subspaces \(Y_1\), \ldots \(Y_n\), each given by intersecting all but one of the supporting hyperplanes in \(f\). Define \(G_i \coloneqq \Gamma^{Y_i}\), that is, \(G_i\) is the subgroup of \(\Gamma\) projecting to \(Y_i\). Let \(k_i \coloneqq \rk(G_i)\). Then the necessary (and sufficient) Diophantine condition to ensure \textbf{LR} is that there exists some \(C > 0\) so that
\[
\|\gamma_<\| \geq C\|\gamma\|^{-(k_i-1)}
\]
for each non-zero \(\gamma \in G_i\), which we require for each \(G_i\). Equivalently, take a basis of \((G_i)_<\) which identifies \(Y_i \cong \R\) by sending the \(k_i\)th basis vector to \(1\), and the others to \(\alpha_1\), \ldots, \(\alpha_{k_i-1} \in \R\). Then the necessary Diophantine condition (on \(G_i\)) is that \(\{\alpha_i\}_{i=1}^{k_i-1}\) is a badly approximable system of numbers in the usual sense. The linear isomorphism \(A \colon Y_i \to Y_i\) induces an automorphism of \(\langle \alpha_1, \ldots, \alpha_{k_i-1}, 1 \rangle_\Z\). It now follows by a theorem of Perron \cite{Per21} that the system is badly approximable.
\end{proof}

\section{Definition of classical tiling substitutions} \label{sec:L-sub versus classical}\label{sec:compare tilings}

In this final section, we compare the notion of a pattern being \(L\)-sub to standard notions, for Delone (multi)sets and tilings, being generated by substitution rules. Throughout this section, \(L \colon E \to E\) will be assumed to be expansive.

\subsection{\(L\)-sub patterns versus substitutional Delone (multi)sets}

The following is a widely used notion \cite{LW03, Lee23} of a Delone (multi)set being substitutional:

\begin{definition} \label{def:SDM}
A Delone (multi)set \(\cps = (\cps_i)_{i \leq m}\) is called a \textbf{substitution Delone multiset} (\textbf{SDM}) if \(\cps\) is a Delone (multi)set and there exists an expansive map \(L \colon \R^d \to \R^d\) and finite sets \(\mathcal{D}_{ij}\) for \(i\), \(j \leq m\) such that, for each \(i \leq m\),
\begin{equation}\label{eq:SDM}
\Lambda_i = \bigcup_{j=1}^\ell (L \Lambda_j + \mathcal{D}_{ij})\ ,
\end{equation}
where the unions on the right-hand side are disjoint.
\end{definition}

Firstly, we observe that an SDM, with expansion \(L\), is clearly \(L\)-sub, where Equation \ref{eq:SDM} provides a local rule \(L\cps \LD \cps\). More precisely, suppose the maximal norm of element over all \(\mathcal{D}_{ij}\) is \(c\). Suppose that \((L\cps)[x,c] = (L\cps)[y,c]\). Then, for each \(j\) and \(z \in B_c\), we have \(x-z \in L\cps_j\) if and only if \(y-z \in L\cps_j\). Thus, for all \(i\) and \(j\), we have \(x \in L\cps_j + D_{ij}\) if and only if \(y \in L\cps_j + D_{ij}\), that is, \(x \in \cps_i\) if and only if \(y \in \cps_i\), showing that \(L\cps \LD \cps\), with derivation radius \(c\).

Of course, it is not true that every Delone (multi)set \(\cps\) which is \(L\)-sub is an SDM, since the former captures the idea that \(\cps\) is any element in a hull of substitutional patterns, whereas Definition \ref{def:SDM} captures the notion of a pseudo self-affine point, substituting to itself under a (very particular kind of) substitution rule. However, we will show that a repetitive \(L\)-sub pattern, at least up to a `local recolouring' (see Definition \ref{def:recolouring}), is one generated in the hull of an SDM.

For the result below recall that, generally, for a space \(\Omega\) of patterns (in this case, Delone (multi)sets), a substitution is a map \(\sub \colon \Omega \to \Omega\), defined by \(\sub = S \circ L\) where \(S\) is a local derivation map. All \(L\)-sub patterns (for repetitive patterns, or \(L\) expansive, as here) uniquely define such a substitution map.

\begin{proposition}\label{prop:fixed point of any sub}
For any FLC pattern space, with substitution \(\sub \colon \Omega \to \Omega\), we have that \(\sub^n\) has a fixed point for some \(n \in \N\).
\end{proposition}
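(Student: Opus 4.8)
The plan is to produce the fixed point of a suitable power of $\sub$ by a compactness-and-pigeonhole argument, building it up from a recurring ``seed'' patch. Recall that $\Omega$ is compact in the local topology (it is the completion of the orbit of an FLC pattern) and that $\sub = S \circ L$ is continuous, being the composite of the inflation $L$ with a local derivation map $S$. The key quantitative input is that $L$ is expansive, so $\|Lx\| \geq \lambda \|x\|$ for some $\lambda > 1$ in an appropriate norm, while $S$ has a fixed derivation radius $c$; consequently, if two patterns $\cU, \cV \in \Omega$ agree on the ball $B(\mathbf{0},r)$ then $L\cU$ and $L\cV$ agree on $B(\mathbf{0},\lambda r)$, and hence $\sub(\cU)$, $\sub(\cV)$ agree on $B(\mathbf{0},\lambda r - c)$, which is strictly larger than $B(\mathbf{0},r)$ once $r$ is large. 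Thus $\sub$ enlarges agreement about the origin.

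First I would set up a bookkeeping of pointed patterns to handle recentring. Since $\sub(\cP + x) = \sub(\cP) + Lx$, the map $\sub$ does not fix the origin, so I track a control point: normalising so that $\mathbf{0} \in \mathrm{supp}(\cP)$, I choose (by relative density of the support, with some radius $R$) a support point $p(\cP)$ of $\sub(\cP)$ within distance $R$ of the origin and pass to $\sub(\cP) - p(\cP)$. For each fixed $r$, this defines a self-map on the set of $r$-patches centred on support points; by FLC this set is finite, and the displacements $p(\cP)$ stay in the bounded set $B(\mathbf{0},R)$. Applying the pigeonhole principle to the induced self-map on this finite set yields an $r$-patch $P_r$ and an integer $n_r$ with $P_r$ recurring after $n_r$ steps of the (recentred) substitution.

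Next I would upgrade these recurring finite patches to an honest fixed point. Running the argument over a sequence $r_j \to \infty$, and using finiteness of the patch sets (a diagonal extraction together with compactness of $\Omega$), I would extract a single period $n$ and a pattern $\cP \in \Omega$ for which $\sub^n(\cP)$ agrees with $\cP$ on arbitrarily large balls about a common control point, up to a bounded translation that can be normalised to the origin. Since two patterns agreeing on balls of every radius about the origin are equal, this gives $\sub^n(\cP) = \cP$; that $\cP \in \Omega$ follows from $\Omega$ being compact and $\sub$-invariant.

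The main obstacle is precisely the translation bookkeeping hidden in the recentring step: because $\sub$ intertwines translation with $L$ rather than commuting with it, one must verify that the control-point displacements remain uniformly bounded (so that FLC genuinely yields a finite state space for the pigeonhole) and that, in the limit, the accumulated net translation relating $\sub^n(\cP)$ to $\cP$ can be absorbed into the choice of origin. Handling this carefully — rather than the compactness or the growth-of-agreement estimate, which are routine — is where the real work lies.
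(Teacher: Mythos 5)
The paper itself contains no proof of this proposition (it is quoted from \cite{Wal25}), so your argument can only be judged on its own terms. Its overall strategy --- pigeonhole on $r$-patches under a recentred substitution, a growth-of-agreement estimate from expansivity, then a limit --- is the standard and correct route, but the step where you assemble these pieces has a genuine gap. At each scale $r_j$ the pigeonhole hands you \emph{some} recurring patch with \emph{some} period $n_{r_j}$, and these data at different scales need not be compatible: the periodic patch found at scale $r_{j+1}$ need not restrict to the one found at scale $r_j$, and even along a nested (K\"onig's lemma) choice of periodic patches the minimal periods only form a divisibility chain $m_1 \mid m_2 \mid \cdots$, which compactness cannot bound. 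So ``diagonal extraction together with compactness of $\Omega$'' cannot produce a single period $n$; indeed a continuous self-map of a compact space need not have any periodic point at all (an irrational rotation of the circle), so topology alone can never finish this proof. The repair is to deploy the growth estimate you set up and then abandon: fix \emph{one} $r_0$ with $\lambda r_0 - c - R > r_0$, run the pigeonhole once at that scale to obtain a patch $Q$ and a period $n$, and iterate the recentred map $T$ (recentred $\sub^n$) on any $\cP_0 \in \Omega$ carrying $Q$ at the origin. Each $T^k(\cP_0)$ again carries $Q$ at the origin, so by induction with the growth estimate, $T^k(\cP_0)$ and $T^{k+1}(\cP_0)$ agree on balls of radius $\rho_k$ with $\rho_{k+1} = \lambda^n \rho_k - c' > \rho_k$ growing geometrically; the iterates converge to some $\cP_\infty \in \Omega$, and since the recentring vector is the same $q$ at every step (it is determined by the patch at the origin, which is always $Q$), continuity gives $\sub^n(\cP_\infty) = \cP_\infty + q$.

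This exposes the second gap: the limit relation is $\sub^n(\cP_\infty) = \cP_\infty + q$, a fixed point only up to a bounded translation, and your closing sentence elides the difference --- a pattern agreeing with its image on all balls ``up to a bounded translation'' is a translate of it, not equal to it. Your instinct that the translation ``can be absorbed into the choice of origin'' is correct, but the mechanism must be exhibited, and it is precisely where expansivity enters a second time: seek $t$ with $\sub^n(\cP_\infty + t) = \cP_\infty + t$, i.e.\ $q + L^n t = t$, i.e.\ $t = (\Id - L^n)^{-1} q$, which exists because no eigenvalue of $L^n$ equals $1$. With these two repairs (plus verifying, rather than asserting, that the control point can be chosen canonically as a function of the $r$-patch, which is what makes your self-map on patches well defined and again follows from the growth estimate), your argument becomes a complete proof.
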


For a proof of the above, see \cite{Wal25}, which applies to non-repetitive pattern spaces too (and defines a notion of \(\Omega\) being FLC, although for the repetitive examples considered here this is automatic).

\begin{definition}\label{def:recolouring}
For Delone (multi)sets \(\cps\) and \(\cps'\), we say that \(\cps'\) is a \textbf{local recolouring} of \(\cps\) if \(\mathrm{sup}(\cps) = \mathrm{sup}(\cps')\) and \(\cps \MLD \cps'\).
\end{definition}

Usually, we will recolour by labelling points with their \(r\)-patches, which is clearly an MLD relabelling. Of course, a local relabelling extends over all Delone sets in a hull (or one may define the notion for a space of such patterns, analogously to definition of a derivation map).

\begin{definition}\label{def:colour sub}
We say that a substitution rule \(\sub = S \circ L \colon \Omega \to \Omega\) on a hull of Delone (multi)sets is \textbf{colour-defined} (or \textbf{CD}, for short) if the subdivision rule \(S\) may defined by
\begin{equation}\label{eq:colour sub}
S(L\cps)_i = \bigcup_{j=1}^\ell (L \Lambda_j + \mathcal{D}_{ij}) .
\end{equation}
If the unions on the right-hand side are disjoint, we call a CD substitution rule \textbf{non-overlapping}.
\end{definition}

\begin{remark}\label{rem:CD subs versus standard}
In other words, a substitution rule is CD and non-overlapping when it is a rule of the form given in Definition \ref{def:SDM}. We use the non-overlapping condition since Definition \ref{def:SDM} is how it is often stated in the literature, although note that it seems quite natural to not require this condition generally, analogously to how for substitutions of tilings (such as the Penrose or Ammann--Beenker tile substitutions) one often considers rules which, when applied to the tilings, can have non-disjoint substitutes of different tiles. The only restriction in a substitution being colour-defined is that the rule must only replace points with clusters depending just on the colour of points i.e., not using information from other nearby points. Thus, by adding extra information to points, using local patches, it is not hard to see why the following holds:
\end{remark}

\begin{proposition}\label{prop:simplicity of subs}
For any substitution rule \(\sub \colon \Omega \to \Omega\) on a hull of Delone (multi)sets, by applying a local recolouring of points (by labelling points by their \(r\)-patches, for sufficiently large \(r\)), the resulting substitution map on the hull of recoloured Delone (multi)sets may be described by a CD and non-overlapping substitution.
\end{proposition}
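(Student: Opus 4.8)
The plan is to realise the recoloured substitution concretely as ``substitute, then relabel each point by its $r$-patch'', and to show that for $r$ sufficiently large the assignment of points of $\sub(\cps)$ to their inflated parents becomes entirely local, so that a point's displacement from its parent and its new label depend only on the label of the parent.

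First I would fix the data. Let $\sub = S \circ L$ with $S$ a local derivation map of some radius $c$ (Lemma \ref{lem: LD over hull}), let $\lambda > 1$ satisfy $\|Lw\| \geq \lambda\|w\|$ for all $w$, and let $R_0$ be a covering radius for a chosen $\cps \in \Omega$, so that $D := \|L\| R_0$ bounds the covering radius of $L\cps$. Let $g$ denote the recolouring that labels each point $x$ by its patch $\cps[x,r]$; since this moves no points and retains the original colour, $\cps \MLD g(\cps) =: \widetilde{\cps}$, so $\widetilde{\cps}$ is again $L$-sub by Lemma \ref{lem:L-sub stable under MLD}. Setting $\widetilde{S}(L\widetilde{\cps}) := g(S(L\cps))$ exhibits the induced substitution on the recoloured hull $\widetilde{\Omega}$ as $\widetilde{\sub} = \widetilde{S}\circ L$, a genuine substitution with inflation $L$: here one uses that $L\cps \MLD L\widetilde{\cps}$, because the label $\cps[x,r]$ is recoverable from $L\cps$ within a bounded radius about $Lx$ (apply $L^{-1}$ to nearby displacements), so $\widetilde{S}$ is a composite of local derivation maps and hence is one.

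Next I would define the parent map. The parents are the points $Lx$, for $x \in \cps$, of $L\cps$; assign to each $z \in \sub(\cps)$ its nearest parent, breaking ties by a fixed, translation-invariant lexicographic order on the displacement $Lx - z$. As the parents have covering radius at most $D$ and, by expansivity, are uniformly separated, every $z$ has a well-defined parent within distance $D$, there are uniformly boundedly many children per parent, and the assignment around a parent $Lx$ is determined by $L\cps$ in a ball of radius $2D$ about $Lx$. The heart of the argument is the radius bookkeeping. The label $i = \sub(\cps)[z,r]$ of a child $z$ is determined by $(L\cps)[z,\, c+r]$ (Remark \ref{rem:LD on supports or punctures}), hence, since $\|z - Lx\| \le D$, by $(L\cps)[Lx,\, c+r+D]$; and any patch $(L\cps)[Lx, s]$ is in turn determined by $\cps[x, s/\lambda]$, because $\|L(y-x)\| \le s$ forces $\|y-x\| \le s/\lambda$. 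Thus the displacement $z - Lx$ together with the new label $i$ of every child of $Lx$ is determined by $\cps\bigl[x,\,(c+r+D)/\lambda\bigr]$. Choosing $r$ so that $(\lambda-1)r \ge c+D$ (possible since $\lambda > 1$) and $r \ge 2D/\lambda$, this radius is at most $r$, so the entire cluster of children of $Lx$ — all their displacements and recoloured labels — depends only on the parent's label $j = \cps[x,r]$.

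Finally I would assemble the $\mathcal{D}_{ij}$. For a colour $j$ of $\widetilde{\cps}$ and a colour $i$ of $g(\sub(\cps))$, set $\mathcal{D}_{ij}$ to be the set of displacements $d$ at which a parent of colour $j$ produces a child of colour $i$; by the previous paragraph this is well-defined, and by finite local complexity together with the uniform bound on children there are finitely many colours and each $\mathcal{D}_{ij}$ is finite. Then $\widetilde{S}(L\widetilde{\cps})_i = \bigcup_j (L\widetilde{\cps}_j + \mathcal{D}_{ij})$, so $\widetilde{\sub}$ is \textbf{CD} in the sense of Definition \ref{def:colour sub}; and because the parent map assigns each $z \in \sub(\cps)$ to exactly one parent and one displacement, these unions are disjoint, i.e. the rule is \textbf{non-overlapping}, matching the form of Definition \ref{def:SDM}. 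I expect the main obstacle to be the self-referential radius estimate: a child's recoloured label is itself an $r$-patch, so it a priori depends on a window larger than $r$ about the parent, and only the strict expansivity $\lambda > 1$ lets one absorb the extra $c+D$ and close the loop. A secondary point needing care is keeping the tie-breaking in the parent map translation-invariant and local, so that the resulting rule genuinely depends only on colours and not on absolute position.
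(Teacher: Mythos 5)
Your proposal is correct and follows essentially the same route as the paper's proof: recolour by \(r\)-patches, exploit expansivity (\(\lambda > 1\)) to close the self-referential radius estimate \((\lambda-1)r \geq c + \kappa\), and obtain disjointness by assigning each child a unique parent via a translation-invariant total order on displacements. The only difference is organisational — the paper first builds a possibly overlapping CD rule from the ``covered by'' relation and then repairs it with an order on the finite displacement set \(\mathcal{D}\), whereas you fold both steps into a single nearest-parent-with-tie-breaking assignment — but the underlying estimates and the assembly of the \(\mathcal{D}_{ij}\) are the same.
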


\begin{proof}
We begin by showing we may enhance the colouring to define Delone (multi)sets with (non-overlapping) colours allowing for a CD substitution rule. We will then refine the argument to show it can be made non-overlapping.

Let \(\kappa > 0\) be such that, for any \(\cps' \in \Omega\), every point of \(\cps\) is within distance \(\kappa\) of a point of \(L\cps'\), where \(\cps = \sub(\cps') = S(L\cps')\). Let \(c\) be the derivation radius of \(S \colon L\Omega \to \Omega\). Recall that \(\lambda > 1\) is such that \(\lambda B \subseteq LB\), where \(B\) is the unit ball. Then, for any \(r > 0\) and any \(\cps_1\), \(\cps_2 \in \Omega\), if \((L\cps_1)[\mathbf{0},r+\kappa+c] = (L\cps_2)[\mathbf{0},r+\kappa+c]\) we quickly derive that
\begin{equation}\label{eq:colour derivation}
S(L\cps_1)[z,r] = S(\cps_2)[z,r] \text{ for all } z \in B_\kappa .
\end{equation}
Given any \(\cps' \in \Omega\), and \(\cps = \sub \cps'\), say that \(x \in \cps\) is \textbf{covered by} \(Ly \in L\cps'\) if \(Ly-x \in B_\kappa\). By our assumption on \(\kappa\), each point of \(\cps\) is covered by a point (perhaps more than one) from \(L\cps'\). Moreover, by Equation \ref{eq:colour derivation}, if \(Ly\), \(Ly' \in L\cps'\) are centred at the same \((r+\kappa+c)\)-patches, then \(Ly\) controls a point \(Ly+z\), with colour \(i\), if and only if \(Ly'\) controls the point \(Ly'+z\), also with colour \(i\). It easily follows that, if we colour the Delone (multi)sets using labels defined by their \(r\)-patches, the substitution rule is CD, provided that the rule is well-defined, which requires that any \(r\)-patch \(\cps'[y,r]\) determines the patch \(\cps[Ly,r+\kappa+c]\) (since then, if \(Ly\) and \(Ly'\) are labelled with the same \(r\)-patches in \(L\cps'\), then they determine the labels of the \(r\)-patches they control in \(\cps\)).

Now, if \(\cps_1[y,r] = \cps_2[y',r]\), clearly \((L\cps_1)[Ly,LB_r] = (L\cps_2)[Ly',LB_r]\). Thus, the claim holds provided that \(r\) is large enough so that \(LB_r \supseteq r+\kappa+c\). Recall that \(\lambda > 1\) is defined so that \(LB_r \supseteq B_{\lambda r}\), so the above all holds once \(\lambda r \geq r + \kappa + c\), that is, \(r \geq \frac{\kappa + c}{\lambda-1} > 0\). Thus, by labelling each point with its \(r\)-patch for \(r \geq \frac{\kappa + c}{\lambda-1}\), the resulting substitution map will be CD. Explicitly, fixing some \(\cps' \in \Omega\) and \(\cps \coloneqq \sub \cps\), we may define
\[
\mathcal{D}_{ij} = \{x-y \mid y \in L\cps_j', x \in \cps_j \text{ with } x-y \in B_\kappa\}
\]
which, by the above, does not depend on the choice of \(\cps' \in \Omega\). Here, each \(\cps_j\) is the set of points of \(\cps\) at the centres of \(r\)-patches of type \(j\).

Since a point \(x \in \sub(\cps')\) may be controlled by multiple points of \(L\cps'\), though, the rule thus defined may be overlapping. To correct this, begin by letting \(\mathcal{D} \coloneqq \bigcup_{i,j = 1}^\ell \mathcal{D}_{ij}\), which is a finite set. Put an arbitrary total order \(\prec\) on \(\mathcal{D}\). We now say that a point \(x \in \cps = \sub\cps'\) is \textbf{controlled by} \(Ly \in L\cps'\) if \(v = x-Ly \in \mathcal{D}\) and that, if \(v' = x-Ly' \in \mathcal{D}\) for any other \(Ly' \in L\cps'\), then \(v \preceq v'\). Thus, since each \(x \in \cps\) is controlled by \(Ly = v-x \in L\cps'\) for a minimal \(v \in \mathcal{D}\), it is clear that each \(x \in \cps\) is controlled by a unique point of \(L\cps'\). Moreover, if the enhanced label, given by \(\cps'[y,r]\), of the point \(Ly \in L\cps'\) is able to determine the other positions points \(Ly' \in L\cps'\) which cover a point \(x\) that \(Ly\) also covers (that is, with \(x-Ly \in B_\kappa\)), then it can also determine whether or not it controls it. Thus, it is sufficient that \(\cps'[y,r]\) contains the information of which points of \(L\cps'\) are occupied within radius \(\kappa\) of \(x\). All such points are at most distance \(2\kappa\) from \(y\), so it is sufficient, by a similar argument to above, that \(\lambda r \geq 2\kappa\), so this holds provided we have coloured the point with a sufficiently large \(r\), as required.
\end{proof}

\begin{corollary}
Any SDM, with expansion \(L\), is \(L\)-sub. Conversely, given an FLC hull \(\Omega\) of Delone (multi)sets, equipped with an expansive substitution map \(\sub \colon \Omega \to \Omega\) (e.g., one induced by a particular \(L\)-sub Delone (multi)set \(\cps\) being \(L\)-sub, for some expansive \(L\)), at least after applying a local recolouring, \(\cps\) is an SDM for some \(\cps \in \Omega\), with expansion \(L^n\) for some \(n \in \N\).
\end{corollary}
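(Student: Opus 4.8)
The plan is to dispatch the two directions separately, drawing almost entirely on the machinery already assembled in this subsection.

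For the forward direction, that an SDM with expansion $L$ is $L$-sub, essentially nothing remains to be done: the computation immediately following Definition \ref{def:SDM} shows that Equation \ref{eq:SDM} furnishes a local derivation $L\cps \LD \cps$ with derivation radius $c = \max_{i,j}\max_{v \in \mathcal{D}_{ij}}\|v\|$. To match Definition \ref{def:L-sub} of being $L$-sub I would simply take the predecessor to be $\cps$ itself: then $L\cps \LD \cps$ holds as noted and $\cps \LI \cps$ trivially, so $\cps$ is $L$-sub (indeed, pseudo self-affine with respect to $L$).

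For the converse, the key observation I would make explicit is that an SDM is nothing other than a fixed point of a colour-defined, non-overlapping substitution: unwinding Definition \ref{def:colour sub} (see also Remark \ref{rem:CD subs versus standard}), a pattern $\cps$ with $\sub(\cps) = \cps$ for such a $\sub = S \circ L$ satisfies $\cps_i = S(L\cps)_i = \bigcup_{j} (L\cps_j + \mathcal{D}_{ij})$ with disjoint unions, which is precisely Equation \ref{eq:SDM}. So the task reduces to producing, after a local recolouring, a fixed point of a CD and non-overlapping substitution, and I would carry this out in three steps. First, apply Proposition \ref{prop:fixed point of any sub} to obtain $n \in \N$ and $\cps \in \Omega$ with $\sub^n(\cps) = \cps$. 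Second, note that $\sub^n$ is itself a substitution map, with expansion $L^n$ (the composite of inflations and local derivation maps is again an inflate-then-locally-derive map), and apply Proposition \ref{prop:simplicity of subs} to $\sub^n$: this yields a local recolouring, i.e.\ an MLD homeomorphism $g \colon \Omega \to \widetilde\Omega$ onto a recoloured hull, for which the transported substitution $\widetilde\sub \coloneqq g \circ \sub^n \circ g^{-1}$ is CD and non-overlapping, still with expansion $L^n$. Third, transport the fixed point: since $g$ is a bijection, $\widetilde\sub(g\cps) = g\,\sub^n g^{-1}(g\cps) = g\,\sub^n(\cps) = g(\cps)$, so $g(\cps)$ is a fixed point of the CD non-overlapping substitution $\widetilde\sub$ and hence, by the observation above, an SDM with expansion $L^n$. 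As $g(\cps)$ lies in a local recolouring of $\Omega$, this is exactly the assertion of the corollary.

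The delicate point to get right is the order of the two reductions. Taking the power first and only then recolouring (rather than recolouring to a CD non-overlapping form and then passing to a power) lets me invoke Proposition \ref{prop:simplicity of subs} directly on $\sub^n$, and so avoids having to argue that a power of a CD, non-overlapping substitution remains non-overlapping --- a statement that would require tracking how level-one clusters nest and whether their sub-replacements stay disjoint. With this ordering the only facts needed beyond the two cited propositions are that $\sub^n$ has expansion $L^n$ and that conjugating a substitution by the recolouring homeomorphism $g$ preserves both the CD/non-overlapping property and the fixed point, both of which are routine.
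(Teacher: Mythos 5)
Your proof is correct and follows essentially the same route as the paper: the forward direction via the local derivation already extracted from Equation \ref{eq:SDM}, and the converse by obtaining a fixed point of \(\sub^n\) from Proposition \ref{prop:fixed point of any sub}, applying Proposition \ref{prop:simplicity of subs} to \(\sub^n\) to get a CD, non-overlapping rule after recolouring, and reading off Definition \ref{def:SDM} from Definition \ref{def:colour sub}. Your additional care in transporting the fixed point through the recolouring homeomorphism, and your remark on taking the power before recolouring, merely make explicit what the paper leaves implicit.
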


\begin{proof}
The fact that an SDM is \(L\)-sub is trivial and was discussed earlier in this section. Conversely, given a substitution map \(\sub \colon \Omega \to \Omega\), take \(n \in \N\) so that \(\sub^n\) has a fixed point \(\cps \in \Omega\), using Proposition \ref{prop:fixed point of any sub}. We have that \(\sub^n = S' \circ L^n\) is also an expansive substitution map (where \(S'\) may be thought of as applying, \(n\) times, the map \(S\), see for instance \cite{Wal25}), with inflation \(L^n\) and, by Proposition \ref{prop:simplicity of subs}, is given by a CD and non-overlapping substitution, perhaps after applying a local recolouring. Applying Definition \ref{def:colour sub}, we have \(\cps_i = S'(L^n \cps)\) on the left-hand side of Equation \ref{eq:colour sub}, where the unions are disjoint on the right-hand side, thus Definition \ref{def:SDM} applies and \(\cps\) is SDM, as required.
\end{proof}

The above shows how the notion of \(L\)-sub considered here is, in an essential sense, a natural extension of being in the hull of an SDM. Moreover, the types of substitutions one gets only differ insofar as points of SDMs must be replaced by (non-overlapping) clusters with a rule utilising only colours of points, rather than local surroundings to a bounded radius (see Remark \ref{rem:CD subs versus standard}). However, the latter is a very helpful flexibility in practice, as we have seen in many previous results and, in particular, allowing a more natural approach in the classification problem for substitutional polytopal windows. That said, knowing that a substitution rule is given by a (non-overlapping) CD substitution has the following interesting consequence:

\begin{theorem}\label{thm:recolour to GIFS}
Suppose that \(\Omega = \Omega(\mathcal{S},W)\) is a hull of \(L\)-sub cut and project sets. Then (1--4) of Theorem \ref{thm:main} apply and, in particular, we have a corresponding contraction \(A \colon \intl \to \intl\).

Suppose that, up to a translation, \(W\) is the attractor of a GIFS (see Definition \ref{def:GIFS}), with contraction maps \(x \mapsto A(x) + g\) for \(g \in \Gamma_<\). Then there is a CD substitution rule \(\sub \colon \Omega \to \Omega\). If the union in the GIFS, defining \(W_i\) from a union of sets of the form \(A(W_j) + X_{ij}\), have mutually disjoint interiors, then the substitution map is also non-overlapping.

Conversely, if \(\sub \colon \Omega \to \Omega\) is an (expansive, with inflation \(L\)) CD substitution map, then the support of (a translate of) \(W\) may be given as a finite union of subsets \(W_1'\), \ldots, \(W_\ell' \in \mathscr{B}(W)\), so that \(W' = (W_i')_{i=1}^\ell\) is the attractor of a GIFS, with contraction maps \(x \mapsto A(x) - g\) for \(g \in \Gamma_<\), where the elements of \(\Omega(\mathcal{S},W')\) are a local recolouring of those from \(\Omega\) (in particular, they are MLD). If the substitution is non-overlapping then the sets \(A(W_j) +  X_{ij}\), whose union gives \(W_i\), have disjoint interiors.
\end{theorem}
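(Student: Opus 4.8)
The first assertion is immediate: since $\Omega(\mathcal{S},W)$ is a hull of $L$-sub cut and project sets with $L$ expansive, its elements are $L$-Sub by Theorem \ref{thm:recognisability}, so Theorem \ref{thm:main} supplies the map $M$ together with the contraction $A = M|_{\intl}$. For the forward direction, the plan is to read the claimed colour-defined rule directly off the GIFS. Writing the attractor equation componentwise gives $W_i = \bigcup_{j=1}^\ell (A(W_j) + (X_{ij})_<)$ for finite $X_{ij} \subset \Gamma$. Each $A(W_j)$ is a colour component of the scaled window $A(W)$, hence lies in $\mathscr{B}(A(W))$, so this equation exhibits $W$ as constructable from $A(W)$ and places us in the setting of Proposition \ref{prop:LD from construction}, after moving to a non-singular representative and using Lemma \ref{lem:rescaling of cps is cps} to identify $L\cps'$ with $\cps(\mathcal{S},A(W))$. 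The local derivation it produces replaces a point of $L\cps'$ lying in colour component $j$ by the cluster of colour-$i$ points at displacements $(X_{ij})_\vee$, for each $i$; this depends only on the colour of the point, so it is a colour-defined subdivision with $\mathcal{D}_{ij} = (X_{ij})_\vee$ in the sense of Definition \ref{def:colour sub}. Since $W$ equals the predecessor window, Lemma \ref{lem: LI <=> equal windows} gives $\cps \LIs \cps'$ and we obtain a genuine substitution $\sub \colon \Omega \to \Omega$. If the pieces $A(W_j) + (X_{ij})_<$ have mutually disjoint interiors then, because projected lattice points never meet their boundaries, no point is produced twice and the rule is non-overlapping.

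For the converse, I would start from a CD substitution $\sub = S \circ L$ with $S(L\cps)_i = \bigcup_j (L\Lambda_j + \mathcal{D}_{ij})$. Choosing a non-singular $\cps = \cps(\mathcal{S},W)$ and, after composing $\sub$ with a physical-space translation, arranging $\sub(\cps) \subset \Gamma_\vee$, Lemma \ref{lem: LI <=> equal windows} gives $\sub(\cps) = \cps(\mathcal{S},W-s)$ for some $s \in \intl$. Applying the star map to the CD equation and using $L\Lambda_j = \cps(\mathcal{S},A(W_j))$ (Lemma \ref{lem:rescaling of cps is cps}) together with Lemma \ref{lem:acceptance domains indicate patches} then yields the window identity $W_i - s = \bigcup_j (A(W_j) + (X_{ij})_<)$, where $X_{ij} \subset \Gamma$ lifts $\mathcal{D}_{ij}$. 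This is almost a graph-directed system, except that the shift $s$ sits only on the left. I would absorb it by repositioning: set $t \coloneqq (\Id - A)^{-1}(s)$ (well defined, as $A$ is contractive) and $W'_i \coloneqq W_i - t$; a short computation using $t - s = A t$ converts the identity into $W'_i = \bigcup_j (A(W'_j) + (X_{ij})_<)$, a GIFS with contraction maps $x \mapsto A(x) + (X_{ij})_<$ and honest $\Gamma_<$ translations, whose support is a translate of that of $W$.

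It then remains to identify $W' = (W'_i)$ with a local recolouring of $\Omega$. The colour a point receives under the CD rule is a local function of the pattern, so passing to the recoloured hull is an MLD relabelling; by Theorem \ref{thm: MLD <=> constructable} the recoloured cut and project sets are $\cps(\mathcal{S},W')$ with each $W'_i \in \mathscr{B}(W)$, the two hulls being MLD. For non-overlapping $\sub$, the same density-of-$\Gamma_<$ argument as in the forward direction applies: an interior overlap of two pieces $A(W'_j) + (X_{ij})_<$ would force a non-singular point to be counted twice in $\Lambda_i$, contradicting disjointness of the union, so the pieces have disjoint interiors.

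The hard part is the bookkeeping around positioning. Both directions hinge on the window identity being expressed with genuine $\Gamma_<$ translations, and the repositioning by $(\Id - A)^{-1}(s)$ in the converse is precisely what removes the torus shift $s = u_<$ that Theorem \ref{thm:classification} attaches to $\sub$, while preserving the lattice structure. I expect the delicate points to be (i) checking that this repositioning is compatible with the requirements that the pieces lie in $\mathscr{B}(W)$ and that the recolouring be MLD (equivalently, that the recoloured window be mutually constructable from $W$), and (ii) in the forward direction, choosing a non-singular representative without destroying the $\Gamma_<$-GIFS structure, bearing in mind that for singular attractors all lattice translates remain singular, so the derivation map must instead be defined on a non-singular translate and then extended over the hull via Lemma \ref{lem: LD over hull}. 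Establishing that the resulting subdivision depends only on colour, rather than on a larger neighbourhood, is the substantive (if not deep) feature that distinguishes the colour-defined case.
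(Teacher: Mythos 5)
Your proof follows essentially the same route as the paper's: the forward direction reads the colour-defined rule off the GIFS via the star map with $\mathcal{D}_{ij} = (X_{ij})_\vee$ (the paper computes $\cps(\mathcal{S},W_i+A(s))=\bigcup_j(L\cps_j+\mathcal{D}_{ij})$ directly rather than citing Proposition \ref{prop:LD from construction}, but it is the same computation), while the converse applies the CD rule to a non-singular representative, extracts the window identity $W_i - s = \bigcup_j (A(W_j)+(X_{ij})_<)$ via Lemmas \ref{lem:rescaling of cps is cps}, \ref{lem:acceptance domains indicate patches} and \ref{lem: LI <=> equal windows}, and absorbs the shift using $(\Id-A)^{-1}$, exactly as in the paper, with both non-overlapping claims handled by the same non-singularity/density-of-$\Gamma_<$ arguments. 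The only superfluous element is your closing recolouring paragraph in the converse: since the substitution is already colour-defined with respect to the given colours, the GIFS components are simply translates of the original window components and translating the window leaves the hull unchanged, so no relabelling (and no appeal to Theorem \ref{thm: MLD <=> constructable}, which would not yield $W_i - t \in \mathscr{B}(W)$ for an irrational shift $t$ anyway) is required at that point.
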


\begin{proof}
Firstly, suppose that \(W+s\) is the attractor of a GIFS as described. Choose, without loss of generality, a non-singular translate of \(W+s\) and let \(\cps' \coloneqq \cps(\mathcal{S},W+s)\). By Lemma \ref{lem:rescaling of cps is cps}, we have that \(L\cps_j = \cps(\mathcal{S},A(W_j+s))\), with window \(A(W_j+s)\) still in non-singular position. For each \(\gamma \in \Gamma\), we have that \(L\cps_j + \gamma_\vee = \cps(\mathcal{S},A(W_j+s)+\gamma_<)\). In particular, taking \(\mathcal{D}_{ij}\) as the union of \(\gamma_\vee\), where the \(\gamma_<\) are the \(g \in X_{ij} \in \Gamma_<\) for which
\[
W_i = \bigcup_{j=1}^\ell A(W_j) + X_{ij}
\]
in the GIFS defining \(W_i\) from the \(W_j\) (that is, \(\mathcal{D}_{ij}^\star = X_{ij}\)), we see that
\[
\cps(\mathcal{S},W_i+A(s)) = \bigcup_{j=1}^\ell (L\cps_j + \mathcal{D}_{ij}) .
\]
Thus, defining \(\cps\) by \(\cps_i \coloneqq \cps(\mathcal{S},W_i+A(s))\), we have that \(\cps = \cps(\mathcal{S},W+A(s))\). Since \(\cps\) is a cut and project set with translated window, we have \(\cps \LIs \cps'\) by Lemma \ref{lem: LI <=> equal windows}. And clearly, by construction, \(\cps\) is given as a colour-defined substitution of \(\cps'\). Moreover, if the \(A(W_j)+X_{ij}\) have disjoint interiors for distinct \(j\), then the \(L\cps_j + \mathcal{D}_{ij}\) are disjoint for distinct \(j\). Indeed, otherwise, there exists some \(x \in (L\cps_j + v) \cap (L\cps_k + w)\), for \(v \in \mathcal{D}_{ij}\) and \(w \in \mathcal{D}_{ik}\). Applying the star map, \(x^\star \in A(W_j+s) + v^\star\) and \(x^\star \in A(W_k+s) + w^\star\) where \(v^\star \in X_{ij}\) and \(w^\star \in X_{ik}\). Since \(A(W+s)\) is in non-singular position, \(x^\star\) belongs to the interiors of these sets so, shifting by \(-A(s)\), we see that they have intersecting interiors, a contradiction, so the substitution is defined above is non-overlapping.

Conversely, suppose that we have a CD substitution \(\sub \colon \Omega \to \Omega\) with inflation \(L\). Take any \(\cps' = \cps(\mathcal{S},W) \in \Omega\) where, without loss of generality, we assume that \(W\) is in non-singular position. Then, for \(\cps \coloneqq \sub \cps'\), by Definition \ref{def:colour sub},
\begin{equation}\label{eq:GIFS1}
\cps_i = \bigcup_{j=1}^\ell L\cps'_j + \mathcal{D}_{ij} = \bigcup_{j=1}^\ell \cps(\mathcal{S},A(W_j)) + \mathcal{D}_{ij}
\end{equation}
by Lemma \ref{lem:rescaling of cps is cps}. We may assume, by replacing the substitution \(\cps \mapsto \sub \cps\) with \(\cps \mapsto (\sub \cps) - v\) for any \(v \in \mathrm{sup}(\cps)\), that \(\mathrm{sup}(\cps) \subset \Gamma_\vee\), since the same is true of \(\cps'\) and \(\cps \LIs \cps'\) (so they have the same return vectors). Thus, since each \(\cps(\mathcal{S},A(W)) \subset \Gamma_\vee\), we also have that each \(\mathcal{D}_{ij} \subseteq \Gamma_\vee\). It quickly follows that
\begin{equation}\label{eq:GIFS2}
\cps_i = \bigcup_{j=1}^\ell \cps(\mathcal{S},A(W_j) + X_{ij}) = \cps\left(\mathcal{S},\bigcup_{j=1}^\ell A(W_j) + X_{ij}\right)
\end{equation}
where we define \(X_{ij} \coloneqq \mathcal{D}_{ij}^\star\). Thus, \(\cps\) is a cut and project set, with colour \(i\) window given as above. Since \(\cps \LIs \cps'\), it follows from Lemma \ref{lem: LI <=> equal windows} that each \(W_i + s = \bigcup_{j=1}^\ell A(W_j) + X_{ij}\), for some \(s \in \intl\). Since \(A\) is a contraction, by a standard argument we have seen multiple times, we may replace this with \(W_i + s' = \bigcup_{j=1}^\ell A(W_j + s') + X_{ij}\) for an appropriate \(s' \in \intl\), that is, \(W+s'\) is the attractor of a GIFS of the required form.

If \(\sub\) is also non-overlapping, then we can take the unions in Equation \ref{eq:GIFS1}, and thus also the first union of Equation \ref{eq:GIFS2}, to be disjoint. It follows that the \(A(W_j) + X_{ij}\), for a fixed \(i\), have mutually disjoint interiors. Indeed, suppose otherwise. By density of \(\Gamma_<\), there exists \(x \in \Gamma_\vee\) for which \(x^\star \in A(W_j) + X_{ij}\) and \(x^\star \in A(W_k) + X_{ik}\), for \(j \neq k\). But then \(x \in \cps(A(W_j) + X_{ij}\) and \(x \in \cps(A(W_k) + X_{ik}\), a contradiction, so these sets have disjoint interiors, as required.
\end{proof}

\begin{corollary}\label{cor:sub=>GIFS}
Suppose that \(\Omega = \Omega(\mathcal{S},W)\) is a hull of cut and project sets with Euclidean total space which consists of \(L\)-sub patterns, that is, conditions (1--4) of Theorem \ref{thm:main} apply (which, in particular, defines a particular contraction \(A \colon \intl \to \intl\)).

Denote also \(\mathrm{sup}(W) = \bigcup_{i=1}^\ell W_i\). Then, perhaps after taking an appropriate translate of \(W\), there is a window \(W' = (W_j')_{j=1}^{\ell'}\) that is mutually constructable from \(W\), where the \(W_j' \in \mathscr{B}(W)\) have mutually disjoint interiors (which may be taken as the acceptance domains of \(r\)-patches, for sufficiently large \(r\)) with union \(\mathrm{sup}(W)\) and so that \(W' = (W_j')_{j=1}^{\ell'}\) is the attractor of a GIFS of the form
\[
W'_i = \bigcup_{j=1}^{\ell'} A(W'_j) + X_{ij} ,
\]
where each \(X_{ij} \subset \Gamma_\vee\), and where the above union may be chosen to be disjoint.
\end{corollary}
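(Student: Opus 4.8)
The plan is to combine the reduction to a colour-defined substitution from Proposition \ref{prop:simplicity of subs} with the converse direction of Theorem \ref{thm:recolour to GIFS}; the only genuine work is to check that the local recolouring used there can be realised geometrically by passing to acceptance domains, so that the resulting window components land in $\mathscr{B}(W)$ and tile $\mathrm{sup}(W)$. First I would note that since $\Omega = \Omega(\mathcal{S},W)$ consists of $L$-sub patterns with $L$ expansive, these are aperiodic and repetitive, so by Theorem \ref{thm:recognisability} they are $L$-Sub and we obtain an (injective) substitution map $\sub \colon \Omega \to \Omega$ with inflation $L$. By Proposition \ref{prop:simplicity of subs}, after recolouring every point of each pattern in $\Omega$ by its $r$-patch, for a fixed and sufficiently large $r$, the induced substitution map $\sub'$ on the recoloured hull $\Omega'$ is colour-defined (CD) and non-overlapping.

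The key middle step is to identify $\Omega'$ as a cut and project hull whose window components are exactly the $r$-acceptance domains. Taking a non-singular representative $\cps = \cps(\mathcal{S},W)$, the points carrying a given recolour $P$ are precisely those with $r$-patch $P$, which by Lemma \ref{lem:acceptance domains indicate patches} form the cut and project set $\cps_P = \cps(\mathcal{S},W_P)$ with window the acceptance domain $W_P \in \mathscr{A}_r$. Hence the recoloured Delone multiset is $\cps(\mathcal{S},W')$ with $W' = (W_P)_P$ ranging over the finitely many $r$-patches. By Lemma \ref{lem:ADs tile} the $W_P$ have mutually disjoint interiors and union $\mathrm{sup}(W)$, and each lies in $\mathscr{B}(W)$ by Lemma \ref{lem:acceptance domains indicate patches}; thus $W'$ is constructable from $W$, while $\mathrm{sup}(W) = \bigvee_P W_P \in \mathscr{B}(W')$ gives constructability the other way, so $W$ and $W'$ are mutually constructable and $\mathscr{B}(W') = \mathscr{B}(W)$. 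In particular all subsets produced in the final step lie in $\mathscr{B}(W)$, as required.

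It then remains to apply the converse direction of Theorem \ref{thm:recolour to GIFS} to the CD, non-overlapping substitution map $\sub'$ on $\Omega' = \Omega(\mathcal{S},W')$: it produces an appropriate translate of $W'$ that is the attractor of a GIFS of the form $W'_i = \bigcup_{j=1}^{\ell'} A(W'_j) + X_{ij}$, where each $X_{ij} = \mathcal{D}_{ij}^\star$ is a finite set of projected lattice points (so $X_{ij} \subset \Gamma_<$) and where, because $\sub'$ is non-overlapping, the sets $A(W'_j) + X_{ij}$ entering each union have mutually disjoint interiors. This yields exactly the asserted GIFS decomposition of $\mathrm{sup}(W)$.

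The main obstacle I anticipate is the bookkeeping in the middle step rather than any deep new idea: one must ensure that the particular large radius $r$ demanded by Proposition \ref{prop:simplicity of subs} is compatible with using the $r$-acceptance domains as the new window components, and that the translations forced upon us (both to place $\mathrm{sup}(\cps) \subset \Gamma_\vee$ inside Theorem \ref{thm:recolour to GIFS} and to realise the exact GIFS fixed point) do not disturb mutual constructability with the original $W$. This last point causes no trouble, since constructability is invariant under $\Gamma_<$-translation and, more generally, translating $W$ leaves $\Omega(\mathcal{S},W)$ unchanged by Lemma \ref{lem: LI <=> equal windows}. Finally, the multi-coloured case needs only the observation that acceptance domains of $r$-patches record colours as well as positions, so that they continue to have disjoint interiors and tile $\mathrm{sup}(W)$.
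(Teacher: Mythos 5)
Your proposal is correct and follows essentially the same route as the paper's own proof: recolour points by their \(r\)-patches via Proposition \ref{prop:simplicity of subs}, identify the recoloured hull as \(\Omega(\mathcal{S},W')\) with \(W'\) the collection of \(r\)-acceptance domains (Lemmas \ref{lem:acceptance domains indicate patches} and \ref{lem:ADs tile}), and then apply the converse direction of Theorem \ref{thm:recolour to GIFS}. The only cosmetic difference is in the mutual-constructability step, which you verify directly through Boolean expressions (where, in the multi-coloured case, one should note that each \(W_i\) is the union of those acceptance domains whose central point carries colour \(i\)), whereas the paper deduces it immediately from the recolouring being an MLD equivalence together with Theorem \ref{thm: MLD <=> constructable}; your observation that the translation parts \(X_{ij} = \mathcal{D}_{ij}^\star\) lie in \(\Gamma_<\) is the mathematically correct reading of the statement.
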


\begin{proof}
By Lemma \ref{prop:simplicity of subs}, by labelling points with their \(r\)-patches, for sufficiently large \(r\), we in fact have a substitution \(\sub \colon \Omega \to \Omega\) which may be defined by a CD and non-overlapping rule. Since the \(r\)-patch of a point is determined by which acceptance domain of \(r\)-patch it projects to (see Lemma \ref{lem:acceptance domains indicate patches}), this recolouring amounts to replacing \(W\) with its acceptance domains (one for each new colour). By definition, the acceptance domains are in \(\mathscr{B}(W)\), and they tile the window (Lemma \ref{lem:ADs tile}), and as a collection this gives a mutually constructable window since adding these labels defines an MLD equivalence. By Theorem \ref{thm:recolour to GIFS}, up to a translation, the window is the attractor of a GIFS of the stated form.
\end{proof}

Thus, whilst it is not true that windows of \(L\)-sub patterns are attractors of some GIFS, it is at least true (up to a translation) once they are decomposed sufficiently into constructable subsets. However, as noted above, both perspectives are valuable, particularly, as we have seen, when considering polytopal windows. Indeed, in practice, it is useful to know results that apply to the a priori simpler notion of \(L\)-sub.

\subsection{\(L\)-sub tilings versus substitution tilings}
In the above section, we showed how a standard notion of a Delone (multi)set being substitutional corresponds rather directly (essentially, ``up to MLD'') to our notion of \(L\)-sub. In this section, we relate \(L\)-sub with the classical notion of being substitutive for tilings.

Given a tile substitution rule \(\sub\) (satisfying standard properties, such as FLC and primitivity), for each tiling \(\cT\) admitted by \(\sub\), it is well-known that there is always a `predecessor' tiling \(\cT' \in \Omega\) for which \(\sub \cT' = \cT\), see for instance \cite{AB98}. Substitution is defined by inflation followed by a subdivision rule \(S\) on tiles, so \(S(L\cT') = \cT\). Since subdivision \(S\) is clearly a local derivation, we have that \(L\cT' \LD \cT\). And since \(\cT'\) belongs to the translational hull of \(\cT\), we have that \(\cT' \LI \cT\), establishing that \(\cT\) is \(L\)-sub.

The relationship in the converse direction is established by a result of Solomyak \cite{Sol97} (and earlier, in the planar case, of Priebe--Frank and Solomyak \cite{PS01}), namely, that a pseudo self-affine tiling is self-affine, up to MLD. Recall that a tiling \(\cT\) is pseudo self-affine if it is repetitive and \(L\cT \LD \cT\), for an expansion \(L\).

So, suppose that \(\cT\) is a repetitive and \(L\)-sub pattern. It may be shown (see, for instance, \cite{Wal25}) that, at least for some power \(n \in \N\), the associated substitution map \(\sub \colon \Omega \to \Omega\) has some fixed point \(\sub^n(\cT') = \cT'\). Thus, \(L^n\cT' \LD \cT'\), so \(\cT' \in \Omega\) is pseudo self-affine. The result of \cite{Sol97} then says that \(\cT' \MLD \cT''\), where \(\cT''\) is fixed by a stone inflation substitution, with expansion perhaps some further power of \(L^n\). Thus, the hull \(\Omega\) is, up to MLD, generated by a standard tile stone inflation rule. With the above, this establishes that our notion of \(L\)-sub only differs from other standard notions up to redecorating our patterns up to MLD.

\bibliographystyle{amsalpha}
\bibliography{biblio.bib}

\end{document}